 \theoremstyle{plain}
 \newtheorem{thm}{Theorem}[section]
 \newtheorem{cor}[thm]{Corollary}
 \newtheorem{lem}[thm]{Lemma}
 \newtheorem{prop}[thm]{Proposition}
\theoremstyle{definition}
 \newtheorem{defn}[thm]{Definition}
\theoremstyle{remark}
 \newtheorem{rem}[thm]{Remark}
 \newtheorem{nota}[thm]{Notation}
 \newtheorem{conv}[thm]{Convention}
 \numberwithin{equation}{section}
\DeclareMathOperator{\VF}{VF} \DeclareMathOperator{\ACVF}{ACVF}
\DeclareMathOperator{\RV}{RV} 
\DeclareMathOperator{\MM}{\mathcal{M}}
\DeclareMathOperator{\LL}{\mathcal{L}}
\DeclareMathOperator{\OO}{\mathcal{O}}
 \DeclareMathOperator{\ran}{ran}
 \DeclareMathOperator{\dom}{dom}
 \DeclareMathOperator{\id}{id}
 \DeclareMathOperator{\lh}{lh}
 \DeclareMathOperator{\spec}{Spec}
 \DeclareMathOperator{\td}{tr\,deg}
 \DeclareMathOperator{\acl}{acl}
 \DeclareMathOperator{\dcl}{dcl}
 \DeclareMathOperator{\pr}{pr}
 \DeclareMathOperator{\alg}{ac}
\DeclareMathOperator{\jcb}{Jcb}
\DeclareMathOperator{\K}{\overline{K}}
\def\XXint#1#2#3{{\setbox0=\hbox{$#1{#2#3}{\int}$}
\vcenter{\hbox{$#2#3$}}\kern-.5\wd0}}
\newcommand{\Z}{\mathds{Z}}
\newcommand{\N}{\mathds{N}}
\newcommand{\PP}{\mathds{P}}
\newcommand{\p}{$p$\nobreakdash}
\newcommand{\cmin}{$C$\nobreakdash}
\newcommand{\vmin}{$V$\nobreakdash}
\newcommand{\gB}{\mathfrak{B}}
\newcommand{\gC}{\mathfrak{C}}
\newcommand{\ga}{\mathfrak{a}}
\newcommand{\gb}{\mathfrak{b}}
\newcommand{\gc}{\mathfrak{c}}
\newcommand{\go}{\mathfrak{o}}
\newcommand{\gp}{\mathfrak{p}}
\newcommand{\gq}{\mathfrak{q}}
\newcommand{\gr}{\mathfrak{r}}
\newcommand{\0}{\emptyset}
 \newcommand{\abs}[1]{\left\vert#1\right\vert}
 \newcommand{\set}[1]{\left\{#1\right\}}
 \newcommand{\seq}[1]{\left<#1\right>}
 \newcommand{\lan}[1]{\mathcal{L}_{\textup{#1}}}
\newcommand{\mdl}[1]{\mathcal{#1}}  
\newcommand{\bb}[1]{\mathbb{#1}}
\newcommand{\liff}{\leftrightarrow}
\newcommand{\ex}[1]{\exists #1 \;} 
\newcommand{\rest}{\upharpoonright}
\newcommand{\lbar}{\vec}
\newcommand{\fun}{\longrightarrow}
\newcommand{\efun}{\longmapsto}
\newcommand{\sub}{\subseteq}
\newcommand{\mi}{\smallsetminus}
\newcommand{\la}{\langle}
\newcommand{\ra}{\rangle}
\newtheorem*{incla}{Claim}
\DeclareMathOperator{\mVF}{\mu \! \VF}
\DeclareMathOperator{\mgVF}{\mu_{\Gamma} \! \VF}
\DeclareMathOperator{\mRV}{\mu \! \RV}
\DeclareMathOperator{\mgRV}{\mu_{\Gamma} \! \RV}
\DeclareMathOperator{\rv}{rv}
\DeclareMathOperator{\vv}{val}
\DeclareMathOperator{\mor}{Mor}
\DeclareMathOperator{\gsk}{\mathbf{K}_+}
\DeclareMathOperator{\ggk}{\mathbf{K}}
\DeclareMathOperator{\ob}{Ob}
\DeclareMathOperator{\fib}{fib}
\DeclareMathOperator{\isp}{I_{sp}}
\DeclareMathOperator{\hen}{HEN}
\DeclareMathOperator{\rad}{rad}
\DeclareMathOperator{\vcr}{vcr}
\DeclareMathOperator{\vrv}{vrv}
\DeclareMathOperator{\RVH}{RVH}
\DeclareMathOperator{\can}{\mathbf{c}}
\DeclareMathOperator{\wgt}{wgt}
\DeclareMathOperator{\pvf}{pvf}
\DeclareMathOperator{\prv}{prv}
\DeclareMathOperator{\der}{d}
\begin{document}

\title[Special transformations in ACVF]{Special transformations in algebraically closed valued fields}

\author{Yimu Yin}

\address{Department of Mathematics, University of Pittsburgh, 301 Thackeray Hall, Pittsburgh, PA  15260}

\email{yimuyin@pitt.edu}

\begin{abstract}
We present two of the three major steps in the construction of motivic integration,
that is, a homomorphism between Grothendieck semigroups that are
associated with a first-order theory of algebraically closed
valued fields, in the fundamental work of Hrushovski and
Kazhdan~\cite{hrushovski:kazhdan:integration:vf}. We limit our
attention to a simple major subclass of \vmin-minimal theories of
the form $\ACVF_S(0, 0)$, that is, the theory of algebraically closed
valued fields of pure characteristic $0$ expanded by a $(\VF,
\Gamma)$-generated substructure $S$ in the language $\lan{RV}$.
The main advantage of this subclass is the presence of syntax. It
enables us to simplify the arguments with many different technical
details while following the major steps of the Hrushovski-Kazhdan
theory.
\end{abstract}

\maketitle

\tableofcontents

\section{Introduction}

The theory of motivic integration in valued fields has been
progressing rapidly since its first introduction by Kontsevich.
Early developments by Denef and Loeser et al.\ have yielded many
important results in many directions. The reader is referred to
\cite{hales:2005} for an excellent introduction to the
construction of motivic measure.

There have been different approaches to motivic integration. The comprehensive study in Cluckers-Loeser~\cite{cluckers:loeser:constructible:motivic:functions}
has successfully united some major ones on a general foundation.
Their construction may be applied in general to the field of
formal Laurent series over a field of characteristic $0$ but
heavily relies on the Cell Decomposition Theorem of Denef-Pas~\cite{Denef:1986,Pa89}. We note that cell decomposition is also achieved in other cases, for example, in certain finite extensions of \p-adic fields~\cite{Pas:1990} and in henselian fields with respect to a first-order language that is equipped with, instead of an angular component, a collection of residue multiplicative structures~\cite{cluckers:loeser:bminimality}. On the other hand, the Hrushovski-Kazhdan integration theory~\cite{hrushovski:kazhdan:integration:vf} is a
major development that does not require the presence of an angular
component map and hence is of great foundational importance. Its
basic objects of study are models of the so-called \vmin-minimal theories, for example, the theory of algebraically closed valued fields of
pure characteristic $0$ and the theories of its rigid analytic expansions~\cite{lipsh:1993, lipsh:rob:1998}. The method of the Hrushovski-Kazhdan integration theory is based on a fine analysis of definable subsets up to definable bijections in a first-order language $\lan{RV}$ for valued fields. Of course the method of the Cluckers-Loeser approach~\cite{cluckers:loeser:constructible:motivic:functions} is similar, but the ``up to definable bijections'' point of view is not so much stressed. In fact both approaches are rooted in the Cohen-Denef analysis of definable sets that leads to cell decomposition~\cite{Co69, Denef:1986}.

The language $\lan{RV}$ has two sorts: the $\VF$-sort and the $\RV$-sort.
One of the main features of $\lan{RV}$ is that the residue field
and the value group are wrapped together in one sort $\RV$. Let $(K, \vv)$ be a
valued field and $\OO$, $\MM$, $\K$ the corresponding valuation
ring, its maximal ideal, and the residue field. Let $\RV(K) = K^{\times} / (1 + \MM)$ and $\rv : K^{\times} \fun \RV(K)$ the quotient map. Note that, for each $a \in K$, $\vv$ is constant on
the subset $a + a\MM$ and hence there is a naturally induced map
$\vrv$ from $\RV(K)$ onto the value group $\Gamma$. The situation
is illustrated in the following commutative diagram
\begin{equation*}
\bfig
 \square(0,0)/^{ (}->`->>`->>`^{ (}->/<600, 400>[\OO \mi \MM`K^{\times}`\K^{\times}`
\RV(K);`\text{quotient}`\rv`]
 \morphism(600,0)/->>/<600,0>[\RV(K)`\Gamma;\vrv]
 \morphism(600,400)/->>/<600,-400>[K^{\times}`\Gamma;\vv]
\efig
\end{equation*}
where the bottom sequence is exact.

Let $\VF_*[\cdot]$ and $\RV[*, \cdot]$ be two categories of definable sets that are respectively associated with the $\VF$-sort and the $\RV$-sort. In $\VF_*[\cdot]$, the objects are definable subsets of products of the form $\VF^n \times \RV^m$ and the morphisms are definable functions. On the other hand, for technical reasons (particularly for keeping track of dimensions), $\RV[*, \cdot]$ is formulated in a somewhat complicated way (see Section~\ref{section:category}). The main construction of the Hrushovski-Kazhdan theory is a canonical
homomorphism from the Grothendieck semigroup $\gsk \VF_*[\cdot]$
to the Grothendieck semigroup $\gsk \RV[*, \cdot]$ modulo a
semigroup congruence relation $\isp$ on the latter. In fact, it turns out to be an isomorphism. This construction has three main steps.
\begin{itemize}
 \item\emph{Step~1.} First we define a lifting map $\bb L$ from the set of the objects in $\RV[*, \cdot]$ into the set of the objects in $\VF_*[\cdot]$; see Definition~\ref{def:L}. Next we single out a subclass of
the isomorphisms in $\VF_*[\cdot]$, which are called special bijections; see
Definition~\ref{defn:special:bijection}. Then we show that for
any object $A$ in $\VF_*[\cdot]$ there is a special bijection
$T$ on $A$ and an object $\mathbf{U}$ in $\RV[*, \cdot]$ such that
$T(A)$ is isomorphic to $\bb L(\mathbf{U})$. This implies that $\bb L$
hits every isomorphism class of $\VF_*[\cdot]$. Of course, for
this result alone we do not have to limit our means to special
bijections. However, in Step~3 below, special bijections
become an essential ingredient in computing the congruence
relation $\isp$.

 \item\emph{Step~2.} For any two isomorphic objects $\mathbf{U}_1, \mathbf{U}_2$ in $\RV[*, \cdot]$, their lifts $\bb L(\mathbf{U}_1), \bb L(\mathbf{U}_2)$ in
$\VF_*[\cdot]$ are isomorphic as well. This shows that $\bb L$
induces a semigroup homomorphism from $\gsk \RV[*, \cdot]$
into $\gsk \VF_*[\cdot]$, which is also denoted by $\bb L$.

 \item\emph{Step~3.} A number of classical properties of integration can already be (perhaps only partially) verified for the inversion of the homomorphism $\bb L$ and hence, morally, this third step is not necessary. To facilitate computation in future applications, however, it seems much more satisfying to have a precise description of the semigroup congruence relation induced by it. The basic notion used in the description is that of a blowup of an object in $\RV[*, \cdot]$. We then show that, for any objects $\mathbf{U}_1, \mathbf{U}_2$ in $\RV[*, \cdot]$, there are isomorphic iterated blowups $\mathbf{U}_1^{\sharp}, \mathbf{U}_2^{\sharp}$ of $\mathbf{U}_1, \mathbf{U}_2$ if and only if $\bb L(\mathbf{U}_1), \bb L(\mathbf{U}_2)$ are isomorphic. The ``if'' direction essentially contains a form of Fubini's Theorem and is the most technically involved part of the construction.
\end{itemize}
The inverse of $\bb L$ thus obtained is a Grothencieck homomorphism. If the Jacobian transformation preserves integrals, that is, the change of variables formula holds, then it may be called a motivic integration. When the Grothendieck semigroups are formally groupified this integration is recast as a ring homomorphism.

In this paper we give a presentation of the first two steps. The sections are organized as follows. Throughout we shall follow the terminology and notation of~\cite{Yin:QE:ACVF:min}. For the reader's convenience some key definitions and notational conventions are recalled in Section~\ref{section:prelim}, where new ones are introduced as well. To delineate the basic geography of definable subsets, many structural properties concerning the three sorts $\VF$, $\RV$, and $\Gamma$ are needed. These are discussed in Section~\ref{section:some:stru} and Section~\ref{section:more:stru}. In Section~\ref{section:category} we first
discuss various notions of dimension, mainly $\VF$-dimension and
$\RV$-dimension, and then describe the relevant categories of definable
subsets and the formulation of their Grothendieck semigroups. The fundamental lifting map $\bb L$ between $\VF$-categories and $\RV$-categories is also introduced here. The central topic of Section~\ref{section:RV:product} is
$\RV$-pullbacks and special bijections on them. Corollary~\ref{all:subsets:rvproduct} corresponds to
Step~1 above. In Section~\ref{section:inter} we describe the ``descent'' technique and use it to obtain a general quantifier elimination result for henselian fields.

Section~\ref{section:lifting} is devoted to showing Step~2 above.
The notion of a $\lbar \gamma$-polynomial is introduced here, which generalizes the relation between a polynomial with coefficients in the valuation ring and its projection into the residue field. This leads to Lemma~\ref{hensel:lemma}, a
generalized form of the multivariate version of Hensel's lemma.
Note that in order to apply Lemma~\ref{hensel:lemma} to a given
definable subset we need to find suitable polynomials with a simple
common residue root. This is investigated in Lemma~\ref{exists:gamma:polynomial}, which does not hold when the substructure in question contains an excessive amount of parameters in the $\RV$-sort. This is the reason why motivic
integration is constructed only when parameters are taken from a $(\VF, \Gamma)$-generated substructure.

For finer categories of definable subsets that can handle the Jacobian transformation, a notion of the Jacobian is needed. This is provided in Section~\ref{section:diff}. Then in Section~\ref{section:cat:vol} we define these finer categories and explain how to carry out Step~1 and Step~2 for them.

While we do follow the broad outline of~\cite{hrushovski:kazhdan:integration:vf}, there are significant technical differences. To begin with, our construction is
specialized for $\ACVF_S(0, 0)$, that is the theory of algebraically closed valued fields of pure characteristic $0$, formulated in the
language $\lan{RV}$ and expanded by a substructure $S$, where $S$ is generated by elements in the field sort and the (imaginary) value group sort. For this simple major subclass of \vmin-minimal theories we are able
to work with syntax. Very often, in order to grasp the geometrical
content of a definable subset $A$, it is a very fruitful exercise to
analyze the logical structure of a typical formula that defines
$A$, especially when quantifier elimination is available. Consequently, in the context of this paper, syntactical analysis
affords tremendous simplifications of many lemmas in
\cite{hrushovski:kazhdan:integration:vf}. It also gives rise to technical tools that are especially powerful for $\ACVF_S(0, 0)$, the most important of which is Theorem~\ref{special:bi:polynomial:constant}.

Step~3 of the construction of motivic integration will be presented in a sequel.

\section{Preliminaries}\label{section:prelim}

Throughout this paper we shall use the terminology and notation introduced  in~\cite{Yin:QE:ACVF:min}. For the reader's convenience, we recall a few key definitions here.

\begin{defn}
The language $\lan{RV}$ has the following sorts and symbols:
\begin{enumerate}
 \item a $\VF$-sort, which uses the language of rings
 $\lan{R} = \set{0, 1, +, -, \times}$;
 \item an $\RV$-sort, which uses
  \begin{enumerate}
    \item the group language $\set{1, \times}$,
    \item two constant symbols $0$ and $\infty$,
    \item a unary predicate $\K^{\times}$,
    \item a binary function $+ : \K^2 \fun \K$ and a
    unary function $-: \K \fun \K$, where $\K =
    \K^{\times} \cup \set{0}$,
    \item a binary relation $\leq$;
    \end{enumerate}
  \item a function symbol $\rv$ from the $\VF$-sort into the $\RV$-sort.
\end{enumerate}
\end{defn}
The two sorts without the zero elements are respectively denoted by $\VF^{\times}$ and $\RV$; $\RV \mi \set{\infty}$ is denoted by $\RV^{\times}$; and $\RV \cup \set{0}$ is denoted by $\RV_0$.

\begin{defn}\label{defn:acvf}
\emph{The theory $\ACVF$ of algebraically closed valued fields in $\lan{RV}$} states the following:
\begin{enumerate}
 \item $(\VF, 0, 1, + , -, \times)$ is an algebraically close field;

 \item $(\RV^{\times}, 1, \times)$ is a divisible abelian
 group, where multiplication $\times$ is augmented by $t
 \times 0 = 0$ for all $t \in \K$ and $t \times \infty =
 \infty$ for all $t \in \RV_0$;

 \item $(\K, 0, 1, +, -, \times)$ is an algebraically closed field;

 \item the relation $\leq$ is a preordering on $\RV$ with
 $\infty$ the top element and $\K^{\times}$ the equivalence
class of 1;

 \item the quotient $\RV / \K^{\times}$, denoted as
 $\Gamma \cup \set{\infty}$, is a divisible ordered abelian
group with a top element, where the ordering and the group
operation are induced by $\leq$ and $\times$, respectively,
and the quotient map $\RV \fun \Gamma \cup \set{\infty}$ is
denoted as $\vrv$;

 \item the function $\rv : \VF^{\times} \fun \RV^{\times}$
 is a surjective group homomorphism augmented by $\rv(0) =
\infty$ such that the composite function
\[
\vv = \vrv \circ \rv : \VF \fun \Gamma \cup \set{\infty}
\]
is a valuation with the valuation ring $\OO =
\rv^{-1}(\RV^{\geq 1})$ and its maximal ideal $\MM =
\rv^{-1}(\RV^{> 1})$, where
\[
\RV^{\geq 1} = \set{x \in \RV: 1 \leq x}, \quad
\RV^{> 1} = \set{x \in \RV: 1 < x}.
\]
\end{enumerate}
\end{defn}

Semantically we shall treat $\Gamma$ as an imaginary sort and write $\RV_{\Gamma}$ for $\RV \cup \Gamma$. However, syntactically any reference to $\Gamma$ may be eliminated in the usual way and we shall still work with $\lan{RV}$-formulas.

\begin{thm}[{\cite[Theorem~3.10]{Yin:QE:ACVF:min}}]
The theory $\ACVF$ admits quantifier elimination.
\end{thm}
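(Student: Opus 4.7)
The plan is to use the standard embedding test for quantifier elimination: it suffices to show that for any two models $M_1, M_2 \models \ACVF$ with $M_2$ sufficiently saturated, any $\lan{RV}$-substructure $S \sub M_1$, and any $\lan{RV}$-embedding $f \colon S \to M_2$, the map $f$ extends to an embedding $M_1 \to M_2$. A Zorn's lemma argument reduces this to extending $f$ by a single new element, lying in either the $\VF$-sort or the $\RV$-sort.

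For a $\VF$-sort element $a \in \VF(M_1) \mi \VF(S)$, the quantifier-free type of $a$ over $S$ is determined by the field-theoretic type of $a$ over the field generated by $\VF(S)$ together with the data $\rv(p(a))$ as $p$ ranges over polynomials with coefficients in $\VF(S)$. If $a$ is transcendental over this field, I would realize the corresponding partial type in $M_2$ using saturation, exploiting the fact that for a transcendental element the residue and valuation parts of $\rv(p(a))$ can be prescribed independently of algebraic constraints. If $a$ is algebraic, then since $\VF(M_1)$ is algebraically closed, I would pick a conjugate of $a$ inside $\VF(M_2)$ and appeal to uniqueness of the extension of a valuation to an algebraic extension, together with Hensel's lemma, to align $\rv$.

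For an $\RV$-sort element $t \in \RV(M_1) \mi \RV(S)$, I would analyze the situation via the short exact sequence $1 \to \K^{\times} \to \RV \to \Gamma \to 1$ from the introduction. Writing $\gamma = \vrv(t)$, if $\gamma \notin \vrv(\RV(S))$ then I would first extend $f$ to realize $\gamma$ in $\Gamma(M_2)$ using quantifier elimination for divisible ordered abelian groups, and then pick an arbitrary preimage in $\RV(M_2)$. If instead $\gamma = \vrv(s)$ for some $s \in \RV(S)$, then $t/s \in \K^{\times}(M_1)$, and the problem reduces to extending a residue-field element, handled by quantifier elimination for algebraically closed fields.

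The main obstacle is the two-way interaction between the sorts mediated by $\rv$: extending by a $\VF$-element implicitly enlarges the $\RV$-part via the values $\rv(p(a))$, so one must check that these new $\RV$-data are consistent with whatever is already forced in $M_2$. Syntactically this is a careful induction on the complexity of $\lan{RV}$-terms, and semantically it rests on the fact that the residue field $\K$ and the value group $\Gamma$ are orthogonal stably embedded pure structures, so their contributions to the type decouple cleanly. Once the single-element extension step is verified in both sorts, iteration yields the full embedding extension and hence quantifier elimination.
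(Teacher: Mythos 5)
This statement is cited in the paper from another reference (Theorem~3.10 of the author's earlier paper) and has no proof given here, so there is no ``paper's own proof'' to compare against. What you propose is the standard Shoenfield embedding-test route, which is almost certainly what the cited paper and the original Hrushovski--Kazhdan treatment use, so the high-level strategy is right. But the sketch has two substantive gaps.

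First, in the $\VF$-extension step your claim that for a transcendental $a$ ``the residue and valuation parts of $\rv(p(a))$ can be prescribed independently of algebraic constraints'' papers over the real work. The quantifier-free type of a transcendental $a$ over $\VF(S)^{\alg}$ is governed by the trichotomy of valued-field extensions: $a$ either generates an immediate extension (realized via a pseudo-Cauchy sequence), a value-group extension, or a residue-field extension, and each case requires its own realization argument in the saturated $M_2$. In the immediate case in particular nothing can be ``prescribed independently''; you must show the pseudo-Cauchy sequence over the image of $S$ has a pseudo-limit in $M_2$, which is exactly where algebraic closedness and saturation enter. Omitting this trichotomy is the main gap. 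Second, in the $\RV$-extension step, when $\vrv(t)\notin\vrv(\RV(S))$ you cannot ``pick an arbitrary preimage'': $\Gamma(S)$ need not be pure in $\Gamma$, so one may have $n\vrv(t)=\vrv(s)$ for some $s\in\RV(S)$, forcing $t^n/s\in\K^{\times}(M_1)$, and the choice of preimage in $M_2$ is then constrained by the quantifier-free type of $t^n/s$ over $\K(f(S))$. Finally, invoking orthogonality and stable embeddedness of $\K$ and $\Gamma$ is circular in this context --- those are standard \emph{consequences} of quantifier elimination in $\ACVF$, not tools available before you have it; the decoupling must instead be established by hand, for example via the short exact sequence $1\to\K^{\times}\to\RV^{\times}\to\Gamma\to 0$ and an explicit analysis of $\lan{RV}$-terms.
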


Since a $\VF$-sort literal can be equivalently expressed as an $\RV$-sort literal, we may assume that an $\lan{RV}$-formula contains no $\VF$-sort literals at all. In particular, we may assume that every $\VF$-sort polynomial $F(\lbar X)$ in a formula $\phi$ occurs in the form $\rv(F(\lbar X))$. This understanding sometimes makes the discussion more streamlined. We say that $F(\lbar X)$ is an \emph{occurring polynomial} of $\phi$.

\begin{defn}\label{def:normal:form}
Let $\lbar X$ be $\VF$-sort variables and $\lbar Y$ be $\RV$-sort variables.

A \emph{$\K$-term} is an $\lan{RV}$-term of the form $\sum_{i = 1}^k (\rv(F_{i}(\lbar X)) \cdot r_{i} \cdot \lbar Y^{\lbar n_i})$ with $k > 1$, where $F_{i}(\lbar X)$ is a polynomial with coefficients in $\VF$ and $r_{i} \in
\RV$. An \emph{$\RV$-literal} is an $\lan{RV}$-formula of the form
\[
\rv(F(\lbar X)) \cdot \lbar Y^{\lbar m} \cdot T(\lbar X, \lbar Y)
\, \Box \, \rv(G(\lbar X)) \cdot r \cdot \lbar Y^{\lbar l} \cdot S(\lbar X, \lbar Y),
\]
where $F(\lbar X)$, $G(\lbar X)$ are polynomials with coefficients in $\VF$, $T(\lbar X, \lbar Y)$, $S(\lbar X, \lbar Y)$ are $\K$-terms, $r \in \RV$, and $\Box$ is one of the symbols $=$, $\neq$,
$\leq$, and $>$.
\end{defn}

Note that if $T(\lbar X, \lbar Y)$ is a $\K$-term, $\lbar a \in \VF$, and $\lbar t \in \RV$ then $T(\lbar a, \lbar t)$ is defined if and only if each summand in $T(\lbar a, \lbar t)$ is either of value $1$ or is equal to $0$. Also, since the value of $\K$-terms are $0$, we may assume that they do not occur in $\RV$-sort inequalities.

Any $\lan{RV}$-formula with parameters is provably equivalent to a disjunction of conjunctions of $\RV$-literals. This follows from QE of $\ACVF$ and routine syntactical inductions.

Let $\ACVF(0, 0)$ denote $\ACVF$ with pure characteristic $0$. From now on we shall work in a sufficiently saturated model $\gC$ of $\ACVF(0, 0)$. Let $S \sub \gC$ be a small substructure such that \emph{$\Gamma(S)$ is nontrivial}. Let $\ACVF_S(0, 0)$ be the theory that extends $\ACVF(0, 0)$ with the atomic diagram of $S$. For notational simplicity we shall still refer to the language of $\ACVF_S(0, 0)$ as $\lan{RV}$. Although we do not include the multiplicative inverse function in the $\VF$-sort and the $\RV$-sort, we always assume that, without loss of generality, $\VF(S)$ is a field and $\RV^{\times}(S)$ is a group.

\begin{conv}
By a definable subset of $\gC$ we mean a $\0$-definable
subset in the theory $\ACVF_S(0, 0)$. If additional parameters are used in defining a subset then we shall spell them out explicitly if necessary.
\end{conv}

The substructure generated by a subset $A$ is denoted by $\la A \ra$ or $\dcl(A)$. The model-theoretic algebraic closure of $A$ is denoted by $\acl(A)$. A substructure $S$ is \emph{$\VF$-generated} if there is a subset $A \sub \VF(S)$ such that $S  = \la A \ra$; similarly for $(\VF, \RV)$-generated substructures, $(\VF, \Gamma)$-generated substructures, etc.

\begin{defn}
A subset $\gb$ of $\VF$ is an \emph{open ball} if there is a
$\gamma \in \Gamma$ and a $b \in \gb$ such that $a \in \gb$ if and
only if $\vv(a - b) > \gamma$. It is a \emph{closed ball} if $a
\in \gb$ if and only if $\vv(a - b) \geq \gamma$. It is an
\emph{$\rv$-ball} if $\gb = \rv^{-1}(t)$ for some $t \in \RV$. The
value $\gamma$ is the \emph{radius} of $\gb$, which is denoted as
$\rad (\gb)$. Each point in $\VF$ is a closed ball of radius $\infty$ and $\VF$ is a clopen ball of radius $- \infty$.

If $\vv$ is constant on $\gb$ --- that is, $\gb$ is contained in an $\rv$-ball --- then $\vv(\gb)$ is the \emph{valuative center} of $\gb$; if $\vv$ is not constant on
$\gb$, that is, $0 \in \gb$, then the \emph{valuative center} of
$\gb$ is $\infty$. The valuative center of $\gb$ is denoted by
$\vcr(\gb)$.

A subset $\gp \sub \VF^n \times \RV^m$ is an (\emph{open, closed, $\rv$-}) \emph{polydisc} if it is of the form $(\prod_{i \leq n} \gb_i) \times \set{\lbar t}$, where each $\gb_i$ is an (open, closed, $\rv$-) ball and $\lbar t \in \RV^m$.
If $\gp$ is a polydisc then the \emph{radius} of $\gp$, denoted as $\rad(\gp)$,
is $\min \set{\rad(\gb_i) : i \leq n}$. The open and closed polydiscs centered at a sequence of elements $\lbar a = (a_1, \ldots, a_n) \in \VF^n$ with radii $\lbar \gamma = (\gamma_1, \ldots, \gamma_n) \in \Gamma^n$ are respectively denoted as $\go(\lbar a, \lbar \gamma)$ and $\gc(\lbar a, \lbar \gamma)$.

An $\rv$-polydisc $\rv^{-1}(t_1, \ldots, t_n) \times \{ \lbar s \}$ is \emph{degenerate} if $t_i = \infty$ for some $i$.
\end{defn}

\begin{defn}
Let $\LL$ be a language expanding $\lan{RV}$. Let $M$ be a structure of $\LL$ that satisfies the axioms for valued fields. We say that $M$ is \emph{\cmin-minimal} if every parametrically definable subset of $\VF(M)$ is a boolean combination of balls. An $\LL$-theory $T$ is \emph{\cmin-minimal} if every model of $T$ is \emph{\cmin-minimal}.
\end{defn}

\begin{thm}[{\cite[Theorem~4.2]{Yin:QE:ACVF:min}}]\label{c:min:acvf}
The theory $\ACVF$ is $C$-minimal.
\end{thm}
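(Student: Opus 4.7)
The plan is to reduce the claim, via quantifier elimination, to the analysis of a single $\RV$-literal in one free $\VF$-variable, and then exploit the algebraic closedness of $\VF$. Fix a parametrically definable subset $D \sub \VF(M)$ of a model $M$ of $\ACVF$. By QE for $\ACVF$ and the normal-form discussion after Definition~\ref{def:normal:form}, $D$ is defined by a disjunction of conjunctions of $\RV$-literals with parameters. Since the class of Boolean combinations of balls in $\VF$ is itself closed under finite unions, intersections, and complements, it is enough to show that every such $\RV$-literal in the single free $\VF$-variable $X$ cuts out a Boolean combination of balls.

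First I would absorb the parameters: the $\K$-terms and coefficients in an $\RV$-literal depend only on fixed parameters from the substructure, so, setting the $\K$-term contributions aside for a moment, the literal amounts to a condition of the form
\begin{equation*}
\rv(F(X)) \cdot s \, \Box \, \rv(G(X)) \cdot t
\end{equation*}
for some $s, t \in \RV(M)$, polynomials $F, G \in \VF(M)[X]$, and $\Box \in \set{=, \neq, \leq, >}$. Applying $\vrv$ and inverting where possible, this splits into a condition on $\vv(F(X)) - \vv(G(X))$ together with a condition on the image of $F(X) G(X)^{-1}$ in $\K$.

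The core step, which I would carry out next, is to show: for any polynomial $H(X) \in \VF(M)[X]$ and any $\gamma \in \Gamma(M)$ or $t \in \RV(M)$, each of
\begin{equation*}
\set{X : \vv(H(X)) \, \Box \, \gamma}, \qquad \set{X : \rv(H(X)) = t}
\end{equation*}
is a Boolean combination of balls. Here one uses that $\VF(M)$ is algebraically closed to factor $H(X) = c \prod_i (X - a_i)^{n_i}$, so that $\vv(H(X)) = \vv(c) + \sum_i n_i \vv(X - a_i)$. The level sets of each $\vv(X - a_i)$ are annuli, and the value of $\vv(H(X))$ on an intersection of such annuli is controlled by the chosen valuations of the linear factors. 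An induction on $\deg H$, combined with a finite case split determined by the mutual distances $\vv(a_i - a_j)$, expresses the above level sets as Boolean combinations of balls.

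The main obstacle is the bookkeeping of the case split: one must partition $\VF$ according to the pairwise comparisons between the $\vv(X - a_i)$, and verify that the level sets of the full expression $\vv(H(X))$ refine into Boolean combinations of balls on each piece. Once this is in place, conditions on $\K$-term contributions --- being polynomial equations in the residue field $\K$, which is itself algebraically closed --- reduce by the analogous residue-field version of the same argument to pullbacks that are Boolean combinations of balls in $\VF$, completing the proof that $\ACVF$ is $C$-minimal.
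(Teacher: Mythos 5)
The paper cites this theorem from the reference \cite{Yin:QE:ACVF:min} and does not reproduce a proof, so there is no in-paper argument to compare against. Your plan follows the standard route for $C$-minimality of $\ACVF$: use QE to reduce to single $\RV$-literals in one $\VF$-variable, factor polynomials over the algebraically closed $\VF$ so that $\vv(H(X)) = \vv(c) + \sum_i n_i\,\vv(X-a_i)$, and decompose $\VF$ into annuli governed by the pairwise distances $\vv(a_i - a_j)$, on which the level sets become tractable. That plan is sound and is presumably what the cited reference does as well.

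The one place your sketch genuinely underestimates the work is the $\K$-term treatment. A $\K$-term $T(X, \lbar s) = \sum_i \rv(F_i(X)) \cdot s_i$ is not, as written, a polynomial equation over $\K$: the constraint that each summand actually lies in $\K$ (that is, $\vv(F_i(X)) = -\vrv(s_i)$, or the summand is $0$) is itself a system of valuation conditions on $X$, so one must first carry out the valuation-level annulus analysis to obtain a cell decomposition on which the $\K$-term is well defined. Only then, on a fixed annulus $\set{X : \vv(X - a) = \gamma}$, does $\rv(X - a)$ range over a $\K^{\times}$-torsor, and each $\rv(F_i(X))$ becomes expressible in terms of a single residue coordinate via the factored linear forms --- with the additional caveat that on the critical annuli $\vv(X - a_i) = \vv(a_i - a_j)$, the expression $\rv(X - a_j) = \rv(X - a_i) + \rv(a_i - a_j)$ degenerates on a single $\rv$-ball, which must be carved out separately. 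Once on the non-degenerate piece the condition does reduce to a one-variable polynomial equation over the algebraically closed $\K$, whose (co)finite solution set pulls back to a finite union of $\rv$-balls or its complement. This two-stage pass --- valuation cells first, then residue analysis per cell, with degeneracy pieces removed --- is the essential bookkeeping; your plan points at it but should spell it out rather than invoking an ``analogous residue-field version of the same argument'' in one stroke.
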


\begin{nota}
We sometimes write $\lbar a \in \VF$ to mean that every element in
the tuple $\lbar a$ is in $\VF$; similarly for $\RV$, $\Gamma$, etc. We often write
$(\lbar a, \lbar t)$ for a tuple of elements with the understanding that $\lbar a \in \VF$ and $\lbar t \in \RV$. For
such a tuple $(\lbar a, \lbar t) = (a_1, \ldots, a_n, t_1, \ldots
t_m)$, let
\[
\rv(\lbar a, \lbar t) = (\rv(a_1), \ldots, \rv(a_n), \lbar t), \quad
\rv^{-1}(\lbar a, \lbar t) = \set{\lbar a} \times \rv^{-1}(t_1)
\times \cdots \times \rv^{-1}(t_m),
\]
similarly for other functions.

Let $\lbar a = (a_1, \ldots, a_n)$, $\lbar a' = (a'_1, \ldots, a'_n)$ be tuples in $\VF$. We write $\vv(\lbar a - \lbar a')$ for the element
\[
\min \set{\vv(a_i - a_i') : 1 \leq i \leq n} \in \Gamma.
\]
For any $\lbar \gamma =  (\gamma_1, \ldots, \gamma_n) \in \Gamma$, the open polydisc $\set{(b_1, \ldots, b_n) : \vv(b_i - a_i) > \gamma_i}$ is denoted by $\go(\lbar a, \lbar \gamma)$ and the closed polydisc $\set{(b_1, \ldots, b_n) : \vv(b_i - a_i) \geq \gamma_i}$ is denoted by $\gc(\lbar a, \lbar \gamma)$. We set $\go(\lbar a, \infty) = \gc(\lbar a, \infty) = \set{\lbar a}$.
\end{nota}

\begin{nota}\label{indexing}
Coordinate projection maps are ubiquitous in this paper. To
facilitate the discussion, certain notational conventions about
them are adopted.

Let $A \sub \VF^n \times \RV^m$. For any $n \in \N$, let $I_n
= \set{1, \ldots, n}$. First of all, the $\VF$-coordinates and the
$\RV$-coordinates of $A$ are indexed separately. It is cumbersome
to actually distinguish them notationally, so we just assume that
the set of the $\VF$-indices is $I_n$ and the set of the $\RV$-indices is $I_m$. This should never cause confusion in context.

Let $I = I_n \uplus I_m$, $E \sub I$, and $\tilde{E} = I \mi E$. If $E$ is a
singleton $\set{i}$ then we always write $E$ as $i$ and
$\tilde{E}$ as $\tilde{i}$. We write $\pr_E(A)$ for the projection
of $A$ to the coordinates in $E$. For any $\lbar a \in
\pr_{\tilde{E}} (A)$, the fiber $\{\lbar b : (\lbar b, \lbar a)
\in A \}$ is denoted by $\fib(A, \lbar a)$. Note that we shall often tacitly identify the two subsets $\fib(A, \lbar a)$ and $\fib(A, \lbar a) \times \set{\lbar
a}$. Also, it is often more convenient to use simple descriptions
as subscripts. For example, if $E = \set{1, \ldots, k}$ etc.\ then
we may write $\pr_{\leq k}$ etc. If $E$ contains exactly the
$\VF$-indices (respectively $\RV$-indices) then $\pr_E$ is written
as $\pvf$ (respectively $\prv$). If $E'$ is a subset of the coordinates of $\pr_E (A)$ then the composition $\pr_{E'} \circ \pr_E$ is written as $\pr_{E, E'}$. Naturally
$\pr_{E'} \circ \pvf$ and $\pr_{E'} \circ \prv$ are written as $\pvf_{E'}$ and $\prv_{E'}$, respectively.
\end{nota}

\section{Some structural properties}\label{section:some:stru}

In this section we shall list a number of structural properties concerning the relation among the three sorts $\VF$, $\RV$, and $\Gamma$. Some simple ones are just consequences of variations of compactness, for example:

\begin{lem}
Let $A$ be a definable subset and $s$ an element such that $s \in \acl(a)$ for every $a \in A$, then $s \in \acl(\0)$.
\end{lem}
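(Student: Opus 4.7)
The plan is to prove the lemma by a standard compactness/saturation argument followed by construction of a $\0$-definable finite set containing $s$. We may assume $A \neq \0$ (otherwise there is nothing definable here to work with, and the result must be read vacuously or with that implicit assumption).

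First, I would extract uniformity from the hypothesis. The intended statement to establish by compactness is: there exist a single formula $\phi(x,y)$ over $\0$ and an integer $N$ such that for every $a \in A$,
\[
\gC \models \phi(a,s) \quad \text{and} \quad |\phi(a,\gC)| \leq N.
\]
To see this, suppose no such pair exists and consider, over the parameter $s$, the partial type
\[
\Sigma(x) \;=\; \{x \in A\} \cup \bigl\{ \neg\phi(x,s) \,\vee\, \exists y_1,\dots,y_{N+1}\bigl(\bigwedge_{i<j} y_i \neq y_j \wedge \bigwedge_i \phi(x,y_i)\bigr) : \phi \in \lan{RV},\, N \in \N \bigr\},
\]
which asserts that $x \in A$ and $s \notin \acl(x)$. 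If $\Sigma$ were consistent, saturation of $\gC$ would yield some $a \in A$ with $s \notin \acl(a)$, contradicting the hypothesis. Hence $\Sigma$ is inconsistent, and by compactness a finite subcollection $\phi_1,\dots,\phi_k$ with bounds $N_1,\dots,N_k$ suffices. Setting
\[
\phi(x,y) \;\equiv\; \bigvee_{i \leq k} \Bigl(\phi_i(x,y) \wedge \neg \exists y_1,\dots,y_{N_i+1}\bigl(\bigwedge_{j<j'} y_j \neq y_{j'} \wedge \bigwedge_j \phi_i(x,y_j)\bigr)\Bigr),
\]
and $N = N_1 + \cdots + N_k$, one checks that $\phi(a,s)$ holds for every $a \in A$ and $|\phi(a,\gC)| \leq N$.

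Next, I would pass to the $\0$-definable set
\[
D \;=\; \{\, y \in \gC : \fa{x} (x \in A \rightarrow \phi(x,y)) \,\}.
\]
By construction $s \in D$. Fixing any $a_0 \in A$, we have $D \subseteq \phi(a_0,\gC)$, so $|D| \leq N$. Hence $D$ is a finite $\0$-definable set containing $s$, giving $s \in \acl(\0)$.

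The only potential obstacle is purely technical: verifying that the negations of ``$s \in \acl(x)$'' assemble into a genuine partial type whose inconsistency is detectable by compactness. Since ``$|\phi(x,\gC)| \leq N$'' is expressible by a single $\lan{RV}$-formula for each fixed $N$, and the sufficient saturation of $\gC$ is assumed from the outset, this is routine. All remaining steps are bookkeeping.
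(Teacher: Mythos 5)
Your proof is correct and follows essentially the same approach as the paper: use compactness/saturation (over the parameter $s$) to extract finitely many formulas and bounds uniformly witnessing $s \in \acl(a)$ across all $a \in A$, then intersect over $a \in A$ to obtain a $\0$-definable finite set containing $s$. The only cosmetic difference is that you merge the finitely many formulas into a single disjunction rather than using a definable partition of $A$, which makes the $\0$-definability of the final witness slightly more transparent, but the underlying argument is identical.
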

\begin{proof}
By compactness, there are a definable partition $A_1, \ldots, A_m$ of $A$, integers $k_1, \ldots, k_m$, formulas $\phi_1(X, Y), \ldots, \phi_m(X, Y)$, such that if $a \in A_i$ then the subset $U_{a}$ defined by the formula $\phi_i(a, Y)$ contains $s$ and its size is at most $k_i$. Then $\bigcap_{a \in A} U_{a}$ is a definable finite subset that contains $s$.
\end{proof}

\begin{cor}\label{acl:VF:transfer:acl:RV}
For any $\lbar t \in \RV$, any $\lbar t$-definable subset $A \sub \rv^{-1}(\lbar t)$, and any element $x$, if $x \in \acl(\lbar a)$ for every $\lbar a \in A$ then $x \in \acl(\lbar t)$. Similarly, for any $\lbar \gamma \in \Gamma$, any $\lbar \gamma$-definable subset $B \sub \vrv^{-1}(\lbar \gamma)$, and any element $x$, if $x \in \acl(\lbar t)$ for every $\lbar t \in B$ then $x \in \acl(\lbar \gamma)$.
\end{cor}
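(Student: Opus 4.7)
The plan is to deduce both statements directly from the preceding lemma by relativizing to a larger base. The essential observation is that in each case the ``target'' parameter ($\lbar t$ or $\lbar \gamma$) is already contained in $\dcl$ of every element of the set we are looping over, so adjoining it as constants does not strengthen the algebraicity hypothesis.

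For the first statement, I would first note that every $\lbar a \in A \subseteq \rv^{-1}(\lbar t)$ satisfies $\rv(\lbar a) = \lbar t$, hence $\lbar t \in \dcl(\lbar a)$ and consequently $\acl(\lbar a, \lbar t) = \acl(\lbar a)$. Thus the hypothesis $x \in \acl(\lbar a)$ rewrites as $x \in \acl(\lbar a, \lbar t)$ for every $\lbar a \in A$. Now I would pass to the extended theory $\ACVF_{S'}(0,0)$ with $S' = \la S, \lbar t \ra$; note that $\Gamma(S') \supseteq \Gamma(S)$ remains nontrivial, so the standing hypothesis on parameters is preserved and the previous lemma is available in this theory. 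In $\ACVF_{S'}(0,0)$ the set $A$ is $\emptyset$-definable and $x \in \acl(\lbar a)$ for every $\lbar a \in A$ (in the sense of the new theory, which coincides with $\acl(\lbar a, \lbar t)$ in the original one). The lemma then yields $x \in \acl(\emptyset)$ over $S'$, which unwinds to $x \in \acl(\lbar t)$ in $\ACVF_S(0,0)$.

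The $\Gamma$-version is symmetric: for every $\lbar t \in B \subseteq \vrv^{-1}(\lbar \gamma)$ we have $\vrv(\lbar t) = \lbar \gamma$, so $\lbar \gamma \in \dcl(\lbar t)$ and $\acl(\lbar t, \lbar \gamma) = \acl(\lbar t)$. Relativizing again to $\la S, \lbar \gamma \ra$ and applying the lemma gives $x \in \acl(\lbar \gamma)$.

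I do not anticipate a real obstacle; the only minor point worth double-checking is that enlarging the substructure by $\lbar t$ or $\lbar \gamma$ preserves the setup under which the lemma was proved (smallness and nontriviality of $\Gamma$), both of which are automatic here. Everything else is a direct translation of the hypotheses through the $\dcl$ inclusions.
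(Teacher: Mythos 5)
Your proof is correct and is precisely the argument the paper leaves implicit (no proof is supplied for the corollary): add $\lbar t$ (resp.\ $\lbar\gamma$) to the base substructure, observe that the hypotheses of the preceding lemma are then met over $S' = \la S, \lbar t\ra$, and unwind $\acl_{S'}(\emptyset)$ as $\acl_S(\lbar t)$. The only inessential remark is that the observation $\lbar t \in \dcl(\lbar a)$ is not actually needed: the inclusion $\acl_S(\lbar a) \subseteq \acl_{S'}(\lbar a)$ holds trivially when $S'$ extends $S$, which already suffices to invoke the lemma over $S'$.
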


For any $A \sub \VF$ let $A^{\alg}$ be the field-theoretic algebraic closure of $A$. The field generated by $\lbar a \in \VF$ is written as $\VF(S)(\lbar a)$.

\begin{lem}\label{dcl:to:ac}
For any $\lbar a$, $b \in \VF$ and $\lbar t \in \RV$, if $b \in \acl(\lbar a, \lbar t)$ then $b \in \VF(S)(\lbar a)^{\alg}$.
\end{lem}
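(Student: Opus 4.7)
The plan is to combine quantifier elimination with $C$-minimality via a direct structural analysis of $\RV$-literals. Since $b \in \acl(\lbar a, \lbar t)$, I fix an $\lan{RV}$-formula $\phi(y, \lbar a, \lbar t)$ whose solution set is finite and contains $b$. By the quantifier elimination of $\ACVF$ and the normal-form reduction noted immediately after Definition~\ref{def:normal:form}, I may assume $\phi$ is a disjunction of conjunctions of $\RV$-literals; selecting the disjunct satisfied by $b$, I may further assume $\phi$ is a single conjunction $\bigwedge_{i \leq k} \psi_i$ of $\RV$-literals.

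The key step is the following dichotomy for each $\psi_i$, viewed as a condition on $y$ with $\lbar a$ and $\lbar t$ held fixed: either its solution set in $\VF$ is contained in the zero set of some polynomial $P_i(y) \in \VF(S)(\lbar a)[y]$ that is non-constant in $y$, or its solution set is a definable union of open balls of $\VF$ in the valuation topology. Given the generic form
$$\rv(F(y, \lbar a)) \cdot \lbar t^{\lbar m} \cdot T \;\Box\; \rv(G(y, \lbar a)) \cdot r \cdot \lbar t^{\lbar l} \cdot S$$
of an $\RV$-literal with $\Box \in \{=, \neq, \leq, >\}$, the dichotomy is a case analysis on whether either side is forced to take the value $\infty$: if not, then after applying $\vrv$ the literal becomes a comparison between $\vv(F(y, \lbar a))$ and $\vv(G(y, \lbar a))$ shifted by a fixed element of $\Gamma$ depending on $\lbar t$ and $r$, and by $C$-minimality (Theorem~\ref{c:min:acvf}) the resulting subset of $\VF$ contains an open ball whenever it is non-empty; otherwise the literal outright forces $F(y, \lbar a) = 0$ or $G(y, \lbar a) = 0$.

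Granted the dichotomy, the ultrametric property of $\vv$ finishes the argument: a non-empty intersection of open balls of $\VF$ is itself an open ball, hence infinite, so any finite intersection of ``open'' solution sets is either empty or infinite. Since $\bigcap_i \{y : \psi_i(y, \lbar a, \lbar t)\}$ is finite and non-empty, at least one $\psi_i$ must be of the vanishing type, yielding $P_i(b) = 0$ for some non-constant $P_i(y) \in \VF(S)(\lbar a)[y]$. This is exactly the condition $b \in \VF(S)(\lbar a)^{\alg}$.

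The main obstacle is executing the dichotomy step cleanly in the face of the full syntactic complexity of $\RV$-literals, in particular handling the $\K$-term factors $T$ and $S$ (whose summands must have matching valuations for the term to be defined at all) and the cumulative effect of multiple $\lbar t$-monomials when taking $\vrv$. The underlying principle is nevertheless robust: an $\RV$-parameter alone cannot cut a non-empty $\lan{RV}$-definable subset of $\VF$ down to a finite set, so finiteness must ultimately be enforced by a polynomial equation whose coefficients live in $\VF(S)(\lbar a)$.
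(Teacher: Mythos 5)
Your underlying idea is the right one, and it is indeed the mechanism behind the paper's proof: the only way an $\lan{RV}$-formula in one $\VF$-variable can single out a finite set is through the vanishing of one of its occurring polynomials, because away from those zeros the map $y \mapsto \rv(F(y,\lbar a))$ is locally constant. The paper exploits this directly: assume $b \notin \VF(S)(\lbar a)^{\alg}$, so $F(b,\lbar a) \neq 0$ for every occurring polynomial $F$ of $\phi$; since $\phi$ in normal form depends on $y$ \emph{only} through the values $\rv(F(y,\lbar a))$, and each of these is constant on a small ball around $b$, the truth value of $\phi(y,\lbar a,\lbar t)$ is constant on that ball, contradicting finiteness of the solution set. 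No literal-by-literal dichotomy, no intersection argument, and the $\K$-terms cause no trouble because their occurring polynomials are treated uniformly with the others.

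Your execution, however, has a genuine error at the $C$-minimality step. You claim that the subset cut out by a $\Gamma$-comparison ``contains an open ball whenever it is non-empty, by $C$-minimality.'' This is not what $C$-minimality gives you: it says the set is a boolean combination of balls, and a boolean combination of balls can perfectly well be finite (a point is a closed ball of radius $\infty$). Concretely, take $F(y) = (y-b)^2$ and $G(y) = y-b$; the set $\{y : \vv(F(y)) \leq \vv(G(y))\}$ is $\{y : \vv(y-b) \leq 0\} \cup \{b\}$, and $b$ is an isolated point of it. So the dichotomy as you state it per literal is false: a $\Gamma$-comparison literal's solution set is in general an open set together with a finite set of \emph{roots of the occurring polynomials}, not purely one or the other. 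The dichotomy you want is about the point $b$, not about the literal: either $b$ is a root of some occurring polynomial (done), or $b$ lies in the interior of every literal's solution set by local constancy of $\rv \circ F$ at non-roots. That reformulation is exactly the paper's proof, and it bypasses both the $C$-minimality misstep and the $\K$-term bookkeeping you flagged as the obstacle.
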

\begin{proof}
Suppose for contradiction $b \notin \VF(S)(\lbar a)^{\alg}$. Let $\phi(X, \lbar a, \lbar t)$ be a formula that defines a finite subset containing $b$. Then, for any occurring polynomial $F(X, \lbar a)$ of $\phi(X, \lbar a, \lbar t)$, we have $F(b, \lbar a) \neq 0$. We see that, for any $d \in \VF$, if $\vv(d - b)$ is sufficiently large then $\rv(F(d, \lbar a)) = \rv(F(b, \lbar a))$ for all occurring polynomials $F(X, \lbar a)$ and hence $\phi(d, \lbar a, \lbar t)$ holds, which is a contradiction.
\end{proof}

\begin{cor}
For any $\lbar a \in \VF$ and $B \sub \RV$, the transcendental degrees of $\VF(S)(\lbar a)$, $\VF(\la \lbar a, B \ra)$, and $\VF(\acl(\lbar a, B ))$ over $\VF(S)$ are all equal.
\end{cor}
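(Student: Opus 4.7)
The plan is to exhibit the obvious chain of inclusions and then use Lemma~\ref{dcl:to:ac} to squeeze the outer extension back inside the field-theoretic algebraic closure of the inner one. Since we always have
\[
\VF(S)(\lbar a) \sub \VF(\la \lbar a, B \ra) \sub \VF(\acl(\lbar a, B)),
\]
we immediately get the non-decreasing chain of transcendence degrees. So the whole content is to show that $\VF(\acl(\lbar a, B))$ sits inside $\VF(S)(\lbar a)^{\alg}$, which has the same transcendence degree as $\VF(S)(\lbar a)$ over $\VF(S)$.

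To show this last inclusion, I would take an arbitrary $b \in \VF \cap \acl(\lbar a, B)$. Since algebraicity is witnessed by a single formula, only finitely many parameters from $B$ can actually be used; hence there is a finite tuple $\lbar t \in B \sub \RV$ with $b \in \acl(\lbar a, \lbar t)$. Lemma~\ref{dcl:to:ac} then applies directly and gives $b \in \VF(S)(\lbar a)^{\alg}$. This proves the desired inclusion.

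Combining this with the obvious chain, we obtain
\[
\td\bigl(\VF(S)(\lbar a) / \VF(S)\bigr) \leq \td\bigl(\VF(\la \lbar a, B \ra) / \VF(S)\bigr) \leq \td\bigl(\VF(\acl(\lbar a, B)) / \VF(S)\bigr) \leq \td\bigl(\VF(S)(\lbar a)^{\alg} / \VF(S)\bigr),
\]
and the right-hand side equals $\td(\VF(S)(\lbar a)/\VF(S))$ because algebraic extensions preserve transcendence degree. Hence all three transcendence degrees coincide.

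There is no real obstacle here; the only subtlety is the reduction from an arbitrary set $B \sub \RV$ of $\RV$-parameters to a finite tuple $\lbar t$, which is a standard finite-character property of $\acl$, after which Lemma~\ref{dcl:to:ac} does all the work.
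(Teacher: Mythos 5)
Your proof is correct and is exactly the intended argument: the paper states this as a corollary to Lemma~\ref{dcl:to:ac} without further proof, and the obvious chain of inclusions together with the finite-character reduction (replacing $B$ by a finite tuple $\lbar t$) and Lemma~\ref{dcl:to:ac} to trap everything inside $\VF(S)(\lbar a)^{\alg}$ is precisely what the corollary leaves to the reader.
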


\begin{cor}[{\cite[Lemma~4.12]{Yin:QE:ACVF:min}}]\label{function:rv:to:vf:finite:image}
Let $A \sub \RV^m$ and $f: A \fun \VF^n$ a definable function. Then $f(A)$ is finite.
\end{cor}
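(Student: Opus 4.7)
The plan is to reduce to the one-variable case and then appeal to $C$-minimality, using Lemma~\ref{dcl:to:ac} to confine the image to the algebraic closure of the parameter field.

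First I would dispose of the case $n = 1$, i.e.\ $f : A \fun \VF$. For any $\lbar t \in A$ the value $f(\lbar t)$ lies in $\dcl(\lbar t) \sub \acl(\lbar t)$, so by Lemma~\ref{dcl:to:ac} (taking the $\VF$-tuple there to be empty) we obtain $f(\lbar t) \in \VF(S)^{\alg}$. Hence $f(A) \sub \VF(S)^{\alg}$. On the other hand $f(A)$ is a definable subset of $\VF$, so by Theorem~\ref{c:min:acvf} it is a boolean combination of balls. Any ball of positive radius, however, has cardinality equal to that of the ambient saturated model $\gC$ and in particular cannot be contained in the small set $\VF(S)^{\alg}$. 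Thus $f(A)$ contains no ball of positive radius and must be a finite set of points.

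For the general case, apply the $n = 1$ case to each composition $\pr_i \circ f : A \fun \VF$, where $\pr_i$ is the $i$th $\VF$-coordinate projection. This gives that every projection $\pr_i(f(A))$ is finite, and therefore $f(A) \sub \prod_{i \leq n} \pr_i(f(A))$ is contained in a finite product of finite sets, hence finite.

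The only nontrivial step is the claim that a ball of positive radius cannot be absorbed by $\VF(S)^{\alg}$; this is immediate from the fact that $S$ is small relative to the saturation of $\gC$, so it is more a bookkeeping remark than a real obstacle. The conceptual content of the proof is entirely in Lemma~\ref{dcl:to:ac}, which transfers $\RV$-algebraicity into field-theoretic algebraicity over $\VF(S)$.
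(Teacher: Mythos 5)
Your proof is correct, and the core of it — reducing to $n=1$ and invoking Lemma~\ref{dcl:to:ac} to confine $f(A)$ to $\VF(S)^{\alg}$ — is exactly what the paper does. The one difference is the final step: the paper simply cites compactness (from ``every $a \in f(A)$ satisfies some polynomial over $\VF(S)$'' one extracts finitely many polynomials $P_1,\ldots,P_k$ with $\prod_i P_i$ vanishing on all of $f(A)$, so $f(A)$ is finite), whereas you invoke $C$-minimality of the theory together with the cardinality mismatch between a ball of positive radius in the saturated model $\gC$ and the small set $\VF(S)^{\alg}$. Both are sound; the compactness route is lighter in the sense that it does not lean on $C$-minimality (a deeper structural theorem) or on saturation of the ambient model, and would go through even without those, so it is the more economical choice — but your argument buys you nothing less and is perfectly valid in this setting.
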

\begin{proof}
We may assume $n=1$. Since for any $\lbar t \in A$ we have $f(\lbar t) \in \la \lbar t \ra$, by Lemma~\ref{dcl:to:ac}, $f(\lbar t) \in \VF(S)^{\alg}$. By compactness $f(A)$ must be finite.
\end{proof}

\begin{lem}[{\cite[Lemma~4.3]{Yin:QE:ACVF:min}}]\label{exchange}
The exchange principle holds in both sorts:
\begin{enumerate}
 \item For any $a$, $b \in \VF$, if $a \in \acl(b) \mi \acl(\0)$ then $b \in \acl(a)$.
 \item For any $t$, $s \in \RV$, if $t \in \acl(s) \mi \acl(\0)$ then $s \in \acl(t)$.
\end{enumerate}
\end{lem}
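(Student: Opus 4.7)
The two parts are essentially independent, and each reduces the model-theoretic exchange in a sort of $\lan{RV}$ to a classical exchange principle in a well-understood pregeometry.

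For part (1), the plan is to push the hypothesis through Lemma~\ref{dcl:to:ac} and invoke the classical exchange principle for algebraically closed fields. Suppose $a \in \acl(b) \mi \acl(\0)$. By Lemma~\ref{dcl:to:ac} (applied with empty $\lbar t$), $a \in \VF(S)(b)^{\alg}$. Since $\VF(S)^{\alg} \sub \acl(\0)$ in $\ACVF_S(0,0)$ and $a \notin \acl(\0)$, the element $a$ is transcendental over $\VF(S)$. Then $b$ must also be transcendental over $\VF(S)$, for otherwise $\VF(S)(b)^{\alg} = \VF(S)^{\alg}$, contradicting $a \in \VF(S)(b)^{\alg} \setminus \VF(S)^{\alg}$. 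The classical exchange principle in the algebraically closed field $\VF$ then yields $b \in \VF(S)(a)^{\alg} \sub \acl(a)$.

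For part (2), I would exploit the short exact sequence $1 \to \K^{\times} \to \RV^{\times} \xrightarrow{\vrv} \Gamma \to 1$ and the fact that exchange holds both in $\K$ (as an algebraically closed field) and in $\Gamma$ (as a divisible ordered abelian group). Set $\gamma = \vrv(t)$ and $\delta = \vrv(s)$. Split into two cases according to whether $\gamma \in \acl(\0)$. If $\gamma \notin \acl(\0)$, then $\gamma \in \acl(\delta) \setminus \acl(\0)$ in $\Gamma$, exchange in $\Gamma$ gives $\delta \in \acl(\gamma) \sub \acl(t)$, and combining with the fact that each fiber of $\vrv$ is a $\K^{\times}$-torsor one concludes $s \in \acl(t)$ after accounting for the residue component via ACF exchange. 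If $\gamma \in \acl(\0)$, then $t$ lies in a fiber $\vrv^{-1}(\gamma)$ with a $\acl(\0)$-definable representative (such a representative exists because $\Gamma(S)$ is nontrivial, so one can multiply by a definable element of suitable value); identifying the fiber with $\K^{\times}$ via this representative reduces the claim to exchange in the pure algebraically closed field $\K$, applied to the images of $t$ and $s$ (where if necessary one first chooses a definable representative in the fiber of $s$ as well).

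The main obstacle I expect is the bookkeeping in part (2): one must verify that choosing representatives in the $\K^{\times}$-torsors over $\gamma$ and $\delta$ can indeed be done with parameters from $\acl(\0)$ (which is where the hypothesis $\Gamma(S) \neq 0$ enters in an essential way), and then ensure that passing between $\RV$ and its two associated sorts $\K$ and $\Gamma$ respects the algebraic closure operator. Once these identifications are set up correctly, the reduction to the pregeometric character of $\ACF$ and $\mathrm{DOAG}$ is routine. Part (1) should be essentially automatic from Lemma~\ref{dcl:to:ac}.
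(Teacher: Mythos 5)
Part (1) is correct as you wrote it: Lemma~\ref{dcl:to:ac} (with $\lbar t$ empty) pushes the model-theoretic hypothesis into the field-theoretic pregeometry over $\VF(S)$, the observation that $\VF(S)^{\alg} \sub \acl(\0)$ and $\VF(S)(a)^{\alg} \sub \acl(a)$ is sound, and the classical exchange principle for algebraic independence in fields does the rest. Note that in this paper the lemma is simply cited from \cite{Yin:QE:ACVF:min}, so there is no proof here to compare against, but this part would stand on its own.

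Part (2) is where the proposal has real gaps, and they are not mere ``bookkeeping'' as you suggest — they are the mathematical content of the statement. The first gap is the inference from $\gamma = \vrv(t) \in \acl(s)$ to $\gamma \in \acl(\delta)$ where $\delta = \vrv(s)$. This is an orthogonality statement between the residue field and the value group: it asserts that the ``$\K^{\times}$-torsor part'' of $s$ contributes no new algebraic elements of $\Gamma$ beyond $\acl(\delta)$. It does not come for free from the short exact sequence; to extract it via Corollary~\ref{acl:VF:transfer:acl:RV} one would need $\gamma \in \acl(s')$ for \emph{every} $s'$ in some $\delta$-definable subset of $\vrv^{-1}(\delta)$, not just for the single point $s$, and establishing this requires an argument (homogeneity of the fiber over $\acl(\delta)$, handling the possibly-infinite set $\vrv^{-1}(\delta) \cap \acl(\delta)$, and verifying definability over $\delta$ rather than $\acl(\delta)$). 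Worse, the closely related Corollary~\ref{alg:ind:imag} in the paper is \emph{derived from} the exchange principle, so one must be careful not to smuggle in exactly what is to be proved.

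The second gap is the assertion that when $\gamma \in \acl(\0)$ the fiber $\vrv^{-1}(\gamma)$ has an $\acl(\0)$-definable point ``because $\Gamma(S)$ is nontrivial, so one can multiply by a definable element of suitable value.'' Nontriviality of $\Gamma(S)$ gives you one nonzero value in $\Gamma(S)$ (and hence in $\vrv(\RV(S))$), but an arbitrary $\gamma \in \acl(\0) \cap \Gamma$ need not lie in $\Gamma(S)$, nor even in the divisible hull of $\Gamma(S)$; there is no mechanism offered for producing a representative in $\RV$ definable over $\acl(\0)$. (When the ambient substructure is $\VF$-generated and $\Gamma(\acl(S))$ is nontrivial, Corollary~\ref{aclS:model} says $\acl(S)$ is a model, which would give such representatives — but that hypothesis is not available here, and in any case you do not invoke it.) A similar issue arises when you pass to the fiber over $\delta$: you implicitly assume $\delta$-definable base points for the $\K^{\times}$-torsors, which is precisely what needs to be arranged. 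Finally, the case split on $\gamma \in \acl(\0)$ alone is incomplete: in Case~2 you also need $\delta \in \acl(\0)$ (which in fact follows, but again only via the same unproved orthogonality). As written, part (2) reduces the $\RV$-exchange to the exchange in $\K$ and $\Gamma$ only \emph{modulo} these orthogonality and parameter-existence claims, which are themselves the hard part.
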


\begin{cor}\label{alg:ind:imag}
If $a \in \VF$ is such that $a \notin \acl(\0)$, then for any $t \in \RV$ we have $a \notin \acl(t)$. Similarly, if $t \in \RV$ is such that $t \notin \acl(\0)$, then for any $\gamma \in \Gamma$ we have $t \notin \acl(\gamma)$.
\end{cor}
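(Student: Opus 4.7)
The plan is to dispatch the two clauses separately: the first drops out almost immediately from Lemma~\ref{dcl:to:ac}, while the second combines the $\RV$-exchange principle with a genericity argument inside the fiber $\vrv^{-1}(\gamma)$.

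For the first assertion I argue contrapositively. Suppose $a \in \VF \cap \acl(t)$. Applying Lemma~\ref{dcl:to:ac} with the empty $\VF$-tuple (and $\lbar t = t$) yields $a \in \VF(S)^{\alg}$. Because the atomic diagram of $S$ is part of $\ACVF_S(0,0)$, every element of $\VF(S)$ lies in $\dcl(\0)$, so $\VF(S)^{\alg} \sub \acl(\0)$ and hence $a \in \acl(\0)$. This is the required contradiction.

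For the second assertion, assume toward a contradiction that $t \in \acl(\gamma) \mi \acl(\0)$. The fiber $\vrv^{-1}(\gamma)$ is a $\gamma$-definable coset of $\K^{\times}$, which is an infinite group in $\gC$ (as $\K$ is an algebraically closed field). By saturation of $\gC$ I can pick some $s \in \vrv^{-1}(\gamma)$ with $s \notin \acl(t)$. Since $\gamma = \vrv(s) \in \dcl(s)$, we have $\acl(\gamma) \sub \acl(s)$, and so $t \in \acl(s) \mi \acl(\0)$. The $\RV$-exchange principle (clause~(2) of Lemma~\ref{exchange}) then forces $s \in \acl(t)$, contradicting the choice of $s$.

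The only delicate point is the genericity step, namely producing $s \in \vrv^{-1}(\gamma) \mi \acl(t)$; this is routine, because $\vrv^{-1}(\gamma)$ is an infinite $\gamma$-definable set in the sufficiently saturated $\gC$ whereas $\acl(t)$ is small. Equivalently, $\vrv^{-1}(\gamma) \sub \acl(t)$ would force $\acl(t)$ to absorb an entire $\K^{\times}$-coset, contradicting saturation. Nothing beyond Lemma~\ref{dcl:to:ac}, Lemma~\ref{exchange}, and standard saturation is required.
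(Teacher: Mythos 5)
Your proposal is correct, and the second clause follows the paper's own argument (exchange plus the impossibility of $\acl(t)$ absorbing the infinite $\gamma$-definable coset $\vrv^{-1}(\gamma)$); the paper just phrases it contrapositively, noting $s \in \acl(t)$ for \emph{all} $s \in \vrv^{-1}(\gamma)$, which is absurd, rather than producing a single generic witness. Where you genuinely diverge is in the first clause: the paper proves both clauses by the same exchange-plus-saturation pattern (for the $\VF$ case, from $a \in \acl(t)$ one gets $a \in \acl(b)$ for every $b \in \rv^{-1}(t)$, then exchange would put the entire $\rv$-ball inside $\acl(a)$), whereas you invoke Lemma~\ref{dcl:to:ac} with the empty $\VF$-tuple to conclude $a \in \VF(S)^{\alg} \sub \acl(\0)$ directly. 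Your route is valid and somewhat more economical for the $\VF$ clause, since it bypasses the exchange principle entirely, at the modest cost of having to observe that $\VF(S) \sub \dcl(\0)$ (true, since $S$ is named in the language); the paper's route has the advantage of treating both clauses completely uniformly by the same argument. Either is fine; nothing is missing.
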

\begin{proof}
For the first claim, suppose for contradiction that $a \in \acl(t)$. Then $a \in \acl(b)$ for every $b \in \rv^{-1}(t)$. So by the exchange principle we have $b \in \acl(a)$ for every $b \in \rv^{-1}(t)$, which is impossible. The other claim is proved in the same way.
\end{proof}

\begin{lem}[{\cite[Lemma~4.9]{Yin:QE:ACVF:min}}]\label{average:0:rv:not:constant}
Let $c_1, \ldots, c_k \in \VF$ be distinct elements of the same
value $\alpha$ such that their average is $0$. Then for some $c_i
\neq c_j$ we have $\vv(c_i -c_j) = \alpha$ and hence $\rv$ is
not constant on the set $\set{c_1, \ldots, c_k}$.
\end{lem}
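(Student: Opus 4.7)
The plan is to argue by contradiction on the ultrametric inequality, exploiting that we are in characteristic $0$. Assume, for contradiction, that $\vv(c_i - c_j) > \alpha$ for every pair $i \neq j$. Then all the $c_i$ lie in a single open ball of radius $\alpha$, so we may write $c_i = c_1 + d_i$ with $\vv(d_i) > \alpha$ for each $i$ (and $d_1 = 0$).

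From the hypothesis that the average of the $c_i$ is $0$ we obtain $\sum_{i=1}^k c_i = 0$, hence
\[
k c_1 = - \sum_{i=1}^k d_i.
\]
Since $\cha(\VF) = 0$, we have $\vv(k c_1) = \vv(c_1) = \alpha$, while the ultrametric inequality yields $\vv\bigl(\sum_i d_i\bigr) \geq \min_i \vv(d_i) > \alpha$. This contradiction establishes the existence of indices $i \neq j$ with $\vv(c_i - c_j) = \alpha$.

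For the final clause, observe that if $\rv(c_i) = \rv(c_j)$ then $c_j^{-1} c_i \in 1 + \MM$, so $\vv(c_i - c_j) = \vv(c_j) + \vv(c_j^{-1} c_i - 1) > \alpha$. Contrapositively, the pair $(c_i, c_j)$ produced above satisfies $\rv(c_i) \neq \rv(c_j)$, so $\rv$ is not constant on $\{c_1, \ldots, c_k\}$.

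There is no serious obstacle here; the only subtle point is the invocation of characteristic $0$ to conclude $\vv(k c_1) = \vv(c_1)$, which is what makes the contradiction work and explains why the lemma is stated in the context of $\ACVF_S(0,0)$.
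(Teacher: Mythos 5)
Your proof is correct and uses the canonical argument: assume all pairwise differences have value strictly greater than $\alpha$, reduce the averaging condition to $kc_1 = -\sum d_i$, and derive a contradiction from $\vv(kc_1) = \alpha$ (using characteristic $0$) versus $\vv(\sum d_i) > \alpha$. This is essentially the same proof as in the cited reference, and your identification of characteristic $0$ as the crux is exactly right.
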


\begin{lem}[{\cite[Lemma~4.10]{Yin:QE:ACVF:min}}]\label{finite:VF:project:RV}
Let $A$ be a definable finite subset of $\VF^n$. Then there is a definable injection $f : A \fun \RV^m$ for some $m$.
\end{lem}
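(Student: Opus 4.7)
My plan is to reduce the general case to $n = 1$ and then induct on $k := |A|$, using Lemma~\ref{average:0:rv:not:constant} as the key tool.

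The reduction from arbitrary $n$ to $n = 1$ is immediate: for each coordinate projection $\pr_i$, the subset $\pr_i(A) \sub \VF$ is finite and $S$-definable, so by the $n = 1$ case there is an $S$-definable injection $g_i : \pr_i(A) \fun \RV^{m_i}$, and the product map $(a_1, \ldots, a_n) \mapsto (g_1(a_1), \ldots, g_n(a_n))$ gives an $S$-definable injection $A \fun \RV^{\sum_i m_i}$.

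For $n = 1$ I would induct on $k$. The case $k = 1$ is trivial. For $k \geq 2$, set $\alpha := k^{-1} \sum_{a \in A} a$; this is $S$-definable and lies in $\VF$, so $A - \alpha$ consists of $k$ distinct elements summing to $0$. I claim the $S$-definable map $a \mapsto \rv(a - \alpha)$ cannot be constant on $A$: if $\vv$ takes at least two values on $A - \alpha$, this is immediate from $\vv = \vrv \circ \rv$; otherwise all elements of $A - \alpha$ share a common value, the hypotheses of Lemma~\ref{average:0:rv:not:constant} are met, and that lemma delivers the non-constancy. Hence $A = \bigsqcup_{s \in R} A_s$, where $R := \rv(A - \alpha)$ is a finite subset of $\RV$ and each piece $A_s := \set{a \in A : \rv(a - \alpha) = s}$ has cardinality strictly less than $k$.

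By the inductive hypothesis, applied over the enlarged base $\la S, s \ra$, each $A_s$ admits an injection $g_s : A_s \fun \RV^m$ for a common $m$; the assignment $a \mapsto (\rv(a - \alpha), g_{\rv(a - \alpha)}(a))$ is then the desired $S$-definable injection $A \fun \RV^{m+1}$.

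The main technical care will be in parameter handling: the inductive hypothesis has to be stated uniformly for finite $\VF$-subsets definable over $S$ together with finitely many $\RV$-parameters, not merely over $S$ itself. Once formulated that way, the finitely many $g_s$ can be merged into one $S$-definable function by case analysis on $\rv(a - \alpha) \in R$.
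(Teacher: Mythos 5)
Your proof is correct and, as far as the present paper shows, uses exactly the expected ingredient: Lemma~\ref{average:0:rv:not:constant}, which is the immediately preceding lemma in the cited source \cite{Yin:QE:ACVF:min}, so the averaging-and-split induction is almost certainly the reference's own argument. The only delicate point is the uniformity of the inductive hypothesis in the auxiliary $\RV$-parameter $s$, which you correctly identify and correctly resolve by strengthening the induction to allow $\RV$-parameters so that the finitely many $g_s$ are instances of a single formula.
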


\begin{lem}[{\cite[Lemma~4.15]{Yin:QE:ACVF:min}}]\label{effectiveness}
Let $\gB$ be an algebraic set of closed balls. Then $\gB$ has centers.
\end{lem}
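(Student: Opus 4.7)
The plan is to reduce the claim to a single-ball case and analyze that via quantifier elimination for $\ACVF$ together with $C$-minimality. Decomposing $\gB$ into $\aut(\gC/S)$-orbits and treating each separately, I may assume $\gB = \{\gb_1, \ldots, \gb_n\}$ is a single orbit of pairwise disjoint closed balls of a common radius $\gamma$; since $\acl = \dcl$ on the ordered divisible abelian group $\Gamma$, we have $\gamma \in \Gamma \cap \acl(\emptyset) = \Gamma(S)$.

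The heart of the argument is the following single-ball claim: every $\acl(\emptyset)$-definable closed ball $\gb$ of radius $\gamma \in \Gamma(S)$ contains an element of $\VF(S)^{\alg}$, which by Lemma~\ref{dcl:to:ac} coincides with $\acl(\emptyset) \cap \VF$. I would invoke QE of $\ACVF$ to write the defining formula of $\gb$ as a conjunction of $\RV$-literals with occurring polynomials $F_1, \ldots, F_k \in \VF(S)[X]$. Since $\gb$ has finite positive radius, some $F_j$ must have non-constant $\vv$-value on $\gb$: otherwise every $\RV$-literal would remain satisfied on a strictly larger ball enclosing $\gb$, contradicting the closed-ball structure. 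A Newton-polygon analysis of that $F_j$, combined with Hensel's lemma applied to its residue in the algebraically closed field $\K$, produces a root of $F_j$ inside $\gb$; such a root automatically lies in $\VF(S)^{\alg}$ and serves as a center for $\gb$.

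With the single-ball claim established, applying it to each $\gb_i \in \gB$ yields an $\emptyset$-definable finite relation $R \subseteq \gB \times \VF$ where $R(\gb, c)$ holds iff $c \in \gb$ and $c$ is a root of an occurring polynomial in the defining formula of $\gb$; each fiber $R_{\gb_i}$ is nonempty by the claim. Composing with the $\emptyset$-definable injection $R \hookrightarrow \RV^m$ supplied by Lemma~\ref{finite:VF:project:RV} and taking the fiberwise lex-minimum gives the required $\emptyset$-definable selection function $c : \gB \to \VF$. As a useful auxiliary, the barycentric construction $\bar a = n^{-1}\sum_i a_i$ — valid because $\cha(\gC) = 0$ — for arbitrary $a_i \in \gb_i$ yields an $\emptyset$-definable ball $\gb^\star = \bar a + \gc(0, \gamma)$, and Lemma~\ref{average:0:rv:not:constant} applied to the shifted points $\{a_i - \bar a\}$ produces an $\emptyset$-definable injection $\gB \hookrightarrow \RV$ that will be useful downstream.

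The main obstacle is the single-ball claim, specifically the step forcing some occurring polynomial to have a root in $\gb$. The delicate point is verifying that the closed-ball shape of $\gb$ is genuinely incompatible with all occurring polynomials being $\rv$-constant on $\gb$, and then extracting the required root via Hensel's lemma in the algebraically closed residue field. The orbit assembly via the $\RV$-embedding of Lemma~\ref{finite:VF:project:RV} is essentially bookkeeping.
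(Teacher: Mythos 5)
Your overall strategy---show that each closed ball in the orbit must contain a root of an occurring polynomial of the defining formula, then select a center definably---is the right one and essentially matches the standard argument. The key observation that non-$\rv$-constancy of some occurring polynomial on a closed ball forces a root inside it (because otherwise all $\RV$-literals would evaluate the same way on a strictly larger closed ball, which cannot lie inside the finite union $\bigcup_i \gb_i$) is correct and is indeed the heart of the matter; this is exactly where the closed-ball hypothesis is used and why the analogous statement fails for open balls.

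There are, however, two gaps. First, the Newton-polygon/Hensel detour is unnecessary and slightly misleading: you are in an algebraically closed valued field, so once you know $\rv(F_j)$ is non-constant on $\gb$ you can factor $F_j$ into linear factors over $\gC$ and conclude directly that some root lies in $\gb$ (if every root $a_i$ satisfied $\vv(b - a_i) < \gamma$, each $\rv(X - a_i)$, hence $\rv(F_j)$, would be constant on $\gb$). No residue-field Hensel argument is needed, and indeed invoking it obscures the role of the closed-ball hypothesis. Second---and this is the genuine gap---your definable-selection step does not work as written. Taking a ``fiberwise lex-minimum'' after injecting $R$ into $\RV^m$ presupposes a definable linear order on $\RV^m$, but $\RV$ carries only the preorder $\leq$ that collapses to the linear order on $\Gamma$; distinct elements of a $\K^\times$-coset are incomparable, so there is no lex-minimum to take. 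The correct mechanism is the barycentric one, which you mention only as an ``auxiliary'': since $\cha \gC = 0$ and a closed ball is closed under taking averages of finitely many of its points, the map $\gb \mapsto \avg(\fib(R, \gb))$ is a well-defined, $\emptyset$-definable choice of center. (Your auxiliary construction instead averages one point from each ball across the orbit, which produces a single definable coset $\gb^\star$, not centers for the $\gb_i$.) A further small point of hygiene: $R$ should be cut out using the occurring polynomials of an $S$-definable formula defining $\bigcup_i \gb_i$ (or $\gB$), not ``the defining formula of $\gb$''---the latter is only $\acl(S)$-definable and not canonical, so $\emptyset$-definability of $R$ would not be automatic; with the $S$-definable formula this issue disappears and the non-constancy argument still goes through because a strictly larger closed ball cannot be contained in the finite union $\bigcup_i \gb_i$.
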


\begin{lem}\label{ball:proper:sub:center}
If a ball contains a definable proper subset then it contains a definable point.
\end{lem}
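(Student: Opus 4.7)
The plan is to argue by contrapositive. Suppose that $\gb$ contains no definable point; I will show that every definable subset $A \subseteq \gb$ is either empty or equal to $\gb$, which in particular rules out any definable nonempty proper subset.

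The first and main step is to establish the following reduction: for every definable ball $B$ in $\VF$, the intersection $B \cap \gb$ equals either $\emptyset$ or $\gb$. Since balls in the ultrametric field $\VF$ are pairwise nested or disjoint, it suffices to rule out the configuration where $B \cap \gb \neq \emptyset$ and $B \subsetneq \gb$. When $B$ is closed, the definable singleton $\{B\}$ is an algebraic set of closed balls, so Lemma~\ref{effectiveness} supplies a definable center of $B$, which lies in $B \subseteq \gb$, contradicting the hypothesis. When $B$ is open, I replace it by the closed ball $\bar B$ of the same radius and center; this is definable from $B$ as the smallest closed ball containing $B$. I then split into cases according to the nesting of $\bar B$ with $\gb$: if $\bar B \subseteq \gb$ (in particular if $\bar B = \gb$, in which case $\gb$ is itself definable), Lemma~\ref{effectiveness} again yields a definable center of $\bar B$ sitting inside $\gb$; if $\bar B \supsetneq \gb$, then $B \subsetneq \gb \subsetneq \bar B$, which is impossible because no ball can lie strictly between an open ball $\go(c,\gamma)$ and its same-radius closure $\gc(c,\gamma)$.

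The second step then uses $C$-minimality (Theorem~\ref{c:min:acvf}): any definable $A \subseteq \VF$ is a Boolean combination of definable balls $B_1, \ldots, B_n$. Intersecting with $\gb$ and applying the first step, each $B_i \cap \gb$ equals $\emptyset$ or $\gb$, so $A \cap \gb$ is a Boolean combination of $\emptyset$ and $\gb$, hence itself $\emptyset$ or $\gb$. Since $A \subseteq \gb$ gives $A = A \cap \gb$, the contrapositive is complete.

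The main obstacle I anticipate is the open-ball case of the first step, which rests on the small ultrametric computation that there is no ball strictly between $\go(c,\gamma)$ and $\gc(c,\gamma)$. Since any ball containing $\go(c,\gamma)$ must contain $c$, one may take $c$ as a center for the putative intermediate ball $\gb'$ as well; the constraints $\go(c,\gamma) \subsetneq \gb' \subsetneq \gc(c,\gamma)$ then reduce to a direct comparison of radii, out of which the contradiction falls immediately. Once this clean geometric picture is in hand, the rest of the argument is routine, and Lemma~\ref{effectiveness} does all the definability work for centers.
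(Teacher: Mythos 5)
Your Step~1 is sound, and the geometric observation that no ball lies strictly between an open ball $\go(c,\gamma)$ and its closure $\gc(c,\gamma)$ is exactly the right kind of tool. The gap is in Step~2: $C$-minimality (Theorem~\ref{c:min:acvf}) says that $A$ is a Boolean combination of balls, \emph{not} of $S$-definable balls. In the canonical ``Swiss cheese'' decomposition of an $S$-definable set the individual balls are in general only algebraic over $S$. For instance, with $S = \Q(\pi)$, $\vv(\pi)=1$, the $S$-definable set $\set{a : \vv(a^2 - 2) \geq 1}$ is $\gc(\sqrt 2, 1) \cup \gc(-\sqrt 2, 1)$, and neither ball is $S$-definable since $\sqrt 2 \notin \VF(S)$. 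Your Step~1 genuinely needs each $B_i$ to be definable, because you apply Lemma~\ref{effectiveness} to $\set{B_i}$ (or $\set{\bar B_i}$). If $B_i$ is merely algebraic, you can pass to its orbit, a finite definable set of balls with a definable choice of centers, but the particular center landing inside $\gb$ is then only algebraic, while the other centers may fall outside $\gb$; no definable point of $\gb$ is produced.

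The way around this is to work with data that is \emph{canonically} attached to $A$ and hence automatically definable. The natural choice is the smallest ball $\gd$ containing $A$: it is unique, hence $S$-definable, and $\gd \sub \gb$. If $\gd$ is closed (or a singleton), Lemma~\ref{effectiveness} applied to $\set{\gd}$ gives a definable point of $\gd \sub \gb$. If $\gd$ is open and $\gd \subsetneq \gb$, your open-versus-closed trick with $\bar\gd$ applies verbatim. In the remaining case $\gd = \gb$, so $\gb$ is a definable open ball, the Swiss cheese decomposition of $A$ has a single outer ball $\gb$ (two or more disjoint outer balls would force $\gd$ to be closed) together with at least one hole; the smallest ball $\gd'$ containing the holes is again definable and must be a proper sub-ball of $\gb$ (else $\gd'=\gb$ would either be closed, contradicting $\gb$ open, or consist of a single hole $=\gb$, forcing $A=\0$), and one finishes as before. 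So the geometry in your Step~1 is exactly what is needed; the reduction in Step~2 should be routed through these canonically definable balls rather than through the raw $C$-minimal decomposition.
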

\begin{proof}
The proof of~\cite[Lemma~4.16]{Yin:QE:ACVF:min} works almost verbatim here.
\end{proof}

\begin{cor}\label{ball:RV:finite}
Let $B \sub \RV$ and $f : \rv^{-1}(B) \fun \RV^m$ a definable function. Then, for all but finitely many $t \in B$, $f \rest \rv^{-1}(t)$ is constant.
\end{cor}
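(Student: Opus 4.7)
The plan is to isolate the ``bad'' set
\[
B' := \{\, t \in B : f \rest \rv^{-1}(t) \text{ is not constant} \,\}
\]
and show it is finite. Since $\rv^{-1}(\infty) = \{0\}$, the restriction of $f$ there is automatically constant, so we may harmlessly assume $B \sub \RV^{\times}$; then each fiber $\rv^{-1}(t)$ is an open ball, and $B'$ is visibly definable.

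Fix $t \in B'$. Then $f_t := f \rest \rv^{-1}(t)$ takes at least two distinct values $s_1 \neq s_2$ in $\RV^m$, so $f_t^{-1}(s_1)$ is a $t$-definable proper subset of the open ball $\rv^{-1}(t)$. Lemma~\ref{ball:proper:sub:center} accordingly furnishes a $t$-definable point $a_t \in \rv^{-1}(t)$. Hence, for every $t \in B'$, some $\lan{RV}$-formula $\phi(X; t)$ (with parameters from $S$) isolates a single point of $\rv^{-1}(t)$.

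To convert these pointwise witnesses into uniform definable functions, I would invoke saturation of $\gC$: the partial type over $\0$ asserting ``$t \in B'$ and no $\lan{RV}$-formula in one $\VF$-variable defines a single element of $\rv^{-1}(t)$'' is unsatisfied in $\gC$ by the previous paragraph, hence inconsistent. So finitely many $\0$-definable partial functions $g_1, \ldots, g_k : B \rightharpoonup \VF$ suffice, in the sense that $B' = \bigcup_{i \leq k} B'_i$ for $B'_i := \{\, t \in B' : g_i(t) \text{ is defined and } \rv(g_i(t)) = t \,\}$. Each $g_i \rest B'_i$ is a definable injection into $\VF$, since distinct values of $t$ have disjoint $\rv$-fibers; by Corollary~\ref{function:rv:to:vf:finite:image} the image $g_i(B'_i)$ is finite, hence $B'_i$ is finite, and therefore $B' = \bigcup_{i \leq k} B'_i$ is finite, as desired.

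The heart of the argument is the passage from a proper $t$-definable subset of $\rv^{-1}(t)$ to a $t$-definable point via Lemma~\ref{ball:proper:sub:center}; the remaining compactness uniformization is routine in a sufficiently saturated model, and only needs to be articulated carefully enough to ensure the $g_i$ are genuinely $\0$-definable over $S$ rather than over extra parameters.
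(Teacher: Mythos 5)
Your overall strategy matches the paper's: use Lemma~\ref{ball:proper:sub:center} to manufacture a definable point in the $\rv$-ball, uniformize by compactness, and then kill the resulting function into $\VF$ with Corollary~\ref{function:rv:to:vf:finite:image}. But there is a genuine gap in the first step. You assert that $f_t^{-1}(s_1)$ is a $t$-definable proper subset of $\rv^{-1}(t)$; this is false in general, because $s_1$ is merely an existential witness in $\ran(f_t)$ and there is no reason for $s_1$ to lie in $\dcl(t)$. (The range $\ran(f_t)$ may be an infinite $t$-definable subset of $\RV^m$ with no $t$-definable element.) So $f_t^{-1}(s_1)$ is only $(t,s_1)$-definable, Lemma~\ref{ball:proper:sub:center} only yields a $(t,s_1)$-definable point, and the partial type you later invoke (which requires a \emph{$t$}-definable point for every $t \in B'$) is not shown to be inconsistent by what you proved.

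The paper's proof never discards the extra parameter: it obtains, for each $t$ with $f\rest\rv^{-1}(t)$ non-constant and each $\lbar s \in \ran(f\rest\rv^{-1}(t))$, a $(t,\lbar s)$-definable point $a_{t,\lbar s} \in \rv^{-1}(t)$, uniformizes the assignment $(t,\lbar s)\mapsto a_{t,\lbar s}$ into a single $\0$-definable function on a subset of $\RV^{1+m}$, and applies Corollary~\ref{function:rv:to:vf:finite:image} directly to that function. Since the resulting image is finite and the fibers $\rv^{-1}(t)$ are pairwise disjoint, only finitely many $t$ can be ``bad.'' If you prefer to keep your structure and really produce a $t$-definable point for each $t \in B'$, you can do so, but it takes two steps: first note that $\lbar s \mapsto a_{t,\lbar s}$ is a $t$-definable function $\ran(f_t)\to\VF$, hence has finite image by Corollary~\ref{function:rv:to:vf:finite:image}; this finite set is a $t$-definable proper subset of $\rv^{-1}(t)$, and \emph{then} Lemma~\ref{ball:proper:sub:center} produces a $t$-definable point. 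Either repair closes the gap; as written, the passage from a $(t,s_1)$-definable subset to a $t$-definable point is unjustified.
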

\begin{proof}
For any  $t \in B$, if $f \rest \rv^{-1}(t)$ is not constant then, by Lemma~\ref{ball:proper:sub:center}, for each $\lbar s \in \ran(f \rest \rv^{-1}(t))$, $\rv^{-1}(t)$ contains a $(\lbar t, \lbar s)$-definable point $a_{\lbar t, \lbar s}$. By Corollary~\ref{function:rv:to:vf:finite:image}, the image of the function given by $(\lbar t, \lbar s) \efun a_{\lbar t, \lbar s}$ is finite.
\end{proof}

\begin{lem}[{\cite[Lemma~4.17]{Yin:QE:ACVF:min}}]\label{algebraic:balls:definable:centers}
Suppose that $S$ is $(\VF, \Gamma)$-generated. Let $\gB$ be an algebraic set of balls. Then $\gB$ has centers.
\end{lem}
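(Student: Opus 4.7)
The strategy is to reduce the statement to Lemma~\ref{effectiveness}. Decompose $\gB = \gB_{\text{cl}} \sqcup \gB_{\text{op}}$ into its closed balls and its open balls; both are definable subsets of the algebraic set $\gB$, hence algebraic in their own right. Lemma~\ref{effectiveness} immediately disposes of $\gB_{\text{cl}}$, so the remaining work concerns $\gB_{\text{op}}$.

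For each $\gb \in \gB_{\text{op}}$ of radius $\gamma := \rad(\gb)$, let $\hat{\gb}$ denote the unique smallest closed ball containing $\gb$; it has the same radius $\gamma$, and the assignment $\gb \mapsto \hat{\gb}$ is definable. Applying Lemma~\ref{effectiveness} to the algebraic set $\{\hat{\gb} : \gb \in \gB_{\text{op}}\}$ produces a definable function $\gb \mapsto a_{\gb}$ with $a_{\gb} \in \hat{\gb}$. If $a_\gb \in \gb$ then it is already a center of $\gb$. Otherwise the translate $\gb - a_\gb$ is an open ball of radius $\gamma$ sitting inside $\gc(0,\gamma)$ but missing $0$, so it coincides with $\rv^{-1}(t_\gb)$ for a unique $t_\gb \in \RV$ with $\vrv(t_\gb) = \gamma$. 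Since $\gb$ and $a_\gb$ both lie in $\acl(\0)$, so does $t_\gb$.

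It then remains to produce, uniformly in $\gb$, an element $d_\gb \in \VF \cap \acl(\0)$ with $\rv(d_\gb) = t_\gb$, for then $a_\gb + d_\gb \in \gb \cap \acl(\0)$ is the desired center. This is the step in which the hypothesis that $S$ be $(\VF,\Gamma)$-generated enters essentially. By Lemma~\ref{dcl:to:ac} any such $d_\gb$ must lie in $\VF(S)^{\alg}$, whose value group is the divisible hull of $\vv(\VF(S))$. An orthogonality argument between $\K$ and $\Gamma$, together with the observation that no $\RV$-parameters beyond $\rv(\VF(S))$ and $\Gamma(S)$ enter $\acl(\0)$, forces $\vrv(t_\gb) \in \vv(\VF(S)^{\alg})$. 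Picking $b_\gb \in \VF(S)^{\alg}$ with $\vv(b_\gb) = \vrv(t_\gb)$, the quotient $t_\gb \cdot \rv(b_\gb)^{-1}$ lies in $\K^\times \cap \acl(\0)$, and Hensel's lemma lifts it to $\tilde c_\gb \in \OO^\times \cap \acl(\0)$; the element $d_\gb := b_\gb \tilde c_\gb$ does the job.

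The principal obstacle is this final lifting step. Without the $(\VF,\Gamma)$-generated hypothesis, a generic $t_\gb \in \acl(\0) \cap \RV$ could represent a $\K^\times$-coset in $\RV^\times$ with no $\rv$-representative in $\VF(S)^{\alg}$, and the construction would break down. This explains both why the hypothesis on $S$ is needed and why it suffices: it ensures that the obstruction measured by the short exact sequence $1 \to \K^\times \to \RV^\times \to \Gamma \to 0$ is trivialized over $\acl(\0)$.
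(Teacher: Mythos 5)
The paper does not actually prove this lemma here --- it is cited from \cite[Lemma~4.17]{Yin:QE:ACVF:min} --- so there is no local proof to compare against, and your argument has to be assessed on its own terms. Its reduction is sound in outline: splitting off the closed balls, passing from an open ball $\gb$ to its closed hull $\hat\gb$ and a center $a_\gb \in \hat\gb$ via Lemma~\ref{effectiveness}, and then (when $a_\gb \notin \gb$) observing $\gb - a_\gb = \rv^{-1}(t_\gb)$ with $t_\gb \in \acl(\0)$ correctly reduces the problem to producing $d_\gb \in \VF \cap \acl(\0)$ with $\rv(d_\gb) = t_\gb$.

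But that reduction is where the real work begins, not where it ends. You then assert, by appeal to ``an orthogonality argument'' and an unproved ``observation,'' that $\vrv(t_\gb) \in \vv(\VF(S)^{\alg})$, and implicitly, in the Hensel step, that $t_\gb\,\rv(b_\gb)^{-1} \in \K^\times \cap \acl(\0)$ is the residue of a unit of $\VF(S)^{\alg}$. Taken together these two assertions say exactly that $t_\gb \in \rv(\VF(\acl(\0)))$, which is the very thing to be proved. This is the single place where the $(\VF,\Gamma)$-generation hypothesis has to enter, and it is not automatic: when $S$ has $\Gamma$-generators outside $\vv(\VF(S)^{\alg})$, Corollary~\ref{aclS:model} does not apply and $\acl(\0)$ need not be a model, so one cannot simply invoke that $\rv$ is surjective on it. What must be shown --- and is not --- is that no element $t \in \RV$ with $\vrv(t) \in \Gamma(\acl(\0)) \mi \vv(\VF(S)^{\alg})$ has finite orbit under $\aut(\gC/S)$; concretely, one must exhibit automorphisms over $S$ that move such a $t$ freely by $\K^{\times}$ within the fibre $\vrv^{-1}(\vrv(t))$, or give some equivalent type-counting argument. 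Gesturing at orthogonality of $\K$ and $\Gamma$ does not discharge this, because the torsor $\vrv^{-1}(\gamma_0)$ is neither a subset of $\K$ nor of $\Gamma$; the entire point of the lemma is to control its $\acl$-points. As written, the proposal identifies where the difficulty lies but then presupposes its resolution, so it has a genuine gap at its central step.
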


\begin{cor}[{\cite[Corollary~4.18]{Yin:QE:ACVF:min}}]\label{aclS:model}
Suppose that $S$ is $\VF$-generated. If the value group $\Gamma(\acl(S))$ is nontrivial then $\acl(S)$ is a model of $\ACVF_S(0,0)$.
\end{cor}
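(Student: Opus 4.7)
The plan is to verify that $M := \acl(S)$ satisfies each axiom of $\ACVF_S(0,0)$. The universal axioms---field structure on $\VF$ and $\K$, the preorder on $\RV$, the exact sequence for $\K^{\times}$, $\RV^{\times}$, $\Gamma$, and the defining behaviour of $\rv$ and $\vrv$---are inherited from $\gC$. The existential axioms to check are algebraic closure of $\VF(M)$ and $\K(M)$, divisibility of $\Gamma(M)$ and $\RV^{\times}(M)$, and surjectivity of $\rv : \VF^{\times}(M) \fun \RV^{\times}(M)$.

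For the algebraic closure and divisibility claims, the required witnesses are all algebraic over the existing parameters: roots of a polynomial in $\VF(M)[X]$ or $\K(M)[X]$ are algebraic over $M$; the element $\gamma/n$ is definable from $\gamma \in \Gamma(M)$; and $n$-th roots of $t \in \RV^{\times}(M)$ form a finite algebraic set. All such witnesses therefore automatically lie in $M$. The one substantial point is surjectivity of $\rv$.

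Given $t \in \RV^{\times}(M)$, I need a point of the open ball $\rv^{-1}(t)$ belonging to $M$. The natural tool is Lemma~\ref{algebraic:balls:definable:centers}, which however requires a $(\VF, \Gamma)$-generated base---precisely the mismatch this corollary must overcome. To bridge the gap I would set $S' := \la S \cup \Gamma(\acl(S)) \ra$, a $(\VF, \Gamma)$-generated substructure sandwiched between $S$ and $\acl(S)$, so that $\acl(S') = \acl(S) = M$; the nontriviality hypothesis ensures $\Gamma(S') \neq 0$, so that $S'$ meets the running assumptions. The orbit of $t$ under $\aut(\gC / S')$ is finite, and the associated collection $\{\rv^{-1}(t') : t' \text{ conjugate to } t \text{ over } S'\}$ is an algebraic set of open balls. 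Lemma~\ref{algebraic:balls:definable:centers} then supplies a center of $\rv^{-1}(t)$ in $\acl(S') = M$, and that center, lying in the open ball, yields the desired $a \in \VF(M)$ with $\rv(a) = t$. The main obstacle is exactly this reduction: without the nontriviality hypothesis one cannot enlarge $S$ while preserving $\acl(S)$ so as to invoke the lemma.
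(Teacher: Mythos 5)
Your proof is correct and follows the expected route: the paper cites this as~\cite[Corollary~4.18]{Yin:QE:ACVF:min}, where it is derived from the lemma on algebraic sets of balls having centers over a $(\VF,\Gamma)$-generated base. The key step of replacing $S$ by $S' = \la S \cup \Gamma(\acl(S)) \ra$ so that $\acl(S') = \acl(S)$ while $S'$ satisfies the $(\VF,\Gamma)$-generated hypothesis of Lemma~\ref{algebraic:balls:definable:centers} is exactly the reduction needed, and your treatment of the remaining (easier) axioms is correct.
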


\begin{lem}\label{inf:RV:def:gam}
Let $A$ be a definable subset of $\RV$. Let $V \sub \Gamma$ be the subset such that $\gamma \in V$ if and only if $\vrv^{-1}(\gamma) \cap A$ is nonempty and finite. Then $V$ is finite and definable.
\end{lem}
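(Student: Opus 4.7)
The plan is to establish definability of $V$ first, and then deduce finiteness by showing $V \subseteq \acl(\0) \cap \Gamma$ and exploiting the fact that $\acl(\0)$ is small compared to the saturated model $\gC$.

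For definability, I would work fiber by fiber. Each $\vrv^{-1}(\gamma)$ is a coset of $\K^{\times}$ in $\RV^{\times}$, so the induced structure on it is that of a $\K^{\times}$-torsor controlled by the algebraically closed residue field. Since ACF is strongly minimal, every $\gamma$-definable subset of $\vrv^{-1}(\gamma)$ is finite or cofinite in $\vrv^{-1}(\gamma)$. Applying this uniformly to the $\0$-definable family $\{A_{\gamma}\}_{\gamma \in \Gamma}$ (where $A_{\gamma} = \vrv^{-1}(\gamma) \cap A$) together with a standard compactness argument yields an integer $N$ such that for every $\gamma$, either $|A_{\gamma}| \leq N$ or $|\vrv^{-1}(\gamma) \mi A_{\gamma}| \leq N$. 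Hence
\[
V = \set{\gamma \in \Gamma : 1 \leq |A_{\gamma}| \leq N}
\]
is a first-order definable set.

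Next, I would show $V \subseteq \acl(\0) \cap \Gamma$. Fix $\gamma \in V$ and pick any $t \in A_{\gamma}$. Since $A_{\gamma}$ is nonempty, finite, and $\gamma$-definable, every element of $A_{\gamma}$ lies in $\acl(\gamma)$; in particular $t \in \acl(\gamma)$. By the contrapositive of the second clause of Corollary~\ref{alg:ind:imag}, any $t \in \RV$ that is algebraic over some $\gamma \in \Gamma$ must already lie in $\acl(\0)$. Therefore $\gamma = \vrv(t) \in \acl(\0)$.

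Finally, since $V$ is a definable subset of $\gC$ contained in $\acl(\0) \cap \Gamma$, and since $\gC$ is sufficiently saturated while $S$ is a small substructure, $V$ cannot be infinite: an infinite definable set would have cardinality at least the saturation cardinal, exceeding $|\acl(\0)|$. Equivalently, using o-minimality of the induced DOAG structure on $\Gamma$, an infinite definable subset of $\Gamma$ would have to contain an interval, and no interval of the saturated $\Gamma$ fits inside $\acl(\0) \cap \Gamma$. The main technical obstacle is securing the uniform bound $N$ in the first step; this is routine from strong minimality of $\K$ plus compactness, the only care needed being to confirm that ACF-style finiteness bounds apply uniformly to the $\K^{\times}$-torsors $\vrv^{-1}(\gamma)$.
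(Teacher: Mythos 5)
Your argument is correct, but it diverges from the paper's proof in the finiteness half. The paper handles both claims with \cmin-minimality alone: \cmin-minimality gives the finite-or-cofinite dichotomy for each fiber $A_{\gamma}$, compactness promotes this to a uniform bound $k$ (so $V$ is definable), and then a second appeal to \cmin-minimality yields finiteness directly --- if $V$ were infinite it would contain a $\Gamma$-interval, and $\rv^{-1}(A)$ restricted to the corresponding annular region would have to meet infinitely many annuli in a nonempty finite union of $\rv$-balls, which is incompatible with being a boolean combination of finitely many balls. Your finiteness argument instead routes through Corollary~\ref{alg:ind:imag}: from $\gamma \in V$ and $t \in A_{\gamma}$ finite and $\gamma$-definable you get $t \in \acl(\gamma)$, hence $t \in \acl(\0)$, hence $\gamma = \vrv(t) \in \acl(\0)$, so $V \sub \acl(\0) \cap \Gamma$, and a definable set contained in the small set $\acl(\0)$ must be finite in the saturated $\gC$. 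This is correct and has the virtue of making explicit how the $\RV$/$\Gamma$ orthogonality (Lemma~\ref{exchange} via Corollary~\ref{alg:ind:imag}) drives the conclusion, at the cost of invoking saturation. One caveat on your definability step: attributing the finite-or-cofinite dichotomy to strong minimality of the $\K^{\times}$-torsor $\vrv^{-1}(\gamma)$ tacitly uses stable embeddedness of the residue field, which is not among the paper's cited tools; the paper obtains the same dichotomy directly from \cmin-minimality applied to $\rv^{-1}(A)$, and you would be better off citing that.
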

\begin{proof}
By \cmin-minimality each $\vrv^{-1}(\gamma) \cap A$ is either finite or cofinite. By compactness there is a number $k$ such that if $\vrv^{-1}(\gamma) \cap A$ is finite then it has at most $k$ elements. So $V$ is definable. By \cmin-minimality again $V$ must be finite.
\end{proof}

Let $A$ be a subset and $B \sub A \times \VF^n \times \RV^m$. We say that $B$ is a \emph{subset over $A$} if the projection of $B$ to $A$ is surjective.

\begin{nota}\label{fun:into:power}
Let $A_1$, $A_2$ be subsets and $R_1$, $R_2$ equivalence relations on them, respectively. A subset $B \sub A_1 \times A_2$ over $A_1$ may be considered as a function from $A_1 / R_1$ into the powerset $\mdl P(A_2 / R_2)$ if, for each equivalence class $C \in A_1 / R_1$ and every $c_1$, $c_2 \in C$, there is a $U \in \mdl P(A_2 / R_2)$ such that $\fib(B, c_1) = \fib(B, c_2) = \bigcup U$. In this case, we sometime do write $B$ as a function $A_1 / R_1 \fun \mdl P(A_2 / R_2)$. We are of course only interested in definable objects. For example, we will discuss functions of the forms
\[
\VF / \MM \fun \mdl P(\RV^m), \,\, \VF^n \times \Gamma^l \fun \mdl P(\RV^m).
\]
\end{nota}

More elaborate syntactical analysis using the normal forms in Definition~\ref{def:normal:form} can sometimes reveal finer details.

\begin{lem}\label{fun:bounded:cons}
Let $f : \VF^{\times} \fun \mdl P(\RV^m)$ be a definable function such that the subset $\vrv(\bigcup f(\VF^{\times}))$ is bounded from both above and below. Then for any sufficiently large $\delta \in \Gamma$ the restriction $f \rest \go(0, \delta) \mi \set{0}$ is constant.
\end{lem}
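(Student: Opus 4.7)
My plan is to proceed syntactically via the normal form of Definition~\ref{def:normal:form}. Let $B \subseteq \VF^{\times} \times \RV^m$ be the definable subset with $f(a) = \fib(B, a)$. By quantifier elimination, fix a formula $\phi(X, \lbar Y)$ defining $B$ that is a disjunction of conjunctions of $\RV$-literals, and enumerate all occurring polynomials $F_1, \ldots, F_r$ of $\phi$ (including those inside $\K$-terms). By hypothesis, choose $\beta \in \Gamma$ so that every coordinate of $\vrv(\lbar t)$ lies in $[-\beta, \beta]$ for all $\lbar t \in \bigcup f(\VF^{\times})$. The goal is to produce a single $\delta \in \Gamma$ such that, for all $a \in \go(0, \delta) \mi \set{0}$ and all $\lbar t$, the truth of $\phi(a, \lbar t)$ depends only on $\lbar t$; this immediately gives that $f(a)$ is a fixed definable subset, independent of $a$.

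The central technical step is the stabilization of each $\rv(F_j(a))$ as $\vv(a) \to \infty$. Writing $F_j(X) = c_0 + c_1 X + \cdots$, three cases arise: if $c_0 \neq 0$, then $\rv(F_j(a)) = \rv(c_0)$ for $\vv(a)$ large; if $F_j \equiv 0$, then $\rv(F_j(a)) = \infty$; otherwise, letting $k \geq 1$ be the smallest index with $c_k \neq 0$, we have $\rv(F_j(a)) = \rv(c_k) \rv(a)^k$ for $\vv(a)$ large, with $\vrv(\rv(F_j(a))) = \vv(c_k) + k \vv(a) \to \infty$. Thus above a common threshold $\delta_0 \in \Gamma$, every $\rv(F_j(a))$ has attained its stable form, and every ``growing'' instance has $\vrv$ exceeding any prescribed bound depending on $\beta$ and the coefficient data of $\phi$. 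Plugging these stable forms into each $\RV$-literal $\rv(F) \lbar Y^{\lbar m} T_1 \,\Box\, \rv(G) r \lbar Y^{\lbar l} T_2$ and using $|\vrv(\lbar t)| \leq \beta$, the $\vrv$-value of each side decomposes into a bounded part (from constants, $r$, and the factors $\lbar t^{\lbar m}$, $\lbar t^{\lbar l}$) and an unbounded part (from polynomial factors with $F_j(0) = 0$) that diverges linearly in $\vv(a)$. For $\vv(a)$ beyond a threshold $\delta_{\text{lit}}$ depending only on $\beta$, $S$, and $\phi$, the comparison $\Box$ is therefore either dominated by the unbounded side (forcing a fixed truth value), reduces after factoring out matched growth rates to a bounded comparison in the leading coefficients and $\lbar t$, or involves only bounded contributions so depends on $\lbar t$ alone.

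The main obstacle is the definedness requirement of the $\K$-terms $T_i$: each summand $\rv(F_j(a)) \cdot r_j \cdot \lbar t^{\lbar n_j}$ must be zero or lie in $\K^{\times}$, i.e., have $\vrv$ equal to the identity. For a summand whose polynomial factor satisfies $F_j(0) = 0$, the divergent $\vrv$-contribution from $\rv(F_j(a))$ cannot be cancelled by the $\beta$-bounded factor $\lbar t^{\lbar n_j}$ once $\vv(a)$ is large, so either the summand must vanish (imposing a stable condition on $\lbar t$) or the $\K$-term becomes undefined and the enclosing literal vacuously false---still a stable truth value. Careful bookkeeping of these cases across the finitely many literals and summands of $\phi$ yields only finitely many additional thresholds, and letting $\delta$ be the maximum of all of them gives the required uniform threshold: for $a \in \go(0, \delta) \mi \set{0}$, the truth of $\phi(a, \lbar t)$ is determined by $\lbar t$ alone, so $f$ is constant on $\go(0, \delta) \mi \set{0}$.
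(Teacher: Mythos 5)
Your proposal is correct and takes essentially the same route as the paper: both arguments put a defining formula $\phi(X, \lbar Y)$ into the $\RV$-literal normal form, observe that each $\rv(F_j(a))$ stabilizes to $\rv(a^k)\rv(F_j^*(0))$ once $\vv(a)$ is large, use boundedness of $\vrv(\bigcup f(\VF^\times))$ to force the $\K$-term summands involving a vanishing polynomial to drop out (vanish or make the literal vacuously false), and then check that every remaining literal reduces, above a uniform threshold, to a condition in $\lbar Y$ alone. The paper's proof is terser and splits the two bounds as ``bounded below handles $\K$-terms, bounded above handles the rest''; your version tracks both bounds uniformly, which is slightly cleaner when the monomial exponents $\lbar n_i$ in the $\K$-terms can be negative.
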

\begin{proof}
Let $\phi(X, \lbar Y)$ be a disjunction of conjunctions of $\RV$-literals that defines $f$. For any $\delta \in \Gamma$ let $\phi_{\delta}(X, \lbar Y)$ be the formula $\phi(X, \lbar Y) \wedge \vv(X) > \delta$. Any term of the form $\rv(F(X))$ in $\phi(X, \lbar Y)$ may be written as $\rv(X^m F^*(X))$, where $F^*(0) \neq 0$. So if $\vv(a)$ is sufficiently large then
\[
\rv(a^m F^*(a)) = \rv(a^m) \rv(F^*(0)).
\]
Since $\vrv(\bigcup f(\VF^{\times}))$ is bounded from below, if $\delta$ is sufficiently large then we may assume that no $\K$-term in $\phi_{\delta}(X, \lbar Y)$ contains $X$. Since $\vrv(\bigcup f(\VF^{\times}))$ is also bounded from above, it is not hard to see that $\phi_{\delta}(X, \lbar Y)$ is actually equivalent to a formula of the form $\psi(\lbar Y) \wedge \vv(X) > \delta$, where $\psi(\lbar Y)$ does not contain $X$.
\end{proof}

It is not hard to see that the same argument shows that the above lemma also holds for functions $f : \VF^{\times} \fun \mdl P(\RV^m)$ that satisfy the obvious condition.

\begin{lem}\label{fun:quo:rep}
Let $G$ be a definable additive subgroup of $\VF$ (hence $G$ is either an open ball around $0$ or a closed ball around $0$). Let $f : \VF \fun \mdl P(\RV^m)$ be a definable function. Then
\begin{enumerate}
  \item There are $G$-cosets $D_1, \ldots, D_n$ such that $f \rest (\VF \mi \bigcup_i D_i)$ is a function from $(\VF \mi \bigcup_i D_i) / G$ into $\mdl P(\RV^m)$.
  \item If either $G$ is a closed ball or $S$ is $(\VF, \Gamma)$-generated then there is a definable function $f_{\downarrow} : \VF / G \fun \mdl P(\RV^m)$ such that for any $G$-coset $D$ there is a $d \in D$ such that $f(d) = f_{\downarrow}(D)$.
\end{enumerate}
\end{lem}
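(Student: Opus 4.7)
The plan is to prove Part~(1) by a syntactic analysis of a normal form for the formula defining $f$, and then to deduce Part~(2) by combining Part~(1) with the appropriate center lemma.

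For Part~(1), I put the defining formula $\phi(X, \lbar Y)$ of the subset $B \sub \VF \times \RV^m$ encoding $f$ into the normal form of Definition~\ref{def:normal:form}. Let $F_1(X), \ldots, F_k(X)$ be its occurring polynomials; since $f$ is $\0$-definable, these have coefficients in $\VF(S)$. The truth value of $\phi(a, \lbar t)$ depends on $a$ only through the tuple $(\rv(F_1(a)), \ldots, \rv(F_k(a)))$, uniformly in $\lbar t$. Let $R$ be the finite $\0$-definable zero set of $\prod_j F_j$ in $\VF(\gC)$. Factoring each $F_j(X) = c_j \prod_i (X - \alpha_{ji})^{n_{ji}}$ over $\VF(\gC)$ and applying the standard ultrametric estimate $\rv(a - \alpha) = \rv(a_0 - \alpha)$ whenever $\vv(a - a_0) > \vv(a_0 - \alpha)$, I check that on any $G$-coset $D = a_0 + G$ meeting no root of $\prod_j F_j$, each $\rv(F_j(a))$ is constant as $a$ ranges over $D$ (the open and closed cases for $G$ differ only by a strict versus weak inequality at $\rad G$). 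Consequently $f$ is constant on every such $D$ as a function into $\mdl P(\RV^m)$. Since each $\alpha \in R$ lies in a single $G$-coset, there are at most $|R|$ exceptional cosets $D_1, \ldots, D_n$.

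For Part~(2), the exceptional set $\{D_1, \ldots, D_n\}$ is a finite, hence algebraic, $\0$-definable set of balls. When $G$ is closed, each $D_i$ is a closed ball, and Lemma~\ref{effectiveness} supplies definable centers $d_i \in D_i$; when $S$ is $(\VF, \Gamma)$-generated, Lemma~\ref{algebraic:balls:definable:centers} supplies such $d_i$ even if $G$ is open. Setting $f_{\downarrow}(D) = f(d)$ for any $d \in D$ when $D$ is non-exceptional (well-defined by Part~(1)) and $f_{\downarrow}(D_i) = f(d_i)$ otherwise yields the required definable descent.

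The main obstacle is the syntactic bookkeeping in Part~(1): one must confirm that the $\K$-term contributions to $\phi$ (which themselves depend on $X$ only through $\rv$ of the same polynomials) introduce no further source of non-constancy, and that the ultrametric estimate above is truly uniform in $\lbar t$. Once this is settled, Part~(2) is a clean application of the appropriate center lemma and presents no further difficulty.
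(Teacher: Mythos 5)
Your proof is correct, but it takes a genuinely different route from the paper's, and it is worth recording the contrast. The paper does not perform any syntactic analysis: it works model-theoretically with the sets
\[
U_{\lbar t}(D) = \{d \in D : \lbar t \in f(d)\}, \qquad
E_{\lbar t} = \{D \in \VF/G : \0 \neq U_{\lbar t}(D) \subsetneq D\},
\]
and shows each $E_{\lbar t}$ is finite via $C$-minimality (separating the positive from the negative boolean components of $U_{\lbar t}(D)$ and trapping one of them in a proper definable sub-ball), then shows $\bigcup_{\lbar t} E_{\lbar t}$ is finite by invoking Corollary~\ref{function:rv:to:vf:finite:image} on a definable map from $\RV^m$ into $\VF$. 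Part~(2) follows by Lemma~\ref{effectiveness} or Lemma~\ref{algebraic:balls:definable:centers} applied to that finite set of balls, just as you do. Your approach replaces the $C$-minimality argument with the explicit syntactic normal form: the occurring polynomials $F_j$ have coefficients in $\VF(S)$, so their root set $R$ is finite and $\0$-definable, and the ultrametric estimate $\rv(a - \alpha) = \rv(a_0 - \alpha)$ for $\vv(a - a_0) > \vv(a_0 - \alpha)$ shows that each $\rv(F_j)$ is constant on any $G$-coset disjoint from $R$. This is perfectly valid and in fact more concrete: it identifies the exceptional cosets as exactly those containing a root, and gives the explicit bound $n \leq |R|$, which the paper's proof does not yield. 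What you give up is generality: the paper's argument is purely structural and would transfer to any $C$-minimal setting where Corollary~\ref{function:rv:to:vf:finite:image} and the center lemmas are available, whereas yours is wedded to the explicit quantifier-eliminated syntax of $\ACVF_S(0,0)$ --- though of course this is exactly the kind of syntactic shortcut the paper advertises in its introduction as a distinctive feature of working in $\ACVF_S(0,0)$.

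One small point worth making explicit in your write-up: for Part~(2) to produce a \emph{definable} $f_{\downarrow}$, it is not enough that each exceptional coset $D_i$ contains an $\acl(\0)$-definable point from $R$ --- you need a definable choice of a single point in each $D_i$, uniformly. This is exactly what the center lemmas supply, and your appeal to them is therefore not optional, even though the roots in $R$ are already ``algebraic''. Your phrasing suggests you understood this, but it deserves emphasis since it is the only place in your argument where the case split ($G$ closed versus $S$ being $(\VF,\Gamma)$-generated) actually matters.
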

\begin{proof}
For any $D \in \VF / G$ and any $\lbar t \in \RV^m$ let $U_{\lbar t}(D) = \set{d \in D : \lbar t \in f(d)}$. Let
\[
E_{\lbar t} = \set{D \in \VF / G : U_{\lbar t}(D) \neq \0 \text{ and } U_{\lbar t}(D) \neq D}.
\]
Note that $E_{\lbar t}$ is $\lbar t$-definable. Let $A = \{\lbar t \in \RV^m : E_{\lbar t} \neq \0 \}$, which is definable. If $D \notin E_{\lbar t}$ for any $\lbar t$ then $f \rest D$ is constant. So, without loss of generality, $A \neq \0$. For any $\lbar t \in A$, by $C$-minimality and compactness, there is a $\lbar t$-definable function $h_{\lbar t}$ on $E_{\lbar t}$ such that, for each $D \in E_{\lbar t}$,
\begin{enumerate}
 \item $h_{\lbar t}(D)$ is either the union of the positive boolean components of $U_{\lbar t}(D)$ or the union of the negative boolean components of $U_{\lbar t}(D)$,
 \item there is a $D$-definable closed ball $\gb_{D} \sub D$ that properly contains $h_{\lbar t}(D)$.
\end{enumerate}
Since $h_{\lbar t}(E_{\lbar t})$ is $\lbar t$-definable, by $C$-minimality again, $E_{\lbar t}$ must be finite. By Lemma~\ref{effectiveness}, there is a $\lbar t$-definable subset $A_{\lbar t}$ such that $\abs{A_{\lbar t} \cap \gb_D} = 1$. Let $g_D : A \fun \VF$ be the $D$-definable function given by $\lbar t \efun A_{\lbar t} \cap \gb_D$ if $D \in E_{\lbar t}$ and $\lbar t \efun 0$ otherwise. By Corollary~\ref{function:rv:to:vf:finite:image}, $g_D(A)$ is finite. Since $g_D(A) \sub D \cup \set{0}$, by $C$-minimality, the definable subset $\bigcup_{D \in \VF / G} g_D(A)$ must be finite and hence $\bigcup_{\lbar t \in A} E_{\lbar t}$ is finite. This establishes~(1). By Lemma~\ref{effectiveness} or Lemma~\ref{algebraic:balls:definable:centers}, $\bigcup_{\lbar t \in A} E_{\lbar t}$ has definable centers. This establishes~(2).
\end{proof}

\begin{rem}\label{fun:quo:rep:mult}
Let $G$ be a definable multiplicative subgroup of $\VF^{\times}$. Then $G$ is an open ball around $1$ or a closed ball around~$1$ or $\OO \mi \MM$. It is easy to see that if $G$ is not $\OO \mi \MM$ then the proof of Lemma~\ref{fun:quo:rep} also works with respect to $G$. If $G$ is $\OO \mi \MM$ then we can modify the proof as follows: in the construction of $h_{\lbar t}$, $\gb_{D} \sub D$ is a finite union of $\rv$-balls and contains $h_{\lbar t}(D)$.
\end{rem}

\section{Categories of definable subsets}\label{section:category}

\subsection{Dimensions}

For the categories of definable sets associated with $\ACVF_S(0,0)$ and their Grothendieck groups, two notions of dimension with respect to the two sorts are needed. Some basic properties of them are stated below.

Let $A \sub \VF^n \times \RV^m$ be a definable subset.

\begin{defn}
The \emph{$\VF$-dimension} of $A$, denoted by $\dim_{\VF}(A)$, is the
smallest number $k$ such that there is a definable finite-to-one function $f: A \fun \VF^k \times \RV_{\Gamma}^l$.
\end{defn}

\begin{lem}\label{dim:RV:fibers}
For any natural number $k$, $\dim_{\VF}(A) \leq k$ if and only if there is a definable injection $f : A \fun \VF^k \times \RV_{\Gamma}^l$ for some $l$.
\end{lem}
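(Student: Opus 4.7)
The backward direction ($\Leftarrow$) is immediate: any definable injection is in particular finite-to-one, so its existence witnesses $\dim_{\VF}(A) \leq k$ by the definition. For the forward direction, suppose $g : A \fun \VF^k \times \RV_{\Gamma}^l$ is a definable finite-to-one function, with $A \sub \VF^n \times \RV^m$. The plan is to enlarge $g$ by adjoining $\RV$-valued coordinates that separate points within each fiber, turning it into an injection.

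First I would absorb the ambient $\RV$-part of $A$ into the target, setting $\tilde g(x) := (g(x), \prv(x))$, a definable finite-to-one map $A \fun \VF^k \times \RV_{\Gamma}^l \times \RV^m$. Along each fiber of $\tilde g$ the $\RV$-projection $\prv$ is constant, so the $\VF$-projection $\pvf$ is injective on each fiber; consequently $\pvf(\tilde g^{-1}(y))$ is a definable finite subset of $\VF^n$ for each $y$, in bijection with $\tilde g^{-1}(y)$ itself. By compactness the sizes of these fibers are bounded uniformly by some integer $N$.

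The second step is to embed these fibers into $\RV$ uniformly. Applying Lemma~\ref{finite:VF:project:RV} in a parametric form to the uniformly finite definable family $\{\pvf(\tilde g^{-1}(y))\}_y$ produces a definable map $\eta : A \fun \RV^{m'}$ whose restriction to each fiber of $\tilde g$ is injective. Then $f(x) := (g(x), \prv(x), \eta(x))$ is a definable injection from $A$ into $\VF^k \times \RV_{\Gamma}^{l + m + m'}$, since the pair $(g, \prv)$ already separates distinct fibers of $\tilde g$ and $\eta$ separates points within each fiber.

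The main obstacle is establishing the parametric form of Lemma~\ref{finite:VF:project:RV}. I expect to deduce it via a compactness argument: the non-uniform proof of that lemma, which rests on \cmin-minimality and the exchange principle (Lemma~\ref{exchange}), adjoins $\RV$-coordinates of the form $\rv(F(\lbar x))$ for suitable polynomials $F$ until the set is separated, and the number of such coordinates needed to separate a finite set of size $\leq N$ in $\VF^n$ is bounded in terms of $n$ and $N$ alone. Finitely many formula-schemes therefore suffice across the entire family $\{\pvf(\tilde g^{-1}(y))\}_y$, and a definable case-split on $y$ patches them into a single definable $\eta$ valued in a fixed $\RV^{m'}$, after padding with a distinguished $\RV$-element to equalise arities.
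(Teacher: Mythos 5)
Your proof is correct and follows essentially the same route as the paper: fix a finite-to-one $g$, apply Lemma~\ref{finite:VF:project:RV} fiber-by-fiber (with the fiber's parameter as an extra parameter) to get injections of the finite fibers into a power of $\RV$, glue these into a single definable map by compactness (finitely many formula-schemes suffice; pad arities), and pair with $g$ to obtain the injection. The only cosmetic difference is that you explicitly peel off the $\RV$-coordinates via $\prv$ so that Lemma~\ref{finite:VF:project:RV} is applied literally to finite subsets of $\VF^n$, whereas the paper applies it directly to the fibers $g^{-1}(\lbar a,\lbar t)\subseteq\VF^n\times\RV^m$ and leaves this bookkeeping implicit; your version is the more careful reading of the same argument.
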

\begin{proof}
Suppose that $\dim_{\VF}(A )\leq k$. Let $g : A \fun \VF^k \times
\RV_{\Gamma}^l$ be a definable finite-to-one function. For every $(\lbar a,
\lbar t) \in g(A)$, since $g^{-1}(\lbar a, \lbar t)$ is finite, by
Lemma~\ref{finite:VF:project:RV}, there is an $(\lbar a, \lbar t)$-definable injection $h_{\lbar a, \lbar t} : g^{-1}(\lbar a,
\lbar t) \fun \RV_{\Gamma}^j$ for some $j$. By compactness, there is a
definable function $h : A \fun \RV_{\Gamma}^j$ for some $j$ such that $h
\rest g^{-1}(\lbar a, \lbar t)$ is injective for every $(\lbar a,
\lbar t) \in g(A)$. Then the function $f$ on $A$ given by
\[
(\lbar b, \lbar s) \efun (g(\lbar b, \lbar s), h(\lbar b, \lbar s))
\]
is as desired. The other direction is trivial.
\end{proof}

\begin{lem}\label{dim:VF:pullback}
Let $f : A \fun \RV_{\Gamma}^l$ be a definable function. Then
\[
\dim_{\VF}(A) = \max \{\dim_{\VF} (f^{-1}(\lbar t)) : \lbar t \in \RV_{\Gamma}^l \}.
\]
\end{lem}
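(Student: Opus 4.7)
The plan is to prove the two inequalities separately. The inequality $\dim_{\VF}(A) \geq \max \{\dim_{\VF}(f^{-1}(\lbar t)) : \lbar t \in \RV_{\Gamma}^l\}$ is the easy direction. Setting $k = \dim_{\VF}(A)$, I would fix a witnessing definable finite-to-one map $g : A \fun \VF^k \times \RV_{\Gamma}^{l'}$. Then, for every $\lbar t \in f(A)$, the restriction $g \rest f^{-1}(\lbar t)$ is a definable finite-to-one map from $f^{-1}(\lbar t)$ into $\VF^k \times \RV_{\Gamma}^{l'}$, so $\dim_{\VF}(f^{-1}(\lbar t)) \leq k$ by definition.

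For the reverse inequality, suppose that $k$ is the claimed maximum. For each $\lbar t \in f(A)$, since $\dim_{\VF}(f^{-1}(\lbar t)) \leq k$, Lemma~\ref{dim:RV:fibers} furnishes a $\lbar t$-definable injection $h_{\lbar t} : f^{-1}(\lbar t) \fun \VF^k \times \RV_{\Gamma}^{j_{\lbar t}}$ for some $j_{\lbar t}$. I would then invoke compactness, exactly as in the proof of Lemma~\ref{dim:RV:fibers}, to uniformize: there is a single $j$ and a single definable function $h : A \fun \VF^k \times \RV_{\Gamma}^{j}$ whose restriction to each fiber $f^{-1}(\lbar t)$ is injective.

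Finally, I would pair $h$ with $f$ to obtain the definable map
\[
F : A \fun \VF^k \times \RV_{\Gamma}^{j+l}, \qquad (\lbar b, \lbar s) \efun (h(\lbar b, \lbar s), f(\lbar b, \lbar s)).
\]
Two points of $A$ lying in the same $f$-fiber are separated by $h$, and two points in distinct fibers are separated by $f$, so $F$ is a definable injection. Applying Lemma~\ref{dim:RV:fibers} once more gives $\dim_{\VF}(A) \leq k$, which closes the argument.

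The only step that requires any care is the compactness uniformization of the family $(h_{\lbar t})$; however, this is the very same manoeuvre already carried out in Lemma~\ref{dim:RV:fibers}, so I do not anticipate a genuine obstacle, merely a routine repetition of that argument.
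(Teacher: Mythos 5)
Your proposal is correct and follows essentially the same route as the paper: fiber-wise injections into $\VF^k \times \RV_{\Gamma}^{j}$ via Lemma~\ref{dim:RV:fibers}, uniformized by compactness, then paired with $f$ to produce a global injection. The paper dismisses the easy direction as ``trivial''; you spell it out, which is harmless.
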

\begin{proof}
Let $\max \{ \dim_{\VF} (f^{-1}(\lbar t)) : \lbar t \in \RV_{\Gamma}^l \} =
k$. By Lemma~\ref{dim:RV:fibers}, for every $\lbar t \in \ran(f)$,
there is a $\lbar t$-definable injective function $h_{\lbar t} :
f^{-1}(\lbar t) \fun \VF^k \times \RV_{\Gamma}^j$ for some $j$. By
compactness, there is a definable function $h : A \fun \VF^k
\times \RV_{\Gamma}^j$ for some $j$ such that $h \rest f^{-1}(\lbar t)$ is
injective for every $\lbar t \in \ran(f)$. Then the function on $A$
given by $(\lbar b, \lbar s) \efun (h(\lbar b, \lbar s), f(\lbar
b, \lbar s))$ is injective and hence $\dim_{\VF}(A) \leq k$. The
other direction is trivial.
\end{proof}

For any $(\lbar a, \lbar t) \in A$ let $\td(\lbar a, \lbar t)$ be the transcendental degree of $\VF(\seq{\lbar a})$ over $\VF(S)$. Let $\td(A) = \max \{\td(\lbar a, \lbar t) : (\lbar a, \lbar t) \in A \}$.

\begin{lem}\label{VF:dim:tran:deg}
$\dim_{\VF}(A) = \td(A)$.
\end{lem}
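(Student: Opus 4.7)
My plan is to prove both inequalities separately, with the easier direction coming from Lemma~\ref{dcl:to:ac} and the harder one requiring a compactness/saturation argument to produce a uniform projection.

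For $\dim_{\VF}(A) \geq \td(A)$, suppose $f : A \fun \VF^k \times \RV_{\Gamma}^l$ is a definable finite-to-one function witnessing $\dim_{\VF}(A) = k$. For any $(\lbar a, \lbar t) \in A$ with image $(\lbar b, \lbar u)$, finite-to-oneness gives $\lbar a \in \acl(\lbar b, \lbar u)$. Since $\Gamma$-elements are coded by $\RV$-elements, Lemma~\ref{dcl:to:ac} yields $\lbar a \in \VF(S)(\lbar b)^{\alg}$, so $\td(\VF(S)(\lbar a)/\VF(S)) \leq k$. Taking the maximum over $(\lbar a, \lbar t) \in A$ gives $\td(A) \leq k$.

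For $\dim_{\VF}(A) \leq \td(A)$, let $k = \td(A)$ and (after trivial adjustment when $n<k$) assume $k \leq n$. For each $(\lbar a, \lbar t) \in A$, choose $E \sub I_n$ with $|E| = k$ such that $\lbar a_E$ contains a transcendence basis of $\VF(S)(\lbar a)$ over $\VF(S)$; then $\lbar a_{\tilde E}$ is field-algebraic over $\VF(S)(\lbar a_E)$. Consider the projection $g_E : A \fun \VF^k \times \RV^m$ given by $(\lbar a', \lbar t') \efun (\lbar a'_E, \lbar t')$. Its fiber over $(\lbar a_E, \lbar t)$ inside $A$ is a definable subset of $\{\lbar a_E\} \times \VF^{n-k} \times \{\lbar t\}$. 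Every element there must again have transcendence degree $\leq k$ over $\VF(S)$; since $\lbar a_E$ is already a transcendence basis, the $\tilde E$-coordinates live in $\VF(S)(\lbar a_E)^{\alg}$, and by $C$-minimality (Theorem~\ref{c:min:acvf}) a definable subset of $\VF^{n-k}$ whose elements all lie in a fixed algebraic closure is finite.

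To globalize, set $C_{E,N} = \{(\lbar a', \lbar t') \in A : |g_E^{-1}(g_E(\lbar a', \lbar t')) \cap A| \leq N\}$ for each $|E| = k$ and $N \in \N$. These are $\0$-definable, and the argument above shows $A = \bigcup_{E,N} C_{E,N}$. By saturation of $\gC$ this cover admits a finite subcover by some $C_{E_1,N_1}, \ldots, C_{E_r,N_r}$. Refine it to a definable partition $A = A_1 \uplus \cdots \uplus A_r$ with $A_j \sub C_{E_j,N_j}$. Using that $\Gamma(S)$ is nontrivial, pick $r$ distinct elements $\gamma_1, \ldots, \gamma_r \in \Gamma(S)$ (e.g.\ integer multiples of a fixed nonzero $\gamma \in \Gamma(S)$) and define $f : A \fun \VF^k \times \RV_{\Gamma}^{m+1}$ by $f(\lbar a, \lbar t) = (\lbar a_{E_j}, \lbar t, \gamma_j)$ for $(\lbar a, \lbar t) \in A_j$. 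The labels $\gamma_j$ ensure images from different pieces are disjoint, so $f$ is at most $\max_j N_j$-to-one, giving $\dim_{\VF}(A) \leq k$.

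The main obstacle is the transition from pointwise algebraicity to a single definable finite-to-one function: one must verify that the conditions $C_{E,N}$ are actually definable (which uses compactness applied to the model-theoretic $\acl$-relation, i.e.\ uniform finiteness of the fibers), and that the cover of $A$ by the countably many $C_{E,N}$ reduces to a finite subcover (saturation of $\gC$). The use of $\Gamma(S)$-labels to glue the pieces without overlap is the reason for requiring $\Gamma(S)$ to be nontrivial in the standing hypothesis.
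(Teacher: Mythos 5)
Your first inequality, $\td(A) \leq \dim_{\VF}(A)$, is correct and is essentially the paper's argument (the paper phrases it via an injection obtained from Lemma~\ref{dim:RV:fibers} rather than a finite-to-one map directly, but this is cosmetic).

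The second inequality has a genuine gap: the claimed cover $A = \bigcup_{E,N} C_{E,N}$ is false in general. Your sets $C_{E,N}$ are built from the fiber $g_E^{-1}(g_E(\lbar a', \lbar t')) \cap A$ \emph{inside $A$}, and that fiber need not be finite at points of low transcendence degree. Concretely, take $A = \{(a,0) : a \in \VF\} \cup \{(0,b) : b \in \VF\} \subseteq \VF^2$ (no $\RV$ coordinates). Then $\td(A) = 1 = k$, but $(0,0) \in A$ lies in no $C_{E,N}$: for $E = \{1\}$ the fiber over $0$ is $\{0\} \times \VF$ and for $E = \{2\}$ it is $\VF \times \{0\}$, both infinite. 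The step that fails is ``since $\lbar a_E$ is already a transcendence basis, the $\tilde E$-coordinates live in $\VF(S)(\lbar a_E)^{\alg}$'': when $\td(\lbar a, \lbar t) < k$, the tuple $\lbar a_E$ only \emph{contains} a transcendence basis of length $< k$, so other points in the fiber can contribute transcendence degree through their $\tilde E$-coordinates. The paper's proof avoids this by not using the fiber in $A$ at all: via compactness it produces, on each piece $A_i$ of a definable partition, a formula $\phi_i(\lbar X, \pr_{E_i}(\lbar a))$ whose solution set is a \emph{finite} set containing $\pr_{\tilde E_i}(\lbar a)$ --- a witness to $\lbar a_{\tilde E_i} \in \acl(\lbar a_{E_i})$, which always exists pointwise and, crucially, has nothing to do with the ambient fiber in $A$. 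Lemma~\ref{finite:VF:project:RV} then injects these finite sets into $\RV$-space and the finitely many pieces are glued with labels. If you replace your $C_{E,N}$ by sets of the form $\{(\lbar a, \lbar t) \in A : \phi(\lbar X, \pr_E(\lbar a)) \text{ defines a set of size} \leq N \text{ containing } \pr_{\tilde E}(\lbar a)\}$, ranging also over formulas $\phi$, the cover claim becomes true and your compactness and labeling steps go through.

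A minor point: the finiteness of a definable set contained in a fixed algebraic closure is not a consequence of $C$-minimality (Theorem~\ref{c:min:acvf}, which concerns subsets of $\VF^1$); it is the saturation/compactness fact that a $\lbar c$-definable subset of $\acl(\lbar c)$ must be finite.
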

\begin{proof}
Let $\dim_{\VF}(A) = k$ and $\td(A) = k'$. By Lemma~\ref{dim:RV:fibers}, there is a definable injection $f : A \fun \VF^k \times \RV_{\Gamma}^l$ for some $l$. For any $(\lbar a, \lbar t) \in A$, if $f(\lbar a, \lbar t) = (\lbar b, \lbar s)$ then, by Lemma~\ref{dcl:to:ac}, $\VF(\seq{\lbar a}) \sub \VF(S)(\lbar b)^{\alg}$ and hence $\td(\lbar a, \lbar t) \leq k$. So $k' \leq k$.

On the other hand, for any $\lbar a = (a_1, \ldots, a_n) \in \pvf(A)$, there is a subset $E \sub \set{1, \ldots, n}$ of size $k'$ such that for any $j \in \tilde{E}$ we have $a_j \in \VF(\la \pr_E(\lbar a) \ra)^{\alg}$. Therefore, by compactness, there are a partition $A_i$ of $\pvf(A)$, subsets $E_i \sub \set{1, \ldots, n}$ of size $k'$, and formulas $\phi_i(\lbar X, \lbar Y)$ such that, for every $\lbar a \in A_i$, the subset $B_i \sub \VF^{n - k'}$ defined by $\phi_i(\lbar X, \pr_{E_i}(\lbar a))$ is finite and $\pr_{\tilde{E}_i}(\lbar a) \in B_i$. By compactness and Lemma~\ref{finite:VF:project:RV}, there is a definable injection $A \fun \VF^{k'} \times \RV_{\Gamma}^l$ for some $l$ and hence $k \leq k'$.
\end{proof}

It follows that additional parameters cannot change the $\VF$-dimension of a definable subset and hence there is no need to specify parameters when we discuss $\VF$-dimension.

\begin{cor}\label{VF:dim:mono}
If $f : A \fun \mdl P (\VF^{n'} \times \RV^{m'})$ is a definable function with finite images then $\dim_{\VF}(A) \geq \dim_{\VF}(\bigcup f(A))$.
\end{cor}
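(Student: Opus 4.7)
The plan is to reduce to transcendence degree via Lemma~\ref{VF:dim:tran:deg} and then transfer the bound across the multi-valued map $f$ using Lemma~\ref{dcl:to:ac}. The key observation is that since $f$ has finite images, every point in $\bigcup f(A)$ is algebraic over some point of $A$, and algebraic closure does not increase the transcendence degree of the $\VF$-coordinates over $\VF(S)$.

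More concretely, I would proceed as follows. Fix $(\lbar b, \lbar s) \in \bigcup f(A)$, and pick $(\lbar a, \lbar t) \in A$ witnessing this, i.e.\ $(\lbar b, \lbar s) \in f(\lbar a, \lbar t)$. Because $f$ is definable and $f(\lbar a, \lbar t)$ is a finite subset of $\VF^{n'} \times \RV^{m'}$, the tuple $(\lbar b, \lbar s)$ lies in $\acl(\lbar a, \lbar t)$; in particular each $\VF$-coordinate $b_i$ satisfies $b_i \in \acl(\lbar a, \lbar t)$. Lemma~\ref{dcl:to:ac} then yields $b_i \in \VF(S)(\lbar a)^{\alg}$, so $\VF(\la \lbar b, \lbar s \ra) \sub \VF(S)(\lbar a)^{\alg}$. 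Consequently
\[
\td(\lbar b, \lbar s) \le \td(\VF(S)(\lbar a)/\VF(S)) = \td(\lbar a, \lbar t).
\]

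Taking the maximum over $(\lbar b, \lbar s) \in \bigcup f(A)$, we obtain $\td(\bigcup f(A)) \le \td(A)$. Applying Lemma~\ref{VF:dim:tran:deg} to both sides gives $\dim_{\VF}(\bigcup f(A)) \le \dim_{\VF}(A)$, as required. No serious obstacle is expected: the only subtle point is remembering that $f$ is multi-valued, which is handled by choosing, for each target point, a witnessing preimage and invoking $\acl$ rather than $\dcl$; this is exactly the content of the finite-images hypothesis.
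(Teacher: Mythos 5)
Your proof is correct and is exactly the intended argument: the paper leaves Corollary~\ref{VF:dim:mono} without an explicit proof because it is meant to follow immediately from Lemma~\ref{VF:dim:tran:deg} together with Lemma~\ref{dcl:to:ac} (and the fact that a finite $(\lbar a, \lbar t)$-definable set lies in $\acl(\lbar a, \lbar t)$), which is precisely the chain you spelled out.
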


\begin{lem}\label{full:dim:open:poly}
$\dim_{\VF} (A) = n$ if and only if there is a $\lbar t \in \RV^m$ such that $\fib(A, \lbar t)$ contains an open polydisc.
\end{lem}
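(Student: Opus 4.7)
The plan is to prove the two directions separately.

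For the direction $(\Leftarrow)$: suppose some fiber $\fib(A, \lbar t)$ contains an open polydisc $\go(\lbar a, \lbar \gamma)$. By saturation of $\gC$, one finds a point $\lbar b \in \go(\lbar a, \lbar \gamma)$ with $b_1, \ldots, b_n$ algebraically independent over $\VF(S)$, so Lemma~\ref{VF:dim:tran:deg} gives $\dim_{\VF}(\go(\lbar a, \lbar \gamma)) = n$. Combined with Lemma~\ref{dim:VF:pullback} applied to the projection $\prv: A \fun \RV^m$, this yields $\dim_{\VF}(A) \geq n$. The reverse inequality is immediate from $A \sub \VF^n \times \RV^m$ via Lemma~\ref{dim:RV:fibers}.

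For the direction $(\Rightarrow)$: assume $\dim_{\VF}(A) = n$. By Lemma~\ref{VF:dim:tran:deg} there exists $(\lbar a, \lbar t) \in A$ with $\td(\lbar a) = n$, meaning $a_1, \ldots, a_n$ are algebraically independent over $\VF(S)$. I claim that the $\lbar t$-definable fiber $\fib(A, \lbar t) \sub \VF^n$ contains an open polydisc centered at $\lbar a$. The approach is purely syntactic: write a defining formula for $\fib(A, \lbar t)$ in the normal form of Definition~\ref{def:normal:form}, i.e.\ as a disjunction of conjunctions of $\RV$-literals, and fix a conjunction $\bigwedge_l \psi_l(\lbar X, \lbar t)$ satisfied by $\lbar a$.

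The crucial observation is that since the formula has parameters only in $S \cup \set{\lbar t}$ and the $\RV$-parameters $\lbar t$ do not enter as polynomial coefficients, every polynomial $F(\lbar X)$ occurring in the formula lies in $\VF(S)[\lbar X]$. The algebraic independence of $\lbar a$ over $\VF(S)$ then forces $F(\lbar a) \neq 0$ for every non-identically-zero such $F$. Standard continuity of polynomial evaluation in the valuation topology produces, for each such $F$, a finite $\gamma_F \in \Gamma$ such that $\rv(F(\lbar b)) = \rv(F(\lbar a))$ whenever $\vv(b_i - a_i) > \gamma_F$ for all $i$. Taking $\gamma = \max_F \gamma_F$ over the finitely many occurring polynomials, every $\rv$-value of an occurring polynomial is preserved on $\go(\lbar a, (\gamma, \ldots, \gamma))$. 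Because each $\RV$-literal (including those involving $\K$-terms, which are $\RV$-linear combinations of such $\rv(F_i(\lbar X))$-factors) is determined by these $\rv$-values together with the fixed parameters from $S$ and $\lbar t$, the truth value of $\bigwedge_l \psi_l$ is constant on $\go(\lbar a, (\gamma, \ldots, \gamma))$, and the entire open polydisc lies inside $\fib(A, \lbar t)$.

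The main technical point, rather than a real obstacle, is the careful handling of the $\K$-terms: one must verify that for $\lbar b$ close enough to $\lbar a$ not only does each $\rv(F_i(\lbar b))$ equal $\rv(F_i(\lbar a))$, but also the sum defining the $\K$-term remains defined (each summand still lying in $\K$), which is immediate from the same continuity estimate applied uniformly to the finitely many polynomials involved.
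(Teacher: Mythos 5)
Your proposal is correct, but the forward direction takes a genuinely different, and shorter, route than the paper's. For $(\Leftarrow)$ both you and the paper invoke Lemma~\ref{VF:dim:tran:deg} in essentially the same way. For $(\Rightarrow)$ the paper first reduces by compactness to $A \sub \VF^n$ and then runs an induction on $n$: at the inductive step it extracts an $\lbar a$-definable radius from $\Delta_{\lbar a}$ via $o$-minimality of $\Gamma$, re-applies the inductive hypothesis to the image of a map $B \fun \RV^k$ built from occurring polynomials of the radius formula, then re-applies it a second time to a fiber of an auxiliary set $W \sub \gp \times \VF$ to produce the last coordinate of the polydisc. Your argument sidesteps the induction entirely: starting from a point $(\lbar a, \lbar t) \in A$ with $a_1, \ldots, a_n$ algebraically independent over $\VF(S)$ (Lemma~\ref{VF:dim:tran:deg}), you exploit the syntactic form of $\lan{RV}$-formulas (Definition~\ref{def:normal:form}) to observe that every occurring polynomial of a defining formula of $A$ lies in $\VF(S)[\lbar X]$ — no $\RV$-parameters leak into the $\VF$-coefficients — hence is nonvanishing at the generic point $\lbar a$, and then the valuation-theoretic continuity estimate makes all the relevant $\rv$-values, and therefore the truth of the whole conjunction, locally constant near $\lbar a$. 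Your handling of the $\K$-terms is right: whether a $\K$-term is defined at a point, as well as its value when defined, is governed by the same $\rv$-values, so it is preserved under the same estimate. This direct, one-shot syntactic argument is an instance of the paper's stated philosophy that working with an explicit normal form simplifies many of the Hrushovski--Kazhdan lemmas, and here it replaces a two-level inductive construction with a single local-constancy observation.
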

\begin{proof}
The ``if'' direction is immediate by Lemma~\ref{VF:dim:tran:deg}. For the ``only if'' direction, by compactness, it is enough to show the case $A \sub \VF^n$. We do induction on $n$. For the base case $n=1$, since $A$ is infinite, the lemma simply follows from $C$-minimality. We proceed to the inductive step $n = m + 1$. For each $\lbar a \in \pr_{\leq m} (A) = B$, let $\Delta_{\lbar a}$ be the subset of those $\gamma \in \Gamma$ such that $\fib(A, \lbar a)$ contains an open ball of radius $\gamma$ (if $\fib(A, \lbar a)$ is finite then we set $\Delta_{\lbar a} = \set{\infty}$). Since $\Gamma$ is $o$-minimal, some element $\gamma_{\lbar a}$ in $\Delta_{\lbar a}$ is $\lbar a$-definable. By compactness and the inductive hypothesis, we may assume that $\dim_{\VF} (B) = m$ and there is a quantifier-free formula $\phi(Z, \lbar X)$ such that, for every $\lbar a \in B$, $\fib(A, \lbar a)$ contains an open ball whose radius $\gamma_{\lbar a}$ is defined by the formula $\phi(Z, \lbar a)$.

Let $G_i(\lbar X)$ be the occurring polynomials of $\phi(Z, \lbar X)$. Let $f : B \fun \RV^k$ be the definable function given by
\[
\lbar a \efun (\rv(G_1(\lbar a)), \ldots, \rv(G_k(\lbar a))).
\]
By Lemma~\ref{dim:VF:pullback}, for some $\lbar t \in \RV^k$, $\dim_{\VF}(f^{-1}(\lbar t)) = m$. By the inductive hypothesis, $f^{-1}(\lbar t)$ contains an open polydisc $\gp$. Note that, by the construction of $f$, for every $\lbar a \in \gp$ the formula $\phi(Z, \lbar a)$ defines the same element $\delta \in \Gamma$. Let $\lbar b \in \gp$. We may assume that $\gp$ is $\lbar b$-definable. Note that, by Lemma~\ref{VF:dim:tran:deg}, the $\VF$-dimension of $\gp$ with respect to the substructure $\dcl(\lbar b)$ is still $m$. Consider the $\lbar b$-definable subset
\[
W = \set{(\lbar a, c) \in A : \lbar a \in \gp \text{ and } \go(c, \delta) \sub \fib(A, \lbar a)}.
\]
Since there is a $\lbar d \in W$ such that the transcendental degree of $\VF(\dcl(\lbar d, \lbar b))$ over $\VF(\dcl(\lbar b))$ is $m+1$, by Lemma~\ref{VF:dim:tran:deg} again, $\dim_{\VF}(W) = m + 1$. By compactness, for some $c \in \pr_{m+1} (W)$, $\dim_{\VF} (\fib(W, c)) = m$. By the inductive hypothesis (with respect to the substructure $\dcl(\lbar b, c)$), $\fib(W, c)$ contains an open polydisc $\gq$. So $\go(c, \delta) \times \gq \sub A$, as required.
\end{proof}

\begin{cor}\label{VF:dim:rv:polydisc}
Suppose that $A$ contains an $\rv$-polydisc of the form
\[
\set{(0, \ldots, 0)} \times \rv^{-1}(\lbar t) \times \set{\lbar s},
\]
where $\lbar t \in (\RV^{\times})^k$. Then $\dim_{\VF}(A) \geq k$.
\end{cor}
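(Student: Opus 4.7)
The plan is to produce a single point of $A$ whose $\VF$-coordinates have transcendence degree at least $k$ over $\VF(S)$, and then invoke Lemma~\ref{VF:dim:tran:deg}. Since each $t_i \in \RV^{\times}$, the fiber $\rv^{-1}(t_i) \sub \VF$ is an open ball of radius $\vrv(t_i) \in \Gamma$, so $\rv^{-1}(\lbar t) = \prod_i \rv^{-1}(t_i) \sub \VF^k$ is a genuine open polydisc in the sense of the paper. This is precisely where the hypothesis $\lbar t \in (\RV^{\times})^k$ enters: were some $t_i$ equal to $\infty$, the corresponding factor would collapse to $\set{0}$ and we would lose a dimension.

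Next I would apply Lemma~\ref{full:dim:open:poly} to $\rv^{-1}(\lbar t)$ itself, regarded as a definable subset of $\VF^k$ with no $\RV$-component. Since it contains a full-dimensional open polydisc (namely itself), the lemma yields $\dim_{\VF}(\rv^{-1}(\lbar t)) = k$. The remark following Lemma~\ref{VF:dim:tran:deg}, that $\VF$-dimension is unaffected by adjoining parameters, is what licenses this application even though $\rv^{-1}(\lbar t)$ is only definable with the parameters $\lbar t$. Lemma~\ref{VF:dim:tran:deg} then produces a tuple $\lbar b \in \rv^{-1}(\lbar t)$ with $\td(\lbar b) = k$, i.e.\ $b_1, \ldots, b_k$ are algebraically independent over $\VF(S)$.

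To conclude, observe that by hypothesis the point $(\lbar 0, \lbar b, \lbar s)$ (padding with zeros in the $\VF$-coordinates that are fixed at $0$) lies in $A$. Since $\td$, as defined just before Lemma~\ref{VF:dim:tran:deg}, is the transcendence degree of $\VF(\seq{\cdot})$ over $\VF(S)$ and depends only on the $\VF$-coordinates (which follows immediately from the corollary to Lemma~\ref{dcl:to:ac}), we obtain
\[
\td(\lbar 0, \lbar b, \lbar s) \;=\; \td(\lbar b) \;=\; k.
\]
A final application of Lemma~\ref{VF:dim:tran:deg} to $A$ itself yields $\dim_{\VF}(A) \geq k$. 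I do not anticipate any real technical obstacle; the whole argument is essentially the concatenation of Lemma~\ref{full:dim:open:poly} and Lemma~\ref{VF:dim:tran:deg}, with the only hypothesis-sensitive step being the verification that $\rv^{-1}(\lbar t)$ is a nondegenerate open polydisc.
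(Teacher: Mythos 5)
Your proof is correct and takes essentially the intended route: the corollary is positioned in the paper as a direct consequence of Lemma~\ref{full:dim:open:poly} together with Lemma~\ref{VF:dim:tran:deg}, and your argument is a reasonable expansion of exactly that. The only slight detour is that you pass through Lemma~\ref{full:dim:open:poly} to conclude $\dim_{\VF}(\rv^{-1}(\lbar t)) = k$, when one could simply pick, by saturation, a tuple $\lbar b \in \rv^{-1}(\lbar t)$ with $b_1, \ldots, b_k$ algebraically independent over $\VF(S)$ (successively choosing $b_i$ outside $\VF(S)(b_1,\ldots,b_{i-1})^{\alg}$, using that each $\rv^{-1}(t_i)$ is an infinite open ball) and go straight to $\td(\lbar 0, \lbar b, \lbar s) = k$ and Lemma~\ref{VF:dim:tran:deg}; in any case the "if" direction of Lemma~\ref{full:dim:open:poly} reduces to Lemma~\ref{VF:dim:tran:deg} anyway, so your argument is not genuinely different, just slightly longer.
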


\begin{prop}\label{dim:vf:same:zar}
Suppose that $A \sub \VF^n$. Let $\overline A$ be the Zariski closure of $A$ and $k$ the Zariski dimension of $\overline A$. Then $\dim_{\VF}(A) = k$.
\end{prop}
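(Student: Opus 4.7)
The plan is to translate both sides into transcendence degrees and compare them. By Lemma~\ref{VF:dim:tran:deg}, $\dim_{\VF}(A) = \max\{\td(\bar a / \VF(S)) : \bar a \in A\}$, and this coincides with $\max\{\td(\bar a / \VF(S)^{\alg}) : \bar a \in A\}$ since $\VF(S)^{\alg}/\VF(S)$ is algebraic. On the algebraic-geometry side, standard dimension theory identifies $k = \dim \overline{A}$ with $\max\{\td(\bar a / \VF(S)^{\alg}) : \bar a \in \overline{A}\}$.

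The upper bound $\dim_{\VF}(A) \leq k$ is then immediate: every $\bar a \in A$ lies in $\overline{A}$, so its transcendence degree over $\VF(S)^{\alg}$ is at most $k$.

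For the lower bound, decompose $\overline{A} = V_1 \cup \cdots \cup V_r$ into its irreducible components (computed inside $\gC^n$) with $\dim V_1 = k$, and fix a small subfield $F \sub \gC$, finitely generated over $\VF(S)$, over which $V_1$ is defined. Set $U = V_1 \mi \bigcup_{j > 1} V_j$. First I would verify that $\overline{A \cap U} = V_1$: from $A \sub (A \cap U) \cup \bigcup_{j > 1} V_j$ one gets $V_1 \sub \overline{A} \sub \overline{A \cap U} \cup \bigcup_{j > 1} V_j$, and the irreducibility of $V_1$ together with $V_1 \not\sub \bigcup_{j > 1} V_j$ forces $V_1 \sub \overline{A \cap U}$. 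Next, I would exhibit a generic point of $V_1$ lying in $A \cap U$ by a saturation argument: the partial type consisting of ``$\bar x \in A \cap U$'' together with ``$\bar x \notin W$'' as $W$ ranges over proper $F$-definable Zariski-closed subsets of $V_1$ is finitely consistent, because $V_1$ is irreducible and so any finite union of such $W$'s is again a proper closed subset, which by the previous closure computation cannot swallow $A \cap U$. Any realization $\bar a$ in the saturated $\gC$ is a generic point of $V_1$ over $F$, so $\td(\bar a/F) = k$ and consequently $\td(\bar a / \VF(S)) \geq \td(\bar a / F) = k$.

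The main obstacle is precisely this extraction of a generic point of $V_1$ from inside the possibly non-constructible set $A \cap U$; the remaining bookkeeping between the three base fields $\VF(S)$, $\VF(S)^{\alg}$, and $F$ uses only that algebraic extensions leave transcendence degree unchanged, and the upper bound is essentially a direct application of Lemma~\ref{VF:dim:tran:deg}.
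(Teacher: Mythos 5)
Your proof is correct; both you and the paper reduce to transcendence degrees via Lemma~\ref{VF:dim:tran:deg}, and your easy direction ($\dim_{\VF}(A) \leq k$) is essentially the paper's, just stated without routing through the function field $K_P$ of an irreducible component. The hard direction is where you genuinely diverge. The paper argues $k \leq \dim_{\VF}(A)$ ``from above'': it uses compactness to cover $A$ by finitely many graph-like Zariski-closed sets $D_i$ of dimension at most $\td(A)$ (defined by polynomial equations witnessing the algebraic dependence of the remaining coordinates on a transcendence base), so that $\overline A \sub \bigcup_i D_i$ forces $\dim \overline A \leq \td(A)$. You argue $\dim_{\VF}(A) \geq k$ ``from below'': you peel off a top-dimensional irreducible component $V_1$, verify $\overline{A \cap U} = V_1$ where $U$ deletes the lower components, and then realize, by saturation over a small parameter set, a point of $A \cap U$ avoiding every proper $F$-definable closed subset of $V_1$ --- a generic point of $V_1$ lying inside $A$. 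The two arguments are complementary: the paper's is syntactic, exhibiting an explicit low-dimensional Zariski cover of $A$, while yours is semantic, exhibiting a single witness of maximal transcendence degree. Yours perhaps more directly expresses the geometric content (that $A$ and $\overline A$ share the same generics), whereas the paper's stays closer in flavor to the compactness-on-formulas arguments used elsewhere in the section.
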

\begin{proof}
Let $D$ be an irreducible component of $\overline A$ and $\lbar a \in D \cap A$. Let $P$ be the prime ideal of $\VF(S)^{\alg}[X_1, \ldots, X_n]$ such that $D = Z(P)$. Let $K_P$ be the corresponding quotient field. By general facts of commutative algebra (see, for example,~\cite[Chapter~11]{atiyah:mac}), the dimension of $D$ is equal to the transcendental degree of $K_P$ over $\VF(S)$. Since the latter is no less than the transcendental degree of $\VF(S)^{\alg}(\lbar a)$ over $\VF(S)$, we see that, by Lemma~\ref{VF:dim:tran:deg}, $k \geq \dim_{\VF}(A)$.

Let $\dim_{\VF}(A) =  \td(A) = k'$. If $k' = n$ then obviously $\overline A = \VF^n$ and hence $k = n$. Suppose $\dim_{\VF}(A) < n$. By compactness, there are Zariski closed subsets $D_i$ given by formulas of the form
\[
\bigwedge_{j \notin I_i} F_j(X_{i(1)}, \ldots, X_{i(k')}, X_j) = 0,
\]
where $I_i = \set{i(1), \ldots, i(k')}$ and each $F_j$ is a nonzero polynomial with coefficients in $\VF(S)$, such that $A \sub \bigcup_i D_i$. Then $\overline A \sub \bigcup_i D_i$ and hence each irreducible component of $\overline A$ is contained in some $D_i$, which implies $k \leq k'$.
\end{proof}

\begin{defn}
Let $B \sub \RV^m$ be a definable subset. The \emph{$\RV$-dimension} of $B$, denoted by $\dim_{\RV}(B)$, is the smallest number $k$ such that there is a definable finite-to-one function $f: B \fun \RV^k$ ($\RV^0$ is taken to be the singleton $\{\infty\}$).
\end{defn}

By the exchange principle (Lemma~\ref{exchange}), if $\dim_{\RV}(B) = k$ then for every $\lbar t \in B$ there is a subsequence $\lbar t' \sub \lbar t$ of length $k$ such that $\lbar t \in \acl(\lbar t')$. Also, by compactness, there is a $\lbar t \in B$ that contains an algebraically independent subsequence of length $k$ (in the model-theoretic sense); that is, for some subsequence $(t_{i(1)}, \ldots, t_{i(k)}) \sub \lbar t$ of length $k$, no $t_{i(j)}$ is in the algebraic closure of the other $k-1$ elements. So additional parameters cannot change the $\RV$-dimension of $B$ as well. Also, if $f : B \fun \mdl P (\RV^l)$ is a definable function then $\dim_{\RV}(B) \geq \dim_{\RV}(\bigcup f(B))$.

\begin{lem}\label{dim:rv:same:zar}
Let $\lbar s = (s_1, \ldots, s_m) \in \RV$, $\lbar \gamma = \vrv(\lbar s)$, and $B \sub \vrv^{-1}(\lbar \gamma)$ a definable subset. Let
\[
B_{\lbar s} = \set{(t_1/s_1, \ldots, t_m / s_m) : (t_1, \ldots, t_m) \in B}.
\]
Then $\dim_{\RV}(B)$ agrees with the Zariski dimension of $B_{\lbar s}$.
\end{lem}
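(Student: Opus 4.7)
The plan is to transport the claim via an obvious definable bijection to a statement about a definable subset of the residue field $\K$, and then reduce to the agreement between model-theoretic and Zariski dimension in an algebraically closed field, paralleling Proposition~\ref{dim:vf:same:zar}.

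The first step is to note that, since $\vrv(s_i) = \gamma_i$, the map
\[
\phi_{\lbar s} : \lbar t \efun (t_1/s_1, \ldots, t_m/s_m)
\]
is a definable bijection of $\vrv^{-1}(\lbar \gamma)$ onto $(\K^{\times})^m$: each coordinate $t_i/s_i$ has $\vrv$-value $\gamma_i - \gamma_i = 0$ and hence lies in $\K^{\times}$. Since $\dim_{\RV}$ is invariant under definable bijections, $\dim_{\RV}(B) = \dim_{\RV}(B_{\lbar s})$, and the claim reduces to showing that for every definable $C \sub (\K^{\times})^m$ the $\RV$-dimension of $C$ coincides with its Zariski dimension $k$.

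For $\dim_{\RV}(C) \leq k$ I would use Noether normalization: since $\K$ is algebraically closed, each irreducible component of the Zariski closure $\overline C$, of some dimension $d \leq k$, admits a finite surjection onto $\K^d \sub \K^k$. By compactness these fit together into a definable partition of $C$, on each piece of which a finite-to-one definable map into $\K^k \sub \RV^k$ is given, which in turn assemble into a single definable finite-to-one map $C \fun \RV^k$ (the finite index labeling the pieces being absorbable into one further $\RV$-coordinate and then eliminated, as in Lemma~\ref{dim:RV:fibers} on the $\VF$-side).

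For the reverse inequality I would appeal to the exchange principle in $\RV$ (Lemma~\ref{exchange}) together with compactness; as remarked right after the definition of $\dim_{\RV}$, this yields a point $\lbar t \in C$ containing a subsequence $\lbar t'$ of length $\dim_{\RV}(C)$ that is model-theoretically algebraically independent over $S$. The main obstacle, and the only nontrivial point, is to upgrade this to field-theoretic algebraic independence inside $\K$; equivalently, to check that for tuples in $\K$ the $\RV$-sort algebraic closure restricted to $\K$ coincides with field-theoretic algebraic closure inside $\K$. This I would establish either by invoking the classical fact that $\K$ is stably embedded in $\ACVF$ as a pure algebraically closed field, or directly by a $C$-minimality argument in the spirit of Lemma~\ref{dcl:to:ac} applied inside $\K^{\times}$: if $t \in \K$ lay in $\acl(\K(S)(\lbar t'))$ but were field-theoretically transcendental, every occurring polynomial in a formula isolating $t$ would remain generically nonzero, forcing cofinitely many elements of $\K^{\times}$ to satisfy the formula. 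Granting this, $\lbar t'$ gives an algebraically independent tuple of length $\dim_{\RV}(C)$ in the Zariski closure of $C$, so the Zariski dimension of $C$ is at least $\dim_{\RV}(C)$ by the standard commutative-algebra argument already used in the first half of Proposition~\ref{dim:vf:same:zar}.
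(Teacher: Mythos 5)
Your proof is correct, but it takes a genuinely different route from the paper's, and along the way you misdiagnose where the difficulty lies.

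The paper's proof is literally a pointer to Proposition~\ref{dim:vf:same:zar}, whose argument factors through Lemma~\ref{VF:dim:tran:deg} ($\dim_{\VF} = $ transcendence degree) and hence through Lemma~\ref{dcl:to:ac}. The ``verbatim'' transport to the residue field therefore implicitly requires the $\K$-analogue of Lemma~\ref{dcl:to:ac}, namely that for tuples from $\K$, model-theoretic algebraic closure intersected with $\K$ equals field-theoretic algebraic closure over $\K(S)$ (i.e.\ stable embeddedness of $\K$ as a pure ACF, which Remark~\ref{def:subseti:in:K} more or less records). Your proof sidesteps this: Noether normalization directly produces a definable finite-to-one map into $\K^k$ for the inequality $\dim_{\RV} \leq k$, and the exchange principle handles $k \geq \dim_{\RV}$. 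This is a legitimate alternative and in fact needs less, which brings me to the main correction.

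What you call ``the main obstacle, and the only nontrivial point'' is in fact the trivial direction, and you are arguing the unneeded inclusion. You obtain a tuple $\lbar t'$ that is model-theoretically algebraically independent, i.e.\ $t'_j \notin \acl(\lbar t' \setminus t'_j)$ for each $j$. Since $\K(S)(\lbar t' \setminus t'_j)^{\alg} \cap \K \sub \acl(\lbar t' \setminus t'_j)$ is immediate (polynomial equations are definable), model-theoretic independence trivially implies field-theoretic independence over $\K(S)$, and you are done. The nontrivial inclusion $\acl(\,\cdot\,) \cap \K \sub \K(S)(\,\cdot\,)^{\alg}$, which is what your $C$-minimality sketch would establish, is precisely what the paper's route (via the residue analogue of Lemma~\ref{dcl:to:ac}) would need to prove $\td \leq \dim_{\RV}$ — but your Noether normalization argument has already taken care of that side without it. So your proof is correct and even a bit leaner than the intended one, but the paragraph spent on the supposed obstacle is superfluous, and it is worth noticing that you never actually need the stable embeddedness fact you invoke.
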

\begin{proof}
The proof of Proposition~\ref{dim:vf:same:zar} works almost verbatim here.
\end{proof}

\begin{lem}\label{rv:dim:gamma:coset}
Let $B \sub \RV^m$ with $\dim_{\RV}(B) = k$. Then there is a definable sequence $\lbar \gamma \in \Gamma^m$ such that $\dim_{\RV}(B \cap \vrv^{-1}(\lbar \gamma)) = k$.
\end{lem}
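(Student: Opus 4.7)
The plan is to first produce some (possibly non-$\0$-definable) $\lbar \gamma^{\ast} \in \Gamma^m$ for which the fiber $B \cap \vrv^{-1}(\lbar \gamma^{\ast})$ already attains $\RV$-dimension $k$, and then to upgrade this to a $\0$-definable choice using $o$-minimality of $\Gamma$.

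For the first step I would invoke the remark made just before the lemma: by compactness there exists some $\lbar t \in B$ containing a length-$k$ algebraically independent subsequence. Setting $\lbar \gamma^{\ast} := \vrv(\lbar t)$, this very $\lbar t$ already lies in the fiber $B \cap \vrv^{-1}(\lbar \gamma^{\ast})$ and its independent subsequence is unchanged — algebraic independence is a property of the tuple itself, and $\RV$-dimension is insensitive to the addition of parameters (as noted after the definition of $\dim_{\RV}$). Hence $\dim_{\RV}(B \cap \vrv^{-1}(\lbar \gamma^{\ast})) \geq k$, and the reverse inequality is automatic from monotonicity of $\RV$-dimension under inclusion.

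Next I would argue that the set
\[
V := \set{\lbar \gamma \in \Gamma^m : \dim_{\RV}(B \cap \vrv^{-1}(\lbar \gamma)) \geq k}
\]
is $\0$-definable. By Lemma~\ref{dim:rv:same:zar}, upon fixing any $\lbar s \in \vrv^{-1}(\lbar \gamma)$, the $\RV$-dimension of $B \cap \vrv^{-1}(\lbar \gamma)$ equals the Zariski dimension of the $\lbar s$-translate $(B \cap \vrv^{-1}(\lbar \gamma))_{\lbar s} \sub (\K^{\times})^m$. Since the Zariski dimension of a uniformly definable family of constructible subsets of $\K^m$ is itself a definable function of the parameter (a standard fact in $\ACF(0)$), the condition cutting out $V$ is expressible by an $\lan{RV}$-formula, so $V$ is $\0$-definable; by the previous paragraph $V \neq \emptyset$.

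Finally, since $\Gamma$ is $o$-minimal and $\Gamma(S)$ is nontrivial by the standing assumption, every nonempty $\0$-definable subset of $\Gamma^m$ contains a $\0$-definable point: cell decomposition reduces the task to selecting a definable element out of each cell, and the presence of a nonzero constant $\alpha \in \Gamma(S)$ supplies the necessary definable Skolem functions. Choosing such $\lbar \gamma \in V$ finishes the proof. The main obstacle is the family-definability of $\RV$-dimension in the middle step; this is the one nontrivial ingredient and is handled via Lemma~\ref{dim:rv:same:zar} together with the constructibility of Zariski dimension in $\ACF(0)$, while the remaining steps are essentially pigeonhole plus $o$-minimal definable choice.
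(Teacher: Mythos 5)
Your conclusion is correct, but you take a genuinely different route from the paper, and there is one imprecision in the first step that deserves attention.

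On the route: the paper builds a definable nonempty subset of $\Gamma^m$ iteratively, passing from $B$ through auxiliary sets $B_0, B_1, \ldots, B_k$ that trade one $\RV$-coordinate for one $\Gamma$-coordinate at each stage, keeping only those $\alpha \in \Gamma$ over which the relevant fiber $\vrv^{-1}(\alpha) \cap \fib(B_j, \cdot)$ is infinite, with Lemma~\ref{inf:RV:def:gam} supplying definability at each stage; the output $B_k \sub \Gamma^m$ is exactly a set of value-tuples over which $B$ contains a tuple with $k$ independent $\RV$-coordinates. You instead establish, in one shot, that $V = \set{\lbar \gamma : \dim_{\RV}(B \cap \vrv^{-1}(\lbar \gamma)) \geq k}$ is definable, by reducing via Lemma~\ref{dim:rv:same:zar} to the definability of Zariski dimension in a uniformly definable family of constructible subsets of $\K^m$. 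That is a valid shortcut, but it imports a fact about $\ACF$ (definability of dimension in families) that the paper never states or uses; the iterative construction keeps the argument within the paper's own toolkit. Both proofs finish identically, by $o$-minimality of $\Gamma$ and the nontriviality of $\Gamma(S)$.

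On the imprecision: you justify $\dim_{\RV}(B \cap \vrv^{-1}(\lbar \gamma^*)) \geq k$ by saying that ``algebraic independence is a property of the tuple itself'' and invoking the insensitivity of $\dim_{\RV}$ to parameters. Neither slogan is correct as stated. The subsequence of $\lbar t$ is independent over $\acl(S)$, whereas lower-bounding the $\RV$-dimension of the fiber requires independence over $\acl(S \cup \set{\lbar \gamma^*})$, and for a \emph{fixed} tuple, enlarging the parameter set can destroy algebraic independence, so this does not simply come for free. The parameter-insensitivity of $\dim_{\RV}$ is a statement about the maximum over all tuples in the set, not about a single tuple's $\acl$-dimension. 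The correct justification is precisely the one the paper cites, namely (a mild generalization of) Corollary~\ref{alg:ind:imag}: adding $\Gamma$-parameters cannot create new algebraic dependencies among $\RV$-elements, so independence of $(t_1, \ldots, t_k)$ over $\acl(S)$ does persist over $\acl(S, \lbar \gamma^*)$. With that substituted in, your step one is sound; as written, the reason given is not.
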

\begin{proof}
By compactness, without loss of generality, we may assume that, for every $\lbar t \in B$, $\lbar t \in \acl(t_1, \ldots, t_k)$. Let
\[
B_0 = \set{(\pr_{\leq k} (\lbar t), \vv(\pr_{>k}(\lbar t))): \lbar t \in B} \sub \RV^k \times \Gamma^{m-k}.
\]
Clearly there is a natural number $q$ such that $\abs{\fib(B_0, \lbar t)} \leq q$ for every $\lbar t \in \pr_{\leq k} (B)$. For every $(\lbar t, \lbar \gamma) \in \pr_{>1}(B_0)$ let $D_{\lbar t, \lbar \gamma} \sub \Gamma$ be the subset such that $\alpha \in D_{\lbar t, \lbar \gamma}$ if and only if $\vrv^{-1}(\alpha) \cap \fib(B_0, (\lbar t, \lbar \gamma))$ is infinite. Since $\dim_{\RV}(B) = k$, by Corollary~\ref{alg:ind:imag}, we see that $D_{\lbar t, \lbar \gamma}$ is not empty for some $(\lbar t, \lbar \gamma) \in \pr_{>1}(B_0)$. Also, by Lemma~\ref{inf:RV:def:gam}, $D_{\lbar t, \lbar \gamma}$ is $(\lbar t, \lbar \gamma)$-definable. So, by compactness, the subset
\[
B_1 = \bigcup_{(\lbar t, \lbar \gamma) \in \pr_{>1}(B_0)} D_{\lbar t, \lbar \gamma} \times \set{(\lbar t, \lbar \gamma)} \sub \RV^{k-1} \times \Gamma^{m-k+1}.
\]
is nonempty and definable. We may repeat this procedure with respect to $B_1$ and get a definable subset $B_2 \sub \RV^{k-2} \times \Gamma^{m-k+2}$, and so on. Eventually we obtain a nonempty definable subset $B_k \sub \Gamma^{m}$ with the following property: if $\lbar \gamma \in B_k$ then there is a $(t_1, \ldots, t_k, \ldots, t_m) \in \vrv^{-1}(\lbar \gamma) \cap B$ such that $t_1, \ldots, t_k$ are algebraically independent and hence $\dim_{\RV}(\vrv^{-1}(\lbar \gamma) \cap B) = k$. Now, since $\Gamma$ is $o$-minimal, some $\lbar \gamma \in B_k$ is definable.
\end{proof}

\begin{defn}
The \emph{$\RV$-fiber dimension} of $A$, denoted by $\dim^{\fib}_{\RV}(A)$, is
\[
\max \set{\dim_{\RV} (\fib(A, \lbar a)) : \lbar a \in \pvf(A)}.
\]
\end{defn}

\begin{lem}\label{RV:fiber:dim:same}
Suppose that $f : A \fun A'$ is a definable bijection. Then $\dim^{\fib}_{\RV}(A) = \dim^{\fib}_{\RV} (A')$.
\end{lem}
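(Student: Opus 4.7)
By applying the same argument to $f^{-1}$, it suffices to establish the inequality $\dim^{\fib}_{\RV}(A') \geq \dim^{\fib}_{\RV}(A)$. Write $A \sub \VF^n \times \RV^m$ and $A' \sub \VF^{n'} \times \RV^{m'}$, and set $k = \dim^{\fib}_{\RV}(A)$. Pick $\lbar a \in \pvf(A)$ with $\dim_{\RV}(\fib(A, \lbar a)) = k$, and set $B = \fib(A, \lbar a) \sub \RV^m$, a subset of $\RV^m$ of $\RV$-dimension $k$.

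The restriction of $f$ to $\set{\lbar a} \times B$ is a definable bijection onto a subset of $A'$. Compose it with $\pvf$ to obtain a definable function $h: B \fun \VF^{n'}$. By Corollary~\ref{function:rv:to:vf:finite:image} the image $h(B)$ is finite, say equal to $\set{\lbar a'_1, \ldots, \lbar a'_r}$. The preimages $B_i = h^{-1}(\lbar a'_i)$ form a finite definable partition of $B$. The plan is to argue that at least one $B_i$ has $\RV$-dimension $k$ and then transport this dimension across the bijection $f$ to produce an $\RV$-fiber of $A'$ of dimension at least $k$.

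For the first point, $\RV$-dimension is additive under finite disjoint unions: if each $B_i$ admits a definable finite-to-one map into $\RV^k$, these maps glue by cases into a single definable finite-to-one map $B \fun \RV^k$, so $\dim_{\RV}(B) = \max_i \dim_{\RV}(B_i)$. Hence some $B_i$ satisfies $\dim_{\RV}(B_i) = k$. For the second point, the restriction $f \rest (\set{\lbar a} \times B_i)$ lands entirely in $\set{\lbar a'_i} \times \RV^{m'}$, so composing with the projection to $\RV^{m'}$ gives a definable bijection $B_i \fun B' \sub \fib(A', \lbar a'_i)$. Since definable bijections preserve $\RV$-dimension (pull back any finite-to-one witness), we conclude $\dim_{\RV}(\fib(A', \lbar a'_i)) \geq \dim_{\RV}(B') = k$, and therefore $\dim^{\fib}_{\RV}(A') \geq k$, as required.

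The substantive ingredient is Corollary~\ref{function:rv:to:vf:finite:image}, which breaks the fiber $B$ into finitely many pieces over which the $\VF$-part of $f$ is constant; the rest of the argument is a routine bookkeeping with $\RV$-dimension, and no real obstacle is expected.
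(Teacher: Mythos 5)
Your proof is correct and rests on the same key ingredient as the paper's, namely Corollary~\ref{function:rv:to:vf:finite:image} applied to $\pvf\circ f$ on an $\RV$-fiber of $A$. The bookkeeping differs slightly: the paper first uses compactness to build a single definable $h: A'\fun\RV^{k_2}$ that is fiberwise finite-to-one and then composes $h\circ f$ on \emph{every} fiber of $A$, while you fix a maximizing fiber $B$ of $A$, partition it by the finite image of $\pvf\circ f$, and transport a piece of full $\RV$-dimension; both are valid and land in the same place. One small slip in wording: $\RV$-dimension is not ``additive'' over finite disjoint unions but rather takes the maximum, which is in fact what your gluing argument establishes.
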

\begin{proof}
Let $\dim^{\fib}_{\RV}(A) = k_1$ and $\dim^{\fib}_{\RV}(A') =
k_2$. Since for every $\lbar b \in \pvf(A')$ there is a
$\lbar b$-definable finite-to-one function $h_{\lbar b} :
\fib(A', \lbar b) \fun \RV^{k_2}$, by compactness, there
is a definable function $h : A' \fun \RV^{k_2}$ such that $h
\rest \fib(A', \lbar b)$ is finite-to-one for every $\lbar
b \in \pvf (A')$. For every $\lbar a \in \pvf (A)$, by
Corollary~\ref{function:rv:to:vf:finite:image}, the subset $(\pvf
\circ f)(\fib(A, \lbar a))$ is finite. So the function
$g_{\lbar a}$ on $\fib(A, \lbar a)$ given by
\[
(\lbar a, \lbar t) \efun (h \circ f)(\lbar a, \lbar t)
\]
is $\lbar a$-definable and finite-to-one. So $k_1 \leq k_2$. Symmetrically
we also have $k_1 \geq k_2$ and hence $k_1 = k_2$.
\end{proof}

\subsection{Categories of definable subsets}

The class of objects and the class of morphisms of any
category $\mathcal{C}$ are denoted by $\ob \mathcal{C}$ and $\mor
\mathcal{C}$, respectively. By $A \in \mdl C$ we usually mean that $A$ is an object of $\mdl C$.

\begin{defn}[$\VF$-categories]\label{defn:VF:cat}
The objects of the category $\VF[k, \cdot]$ are the definable
subsets of $\VF$-dimension $\leq k$. The morphisms in this
category are the definable functions between the objects.

The category $\VF[k]$ is the full subcategory of $\VF[k, \cdot]$
of the definable subsets that have $\RV$-fiber dimension 0 (that is, all the $\RV$-fibers are finite). The category $\VF_*[\cdot]$ is
the union of the categories $\VF[k, \cdot]$. The category $\VF_*$
is the union of the categories $\VF[k]$.
\end{defn}

Note that, for any definable subset $A$, by Lemma~\ref{finite:VF:project:RV} and Lemma~\ref{dim:VF:pullback}, $\fib(A, \lbar t)$ is finite for every $\lbar t \in \prv(A)$ if and only if $A \in \VF[0, \cdot]$. Also, by Lemma~\ref{RV:fiber:dim:same}, $A \in \VF[k]$ if and only if there is a definable finite-to-one map $A \fun \VF^k$.

\begin{defn}\label{defn:weight}
For any tuple $\lbar t = (t_1, \ldots, t_n) \in \RV$, the
\emph{weight} of $\lbar t$ is the number $\abs{\set{i \leq n : t_i
\neq \infty}}$, which is denoted by $\wgt(\lbar t)$.
\end{defn}

\begin{defn}[$\RV$-categories]\label{defn:RV:cat}
An object of the category $\RV[k,\cdot]$ is a definable pair $(U,f)$, where $U \sub \RV^m$ for some $m$ and $f: U \fun \RV^k$ is a function ($\RV^0$ is taken to be the singleton $\{\infty\}$). We often denote the projections $\pr_i \circ f$ as $f_{i}$ and write $f$ as $(f_{1}, \ldots, f_{k})$. The \emph{companion
$U_f$} of $(U,f)$ is the subset $\set{(f(\lbar u), \lbar u) :
\lbar u \in U}$.

For any two objects $(U,f), (U',f')$ in $\RV[k,\cdot]$ and any function $F: U \fun U'$, if $\wgt (f(\lbar u)) \leq \wgt ((f' \circ F)(\lbar u))$ for every $\lbar u \in U$ then we say that $F$ is \emph{volumetric}. If
$F$ is definable, volumetric, and, for every $\lbar t \in \RV^k$ the subset $(f' \circ F)(f^{-1}(\lbar t))$ is finite, then it is a morphism in $\mor \RV[k,\cdot]$.

The category $\RV[k]$ is the full subcategory of $\RV[k,\cdot]$ of
the pairs $(U,f)$ such that $f: U \fun \RV^k$ is finite-to-one.

Direct sums (coproducts) over these categories are formed naturally:
\[
\RV[\leq i, \cdot] = \coprod_{0 \leq k \leq i} \RV[k, \cdot],\,\, \RV[*, \cdot] = \coprod_{0 \leq k} \RV[k, \cdot],
\]
and similarly for $\RV[\leq i]$ and $\RV[*]$.
\end{defn}

We usually just write $A$ for the object $(A, \id) \in \RV[k,\cdot]$. Also, for any object in $\RV[k,\cdot]$ of the form $(U, \pr_E)$, we may assume that $(U, \pr_E)$ is $(U, \pr_{\leq k})$ if this is more convenient. This should not cause any confusion in context.

One of the main reasons for the peculiar forms of the objects and
the morphisms in the $\RV$-categories is that each isomorphism
class in these categories may be ``lifted'' to an isomorphism
class in the corresponding $\VF$-category. See
Proposition~\ref{RV:iso:class:lifted} and
Corollary~\ref{L:semigroup:hom} for details.

A \emph{subobject} of an object $A$ of a $\VF$-category is just a
definable subset. A \emph{subobject} of an object $(U, f)$ of an
$\RV$-category is a definable pair $(A, g)$ with $A$ a subset of $U$ and $g = f \rest A$. Note that the inclusion map is
a morphism in both cases.

Notice that the cartesian product of two objects $A$, $B \in \VF[k, \cdot]$ may or may not be in $\VF[k, \cdot]$. On the other
hand, the cartesian product of two objects $(U,f)$, $(U',f') \in
\RV[k,\cdot]$ is the object $(U \times U', f \times f') \in
\RV[2k,\cdot]$, which is definitely not in $\RV[k,\cdot]$ if $k >
0$. Hence, in $\RV[*, \cdot]$ or $\RV[*]$, multiplying with a
singleton in general changes isomorphism class.

The categories $\VF_*[\cdot]$ and $\VF_*$ are formed through union
instead of direct sum or other means that induces more complicated
structure. The reason for this is that the main goal of the
Hrushovski-Kazhdan integration theory is to assign motivic
volumes, that is, elements in the Grothendieck groups of the
$\RV$-categories, to the definable subsets, or rather, the
isomorphism classes of the definable subsets, in the
$\VF$-categories, and the simplest categories that contain all the
definable subsets that may be ``measured'' in this motivic way are
$\VF_*[\cdot]$ and $\VF_*$. In contrast, the unions of the
$\RV$-categories are naturally endowed with the structure of
direct sum, which gives rise to graded Grothendieck semirings. The ring homomorphisms are obtained by ``passing to the limit''. These will be made precise in a sequel.

\begin{defn}
For any $(U,f) \in \RV[k,\cdot]$ and any $F \in \mor
\RV[k,\cdot]$, let $\bb E_k(f)$ be the function on $U$ given by
$\lbar u \efun (f(\lbar u), \infty)$, $\bb E_k(U, f) = (U, \bb
E_k(f))$, and $\bb E_k(F) = F$. Obviously
\[
\bb E_k : \RV[k,\cdot] \fun  \RV[k+1,\cdot]
\]
is a functor that is faithful, full, and
injective on objects. For any $i < j$ let $\bb E_{i,j} = \bb
E_{j-1} \circ \cdots \circ \bb E_{i}$ and $\bb E_{i,i} = \id$.
\end{defn}

Homomorphisms between Grothendieck groups shall be induced by the following fundamental maps:

\begin{defn}\label{def:L}
For any $(U,f) \in \RV[k,\cdot]$, let
\[
\mathbb{L}_k(U,f) = \bigcup \set{\rv^{-1}(f(\lbar u)) \times
\set{\lbar u}: \lbar u \in U}.
\]
The map $\mathbb{L}_k: \ob \RV[k,\cdot] \fun \ob \VF[k,\cdot]$ is
called the \emph{$k$th canonical $\RV$-lift}. The map
$\mathbb{L}_{\leq k}: \ob \RV[\leq k,\cdot] \fun \ob \VF[k,\cdot]$
is given by
\[
((U_1, f_1), \ldots, (U_k, f_k)) \efun \biguplus_{i \leq k} (\bb L_k \circ
\bb E_{i, k})(U_i, f_i).
\]
The map $\mathbb{L}: \ob \RV[*,\cdot] \fun \ob \VF_*[\cdot]$ is
simply the union of the maps $\mathbb{L}_{\leq k}$.
\end{defn}

For notational convenience, when there is no danger of confusion,
we shall drop the subscripts and simply write $\bb E$ and
$\mathbb{L}$ for these maps.

Observe that if $(U,f) \in \RV[k]$ then $\mathbb{L}(U, f) \in \VF[k]$ and hence the restriction $\mathbb{L}: \ob \RV[k] \fun
\ob \VF[k]$ is well-defined. Similarly we have the maps
\[
\mathbb{L}: \ob \RV[\leq k] \fun \ob \VF[k],\quad
\mathbb{L}: \ob \RV[*] \fun \ob \VF_*.
\]
Also note that $\rv(\mathbb{L}(U, f)) = U_f$ for $(U,f) \in \RV[k,\cdot]$.

For any two objects $(U,f), (U',f') \in \RV[k,\cdot]$ and any definable function $F : U \fun U'$ there is a naturally induced function $F_{f,f'} : U_f \fun U'_{f'}$ given by
\[
(f(\lbar u), \lbar u) \efun ((f' \circ F)(\lbar u), F(\lbar u)).
\]
We have:

\begin{lem}\label{RV:morphism:condition}
Suppose that $F$ is volumetric and there is a definable
function $F^{\uparrow} : \mathbb{L}(U, f) \fun \mathbb{L}(U', f')$
such that the diagram
\[
\bfig
  \square(0,0)/->`->`->`->/<600,400>[\mathbb{L}(U, f)`U_f`\mathbb{L}(U', f')`
   U'_{f'}; \rv`F^{\uparrow}`F_{f, f'}`\rv]
 \square(600,0)/->`->`->`->/<600,400>[U_f`U`U'_{f'}`U';
  \pr_{>k}``F`\pr_{>k}]
\efig
\]
commutes. Then $F$ is a morphism in $\RV[k,\cdot]$.
\end{lem}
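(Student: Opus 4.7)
The plan is to verify the one remaining condition from Definition~\ref{defn:RV:cat}, namely that $(f' \circ F)(f^{-1}(\lbar t))$ is finite for every $\lbar t \in \RV^k$ (definability and volumetricity are assumed). The key idea is to use $F^{\uparrow}$ to transfer the problem from the $\RV$-sort into the $\VF$-sort, where Corollary~\ref{function:rv:to:vf:finite:image} kills the question: any definable map from an $\RV$-subset into $\VF$ has finite image.

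First I would fix $\lbar t \in \RV^k$ (we may assume $f^{-1}(\lbar t) \neq \0$) and pick any point $\lbar a \in \rv^{-1}(\lbar t)$. By the very definition of $\bb L(U, f)$, for each $\lbar u \in f^{-1}(\lbar t)$ the pair $(\lbar a, \lbar u)$ lies in $\bb L(U, f)$, since $\rv(\lbar a) = \lbar t = f(\lbar u)$. Using $\lbar a$ as an additional parameter, define
\[
\phi : f^{-1}(\lbar t) \fun \VF^k, \qquad \phi(\lbar u) = \pr_{\leq k}\bigl(F^{\uparrow}(\lbar a, \lbar u)\bigr).
\]

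Next I would chase the left square of the displayed diagram. Writing $F^{\uparrow}(\lbar a, \lbar u) = (\lbar a', \lbar u')$ with $(\lbar a', \lbar u') \in \bb L(U', f')$, commutativity gives
\[
(\rv(\lbar a'), \lbar u') = \rv(F^{\uparrow}(\lbar a, \lbar u)) = F_{f, f'}(\rv(\lbar a), \lbar u) = F_{f, f'}(f(\lbar u), \lbar u) = ((f' \circ F)(\lbar u), F(\lbar u)),
\]
so $\rv(\phi(\lbar u)) = (f' \circ F)(\lbar u)$ for every $\lbar u \in f^{-1}(\lbar t)$.

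Finally I apply Corollary~\ref{function:rv:to:vf:finite:image} to $\phi$: it is a definable function from a subset of $\RV^m$ into $\VF^k$ (the extra parameter $\lbar a \in \VF$ is harmless, as the proof of that corollary goes through Lemma~\ref{dcl:to:ac}, which allows arbitrary $\VF$-parameters). Hence $\phi(f^{-1}(\lbar t))$ is finite, and therefore
\[
(f' \circ F)(f^{-1}(\lbar t)) = \rv\bigl(\phi(f^{-1}(\lbar t))\bigr)
\]
is finite as well. No real obstacle is anticipated; the whole content of the lemma is that $F^{\uparrow}$ together with the commuting diagram lets us replace an \emph{a priori} $\RV$-valued image by the $\rv$-image of a $\VF$-valued definable function, after which Corollary~\ref{function:rv:to:vf:finite:image} finishes the job.
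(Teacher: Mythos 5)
Your proof is correct, and it reaches the finiteness condition in Definition~\ref{defn:RV:cat} by a route that is close in spirit to but not identical with the paper's. The paper fixes a single $\lbar u \in U$ and varies $\lbar a$ over the whole fiber $\rv^{-1}(f(\lbar u))$: it first uses Lemma~\ref{dcl:to:ac} (via the lift $F^{\uparrow}$) to show $(f'_i \circ F)(\lbar u) \in \acl(\lbar a)$ for each such $\lbar a$, and then invokes Corollary~\ref{acl:VF:transfer:acl:RV} to pass from ``algebraic over every point of the polydisc'' to ``algebraic over $f(\lbar u)$,'' after which finiteness of the $\lbar t$-definable set $(f' \circ F)(f^{-1}(\lbar t)) \sub \acl(\lbar t)$ follows by compactness. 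You instead fix a single $\lbar a \in \rv^{-1}(\lbar t)$ and vary $\lbar u$ over $f^{-1}(\lbar t)$, package the VF-coordinates of $F^{\uparrow}(\lbar a, \cdot)$ as a definable function $\phi$ from an $\RV$-set into $\VF^k$, and apply Corollary~\ref{function:rv:to:vf:finite:image} (correctly noting that the extra $\VF$-parameter $\lbar a$ is absorbed by Lemma~\ref{dcl:to:ac}, i.e.\ by working over $\la S, \lbar a\ra$). Both proofs ultimately rest on Lemma~\ref{dcl:to:ac}; the paper's route gives the pointwise statement $(f'_i \circ F)(\lbar u) \in \acl(f(\lbar u))$, which is marginally more information, while yours goes directly to the fiberwise finiteness and avoids Corollary~\ref{acl:VF:transfer:acl:RV} entirely.
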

\begin{proof}
It is enough to show that, for every $\lbar u \in U$ and every $i \leq k$,
\[
(f'_i \circ F)(\lbar u) \in \acl(f(\lbar u)),
\]
which is equivalent to $(\pr_i \circ F_{f, f'})(f(\lbar u), \lbar u) \in \acl(f(\lbar u))$. To that end, fix a $\lbar u \in U$. Let $\lbar a \in \rv^{-1}(f(\lbar u))$ and $F^{\uparrow}(\lbar a, \lbar u) = (b_1, \ldots, b_k, \lbar u')$. By Lemma~\ref{dcl:to:ac}, $b_i \in \acl(\lbar a)$ and hence
\[
(\pr_i \circ F_{f, f'})(f(\lbar u), \lbar u) = \rv(b_i) \in \acl(\lbar a)
\]
for each $i \leq k$. By Corollary~\ref{acl:VF:transfer:acl:RV}, $\rv(b_i) \in \acl(f(\lbar u))$.
\end{proof}

\begin{rem}\label{RV:isomorphism:weaker:condition}
In Lemma~\ref{RV:morphism:condition}, if both $F$ and
$F^{\uparrow}$ are bijections then we may drop the assumption that
$F$ is volumetric, since it is guaranteed by the commutative
diagram and Corollary~\ref{VF:dim:rv:polydisc}.
\end{rem}

\subsection{Grothendieck groups}

We now introduce the Grothendieck groups associated with the
categories defined above. The construction is of course the same
for any reasonable category of definable sets of a first-order
theory. For concreteness, we shall limit our attention to the
present context.


Let $\mathcal{C}$ be a $\VF$-category or an $\RV$-category. For
any $A \in \ob \mathcal{C}$, let $[A]$ denote the isomorphism
class of $A$. The \emph{Grothendieck semigroup} of $\mathcal{C}$,
denoted by $\gsk\mathcal{C}$, is the semigroup generated by the
isomorphism classes $[A]$ of $\mathcal{C}$, subject to the relation
\[
[A] + [B] = [A \cup B] + [A \cap B].
\]
It is easy to check that $\gsk\mathcal{C}$ is actually a
commutative monoid, the identity element being $[\0]$ or $([\0],
\ldots)$. Since $\mathcal{C}$ always has disjoint unions, the
elements of $\gsk\mathcal{C}$ are precisely the isomorphism
classes of $\mathcal{C}$. If $\mathcal{C}$ is one of the
categories $\VF_*[\cdot]$, $\VF_*$, $\RV[*,\cdot]$, and $\RV[*]$
then it is closed under cartesian product. In this case,
$\gsk\mathcal{C}$ has a semiring structure with multiplication
given by
\[
[A][B] =[A \times B].
\]
Since the symmetry isomorphisms $A \times B \fun B \times A$ and
the association isomorphisms $(A \times B) \times C \fun A \times
(B \times C)$ are always present in these categories,
$\gsk\mathcal{C}$ is always a commutative semiring.

\begin{rem}
If $\mathcal{C}$ is either $\VF_*[\cdot]$ or $\VF_*$ then the
isomorphism class of definable singletons is the multiplicative
identity of $\gsk\mathcal{C}$. If $\mathcal{C}$ is
$\RV[*,\cdot]$ then we adjust multiplication when $\RV[0, \cdot]$
is involved as follows. For any $(U, f) \in \RV[0, \cdot]$ and
$(V, g) \in \RV[k, \cdot]$, let
\[
[(U, f)][(X, g)] = [(X, g)][(U, f)] = [(U \times V, g^*)],
\]
where $g^*$ is the function on $U \times V$ given by $(\lbar t, \lbar s) \efun g(\lbar s)$.
It is easily seen that, with this adjustment, $\gsk \RV[*,\cdot]$
becomes a filtrated semiring and its multiplicative identity
element is the isomorphism class of $(\infty, \id)$ in $\RV[0,
\cdot]$. Multiplication in $\gsk \RV[*]$ is adjusted in the same
way.
\end{rem}

\begin{defn}
A \emph{semigroup congruence relation} on $\gsk \mathcal C$ is a
sub-semigroup $R$ of the semigroup $\gsk \mathcal C \times \gsk
\mathcal C$ such that $R$ is an equivalence relation on $\gsk
\mathcal C$. Similarly, a \emph{semiring congruence relation} on
$\gsk \mathcal C$ is a sub-semiring $R$ of the semiring $\gsk
\mathcal C \times \gsk \mathcal C$ such that $R$ is an equivalence
relation on $\gsk \mathcal C$.
\end{defn}

Let $R$ be a semigroup congruence relation on $\gsk \mathcal C$
and $(x, y), (v, w) \in R$. Then $(x + v, y + v)$, $(y + v, y+w)
\in R$ and hence $(x+v, y+w) \in R$. Therefore the equivalence
classes of $R$ has a semigroup structure induced by that
of $\gsk \mathcal C$. This semigroup is denoted by $\gsk \mathcal
C / R$ and is also referred to as a Grothendieck semigroup.
Similarly, if $R$ is a semiring congruence relation on $\gsk
\mathcal C$ then $\gsk \mathcal C / R$ is actually a Grothendieck
semiring.

\begin{rem}\label{remark:semiring:cong}
Let $R$ be an equivalence relation on the semiring $\gsk \mathcal
C$. If for every $(x, y) \in R$ and every $z \in \gsk \mathcal C$
we have $(x +z, y+z) \in R$ and $(xz, yz) \in R$ then $R$ is
a semiring congruence relation.
\end{rem}

Let $(\Z^{\gsk \mathcal{C}}, \oplus)$ be the free abelian group
generated by the elements of $\gsk \mathcal{C}$ and $C$ the
subgroup of $(\Z^{\gsk \mathcal{C}}, \oplus)$ generated by all
elements of $(\Z^{\gsk \mathcal{C}}, \oplus)$ of the types
\[
(1\cdot x) \oplus ((-1) \cdot x), \quad
(1\cdot x) \oplus (1 \cdot y) \oplus ((-1) \cdot (x + y)),
\]
where $x, y \in \gsk \mathcal{C}$. The \emph{Grothendieck group}
of $\mathcal C$, denoted by $\ggk \mathcal C$, is the formal
groupification $(\Z^{(\gsk \mathcal{C})}, \oplus) / C$ of $\gsk
\mathcal C$, which is essentially unique by the universal mapping
property. In general the natural homomorphism from $\gsk \mathcal C$ into $\ggk \mathcal C$ is not injective. Note that if $\gsk \mathcal C$ is a commutative semiring then $\ggk \mathcal C$ is naturally a commutative ring.

It is easily checked that $\bb E_k$ induces an injective semigroup
homomorphism
\[
\gsk \RV[k,\cdot] \fun \gsk \RV[k+1,\cdot],
\]
which is also denoted by $\bb E_k$.


\section{$\RV$-pullbacks and special bijections}\label{section:RV:product}

We shall adopt~\cite[Convention~4.20]{Yin:QE:ACVF:min}: Since definably bijective subsets are to be identified, for a subset $A$, we shall tacitly substitute its canonical image $\can(A)$ for it in the discussion if it is necessary or is just more convenient.

For any subset $U$, recall from~\cite[Definition~4.21]{Yin:QE:ACVF:min} that the \emph{$\RV$-hull} of $U$ is the union of the $\rv$-polydiscs that have a nonempty intersection with $U$. If $U$ is equal to its $\RV$-hull then $U$ is an \emph{$\RV$-pullback}. An $\RV$-pullback is \emph{degenerate} if it contains a degenerate $\rv$-polydisc and is \emph{strictly degenerate} if it only contains degenerate $\rv$-polydiscs.

Here comes the general version of~\cite[Definition~4.22]{Yin:QE:ACVF:min}:

\begin{defn}\label{defn:special:bijection}
Let $A \sub \VF \times \VF^n \times \RV^m$. Let $C \sub \RVH(A)$
be an $\RV$-pullback and $\lambda: \pr_{>1}(C \cap A) \fun \VF$ a function
such that every $(\lambda(\lbar a_1, \lbar t), \lbar a, \lbar t)$ is in $C$. Let
\begin{gather*}
C^{\sharp} = \bigcup_{(\lbar a_1, t_1, \lbar t_1) \in \pr_{>1} C} \bigl( \bigl(\bigcup \set{\rv^{-1}(t): \vrv(t) > \vrv(t_1)} \bigr) \times
\set{(\lbar a_1, t_1, \lbar t_1)} \bigr),\\
\RVH(A)^{\sharp} = C^{\sharp} \uplus (\RVH(A) \mi C).
\end{gather*}
The \emph{centripetal transformation $\eta : A \fun \RVH(A)^{\sharp}$ with respect to $\lambda$} is defined by
\[
\begin{cases}
  \eta (a_1, \lbar a_1, \lbar t) = (a_1 - \lambda(\lbar a_1, \lbar t), \lbar a_1, \lbar t), & \text{on } C \cap A,\\
  \eta = \id, & \text{on } A \mi C.
\end{cases}
\]
Note that $\eta$ is injective. The inverse of $\eta$ is naturally called the
\emph{centrifugal transformation with respect to $\lambda$}. The
function $\lambda$ is called a \emph{focus map of $X$}. The $\RV$-pullback $C$ is
called the \emph{locus} of $\lambda$. A \emph{special bijection}
$T$ is an alternating composition of centripetal transformations
and the canonical bijection. The \emph{length} of a special
bijection $T$, denoted by $\lh T$, is the number of centripetal
transformations in $T$. The image $T(A)$ is sometimes denoted by $A^{\sharp}$.
\end{defn}

Note that we should have included the index of the targeted
$\VF$-coordinate as a part of the data of a focus map. Since it
should not cause confusion in context, we shall suppress mentioning it
for notational ease.

We shall only be concerned with definable special bijections.

Clearly if $A$ is an $\RV$-pullback and $T$ is a special bijection on $A$ then $T(A)$ is an $\RV$-pullback. Recall that a subset $A$ is called a \emph{deformed $\RV$-pullback} if there is a special bijection $T$ such that $T(A)$ is an $\RV$-pullback.

\begin{lem}\label{simplex:with:hole:rvproduct}
Every definable subset $A \sub \VF \times \RV^m$ is a deformed $\RV$-pullback.
\end{lem}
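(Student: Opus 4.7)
The plan is to construct, by iterated centripetal transformations, a special bijection $T$ such that $T(A)$ is an $\RV$-pullback. The main tools are $C$-minimality of $\ACVF$ (Theorem~\ref{c:min:acvf}), which describes each $\VF$-fiber of $A$ as a boolean combination of balls, and Lemma~\ref{ball:proper:sub:center}, which supplies the definable foci.

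First I would uniformize the fiber structure. By $C$-minimality, for each $\lbar t \in \prv(A)$ the fiber $\fib(A, \lbar t) \sub \VF$ is a disjoint union of Swiss cheeses. Compactness then yields a finite definable partition of $\prv(A)$ on which this decomposition comes from definable families of balls $\gb_i(\lbar t)$ (outer) and $\gb_{i,j}(\lbar t)$ (holes). The complexity measure I would use is the maximum, over the pieces of the partition, of the total number of balls in the uniform description that are strict subsets of the $\rv$-polydisc containing them. If the complexity is zero, then each fiber is a union of $\rv$-classes and $A$ itself is an $\RV$-pullback.

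Otherwise, let $C$ be the definable union of those $\rv$-polydiscs $\gp \sub \RVH(A)$ for which $\gp \cap A$ is a nonempty proper subset of $\gp$; this $C$ is a definable $\RV$-pullback. For each such $\gp$, Lemma~\ref{ball:proper:sub:center} combined with compactness provides a definable focus map $\lambda \colon \pr_{>1}(C \cap A) \fun \VF$ with $(\lambda(\lbar t), \lbar t) \in C$. Then apply the centripetal transformation $\eta$ with locus $C$ and focus $\lambda$: inside each $\gp \sub C$ the shift $a \mapsto a - \lambda(\lbar t)$ pushes the outermost ``bad'' ball of $\gp \cap A$ into an open polydisc around $0$ in the enlarged ambient $\gp^{\sharp} \sub C^{\sharp}$, where it is absorbed into an $\RV$-pullback structure.

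The main obstacle is confirming that this process strictly decreases the complexity measure. This must be arranged by targeting outermost bad balls first, so that each centripetal transformation eliminates at least one non-$\rv$-polydisc ball from the fiberwise description; interior holes are re-positioned by the shift but pass down intact to the next iteration, where the inductive hypothesis takes over in the smaller ambient polydiscs produced by the centrifuging. Since the initial complexity is finite, iterating this procedure finitely many times yields a special bijection $T$ with $T(A)$ an $\RV$-pullback, as required.
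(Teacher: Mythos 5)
The paper itself does not give an inline proof of this lemma; it defers to \cite[Lemma~4.26]{Yin:QE:ACVF:min}, and the technique there (visible in this paper's own strengthening, Theorem~\ref{special:bi:polynomial:constant:disc}, base case $n=1$) is \emph{syntactic}: it works with the roots of the polynomials occurring in a quantifier-free formula defining $A$. Your approach, via $C$-minimality and a uniform Swiss-cheese decomposition of the fibers, is genuinely different and purely semantic.

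Unfortunately your termination argument has a real gap, and you have flagged the exact spot yourself. Your complexity measure is the number of balls in the fiberwise description that are proper subsets of the containing $\rv$-polydisc, and you claim each centripetal transformation strictly decreases it because you can ``target outermost bad balls first.'' But Lemma~\ref{ball:proper:sub:center} only asserts that the $\rv$-polydisc $\gp$, not any particular bad ball inside it, contains a $\lbar t$-definable point. Consider the simplest case: $\gp \cap A = \gb$, a single proper open subball of $\gp$ with no holes. Then $\gb$ itself contains no $\lbar t$-definable proper subset coming from $A$, so Lemma~\ref{ball:proper:sub:center} does not supply a $\lbar t$-definable point in $\gb$ (and, since $S$ is not assumed $(\VF,\Gamma)$-generated, one cannot fall back on Lemma~\ref{algebraic:balls:definable:centers} either). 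If the focus $\lambda(\lbar t)$ produced by Lemma~\ref{ball:proper:sub:center} lies outside $\gb$, then after the shift and canonical bijection $\gb - \lambda(\lbar t)$ passes intact as a proper subball of a single smaller $\rv$-polydisc $\rv^{-1}(s^*)$, and every other new $\rv$-polydisc has empty fiber; the complexity of the worst polydisc stays at $1$. Nothing prevents this from repeating indefinitely, so the induction does not close.

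The syntactic approach dodges this precisely by using the roots of the occurring polynomials as the focus data. If $\gp$ contains exactly one root of an occurring polynomial $F$, that root is $\lbar t$-definable and is taken as the focus; after the shift the remaining $\rv$-classes contain no roots of $F$, so $\rv(F)$ is constant on each of them. If $\gp$ contains several roots, one shifts by their average, and Lemma~\ref{average:0:rv:not:constant} guarantees the roots then lie in distinct $\rv$-classes, so the root count per polydisc strictly drops. That is what secures termination, and it is the ingredient your proposal is missing. To salvage your argument you would need to produce, for each $\rv$-polydisc with a proper fiber, a $\lbar t$-definable point lying \emph{inside} one of its bad balls; as it stands, Lemma~\ref{ball:proper:sub:center} does not do this.
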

\begin{proof}
See~\cite[Lemma~4.26]{Yin:QE:ACVF:min}
\end{proof}

\begin{rem}\label{preimage:of:special}
Let $A \sub \VF \times \RV^m$ be a deformed $\RV$-pullback and $T : A \fun U$ a special bijection that witnesses this. By a routine induction, we see that if $\rv^{-1}(s) \times \set{(s, \lbar t)} \sub U$ with $s \neq \infty$ then $T^{-1}(\rv^{-1}(s) \times \set{(s, \lbar t)})$ is an open polydisc that is contained in an $\rv$-polydisc.
\end{rem}

Let $f : A \fun B$ be a function. We say that $f$ is \emph{contractible} if for every $\rv$-polydisc $\gp \sub \RVH(A)$ the subset $f(\gp \cap A)$ is contained in one $\rv$-polydisc. Clearly, if $f : A \fun B$ is a (definable) contractible function then there is a unique (definable) function $f_{\downarrow} : \rv(A) \fun \rv(B)$ such that the diagram commutes:
\[
\bfig
  \square/->`->`->`->/<600,400>[A`B`\rv(A)`\rv(B);  f`\rv`\rv`f_{\downarrow}]
 \efig
\]
In this case we say that $f_{\downarrow}$ is the \emph{contraction} of $f$.

The following technical result is a major tool for the Hrushovski-Kazhdan construction as presented in~\cite{yin:hk:part:1}.

\begin{thm}\label{special:bi:polynomial:constant:disc}
Let $F(\lbar X) = F(X_1, \ldots, X_n)$ be a polynomial with coefficients in $\VF(S)$, $\lbar u \in \RV^n$ a definable tuple, $\tau : \rv^{-1}(\lbar u) \fun A$ a special bijection, and $f = F \circ \tau^{-1}$. Then there is a special bijection $T$ on $A$ such that $f \circ T^{-1}$ is contractible.
\end{thm}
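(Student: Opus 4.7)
Since the composition of special bijections is again a special bijection, finding the desired $T$ on $A$ is equivalent to finding any special bijection $\sigma : \rv^{-1}(\lbar u) \fun B$ that factors through $\tau$ (so $\sigma = T \circ \tau$ for some special bijection $T$ on $A$) and such that $F \circ \sigma^{-1}$ is contractible on $B$. I will proceed by induction on the pair $(n, \deg F)$, ordered lexicographically, where $n$ is the number of $\VF$-variables of $F$. The base case $\deg F = 0$ is immediate: $F$ is a constant, so $\rv \circ f \circ T^{-1}$ takes a single value in $\RV$ for any choice of $T$ (e.g.\ the identity).

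The heart of the argument is the univariate case $n = 1$. Factor $F(X_1) = c\prod_i (X_1 - r_i)^{k_i}$ in $\VF(S)^{\alg}$. Where no root $r_i$ lies in $\rv^{-1}(u_1)$, a single monomial in the Taylor expansion of $F$ around any base point of $\rv^{-1}(u_1)$ dominates the others in valuation, and one checks that $\rv \circ F$ is already constant on $\rv^{-1}(u_1)$; no focusing is needed on those pieces. Where some roots do lie in $\rv^{-1}(u_1)$, the task is to produce a $\0$-definable focus map $\lambda$ whose locus is an $\RV$-pullback inside $A$ and whose values are roots of $F$, or at least sufficiently good approximations. The existence of such a $\lambda$ is precisely what Lemma~\ref{exists:gamma:polynomial} and the generalized Hensel lemma (Lemma~\ref{hensel:lemma}) are designed to provide. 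After the centripetal transformation with focus $\lambda$, near each root we write $F(x) = (x - \lambda)^{k} G(x)$ with $G(\lambda) \neq 0$ and $\deg G < \deg F$. On each new $\rv$-polydisc the factor $(x - \lambda)^{k}$ has constant $\rv$-value (since $y = x - \lambda$ sweeps out an $\rv$-polydisc), and the factor $G$ is handled by the inductive hypothesis applied to the polynomial $G$. For $n > 1$, one fixes the coordinates $X_2, \ldots, X_n$ (using canonical bijections to permute if needed), applies the univariate step uniformly in those coordinates, and then invokes the inductive hypothesis in the direction of fewer variables to complete the reduction.

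The principal obstacle is the uniform definability of the focus map $\lambda$: the roots of $F$ generally form a Galois orbit over $S$ rather than a single $\0$-definable point, so one must argue that inside each relevant $\rv$-polydisc there is still a $\0$-definable choice of an (approximate) root. This is exactly the point at which the hypothesis that $S$ is $(\VF, \Gamma)$-generated becomes essential, through Lemma~\ref{exists:gamma:polynomial}. A secondary bookkeeping difficulty is that the candidate loci must be assembled as $\RV$-pullbacks inside $A$ (rather than inside the original $\rv^{-1}(\lbar u)$), and Remark~\ref{preimage:of:special} is the tool needed to transport focal structures on $\rv^{-1}(\lbar u)$ through $\tau$ to legitimate loci on $A$.
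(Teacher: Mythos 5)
There is a genuine gap in the one-variable case, and your proposed fix doesn't work. You correctly identify the obstacle — the relevant roots of $F$ in $\rv^{-1}(u_1)$ form a Galois orbit over $\VF(S)$ and in general contain no $\0$-definable element — but the repair you propose is wrong on two counts. First, this theorem does \emph{not} assume $S$ is $(\VF, \Gamma)$-generated (that hypothesis only enters later, in Section~\ref{section:lifting}), so Lemma~\ref{exists:gamma:polynomial} is simply unavailable here; and indeed Corollary~\ref{all:subsets:rvproduct} must hold for arbitrary $S$. Second, even where it applies, Lemma~\ref{exists:gamma:polynomial} produces a \emph{different} polynomial $G$ for which a given definable $\RV$-tuple is a simple residue root, not an approximate root of $F$ itself; the unique Henselian lift of the residue root of $G$ has no a priori relation to the zero set of $F$, so the factorization $F(x) = (x-\lambda)^k G(x)$ with $G(\lambda) \ne 0$ — which is what drives your lexicographic induction on $(n, \deg F)$ — is not available.

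The paper's resolution is more elementary and sidesteps the definability problem entirely: rather than aiming the focus map at a root, it aims at the \emph{average} of the roots lying in the polydisc, which is a symmetric function of a Galois orbit and hence always definable in characteristic $0$. The inner induction is then not on $\deg F$ but on $k_T(\gp)$, the number of roots of $f \circ T^{-1}$ in each $\rv$-polydisc $\gp$: Lemma~\ref{average:0:rv:not:constant} guarantees that after the centripetal transformation centered at the average, $\rv$ is non-constant on the translated roots, so each new polydisc contains strictly fewer of them, and the induction terminates. Your outline of the multivariate inductive step is close in spirit to the paper's (one direction is handled fiber-by-fiber, compactness produces defining formulas whose occurring polynomials are then contracted by a second application of the inductive hypothesis in the transverse direction), but as written the uniformization mechanism is only gestured at; the crucial point is that the two applications of the hypothesis happen in \emph{different numbers of variables} and interact through the occurring polynomials of the compactness formulas.
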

\begin{proof}
First observe that if the assertion holds for one polynomial $F(\lbar X)$ then it holds simultaneously for any finite number of polynomials. We do induction on $n$. For the base case $n=1$, we simply write $X$ for $\lbar X$. Let $T$ be a special bijection on $A$. For any $\rv$-polydisc $\gp \sub T(A)$, let $k_T(\gp)$ be the size of the set $\{\lbar x \in \gp : (f \circ T^{-1})(\lbar x) = 0 \}$.
\begin{incla}
There is a special bijection $T^*$ on $T(A)$ such that $f \circ (T^* \circ T)^{-1}$ is contractible.
\end{incla}
\begin{proof}
By compactness, we may concentrate on one $\rv$-polydisc $\gp = \rv^{-1}(s) \times \set{(s, \lbar r)} \sub T(A)$. We do induction on $k_T(\gp)$. For the base case $k_T(\gp) = 1$, consider the focus map $\lambda : \{(s, \lbar r)\} \fun \VF$ such that $f(T^{-1}(\lambda(s, \lbar r), s, \lbar r)) = 0$ and the special bijection $T^*$ on $\gp$ given by
\[
(b, s, \lbar r) \efun (b - \lambda(s, \lbar r), \rv(b - \lambda(s, \lbar r)), s, \lbar r).
\]
By Remark~\ref{preimage:of:special}, for every $\rv$-polydisc $\gr \sub T^*(\gp)$, $(T^* \circ T \circ \tau)^{-1}(\gr)$ is either the root of $F(X)$ in question or an open ball that contains no roots of any $F(X)$. So $T^*$ is as required.

For the inductive step $k_T(\gp) = m > 1$, let $(d_1, s, \lbar r), \ldots, (d_m, s, \lbar r) \in \gp$ be the points in question and $d$ the average of $d_1, \ldots, d_m$. Consider the special bijection $T^*$ on $\gp$ given by
\[
(b, s, \lbar r) \efun (b - d, \rv(b - d), s, \lbar r).
\]
By Lemma~\ref{average:0:rv:not:constant}, $\rv$ is not
constant on $\set{d_1 - d, \ldots, d_m - d}$ and hence
$k_{T^* \circ T}(\gr) < m$ for every $\rv$-polydisc $\gr \sub T^*(\gp)$. So we are done by compactness and the inductive hypothesis.
\end{proof}

This completes the base case of the induction.

We now proceed to the inductive step. As above, we may concentrate on one $\rv$-polydisc $\gp = \rv^{-1}(\lbar s) \times \set{(\lbar s, \lbar r)} \sub A$. Let $\phi(\lbar X, Y)$ be a quantifier-free formula that defines the function $(\rv \circ f) \rest \gp$, where $Y$ is the free $\RV$-sort variable. Let $G_i(\lbar X)$ enumerate the occurring polynomials of $\phi(\lbar X, Y)$. For each $a \in \rv^{-1}(s_1)$ let $G_{i,a} = G_i(a, X_2, \ldots, X_n)$. By the inductive hypothesis, there is a special bijection $R_{a}$ on $\rv^{-1}(s_2, \ldots, s_n)$ such that every function $G_{i,a} \circ R_a^{-1}$ is contractible. Let $U_{j, a}$ enumerate the loci used in $R_{a}$ and $\lambda_{j, a}$ the corresponding focus maps. By compactness,
\begin{enumerate}
  \item for each $i$ there is a quantifier-free formula $\psi_i(X_1, \lbar Z', Z)$ such that $\psi_i(a, \lbar Z', Z)$ defines the contraction of $G_{i,a} \circ R_a^{-1}$,
  \item there is a quantifier-free formula $\theta(X_1, \lbar Z'')$ such that $\theta(a, \lbar Z'')$ determines the sequence $\rv(U_{j, a})$ and the $\VF$-coordinates targeted by $\lambda_{j, a}$.
\end{enumerate}
Let $H_{k}(X_1)$ enumerate the occurring polynomials of the formulas $\psi_i(X_1, \lbar Z', Z)$, $\theta(X_1, \lbar Z'')$. Applying the inductive hypothesis again, we obtain a special bijection $T_1$ on $\rv^{-1}(s_1)$ such that every function $H_{k} \circ T_1^{-1}$ is contractible. This means that, for every $\rv$-polydisc $\gq \sub T_1(\rv^{-1}(s_1))$ and every $a_1, a_2 \in T_1^{-1}(\gq)$, the formulas $\psi_i(a_1, \lbar W, Z)$, $\psi_i(a_2, \lbar W, Z)$ define the same function and the special bijections $R_{a_1}$, $R_{a_2}$ may be naturally glued together to form one special bijection on $\{a_1, a_2\} \times \rv^{-1}(s_2, \ldots, s_n)$. Consequently, $T_1$ and $R_{a}$ naturally induce a special bijection $T$ on $\gp$ such that each function $G_i \circ T^{-1}$ is contractible. This implies that $f \circ T^{-1}$ is contractible.
\end{proof}

We immediately give a slightly more general version of Theorem~\ref{special:bi:polynomial:constant:disc}, which is easier to use:

\begin{thm}\label{special:bi:polynomial:constant}
Let $F(\lbar X) = F(X_1, \ldots, X_n)$ be a polynomial with coefficients in $\VF(S)$, $B \sub \VF^n$ a definable subset, $\tau : B \fun A$ a special bijection, and $f = F \circ \tau^{-1}$. Then there is a special bijection $T$ on $A$ such that $T(A)$ is an $\RV$-pullback and $f \circ T^{-1}$ is contractible.
\end{thm}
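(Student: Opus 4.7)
The plan is to reduce the statement to Theorem~\ref{special:bi:polynomial:constant:disc} in two steps: first convert $A$ into an $\RV$-pullback via a preliminary special bijection, then apply the disc version on each $\rv$-polydisc of the new $A$ and glue by compactness. For the first step, I would produce a special bijection $S_0$ on $A$ with $S_0(A)$ an $\RV$-pullback, using a generalization of Lemma~\ref{simplex:with:hole:rvproduct} obtained by iterating it coordinate-by-coordinate with compactness, as recorded in Corollary~\ref{all:subsets:rvproduct}. Absorbing $S_0$ into $\tau$ (replace $\tau$ by $S_0 \circ \tau$, $A$ by $S_0(A)$, and $f$ by $f \circ S_0^{-1}$), I may assume $A$ is itself an $\RV$-pullback; any special bijection $T'$ constructed on the new $A$ then yields the required $T = T' \circ S_0$, and $T(A)$ remains an $\RV$-pullback because a centripetal transformation whose locus sits inside an ambient $\RV$-pullback has image that is again a union of $\rv$-polydiscs.

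For the second step, fix an $\rv$-polydisc $\gp = \rv^{-1}(\lbar s) \times \set{(\lbar s, \lbar r)} \sub A$. By Remark~\ref{preimage:of:special}, $\tau^{-1}(\gp) \sub B$ is an open polydisc $\go(\lbar a_\gp, \lbar \gamma_\gp)$. To match the hypothesis of Theorem~\ref{special:bi:polynomial:constant:disc}, I would choose definably and uniformly in $\gp$ (by compactness) a translation vector $\lbar c_\gp$ with $\vv(a_{\gp,i} + c_{\gp,i}) \leq \gamma_{\gp,i}$ in each coordinate, so that the translate $\go(\lbar a_\gp + \lbar c_\gp, \lbar \gamma_\gp)$ becomes an $\rv$-polydisc $\rv^{-1}(\lbar u_\gp)$. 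Precomposing $\tau \rest \tau^{-1}(\gp)$ with the translation $\lbar X \efun \lbar X - \lbar c_\gp$ yields a special bijection $\tau_\gp : \rv^{-1}(\lbar u_\gp) \fun \gp$ satisfying $f \rest \gp = F_\gp \circ \tau_\gp^{-1}$, where $F_\gp(\lbar X) = F(\lbar X - \lbar c_\gp)$ is the translated polynomial. Theorem~\ref{special:bi:polynomial:constant:disc} applied to $(F_\gp, \lbar u_\gp, \tau_\gp)$ then furnishes a special bijection $T_\gp$ on $\gp$ with $(f \rest \gp) \circ T_\gp^{-1}$ contractible and $T_\gp(\gp)$ an $\RV$-pullback as a by-product of its construction.

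Compactness finally assembles the definable family $\set{T_\gp}$ into a single special bijection $T'$ on $A$, and $T = T' \circ S_0$ fulfills both required conditions. The main obstacle I anticipate is the uniform definable selection of translation vectors $\lbar c_\gp$ and the technical verification that Theorem~\ref{special:bi:polynomial:constant:disc} remains applicable when $F_\gp$ has coefficients in an expansion of $\VF(S)$ by $\lbar c_\gp$ rather than in $\VF(S)$ itself; both are handled by the observation that the theorem and its proof are parametric in the base structure, so enlarging $S$ to include the parameters indexing $\gp$ is harmless.
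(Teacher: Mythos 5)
There are two genuine gaps in your proposal. First, Step~1 is circular: Corollary~\ref{all:subsets:rvproduct} is proved in the paper \emph{from} Theorem~\ref{special:bi:polynomial:constant}, so it cannot be used to reduce $A$ to an $\RV$-pullback here. Your alternative plan of iterating Lemma~\ref{simplex:with:hole:rvproduct} coordinate-by-coordinate glosses over the real difficulty: the focus maps and loci in the fiberwise special bijections vary with the remaining $\VF$-coordinate $a_1$, and unless their dependence on $a_1$ factors through $\rv(a_1)$, the fiberwise loci do not assemble into an $\RV$-pullback. Forcing such constancy first requires another special bijection in the $a_1$-coordinate tailored to the parameters indexing the fiberwise data, which is exactly the nontrivial gluing step in the inductive proof of Theorem~\ref{special:bi:polynomial:constant:disc}. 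It is not merely ``compactness.''

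Second, Step~2 invokes Remark~\ref{preimage:of:special} to conclude $\tau^{-1}(\gp)$ is an open polydisc, but that remark holds only for $A \sub \VF \times \RV^m$ with a single $\VF$-coordinate, precisely because in that case the focus maps depend only on $\RV$-coordinates. For $n > 1$ a focus map $\lambda(\lbar a_1, \lbar t)$ may depend on the other $\VF$-coordinates (take, e.g., $\lambda(a_2, \lbar t) = a_2$ in two variables), so the preimage of an $\rv$-polydisc under $\tau$ is a \emph{sheared} set $\{(a_1, \lbar a_1): \rv(a_1 - \lambda(\lbar a_1, \lbar t)) = s, \lbar a_1 \in \rv^{-1}(\lbar s_1)\}$, not a product of balls. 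The translation-to-$\rv^{-1}(\lbar u_\gp)$ trick therefore has no footing, independently of the (legitimate but secondary) concern about enlarging $S$ by $\lbar c_\gp$. The paper's proof avoids both problems by not trying to normalize $A$ first: it restricts to $A_\gp = \gp \cap A$ inside an ambient $\rv$-polydisc $\gp$, takes a quantifier-free $\phi(\lbar X, Z)$ defining $(\rv \circ f)\rest A_\gp$, enumerates its occurring polynomials $F_i$, and applies Theorem~\ref{special:bi:polynomial:constant:disc} with $\tau = \id_\gp$ to make every $F_i \circ T^{-1}$ contractible on $\gp$. Since the membership condition for $A_\gp$ (namely $\exists Z\,\phi$) and the value of $\rv\circ f$ are both governed by the $\rv(F_i)$, constancy of the $\rv(F_i)$ on each $\rv$-polydisc $\gq \sub T(\gp)$ simultaneously forces $T^{-1}(\gq) \sub A_\gp$ or $T^{-1}(\gq) \cap A_\gp = \0$ and makes $\rv\circ f\circ T^{-1}$ constant on $\gq$, giving both conclusions at once.
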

\begin{proof}
By compactness, we may concentrate on a subset of the form $A_{\gp} = \gp \cap A$, where $\gp$ is an $\rv$-polydisc. Let $\phi(\lbar X, Z)$ be a quantifier-free formula that defines the function $(\rv \circ f) \rest A_{\gp}$. Let $F_i(\lbar X)$ enumerate the occurring polynomials of $\phi(\lbar X, Z)$. By Theorem~\ref{special:bi:polynomial:constant:disc} there is a special bijection $T$ on $\gp$ such that each function $F_i \circ T^{-1}$ is contractible. This means that, for each $\rv$-polydisc $\gq \sub T(\gp)$,
\begin{enumerate}
  \item either $T^{-1}(\gq) \sub A_{\gp}$ or $T^{-1}(\gq) \cap A_{\gp} = \0$,
  \item if $T^{-1}(\gq) \sub A_{\gp}$ then $(\rv \circ f \circ T^{-1})(\gq)$ is a singleton.
\end{enumerate}
So $T \rest A_{\gp}$ is as required.
\end{proof}

Now Lemma~\ref{simplex:with:hole:rvproduct} may be easily generalized to all dimensions:

\begin{cor}\label{all:subsets:rvproduct}
Every definable subset $A \sub \VF^n \times \RV^m$ is a definable
deformed $\RV$-pullback.
\end{cor}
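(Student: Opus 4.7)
The plan is to use quantifier elimination to reduce the problem to a finite collection of $\VF$-sort polynomials in $\lbar X$, and then apply Theorem~\ref{special:bi:polynomial:constant} to these polynomials to produce the desired special bijection in one stroke. Concretely, by QE of $\ACVF_S(0, 0)$ we may choose a defining $\lan{RV}$-formula $\phi(\lbar X, \lbar Y)$ for $A$ that is a disjunction of conjunctions of $\RV$-literals as in Definition~\ref{def:normal:form}. Let $F_1(\lbar X), \ldots, F_k(\lbar X)$ enumerate the $\VF$-sort polynomials with coefficients in $\VF(S)$ occurring in $\phi$. The key observation is that for each fixed $\lbar s \in \RV^m$, the truth value of $\phi(\lbar a, \lbar s)$ depends on $\lbar a$ only through the tuple $(\rv(F_1(\lbar a)), \ldots, \rv(F_k(\lbar a)))$, since every $\RV$-literal of $\phi$, along with every $\K$-term inside it, involves $\lbar X$ only via the symbols $\rv(F_i(\lbar X))$.

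Next I would set $B = \pvf(A)$ and apply Theorem~\ref{special:bi:polynomial:constant} with $\tau = \id$ to $B$ and the polynomials $F_1, \ldots, F_k$ simultaneously; the opening remark in the proof of Theorem~\ref{special:bi:polynomial:constant:disc} licenses this joint application. This produces a special bijection $T : B \fun B^{\sharp}$ such that $B^{\sharp}$ is an $\RV$-pullback and each composite $F_i \circ T^{-1}$ is contractible. I then extend $T$ to a map $\tilde T : A \fun A^{\sharp} \sub B^{\sharp} \times \RV^m$ that acts as $T$ on the $\VF$-coordinates and as the identity on the $\RV$-coordinates; since focus maps in Definition~\ref{defn:special:bijection} are already allowed to depend on all other coordinates, the extra $\RV$-coordinates of $A$ simply ride along as additional parameters, and $\tilde T$ is again a special bijection on $A$.

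It remains to verify that $A^{\sharp}$ is an $\RV$-pullback. Let $\gp = \rv^{-1}(\lbar u) \times \set{\lbar s}$ be an $\rv$-polydisc meeting $A^{\sharp}$ at some $(\lbar b, \lbar s)$. Because $B^{\sharp}$ is an $\RV$-pullback containing $\lbar b$, the whole fiber $\rv^{-1}(\lbar u)$ lies in $B^{\sharp}$, and for any $\lbar b' \in \rv^{-1}(\lbar u)$ the contractibility of each $F_i \circ T^{-1}$ yields $\rv(F_i(T^{-1}(\lbar b'))) = \rv(F_i(T^{-1}(\lbar b)))$ for every $i$. The syntactic observation from the first paragraph then forces $\phi(T^{-1}(\lbar b'), \lbar s)$, that is, $(\lbar b', \lbar s) \in A^{\sharp}$, whence $\gp \sub A^{\sharp}$. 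I expect no genuine obstacles beyond Theorem~\ref{special:bi:polynomial:constant} itself, which has already been proved; the present corollary is essentially its syntactic packaging for definable subsets that also carry $\RV$-coordinates.
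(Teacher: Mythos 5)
Your argument lands in the right place and is essentially sound, but it takes a longer route than the paper's and elides a step that deserves scrutiny. The paper's proof is a two-liner: by compactness one reduces to the case $A \sub \gp = \rv^{-1}(\lbar u) \times \set{\lbar s}$, so the $\RV$-coordinates of $A$ are constant; then, identifying $A$ with $\pvf(A) \sub \rv^{-1}(\lbar u)$, Theorem~\ref{special:bi:polynomial:constant} (with $\tau = \id$) already hands you a special bijection whose image is an $\RV$-pullback, and the fixed $\RV$-coordinate $\lbar s$ tacks on with no work since $\RVH(A) = \RVH(\pvf(A)) \times \set{\lbar s}$. You instead apply Theorem~\ref{special:bi:polynomial:constant} globally to $\pvf(A)$, extend the resulting special bijection $T$ to $A$ with the genuine $\RV$-coordinates $\lbar s$ varying, and then need the normal-form observation and contractibility to re-verify that $\tilde T(A)$ is an $\RV$-pullback. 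That last verification is correct, but it is doing, inside the corollary, work that Theorem~\ref{special:bi:polynomial:constant} already performs internally via its own reduction to $\rv$-polydiscs; the paper's compactness reduction makes all of it unnecessary.

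The one step I would not let pass as stated is the extension of $T$ to $\tilde T$. You justify it by saying focus maps may depend on all other coordinates, but that is not where the difficulty lies. Definition~\ref{defn:special:bijection} requires each locus $C$ of a centripetal step to sit inside $\RVH$ of the set currently being acted on, and the focus point $(\lambda(\lbar a_1, \lbar t), \lbar a_1, \lbar t, \lbar s)$ must lie in that locus. Knowing only that $(\lambda(\lbar a_1, \lbar t), \lbar a_1, \lbar t)$ lies in a locus inside $\RVH(\pvf(A))$ does not immediately give that the lifted point lies in $\RVH(A)$, because $\RVH(A)$ can be much smaller than $\RVH(\pvf(A)) \times \RV^m$. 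This check does go through, but for a structural reason you would need to invoke: the alternation with the canonical bijection records the $\rv$-value of the active coordinate in a fresh $\RV$-coordinate carried inside $\lbar t$, which forces $\lambda(\lbar a_1, \lbar t)$ into the same $\rv$-ball as any $a'_1$ with $(a'_1, \lbar a_1, \lbar t, \lbar s) \in A$, so its $\rv$-polydisc does meet $A$. Without that observation, ``ride along as additional parameters'' is an assertion, not a proof. So: your approach works, and it does make visible the syntactic mechanism behind the corollary, but it trades the paper's one use of compactness for a non-trivial extension lemma plus a redundant $\RV$-pullback check.
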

\begin{proof}
By compactness, we may assume that $A$ is contained in an $\rv$-polydisc. Then the assertion simply follows from Theorem~\ref{special:bi:polynomial:constant}.
\end{proof}

Applying Lemma~\ref{dim:RV:fibers} and Lemma~\ref{RV:fiber:dim:same}, we get the following corollary:

\begin{cor}\label{L:surjective}
The map $\mathbb{L}: \ob \RV[k, \cdot] \fun \ob \VF[k, \cdot]$ is
surjective on the isomorphism classes of $\VF[k, \cdot]$. The map $\mathbb{L}: \ob \RV[k] \fun \ob \VF[k]$ is surjective on the isomorphism classes of $\VF[k]$.
\end{cor}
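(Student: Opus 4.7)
The plan is to chain Lemma~\ref{dim:RV:fibers}, Corollary~\ref{all:subsets:rvproduct}, and the very definition of $\bb L$. Let $A \in \VF[k, \cdot]$, so $\dim_{\VF}(A) \leq k$. First invoke Lemma~\ref{dim:RV:fibers} to realize $A$, up to definable bijection, as a subset of $\VF^k \times \RV^l$ for some $l$ (the lemma states $\RV_{\Gamma}^l$ on the right, but $\Gamma$-coordinates are eliminated in favor of $\RV$-coordinates in the standard $\lan{RV}$ fashion). Then apply Corollary~\ref{all:subsets:rvproduct} to obtain a definable special bijection $T : A \fun B$ with $B \sub \VF^k \times \RV^{l'}$ a definable $\RV$-pullback.

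Since $B$ is an $\RV$-pullback, the definable set
\[
U = \set{(\rv(\lbar a), \lbar s) : (\lbar a, \lbar s) \in B} \sub \RV^{k + l'}
\]
together with the coordinate projection $f = \pr_{\leq k} : U \fun \RV^k$ gives, directly unwinding the definition,
\[
\bb L_k(U, f) = \bigcup_{(\lbar t, \lbar s) \in U} \rv^{-1}(\lbar t) \times \set{(\lbar t, \lbar s)} \sub \VF^k \times U.
\]
The assignment $(\lbar a, \lbar t, \lbar s) \efun (\lbar a, \lbar s)$, which is well-defined because $\lbar t = \rv(\lbar a)$, is a definable bijection $\bb L_k(U, f) \fun B$. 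Composing with $T^{-1}$ yields a definable bijection $\bb L_k(U, f) \fun A$, so $A$ lies in the isomorphism class of $\bb L_k(U, f)$, proving the first claim.

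For the second claim, suppose further $A \in \VF[k]$, that is, $\dim^{\fib}_{\RV}(A) = 0$. By Lemma~\ref{RV:fiber:dim:same} the $\RV$-fiber dimension is a bijection invariant, so $\dim^{\fib}_{\RV}(\bb L_k(U, f)) = 0$. Unpacking, for every $\lbar a \in \pvf \bb L_k(U, f)$ the $\RV$-fiber $\set{\lbar u \in U : f(\lbar u) = \rv(\lbar a)}$ must be finite. Hence $f$ is finite-to-one, $(U, f) \in \RV[k]$, as required.

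All of the substantive content is already packaged into Corollary~\ref{all:subsets:rvproduct} (which rests on Theorem~\ref{special:bi:polynomial:constant}); the rest is bookkeeping. The only mildly nontrivial point worth flagging is the initial reduction to $\VF^k \times \RV^l$: one must verify that the $\Gamma$-valued coordinates produced by Lemma~\ref{dim:RV:fibers} can be replaced by $\RV$-valued ones without inflating the $\VF$-factor.
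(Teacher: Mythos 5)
Your proof follows exactly the same route as the paper, which gives only a one-line pointer to Corollary~\ref{all:subsets:rvproduct}, Lemma~\ref{dim:RV:fibers}, and Lemma~\ref{RV:fiber:dim:same}; the unwinding of $\bb L_k(U,f)$ and the $\RV[k]$ bookkeeping via Lemma~\ref{RV:fiber:dim:same} are correct. The one point you flag but do not close --- whether the $\Gamma$-coordinates in the codomain of Lemma~\ref{dim:RV:fibers} can be traded for $\RV$-coordinates --- is a genuine loose end, but it evaporates on inspecting the proof of Lemma~\ref{VF:dim:tran:deg}, which is what actually produces the injection: the added coordinates there come only from Lemma~\ref{finite:VF:project:RV} (whose target is $\RV^m$, with no $\Gamma$) together with the original $\RV$-coordinates of $A$, so the injection already lands in $\VF^k \times \RV^l$; the $\RV_\Gamma^l$ in the statement is merely a conservative notational allowance matching the definition of $\dim_{\VF}$.

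If you prefer to avoid Lemma~\ref{dim:RV:fibers} altogether, there is also a route that never touches $\Gamma$: apply Corollary~\ref{all:subsets:rvproduct} directly to $A \sub \VF^n \times \RV^m$ to get a special bijection onto an $\RV$-pullback $B \sub \VF^n \times \RV^{m'}$. Since $\dim_{\VF}(B) = \dim_{\VF}(A) \leq k$, Corollary~\ref{VF:dim:rv:polydisc} forces every $\rv$-polydisc $\rv^{-1}(t_1, \ldots, t_n) \times \{\lbar s\} \sub B$ to have at most $k$ indices $i$ with $t_i \neq \infty$; a definable coordinate compression (keep $k$ of the $\VF$-coordinates including all nondegenerate ones, and append $(\lbar t, \lbar s)$ to the $\RV$-coordinates so as to retain injectivity) then realizes $B$ inside $\VF^k \times \RV^{n+m'}$, after which your construction of $(U, \pr_{\leq k})$ applies verbatim.
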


\section{Interlude: quantifier elimination for henselian fields}\label{section:inter}

The analysis on special transformations in Section~\ref{section:RV:product} leads to a general quantifier elimination result for henselian fields. Pas's quantifier elimination result {\cite[Theorem~4.1]{Pa89} may be recovered from it.

\begin{defn}
A substructure $M$ is \emph{functionally closed} if, for any definable subset $A$ and any definable function $f$ on $A$, $f(A \cap M) \sub M$.
\end{defn}

\begin{lem}\label{cut:to:hensel:substru}
Let $M$ be a substructure such that $(\VF(M), \OO(M))$ is a nontrivially valued henselian field and $\rv(\VF(M)) = \RV(M)$. Then $M$ is functionally closed.
\end{lem}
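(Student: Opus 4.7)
My plan is a Galois-theoretic descent. Given $(\lbar a, \lbar t) \in A \cap M$ and $\ast := f(\lbar a, \lbar t)$, I wish to show $\ast \in M$; arguing coordinate-wise, I may assume $\ast$ is a single element of $\VF$ or of $\RV$. Since $f$ is definable (hence $S$-definable) and $S \sub M$, the singleton $\{\ast\}$ is cut out in $\gC$ by a formula with parameters in $M$. Hence every $\sigma \in \aut(\gC / M)$ satisfies $\sigma(\ast) = \ast$, and the task reduces to showing that under our hypotheses every $\aut(\gC/M)$-fixed element of $\gC$ lies in $M$.

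For $\ast = b \in \VF$: Lemma~\ref{dcl:to:ac} relativizes cleanly (its proof uses only that the base substructure contains the parameters of the isolating formula), giving $b \in \VF(M)^{\alg}$. Let $P(Z) \in \VF(M)[Z]$ be the minimal polynomial of $b$, with simple roots $b = b_1, \ldots, b_n$ (simple because $\cha = 0$); for each $j$ pick a Galois automorphism $\sigma_j \in \aut(\VF(M)^{\alg} / \VF(M))$ with $\sigma_j(b) = b_j$. Because $\VF(M)$ is henselian, the valuation extends uniquely from $\VF(M)$ to $\VF(M)^{\alg}$, so $\sigma_j$ preserves it and hence preserves $\rv$; the hypothesis $\rv(\VF(M)) = \RV(M)$ then forces $\sigma_j$ to fix $\RV(M)$ pointwise, making $\sigma_j$ an $\lan{RV}$-automorphism of the small substructure $\la \VF(M)^{\alg} \ra$ over $M$. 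By saturation of $\gC$, it extends to some $\widetilde\sigma_j \in \aut(\gC / M)$, and the fixed-point property then forces $b_j = \widetilde\sigma_j(b) = b$ for every $j$; hence $\deg P = 1$ and $b \in \VF(M)$.

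For $\ast = s \in \RV$: I first establish $\vrv(s) \in \Gamma(M) = \vrv(\RV(M))$. If $\vrv(s) = \gamma \notin \Gamma(M)$, then since $\gamma$ is $\aut$-fixed we have $n \gamma \in \Gamma(M)$ for some least $n \geq 2$. The $n$ distinct $n$th roots of $s^n$ in $\RV(\gC)$ are $s \omega$ with $\omega \in \mu_n \sub \K^{\times}(\gC)$, and for any $\omega \neq 1$ the assignment $s \mapsto s \omega$ gives an $\lan{RV}$-isomorphism of $\la M, s \ra$ fixing $M$ (both sides share the same $n$th power, and all other relevant relations are multiplicative), which by saturation lifts to some $\sigma \in \aut(\gC / M)$ moving $s$ --- contradicting the fixed-point property. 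Hence $\vrv(s) \in \Gamma(M)$; choose $s_0 \in \RV(M)$ with $\vrv(s_0) = \vrv(s)$, so that $k := s / s_0 \in \K^{\times}(\gC)$ is again $\aut$-fixed. Classical Galois theory for the algebraically closed extension $\K(\gC) / \K(M)$, combined with the same saturation-based extension of automorphisms to $\aut(\gC / M)$, yields $k \in \K(M) \sub \RV(M)$; hence $s = k \cdot s_0 \in \RV(M)$.

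The chief difficulty is the extension of Galois automorphisms of algebraic substructures of $\gC$ to full $\lan{RV}$-automorphisms of $\gC$ fixing $M$: henselianness of $\VF(M)$ is what guarantees the $\VF$-Galois theory is compatible with the valuation; the matching $\rv(\VF(M)) = \RV(M)$ is what forces such automorphisms to fix $\RV(M)$ pointwise; and saturation of $\gC$ underpins the final lift in both cases.
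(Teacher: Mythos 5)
Your treatment of the $\VF$-sort runs along the same lines as the paper's: Galois descent inside $\VF(M)^{\alg}$, henselianity forcing uniqueness of the valuation extension and hence that field automorphisms over $\VF(M)$ are valued-field automorphisms, and the hypothesis $\rv(\VF(M)) = \RV(M)$ making them fix $\RV(M)$ pointwise so that they extend (by saturation) to $\aut(\gC/M)$. This part is correct.

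The $\RV$-sort is where you depart from the paper, and where the argument has real gaps. First, the step ``since $\gamma$ is $\aut$-fixed we have $n\gamma \in \Gamma(M)$ for some least $n$'' tacitly uses that $\Gamma(\acl(M))$ is the divisible hull of $\Gamma(M)$; this is true (e.g.\ via Corollary~\ref{aclS:model}, since $M$ is $\VF$-generated: $\acl(M) \models \ACVF$, so $\Gamma(\acl(M)) = \vv(\VF(M)^{\alg})$), but it should be said. Second, the twist $s \mapsto s\omega$ does preserve $\tp(s/M)$, but justifying this requires the normal-form analysis of $\RV$-literals (Definition~\ref{def:normal:form}): one has to check that in any literal with parameters in $M$ the relevant exponents of $s$ are forced to be multiples of $n$, so that $\omega$ disappears; your parenthetical is far from a proof. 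The most serious issue is the $\K$-step. You write ``combined with the same saturation-based extension of automorphisms,'' but the situation is \emph{not} the same: in the $\VF$ case a Galois automorphism of $\VF(M)^{\alg}/\VF(M)$ pushes \emph{forward} along $\rv$ to give an $\lan{RV}$-automorphism of $\acl(M)$, whereas here you need to \emph{lift} a Galois automorphism of $\K(M)^{\alg}/\K(M)$ through $\rv$ and the residue map to an automorphism of the full structure $\acl(M)$ over $M$. That this is possible amounts to the surjectivity of the residue map $\aut(\VF(M)^{\alg}/\VF(M)) \to \aut(\K(M)^{\alg}/\K(M))$ for henselian fields of equicharacteristic $0$ --- a genuine (if standard) fact about unramified extensions, not an instance of ``the same'' argument, and it is nowhere addressed. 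The paper sidesteps the $\RV$-sort entirely: having established $\VF(\dcl(M)) = \VF(M)$, it notes that $M$ is $\VF$-generated (precisely because $\rv(\VF(M)) = \RV(M)$), so Lemma~\ref{algebraic:balls:definable:centers} gives every $M$-definable ball $\rv^{-1}(t)$, $t \in \RV(\dcl(M))$, an $M$-definable center $a \in \VF(\dcl(M)) = \VF(M)$, whence $t = \rv(a) \in \RV(M)$. That is both shorter and avoids the residue-lifting problem altogether; I would encourage you to adopt it.
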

\begin{proof}
By Corollary~\ref{aclS:model}, $\acl(M) \models \ACVF_S(0,0)$. Note that $\VF(\acl(M)) = \VF(M)^{\alg}$. Since the valued field $(\VF(M), \OO(M))$ is henselian, it is the fixed field under the valued field automorphisms of $(\VF(M)^{\alg}, \OO(\acl(M)))$ over $(\VF(M), \OO(M))$. On the other hand, these valued field automorphisms are in one-to-one correspondence with the $\lan{RV}$-automorphisms of $\acl(M)$ over $M$. So $\VF (\dcl (M)) = \VF(M)$. Since $M$ is $\VF$-generated, by Lemma~\ref{algebraic:balls:definable:centers}, every $t \in \RV(\dcl(M))$ has an $M$-definable point in $\VF$. So $M = \dcl(M)$ and the lemma follows.
\end{proof}

Let $\hen_S(0, 0)$ be the theory of henselian fields of pure characteristic $0$ in a language $\lan{H}$ that expands $\lan{RV}$, where the expansion happens only in the $\RV$-sort. Such a theory may be formulated as in Definition~\ref{defn:acvf}, with obvious modifications. Note that $\hen_S(0, 0)$ includes the statement that the function $\rv$ is surjective.


\begin{lem}
Let $\phi(\lbar X)$ be a $\VF$-quantifier-free $\lan{H}$-formula, where $\lbar X = (X_1, \ldots, X_n)$ are the free $\VF$-sort variables. Then $\hen_S(0,0)$ proves that $\ex{\lbar X} \phi(\lbar X)$ is equivalent to a $\VF$-quantifier-free $\lan{H}$-formula.
\end{lem}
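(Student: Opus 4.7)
The plan is to convert $\ex{\lbar X}\phi$ into a $\VF$-quantifier-free $\RV$-sort condition using the special-bijection apparatus of Section~\ref{section:RV:product}, and then to descend this equivalence from $\ACVF_S(0,0)$ down to $\hen_S(0,0)$ via Hensel's lemma (which is built into the theory $\hen_S(0,0)$).

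By induction on the length of $\lbar X$, eliminating one $\VF$-quantifier at a time from innermost outward, it suffices to treat the one-variable case $\ex{X}\phi(X,\lbar W)$, where $\phi$ is $\VF$-quantifier-free and $\lbar W$ collects the remaining free variables of both sorts, held fixed as parameters. Enumerate the polynomials in $X$ occurring in $\phi$ as $F_1(X),\ldots,F_k(X)$; their coefficients involve only the $\VF$-entries of $\lbar W$ together with constants in $\VF(S)$. Since $\phi$ is a boolean combination of $\RV$-literals, its truth value at $x \in \VF$ depends only on $(\rv(F_i(x)))_{i \le k}$ together with $\lbar W$. Hence $\ex{X}\phi(X,\lbar W)$ is equivalent to asserting that the image $I(\lbar W) := \set{(\rv(F_1(x)),\ldots,\rv(F_k(x))) : x \in \VF}$ meets a fixed $\VF$-quantifier-free $\RV$-definable subset $D(\lbar W) \sub \RV^k$.

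To give a $\VF$-quantifier-free description of $I(\lbar W)$, first work inside $\ACVF_S(0,0)$ and apply Theorem~\ref{special:bi:polynomial:constant:disc} to the family $\set{F_i(X)}_{i \le k}$: there is a special bijection $T$ on $\VF$ such that each composite $F_i \circ T^{-1}$ is contractible. On every $\rv$-polydisc $\gp \sub T(\VF)$, the tuple $\bigl((\rv \circ F_i \circ T^{-1})(\gp)\bigr)_{i \le k}$ is a single element of $\RV^k$ that is $\VF$-quantifier-free definable from $\gp$; the indexing set of $\rv$-polydiscs of $T(\VF)$ is itself $\VF$-quantifier-free $\RV$-describable from the focus maps of $T$, which are in turn encoded by $\rv$-values of polynomials in $\lbar W$. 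Combining these pieces produces a $\VF$-quantifier-free formula $\psi(\lbar W)$ satisfying $\ACVF_S(0,0) \proves \ex{X}\phi(X,\lbar W) \liff \psi(\lbar W)$.

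The final step is the descent to $\hen_S(0,0)$. The direction $\ex{X}\phi \limplies \psi$ is pure $\rv$-arithmetic with polynomials and transfers automatically to any $\lan{H}$-expansion. For the converse, given $M \models \hen_S(0,0)$ with $M \models \psi$, Lemma~\ref{cut:to:hensel:substru} yields that $\acl(M) \models \ACVF_S(0,0)$, furnishing a witness $x^* \in \VF(\acl(M))$ to $\ex{X}\phi$; the task is to exhibit a witness already in $\VF(M)$. The \emph{main obstacle} is that the focus maps in $T$ involve roots and averages of roots of polynomials over $M$, which need not a priori lie in $\VF(M)$. Inspecting the proof of Theorem~\ref{special:bi:polynomial:constant:disc}, however, each focus map used at each stage can be arranged as a Hensel-lift of $\RV$-data already present in $M$, and Hensel's lemma then supplies the required root inside $\VF(M)$. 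Consequently $T^{-1}(\gp)$ is a nonempty open polydisc in $\VF(M)$ for the relevant $\rv$-polydisc $\gp$ picked out by $\psi$, and any point in it is the desired witness.
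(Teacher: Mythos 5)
Your proposal follows the same broad strategy as the paper — reduce via a special bijection to a statement about $\rv$-polydiscs, then descend to henselian fields — but several of the choices you make create friction that the paper's proof avoids, and one step is not quite right.

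First, the one-variable-at-a-time induction. The paper treats all $n$ existential $\VF$-variables simultaneously: the occurring polynomials $F_i(\lbar X)$ then have coefficients genuinely in $\VF(S)$, so Theorem~\ref{special:bi:polynomial:constant} applies on the nose, producing a single special bijection on $\VF^n$ and quantifier-free $\lan{RV}$-formulas $\psi_i(\lbar Y, Z_i)$ defining the contractions. In your version the polynomials $F_i(X)$ have coefficients that are themselves polynomials in the untouched $\VF$-variables $\lbar W$; Theorem~\ref{special:bi:polynomial:constant:disc} is stated for coefficients in $\VF(S)$, so you must apply it over the expanded substructure $\langle S, \lbar W \rangle$, and then the special bijection $T$ (its length, its focus maps, its locus decomposition) depends on $\lbar W$. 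You gesture at this by saying the $\rv$-polydisc decomposition is "encoded by $\rv$-values of polynomials in $\lbar W$," but making this uniform in $\lbar W$ and assembling the resulting $\psi(\lbar W)$ into a single $\VF$-quantifier-free formula is exactly the work that the all-variables-at-once version packages for you — this is where your sketch has the most to fill in.

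Second, the descent. You cite Lemma~\ref{cut:to:hensel:substru} as yielding $\acl(M)\models \ACVF_S(0,0)$, but that is actually Corollary~\ref{aclS:model}, a step inside the proof of that lemma; what Lemma~\ref{cut:to:hensel:substru} itself asserts is that $M$ is \emph{functionally closed}. The paper uses this conclusion directly: since $T$, $T^{-1}$, and the contraction maps $f_i$ are $\lan{RV}$-definable functions and $M$ is functionally closed, one immediately gets $T^{-1}(\VF(M)^n) = A \cap M$ and $f_i(\rv(A \cap M)) \sub M$, and the equivalence between $\exists \lbar X\,\phi$ and the $\RV$-sort statement holds in $M$. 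Your alternative — re-opening the proof of Theorem~\ref{special:bi:polynomial:constant:disc} and arguing that "each focus map can be arranged as a Hensel-lift of $\RV$-data already present in $M$" — re-derives this closure by hand and is also not quite accurate: in the inductive step for $k_T(\gp)=m>1$ the focus map is the \emph{average} of $m$ conjugate roots, which lies in $\VF(M)$ because it is Galois-invariant (a symmetric polynomial in the roots), not because it is a Hensel lift of residue data. Packaged as "functionally closed," all these verifications disappear at once; that is precisely what makes the paper's proof short.

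So the idea is right, but your version trades the paper's two clean black boxes (Theorem~\ref{special:bi:polynomial:constant} in $n$ variables, and functional closedness) for a harder uniformity argument and a piecemeal verification with a minor inaccuracy. Rewriting with the simultaneous special bijection on $\VF^n$ and citing Lemma~\ref{cut:to:hensel:substru} for its actual conclusion would close these gaps and essentially recover the paper's argument.
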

\begin{proof}
Without loss of generality, we may assume that $\phi(\lbar X)$ contains no $\VF$-sort literals. Let $F_i(\lbar X)$ be the occurring polynomials of $\phi(\lbar X, \lbar Y)$. Let $\phi^*(\lbar Z)$ be the formula obtained from $\phi(\lbar X)$ by replacing each term $\rv(F_i(\lbar X))$ with a new $\RV$-sort variable $Z_i$. Let $M \models \hen_S(0,0)$ such that its reduct to $\lan{RV}$ is a substructure of $\gC$.

By Theorem~\ref{special:bi:polynomial:constant}, there is an $\RV$-pullback $A$ and an $\lan{RV}$-definable bijection $T : A \fun \VF^n$ such that, for every $\rv$-polydisc $\gp \sub A$, every subset of the form $\rv(F_i(T(\gp)))$ is a singleton. This induces functions $f_{i} : \rv(A) \fun \RV$, defined by quantifier-free $\lan{RV}$-formulas $\psi_i(\lbar Y, Z_i)$ (hence no $\VF$-sort quantifiers). By Lemma~\ref{cut:to:hensel:substru}, $T^{-1} (M \cap \VF^n) \sub M$ and hence $T^{-1} (M \cap \VF^n) = A \cap M$. Similarly $f_i(\rv(A \cap M)) \sub M$ for every $i$. Therefore
\[
M \models \ex{\lbar X} \phi(\lbar X) \liff \ex{\lbar Y, \lbar Z} \bigl( \bigwedge_i \psi_i(\lbar Y, Z_i) \wedge \phi^*(\lbar Z) \bigr).
\]
The lemma follows.
\end{proof}

By elementary logic this lemma yields:

\begin{prop}\label{qe:hensel}
The theory $\hen_S(0,0)$ admits elimination of $\VF$-quantifiers.
\end{prop}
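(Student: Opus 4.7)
The plan is to derive Proposition~\ref{qe:hensel} from the preceding lemma by a standard induction on the complexity of $\lan{H}$-formulas, so the technical work has already been absorbed into the lemma, and what remains is a routine bookkeeping argument.

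First I would fix an arbitrary $\lan{H}$-formula $\psi$ (with free variables of either sort) and argue by induction on its construction that it is provably equivalent in $\hen_S(0,0)$ to a $\VF$-quantifier-free $\lan{H}$-formula. The atomic case is immediate, since atomic formulas contain no quantifiers at all. The Boolean cases are trivial: conjunction, disjunction, and negation preserve the property of being $\VF$-quantifier-free, so if the immediate subformulas have $\VF$-quantifier-free equivalents then so does their Boolean combination.

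The quantifier cases split along the two sorts. If $\psi$ has the form $\exists Y\, \theta$ with $Y$ a free $\RV$-sort variable and $\theta$ already equivalent, by the inductive hypothesis, to a $\VF$-quantifier-free formula $\theta'$, then $\exists Y\, \theta'$ is itself $\VF$-quantifier-free, so we are done in this case. If instead $\psi$ has the form $\exists X\, \theta$ with $X$ a $\VF$-sort variable, the inductive hypothesis again provides a $\VF$-quantifier-free $\theta'$ equivalent to $\theta$; now I would simply quote the preceding lemma, treating all other free variables (of both sorts) as parameters, to conclude that $\exists X\, \theta'$ is equivalent to a $\VF$-quantifier-free $\lan{H}$-formula. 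Universal $\VF$-quantifiers reduce to existential ones via $\forall X\, \theta \liff \neg \exists X\, \neg \theta$ and the cases already handled.

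There is no genuine obstacle here, because all the content sits in the lemma, which extracts an $\RV$-sort witness via the special bijection $T$ produced by Theorem~\ref{special:bi:polynomial:constant} and Lemma~\ref{cut:to:hensel:substru}. The induction above merely propagates the one-quantifier elimination step to arbitrary formulas, yielding elimination of $\VF$-quantifiers for $\hen_S(0,0)$.
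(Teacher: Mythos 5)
Your argument is the same one the paper intends: the paper dispatches the proposition with the single line ``By elementary logic this lemma yields:'' and you have simply unfolded that elementary logic into the standard induction on formula complexity, with the Boolean and $\RV$-quantifier cases trivial and the $\VF$-quantifier case absorbed by the preceding lemma.

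One point deserves a flag, though, because it is silently glossed over both by you and by the paper's terse one-liner. The preceding lemma is stated for $\phi(\lbar X)$ where $\lbar X = (X_1, \ldots, X_n)$ are \emph{the} free $\VF$-sort variables, i.e.\ it treats the case in which the entire block of free $\VF$-variables is quantified and no others remain free. But in your induction, when you hit $\exists X\, \theta'$, the subformula $\theta'$ may well carry additional free $\VF$-variables $\lbar W$ that are not being quantified at this step (think of a formula like $\forall W\, \exists X\, \theta(W, X)$ with both $W, X$ of $\VF$-sort). Your phrase ``treating all other free variables (of both sorts) as parameters'' does not make this disappear: the lemma produces a single $\lan{H}$-formula, uniformly in the model, by applying Theorem~\ref{special:bi:polynomial:constant} to polynomials with coefficients in $\VF(S)$; if the polynomials contain the extra variables $\lbar W$, the special bijection $T$ would be built on $\VF^{n+k}$ and would mix the $\lbar X$- and $\lbar W$-coordinates, so the quantification over $\lbar X$ alone cannot be disentangled directly. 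What one actually needs for the induction is the usual one-variable criterion (or at least the block version with additional free $\VF$-variables permitted), and this requires a mild strengthening of the lemma rather than a literal citation of it. In practice the lemma's proof does extend to accommodate the extra $\lbar W$, so the gap is benign, but your proof as written quotes a statement that does not formally cover the case at hand, and you should either state and use the strengthened form of the lemma or reorganize the induction so that the extra free $\VF$-variables never arise when the lemma is invoked.
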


If angular component map exists then $\RV^{\times}$ may be understood as $\K^{\times} \oplus \Gamma$. Hence we have the following:

\begin{cor}[{\cite[Theorem~4.1]{Pa89}}]
The theory of henselian fields of pure characteristic $0$ in any Denef-Pas language admits elimination of field sort quantifiers.
\end{cor}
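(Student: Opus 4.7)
The plan is to view a Denef--Pas structure as a particular expansion of $\lan{H}$ in which the $\RV$-sort is explicitly presented as the direct product $\K^{\times} \times \Gamma$ (plus a distinguished zero), and then invoke Proposition~\ref{qe:hensel} directly.

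First I would recall that a Denef--Pas language consists of three sorts (a valued field sort, a residue field sort, and a value group sort) linked by a valuation $\vv$ and an \emph{angular component map} $\ac : \VF^{\times} \fun \K^{\times}$ extended by $\ac(0) = 0$, which is a group homomorphism agreeing with the residue map on $\OO^{\times}$. Given such an $\ac$, the map
\[
\rv(x) \, \longmapsto \, (\ac(x), \vv(x))
\]
is a well-defined isomorphism of $\RV^{\times}$ onto $\K^{\times} \oplus \Gamma$ (and sends $\rv(0) = \infty$ to the distinguished top element), because $\ac$ factors through $1 + \MM$. So over any Denef--Pas structure the $\RV$-sort is interpretable from, and in turn interprets, the disjoint pair $(\K, \Gamma)$ together with $\ac$; the two languages have the same definable sets in each sort.

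Next I would formalize this: let $\lan{H}^{\ac}$ be the expansion of $\lan{RV}$ by a function symbol for $\ac$ (an expansion taking place only in the passage from $\VF$ to $\RV$, but still fitting the framework of $\hen_S(0, 0)$ since adding $\ac$ amounts to naming a splitting of the exact sequence $1 \to \K^{\times} \to \RV^{\times} \to \Gamma \to 0$ in the $\RV$-sort). Under the identification $\RV^{\times} \cong \K^{\times} \oplus \Gamma$, a Denef--Pas formula $\varphi$ is literally an $\lan{H}^{\ac}$-formula with all residue-field and value-group occurrences translated into $\RV$-occurrences via the projections $\K^{\times} \fun \RV^{\times}$ and $\Gamma \fun \RV^{\times}/\K^{\times}$. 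Field-sort quantifiers on the Denef--Pas side correspond precisely to $\VF$-sort quantifiers on the $\lan{H}^{\ac}$ side.

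Now I would apply Proposition~\ref{qe:hensel} to $\hen_S(0,0)$ in the language $\lan{H}^{\ac}$: any Denef--Pas formula, translated into $\lan{H}^{\ac}$, is equivalent modulo $\hen_S(0,0)$ to a $\VF$-quantifier-free $\lan{H}^{\ac}$-formula. Translating back, the resulting formula has no field-sort quantifiers in the Denef--Pas language, yielding the corollary. The only step that requires genuine care is the verification that Proposition~\ref{qe:hensel} applies to $\lan{H}^{\ac}$: one must check that adding $\ac$ does not damage the hypotheses used in its proof. This is straightforward because $\ac$ is a function from $\VF$ into $\RV$ whose restriction to any $\rv$-ball $\rv^{-1}(t)$ with $t \neq \infty$ is constant (equal to the first coordinate of $t$ under the splitting), so $\ac$ is already definable from $\rv$ plus the chosen splitting of $\RV^{\times}$, and the $\VF$-quantifier-elimination procedure in the proof of Proposition~\ref{qe:hensel} passes through unchanged. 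The main conceptual obstacle is simply making the correspondence between the Denef--Pas three-sorted presentation and the two-sorted $\lan{RV}$-presentation precise; once that is in place the corollary is immediate.
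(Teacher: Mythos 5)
Your proposal takes essentially the same approach as the paper, whose entire proof is the one-line observation that an angular component map identifies $\RV^{\times}$ with $\K^{\times} \oplus \Gamma$, after which Proposition~\ref{qe:hensel} applies. You spell out the details the paper leaves implicit, including the worthwhile point that $\ac$ is recoverable from $\rv$ together with a splitting of $\RV^{\times} \to \Gamma$, so the Denef--Pas expansion really does fit the $\lan{H}$-framework (an expansion only in the $\RV$-sort).
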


The ``descent'' technique in this section can also be applied to theories of henselian fields with sections, which are formulated in a natural way as in~\cite{Yin:QE:ACVF:min}. This will be explained elsewhere.

\section{Lifting functions from $\RV$ to $\VF$}\label{section:lifting}

We shall show in this section that the map $\mathbb{L}$ actually induces
homomorphisms between various Grothendieck semigroups when $S$ is
a $(\VF, \Gamma)$-generated substructure.

Any polynomial in $\OO[\lbar X]$ corresponds to a polynomial in
$\K[\lbar X]$ via the canonical quotient map. The following
definition generalizes this phenomenon.

\begin{defn}\label{def:gamma:poly}
Let $\lbar \gamma = (\gamma_1, \ldots, \gamma_n) \in \Gamma$. A
polynomial $F(\lbar X) = \sum_{\lbar i j} a_{\lbar i j} \lbar
X^{\lbar i}$ with coefficients $a_{\lbar i j} \in \VF$ is a
\emph{$\lbar \gamma$-polynomial} if there is an $\alpha \in
\Gamma$ such that
\[
\alpha = \vv(a_{\lbar i j}) + i_1 \gamma_1 + \cdots + i_n \gamma_n
\]
for each $\lbar i j =(i_1, \ldots, i_n, j)$. In this case we say that $\alpha$ is a
\emph{residue value} of $F(\lbar X)$ (with respect to $\lbar
\gamma$). For a $\lbar \gamma$-polynomial $F(\lbar X)$ with
residue value $\alpha$ and a $\lbar t \in \RV$ with $\vrv(\lbar t)
= \lbar \gamma$, if $\vv (F(\lbar a)) > \alpha$ for some (hence all) $\lbar a \in
\rv^{-1}(\lbar t)$ then $\lbar t$ is a \emph{residue root} of
$F(\lbar X)$.

If $\lbar t \in \RV$ is a common residue root of the
$\lbar \gamma$-polynomials $F_1(\lbar X), \ldots, F_n(\lbar X)$
but is not a residue root of the $\lbar \gamma$-polynomial
\[
\det \partial (F_1, \ldots, F_n) / \partial \lbar X,
\]
then we say that $F_1(\lbar X), \ldots, F_n(\lbar X)$ are
\emph{minimal} for $\lbar t$ and $\lbar t$ is a \emph{simple}
common residue root of $F_1(\lbar X), \ldots, F_n(\lbar X)$.
\end{defn}

Therefore, according to this definition, every polynomial in
$\K[\lbar X]$ is the projection of a $\lbar 0$-polynomial $F(\lbar X)$ with residue value $0$, where $\lbar 0 = (0, \ldots, 0)$.

Hensel's lemma is accordingly generalized as follows.

\begin{lem}[Generalized Hensel's lemma]\label{hensel:lemma}
Let $F_1(\lbar X), \ldots, F_n(\lbar X)$ be $\lbar \gamma$-polynomials with residue values $\alpha_1, \ldots, \alpha_n$, where $\lbar \gamma = (\gamma_1, \ldots, \gamma_n) \in \Gamma$. For every simple common residue root $\lbar t = (t_1,
\ldots, t_n) \in \RV$ of $F_1(\lbar X), \ldots, F_n(\lbar X)$
there is a unique $\lbar a \in \rv^{-1}(\lbar t)$ such that $F_i(\lbar a) = 0$ for every $i$.
\end{lem}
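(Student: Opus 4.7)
The strategy is to perform a change of variables and a rescaling that turns the $\lbar\gamma$-polynomials $F_i$ into honest polynomials over $\OO$ whose reductions mod $\MM$ lie in $\K[\lbar Y]$, thereby reducing to the classical multivariate Hensel's lemma in the henselian field $\gC$.

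First I would fix any lift $\lbar b = (b_1, \ldots, b_n) \in \rv^{-1}(\lbar t)$, so that $\vv(b_j) = \gamma_j$, and substitute $X_j = b_j Y_j$. Also pick $c_i \in \VF$ with $\vv(c_i) = \alpha_i$ and set
\[
G_i(\lbar Y) \;=\; c_i^{-1}\, F_i(b_1 Y_1, \ldots, b_n Y_n).
\]
The definition of a $\lbar\gamma$-polynomial with residue value $\alpha_i$ guarantees that every coefficient of $G_i$ lies in $\OO$, so $G_i \in \OO[\lbar Y]$ and its reduction $\bar G_i \in \K[\lbar Y]$ is well-defined. Writing $s_j = \rv(b_j)$, the element $\bar t_j := t_j/s_j$ lies in $\K^\times$, and the hypothesis that $\vv(F_i(\lbar a)) > \alpha_i$ for $\lbar a \in \rv^{-1}(\lbar t)$ translates to $\bar G_i(\bar{\lbar t}) = 0$ in $\K$ for every $i$, where $\bar{\lbar t} = (\bar t_1, \ldots, \bar t_n) \in (\K^\times)^n$.

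Next I would track the Jacobian. Each partial $\partial F_i / \partial X_k$ is a $\lbar\gamma$-polynomial with residue value $\alpha_i - \gamma_k$, so $D(\lbar X) := \det \partial (F_1, \ldots, F_n)/\partial \lbar X$ is a $\lbar\gamma$-polynomial with residue value $\beta := \sum_i \alpha_i - \sum_k \gamma_k$. By the chain rule,
\[
\det \partial (G_1, \ldots, G_n)/\partial \lbar Y \;=\; \Bigl(\prod_i c_i^{-1}\Bigr)\Bigl(\prod_k b_k\Bigr)\, D(b_1 Y_1, \ldots, b_n Y_n),
\]
and the prefactor has valuation $-\beta$. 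The simplicity hypothesis says $\lbar t$ is \emph{not} a residue root of $D$, i.e.\ $\vv(D(\lbar a)) = \beta$ exactly for $\lbar a \in \rv^{-1}(\lbar t)$; hence the reduction of the Jacobian of $\lbar G$ at $\bar{\lbar t}$ is a nonzero element of $\K$.

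At this point the hypotheses of the classical multivariate Hensel's lemma hold for $G_1, \ldots, G_n \in \OO[\lbar Y]$ at the point $\bar{\lbar t} \in \K^n$, so there is a unique $\lbar y \in \OO^n$ with $G_i(\lbar y) = 0$ for all $i$ and $\lbar y \equiv \bar{\lbar t} \pmod{\MM}$. Setting $\lbar a = (b_1 y_1, \ldots, b_n y_n)$ produces the required element of $\rv^{-1}(\lbar t)$ with $F_i(\lbar a) = 0$, and uniqueness transfers back from the Henselian statement because the substitution and rescaling are bijective on the relevant polydiscs. The only step requiring any thought is the Jacobian bookkeeping in the previous paragraph; once that is in place, the rest is a direct translation to and from the $\lbar 0$-polynomial setting where Hensel's lemma is standard (and is in any case available because $\gC$ is an algebraically closed valued field).
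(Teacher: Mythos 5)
Your proof is correct and takes essentially the same route as the paper: you change variables by a chosen lift of $\lbar t$ to turn the $F_i$ into $\lbar 0$-polynomials, rescale to put the residue value at $0$, check via the chain rule that the Jacobian has valuation $0$ (hence nonvanishing reduction), and invoke the classical multivariate Hensel's lemma. The only cosmetic difference is that the paper first passes to a complete rank-$1$ submodel so as to cite Bourbaki's version of Hensel's lemma, whereas you appeal to henselianity of $\gC$ directly, which is equally valid.
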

\begin{proof}
Without loss of generality we may work in a topologically complete submodel of $\ACVF$ of rank $1$.

Fix a simple common residue root $\lbar t = (t_1, \ldots, t_n) \in
\RV$ of $F_1(\lbar X), \ldots, F_n(\lbar X)$. Choose a $c_i \in
\rv^{-1}(t_i)$. Changing the coefficients accordingly we may
rewrite each $F_i(\lbar X)$ as $F_i(X_1/c_1, \ldots, X_n/c_n)$.
Write $Y_i$ for $X_i/c_i$. Note that, for each $i$, the coefficients of the $\lbar 0$-polynomial $F_i(\lbar Y)$ are all of the same value $\alpha_i$. For each $i$ choose an $e_i \in \VF$ with $\vv(e_i) = - \alpha_i$. We have that each $\lbar 0$-polynomial $F_i^*(\lbar Y) = e_iF_i(\lbar Y)$ has residue
value $0$ (that is, the coefficients of $F_i^*(\lbar Y)$ is of
value $0$). Clearly $(1, \ldots, 1)$ is a common residue root of $F_1^*(\lbar Y), \ldots, F_n^*(\lbar Y)$; that is, for every $\lbar a \in \rv^{-1}(1, \ldots, 1)$ and every $i$ we have $\vv (F_i^*(\lbar a)) > 0$. It is actually a simple root because
for every $\lbar a \in \rv^{-1}(1, \ldots, 1)$ we have
\[
\det \partial (F_1^*, \ldots, F_n^*) / \partial \lbar Y (\lbar a) = \biggl(\prod_i e_ic_i \biggr)
\cdot \det \partial (F_1, \ldots, F_n) / \partial \lbar X (\lbar{ac}),
\]
where $\lbar{ac} = (a_1c_1, \ldots, a_nc_n)$, and hence
\[
\vv (\det \partial (F_1^*, \ldots, F_n^*) / \partial \lbar Y (\lbar a)) = \sum_i (-\alpha_i + \gamma_i) +
\sum_i \alpha_i - \sum_i \gamma_i = 0.
\]
Now the lemma follows from the multivariate version of Hensel's
lemma (for example, see~\cite[Corollary 2, p.~224]{bourbaki:1989}).
\end{proof}

\begin{defn}
Let $B, C$ be two $\RV$-pullbacks, $A$ a subset of $B \times C$,
and $U$ a subset of $\rv(B \times C)$. We say that $A$ is a
\emph{$(B, C)$-lift of $U$ from $\RV$ to $\VF$}, or just a
\emph{lift of $U$} for short, if $A \cap (\gp \times \gq)$ is a
bijection from $\gp$ onto $\gq$ for any $\rv$-polydiscs
$\gp \sub B$ and $\gq \sub C$ with $\rv(\gp \times \gq) \in U$. A \emph{partial lift of $U$} is a lift of any subset of $U$.

For any $\RV[k, \cdot]$-isomorphism $F : (U, f) \fun (V, g)$, a lift $F^{\uparrow}$ of $F$ is actually an $(\bb L(U, f), \bb L(V, g))$-lift of the induced function $F_{f,g} : U_f \fun V_g$; that is, $F^{\uparrow}$ is a function on $\bb L(U, f)$ such that each restriction
\[
F^{\uparrow} : \rv^{-1}(f(\lbar u), \lbar u) \fun \rv^{-1}((g \circ F)(\lbar u), F(\lbar u))
\]
is a bijection.
\end{defn}

It would be ideal to lift all definable subsets of $\RV^n \times
\RV^n$ with finite-to-finite correspondence for any substructure
$S$. However, the following crucial lemma fails when $S$ is not
$(\VF, \Gamma)$-generated.

\begin{lem}\label{exists:gamma:polynomial}
Suppose that $S$ is $(\VF, \Gamma)$-generated. Let $\lbar t = (t_1, \ldots, t_{n}) \in \RV$ with $t_n \in \acl(t_1, \ldots, t_{n-1})$. Let $\vrv (\lbar t) = (\gamma_1, \ldots, \gamma_{n}) = \lbar \gamma \in \Gamma$. Then there is a $\lbar \gamma$-polynomial $F(X_1, \ldots, X_{n})$ with coefficients in $\VF(S)$ such that $\lbar t$ is a residue root of $F(\lbar X)$ but is not a residue root of $\partial F(\lbar X) / \partial X_n$.
\end{lem}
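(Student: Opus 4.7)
The plan is to extract a concrete polynomial relation in $\RV$ from the algebraicity of $t_n$ over $t_1,\ldots,t_{n-1}$ and then realize it by a polynomial over $\VF(S)$ with the required valuative properties. Since $t_n\in\acl(t_1,\ldots,t_{n-1})$, there is an $\lan{RV}$-formula $\phi(Y_1,\ldots,Y_n)$ with parameters in $S$ such that $\phi(t_1,\ldots,t_{n-1},Y_n)$ defines a nonempty finite subset containing $t_n$. By QE and the normal form of Definition~\ref{def:normal:form}, I may take $\phi$ to be a single conjunction of $\RV$-literals. Crucially, $\phi$ has no $\VF$-sort variables, so every occurring polynomial inside it is just a constant of $\VF(S)$; moreover, since $S$ is $(\VF,\Gamma)$-generated, $\RV(S)=\rv(\VF(S))$, and hence every $\RV$-sort parameter in $\phi$ has the form $\rv(c)$ for some $c\in\VF(S)$. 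This is precisely where the hypothesis on $S$ is used and why the lemma fails without it.

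Next I would analyze the literals of $\phi$ to extract a clean algebraic relation pinning down $t_n$. The preorder $\leq$ on $\RV$ factors through $\vrv$, so $\leq$- and $>$-literals only constrain $\gamma_n$; since any coset in $\vrv^{-1}(\gamma_n)$ is an infinite $\K^{\times}$-torsor, finiteness of $\phi(t_1,\ldots,t_{n-1},Y_n)$ forces either an equality literal or a K-term additive relation that genuinely pins down $t_n$. A single such equality, after collecting terms, takes the form
\[
t_n^{m}\cdot t_1^{m_1}\cdots t_{n-1}^{m_{n-1}}\cdot\rv(c_1)=t_n^{l}\cdot t_1^{l_1}\cdots t_{n-1}^{l_{n-1}}\cdot\rv(c_2),\quad m\neq l,
\]
which rearranges to a clean relation
\[
t_n^{k}=\rv(c)\cdot t_1^{v_1}\cdots t_{n-1}^{v_{n-1}},\quad k\in\Z_{>0},\ v_i\in\Z,\ c\in\VF(S)^{\times}.
\]
A K-term relation $\sum_{j}\rv(c_j)\,t_1^{n_{j,1}}\cdots t_n^{n_{j,n}}=0$ would directly furnish a $\VF(S)$-polynomial vanishing at $(t_1,\ldots,t_n)$ and is analyzed in the same way.

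Having such a relation in hand, I would write $v_i=v_i^{+}-v_i^{-}$ with $v_i^{\pm}\geq 0$ and $c=c_1/c_2$ with $c_1,c_2\in\VF(S)^{\times}$, and set
\[
F(X_1,\ldots,X_n)=c_2\,X_n^{k}\prod_{i<n}X_i^{v_i^{-}}-c_1\prod_{i<n}X_i^{v_i^{+}}.
\]
Applying $\vrv$ to the $\RV$-relation yields $k\gamma_n=\sum_i v_i\gamma_i+\vv(c_1)-\vv(c_2)$, which is precisely equivalent to the two monomials of $F$ having the common $\lbar\gamma$-residue value $\alpha:=\vv(c_2)+k\gamma_n+\sum_{i<n}v_i^{-}\gamma_i=\vv(c_1)+\sum_{i<n}v_i^{+}\gamma_i$, so $F$ is a $\lbar\gamma$-polynomial with residue value $\alpha$. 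For any $\lbar a\in\rv^{-1}(t_1,\ldots,t_n)$, the two monomials of $F$ have equal $\rv$-value by the original relation, hence cancel and $\vv(F(\lbar a))>\alpha$; thus $(t_1,\ldots,t_n)$ is a residue root. Finally, $\partial F/\partial X_n=k\,c_2\,X_n^{k-1}\prod_{i<n}X_i^{v_i^{-}}$ is a single monomial of residue value $\vv(c_2)+(k-1)\gamma_n+\sum_{i<n}v_i^{-}\gamma_i$, and, because $\vv(k)=0$ in characteristic $0$, evaluation at any $\lbar a\in\rv^{-1}(t_1,\ldots,t_n)$ yields exactly this value rather than exceeding it. So $(t_1,\ldots,t_n)$ is not a residue root of $\partial F/\partial X_n$, as required.

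The main obstacle lies in the middle step: confirming that finiteness of the $Y_n$-fibre really does produce a polynomial relation of the stated shape. K-terms allow more complicated relations to sneak in as $\K$-linear combinations of monomials, and one must verify that the derivative clause still succeeds; the cleanest way is to choose, among all witnessing equalities obtained from $\phi$, one of minimal positive $k$, which ensures that $\partial F/\partial X_n$ has a single monomial evaluating to its residue value rather than exceeding it. The hypothesis that $S$ is $(\VF,\Gamma)$-generated enters exactly in the first paragraph: without it, an $\RV$-parameter in $\phi$ could witness algebraicity without descending to an element of $\rv(\VF(S))$, and no $\VF(S)$-polynomial of the required shape need exist.
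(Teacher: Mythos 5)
Your strategy is right in outline—extract an $\RV$-sort equality pinning down $t_n$ from the algebraicity hypothesis, use the $(\VF,\Gamma)$-generated assumption to push all $\RV$-parameters down to $\rv(\VF(S))$, realize the equality by a $\VF(S)$-polynomial, and check the residue-root and derivative conditions—and your handling of the pure monomial case is correct and verified carefully. The gap is in the general ($\K$-term) case, which you flag but do not resolve. An $\RV$-equality with $\K$-terms (the generic normal form here) produces a $\lbar\gamma$-polynomial $F$ with several monomials, typically several sharing the same $X_n$-exponent, and $\partial F/\partial X_n$ is then itself a sum of monomials. There is no a priori reason why evaluating at $\rv^{-1}(\lbar t)$ should not cause cancellation and push the valuation strictly above the residue value: the residue-field analogue $F(Y)=Y^3-3Y+2=(Y-1)^2(Y+2)$ with $\bar t_n=1$ has $F'$ vanishing at $1$ as well. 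Your proposed fix—choosing a witnessing equality of ``minimal positive $k$''—is not a rigorous argument: minimality over literals of $\phi$ does not prevent $\partial F/\partial X_n$ from accidentally cancelling, and minimality over all polynomial relations is not something the setup supplies (the minimal polynomial of the residue of $t_n$ need not lift to a relation over $\VF(S)$ of the required $\lbar\gamma$-weighted form).

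The paper closes this gap with an iterated-derivative argument that your sketch is missing. Writing the extracted polynomial as $G(\lbar X)=\sum_{i\in E}X_n^{i}G_i(\lbar X_n)$, where $E$ is the set of $X_n$-exponents whose coefficient polynomial $G_i$ does \emph{not} have $\lbar t_n$ as a residue root, one first checks $|E|>1$. Each iterated derivative $\partial G/\partial^{k}X_n$ is again a $\lbar\gamma$-polynomial, with residue value shifted by $-k\gamma_n$. The top derivative $\partial G/\partial^{\max E}X_n$ is a nonzero integer multiple of $G_{\max E}(\lbar X_n)$, which (characteristic $0$) does not have $\lbar t_n$ as a residue root, while $G=\partial G/\partial^{0}X_n$ does have $\lbar t$ as a residue root. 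So for some $k<\max E$ the transition happens, and $F=\partial G/\partial^{k}X_n$ is the required polynomial. This is the step you need to complete the $\K$-term case; in the special case where $G$ has exactly two monomials your construction recovers it with $k=0$.
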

\begin{proof}
Write $(t_1, \ldots, t_{n-1})$ as $\lbar t_n$. Let $\phi(\lbar X)$
be a quantifier-free formula such that $\phi(\lbar t_n, X_n)$ defines a finite
subset that contains $t_n$. Without loss of generality we may assume that $\phi(\lbar X)$ is an $\RV$-sort equality such that $\phi(\lbar t_n, X_n)$ defines a finite subset. Since $S$ is $(\VF, \Gamma)$-generated, we may assume that $\phi(\lbar X)$ does not contain parameters from $\RV(S) \mi \rv(\VF(S))$. Hence it is of the form
\[
\lbar X^{\lbar k} \cdot \sum_{\lbar i} (\rv(a_{\lbar i})\cdot
\lbar X^{\lbar i}) = \rv(a) \cdot \lbar X^{\lbar l} \cdot
\sum_{\lbar j} (\rv(a_{\lbar j}) \cdot \lbar X^{\lbar j}),
\]
where $a_{\lbar i}, a, a_{\lbar j} \in \VF(S)$. Fix an $s \in \RV$ such that $\vrv (s \cdot \lbar t^{\lbar k}) = \vrv (s \cdot \rv(a)\cdot \lbar t^{\lbar l}) = 0$. Let $\vrv(s) =
\delta$. Note that $\delta$ is $\lbar t_n$-definable. Let
\[
T_1(\lbar X, s) = \sum_{\lbar i} (s \cdot \rv(a_{\lbar i})\cdot \lbar X^{\lbar i + \lbar k}), \,\,\,\,
T_2(\lbar X, s) = \sum_{\lbar j} (- s \cdot \rv(aa_{\lbar j}) \cdot \lbar X^{\lbar j + \lbar l}).
\]
Consider the $\RV$-sort polynomial $H(\lbar X, s) = T_1(\lbar X, s) + T_2(\lbar X, s)$. For any $r \in \RV$, $H(\lbar t_n, r, s) = 0$ if and only if
\[
\text{either}\quad \sum_{\lbar i} (\rv(a_{\lbar i})\cdot (\lbar t_n, r)^{\lbar i}) = \sum_{\lbar j} (\rv(a_{\lbar j}) \cdot (\lbar t_n, r)^{\lbar j}) = 0 \quad \text{or} \quad \rv(T_1(\lbar t_n, r, s)/s) = \rv(- T_2(\lbar t_n, r, s)/s).
\]
So the equation $H(\lbar t_n, X_n, s) = 0$ defines a finite subset that contains $t_n$ and is actually $\lbar t_n$-definable.

Let $m$ be the maximal exponent of $X_n$ in $H(\lbar X, s)$. For each $i \leq m$ let $H_i(\lbar X, s)$ be the sum of all the monomials $M(\lbar X, s)$
in $H(\lbar X, s)$ such that the exponent of $X_n$ in $M(\lbar X, s)$ is $i$. Replacing $s$ with a variable $Y$ and each $\rv(a)$
with $a$ in $H_i(\lbar X, s)$, we obtain a $\VF$-sort polynomial
$H^*_i(\lbar X, Y)$ for each $i \leq m$. Let
\[
E = \{i \leq m : \vv(H^*_i(\lbar b, c)) = 0 \text{ for all } (\lbar b, c) \in \rv^{-1}(\lbar t, s) \}.
\]
Since $H(\lbar t, s) = 0$, clearly $\abs{E} \neq 1$. Since the equation $H(\lbar t_n, X_n, s) = 0$ defines a finite subset, we actually have $\abs{E} > 1$. Now let
\[
H^*(\lbar X, Y) = \sum_{i \in E} H^*_i(\lbar X, Y) = \sum_{i \in E} Y X_n^i G_i(\lbar X_n) = Y G(\lbar X).
 \]
Since $(\lbar t, s)$ is a residue root of $H^*(\lbar X, Y)$, clearly $G(\lbar X)$
is a $\lbar \gamma$-polynomial with residue value $-\delta$ and $\lbar t$ is a residue root of $G(\lbar X)$. Also, $\lbar t_n$ is not a
residue root of any $G_i(\lbar X_n)$. It follows that, for some $k
< \max E$, $\lbar t$ is a residue root of the $\lbar
\gamma$-polynomial $\partial G(\lbar X) / \partial^k X_n$ but is not a residue root of the $\lbar \gamma$-polynomial $\partial G(\lbar X) / \partial^{k+1} X_n$.
\end{proof}

\begin{rem}\label{def:subseti:in:K}
For definable subsets of the residue field, the situation may be
further simplified. Suppose that $A \sub \K^n$ is definable. Let $\phi(\lbar X)$ be a quantifier-free formula in disjunctive normal form
that defines $A$. It is easily seen by inspection that each conjunct in each disjunct of $\phi(\lbar X)$ is either an $\RV$-sort equality or an $\RV$-sort
disequality, with coefficients in $\K(S)$. So the geometry of
definable subsets in the residue field coincides with its
algebraic geometry. In other words, each definable subset in the
residue field is a constructible subset (in the sense of algebraic
geometry) of a Zariski topological space $\spec \K(S)[\lbar X]$.
\end{rem}

\begin{thm}\label{RV:iso:class:lifted}
Suppose that the substructure $S$ is $(\VF, \Gamma)$-generated.
Let $C \sub (\RV^{\times})^n \times (\RV^{\times})^n$ be a
definable subset such that both $\pr_{\leq n} \rest C$ and
$\pr_{>n} \rest C$ are finite-to-one. Then there is a definable
subset $C^{\uparrow} \sub \VF^n \times \VF^n$ that lifts $C$.
\end{thm}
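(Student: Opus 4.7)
The plan is to construct $C^{\uparrow}$ fiberwise over each $\rv$-polydisc pair by producing polynomial witnesses via Lemma~\ref{exists:gamma:polynomial} in both directions and lifting solutions with the generalized Hensel's lemma (Lemma~\ref{hensel:lemma}).

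First I extract polynomial data. Fix $(\lbar s,\lbar t)\in C$ and set $\lbar\gamma=\vrv(\lbar s,\lbar t)$. Because $\pr_{\leq n}\rest C$ is finite-to-one, each $t_i$ lies in $\acl(\lbar s)\subseteq\acl(\lbar s,t_1,\ldots,t_{i-1})$, so iterating Lemma~\ref{exists:gamma:polynomial} (folding the $\lbar s$-coordinates into the earlier variables at each step) yields $\lbar\gamma$-polynomials $F_i(\lbar X,Y_1,\ldots,Y_i)\in\VF(S)[\lbar X,\lbar Y]$ for $i=1,\ldots,n$ such that $(\lbar s,t_1,\ldots,t_i)$ is a residue root of $F_i$ but not of $\partial F_i/\partial Y_i$. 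The Jacobian matrix $(\partial F_i/\partial Y_j)_{1\leq i,j\leq n}$ is lower triangular with diagonal entries of nonzero residue value at $(\lbar s,\lbar t)$, so $\lbar t$ is a simple common residue root of $(F_1,\ldots,F_n)$ in $\lbar Y$, uniformly in the parameter $\lbar a\in\rv^{-1}(\lbar s)$. Symmetrically, using the finite-to-one property of $\pr_{>n}\rest C$, I extract polynomials $G_i(\lbar Y,X_1,\ldots,X_i)\in\VF(S)[\lbar Y,\lbar X]$ making $\lbar s$ a simple common residue root in $\lbar X$. A compactness argument packages this into a finite definable partition $C=\bigsqcup_\ell C_\ell$ with a uniform polynomial tuple $(\lbar F^{(\ell)},\lbar G^{(\ell)})$ on each piece, so it suffices to lift one piece at a time.

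On a piece $C_\ell$, Lemma~\ref{hensel:lemma} produces a definable function $\phi_\ell$ sending $\lbar a\in\rv^{-1}(\lbar s)$ to the unique $\lbar b\in\rv^{-1}(\lbar t)$ with $F_i^{(\ell)}(\lbar a,b_1,\ldots,b_i)=0$ for all $i$ (where $(\lbar s,\lbar t)\in C_\ell$), and symmetrically a function $\psi_\ell$ in the other direction. The natural candidate for the lift is
\[
C^{\uparrow}=\bigsqcup_\ell\bigl\{(\lbar a,\lbar b)\in\VF^n\times\VF^n : (\rv(\lbar a),\rv(\lbar b))\in C_\ell,\ \lbar b=\phi_\ell(\lbar a)\text{ and }\lbar a=\psi_\ell(\lbar b)\bigr\},
\]
which is manifestly definable. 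For each $\rv$-polydisc pair $(\gp,\gq)$ with $(\lbar s,\lbar t)=\rv(\gp\times\gq)\in C_\ell$, the intersection $C^{\uparrow}\cap(\gp\times\gq)$ is contained in the graphs of both $\phi_\ell\rest\gp$ and $\psi_\ell^{-1}\rest\gq$.

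The main obstacle is showing that these intersections genuinely saturate the graphs, i.e.\ that $\psi_\ell\circ\phi_\ell=\id_\gp$ and $\phi_\ell\circ\psi_\ell=\id_\gq$, so that $C^{\uparrow}\cap(\gp\times\gq)$ is a bijection from $\gp$ onto $\gq$. Both $\phi_\ell$ and $\psi_\ell$ are rigid-analytic on their polydiscs with Jacobians of nonzero residue, so $\psi_\ell\circ\phi_\ell$ is a definable self-map of $\gp$ in the residue class of the identity. I expect this to force $\psi_\ell\circ\phi_\ell=\id$ via a rigid-analytic uniqueness or Newton-iteration argument, best executed by applying the uniqueness clause of Lemma~\ref{hensel:lemma} to a judiciously combined system built from $(F_i)$ and $(G_i)$, arranged so that the two Hensel processes cut out the same local algebraic correspondence. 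Carrying out this bijectivity argument cleanly is, I expect, the technical heart of the proof.
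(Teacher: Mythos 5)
Your setup correctly follows the paper's framework: you invoke Lemma~\ref{exists:gamma:polynomial} to extract polynomial witnesses for the two finite-to-one projections, convert them into $(\lbar\gamma,\lbar\delta)$-polynomials, and appeal to Lemma~\ref{hensel:lemma}. But you stop exactly where the paper's proof begins its hardest work, and the gap you flag — showing that $\psi_\ell\circ\phi_\ell=\id$ and $\phi_\ell\circ\psi_\ell=\id$ — is not something you can expect to hold for the two \emph{separate} Hensel solutions $\phi_\ell$ (built from the $F_i$) and $\psi_\ell$ (built from the $G_i$). There is no reason the two algebraic correspondences cut out by independently chosen polynomials should agree, and your proposed $C^\uparrow$ (the intersection of the two graphs) would then fail to be surjective onto $\gp$ and $\gq$: it would only contain the fixed-point locus of $\psi_\ell\circ\phi_\ell$, which could a priori be a proper, non-definably-nice subset of the polydisc.

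The paper's resolution is precisely the "judiciously combined system" you gesture at, and it is worth seeing exactly how it avoids the composition problem entirely. Rather than defining two maps and trying to prove they are mutually inverse, the paper forms a \emph{single} system
\[
H_i(\lbar X,\lbar Y)=p_i F_i^*(\lbar X,\lbar Y)+q_i G_i^*(\lbar X,\lbar Y)
\]
for suitably chosen integers $p_i,q_i$. If one can show that for each $\lbar a\in\rv^{-1}(\lbar t)$ the tuple $\lbar s$ is a simple common residue root of $H_1(\lbar a,\lbar Y),\ldots,H_n(\lbar a,\lbar Y)$, \emph{and} for each $\lbar b\in\rv^{-1}(\lbar s)$ the tuple $\lbar t$ is a simple common residue root of $H_1(\lbar X,\lbar b),\ldots,H_n(\lbar X,\lbar b)$, then the zero set $\{(\lbar a,\lbar b):\bigwedge_i H_i(\lbar a,\lbar b)=0\}$ is automatically the graph of a bijection from $\gp$ to $\gq$ — uniqueness in each direction comes directly from Lemma~\ref{hensel:lemma}, and no composition identity needs to be checked because the same variety is being sliced both ways. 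The nontrivial work, which your proposal omits, is choosing $p_i,q_i$ so that both Jacobian determinants $\det\partial(H_1,\ldots,H_n)/\partial\lbar Y$ and $\det\partial(H_1,\ldots,H_n)/\partial\lbar X$ have the correct residue value; this requires expanding the determinant, isolating the terms whose valuation is minimal, and arguing that a generic integer choice of $(p_i,q_i)$ prevents cancellation among these dominant terms in either expansion. Without this step your proof does not close.
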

\begin{proof}
By compactness, the lemma is reduced to showing that for every
$(\lbar t, \lbar s) \in C$ there is a definable lift of some
subset of $C$ that contains $(\lbar t, \lbar s)$. Fix a $(\lbar t,
\lbar s) \in C$ and set $(\lbar \gamma, \lbar \delta) = \vrv(\lbar
t, \lbar s)$. Let $\phi(\lbar X, \lbar Y)$ be a formula that
defines $C$. By Lemma~\ref{exists:gamma:polynomial}, for each $Y_i$ there is a
$(\lbar \gamma, \delta_i)$-polynomial $F_i(\lbar X, Y_i)$ with
coefficients in $\VF(S)$ such that $(\lbar t, s_i)$ is a
residue root of $F_i(\lbar X, Y_i)$ but is not a residue root of
$\partial F_i(\lbar X, Y_i)/ \partial Y_i$. Similarly we obtain
such a $(\gamma_i, \lbar \delta)$-polynomial $G_i(X_i, \lbar Y)$
for each $X_i$. For each $i$, let $a_i (\lbar{X}\lbar{Y})^{\lbar k_i}$
and $b_i (\lbar{X}\lbar{Y})^{\lbar l_i}$ be two monomials with $a_i, b_i
\in \VF(S)$ such that
\[
F_i^*(\lbar X, \lbar Y) + G_i^*(\lbar X, \lbar Y) = a_i (\lbar{X}\lbar{Y})^{\lbar k_i} F_i(\lbar X, Y_i) + b_i (\lbar{X}\lbar{Y})^{\lbar l_i}G_i(X_i, \lbar Y)
\]
is a $(\lbar \gamma, \lbar \delta)$-polynomial. Let $\alpha_i$ be
the residue value of $F_i^*(\lbar X, \lbar Y) + G_i^*(\lbar X,
\lbar Y)$. Note that for any $(\lbar a, \lbar b) \in
\rv^{-1}(\lbar t, \lbar s)$ we have
\[
\vv (\partial F_i^* / \partial Y_i(\lbar a, \lbar b)) = \vv(a_i(\lbar {ab})^{\lbar k_i}) + \vv(\partial F_i / \partial Y_i(\lbar a, \lbar b)) = \alpha_i - \delta_i
\]
and for $j \neq i$ we have
\[
\vv (\partial F_i^* / \partial Y_j(\lbar a, \lbar b)) = \vv(a_i) + \vv(\partial (\lbar{X}\lbar{Y})^{\lbar k_i} / \partial Y_j (\lbar a, \lbar b)) + \vv (F_i(\lbar a, b_i)) > \alpha_i - \delta_j.
\]
Therefore,
\[
\vv (\det \partial (F^*_1, \ldots, F^*_n) / \partial \lbar Y (\lbar a, \lbar b))
= \vv \bigl( \prod_i \partial F^*_i /\partial Y_i (\lbar a, \lbar b) \bigr)
= \sum_i \alpha_i - \sum_i \delta_i.
\]
This shows that $\lbar s$ is a simple common residue root of
$F^*_1(\lbar a, \lbar Y), \ldots, F^*_n(\lbar a, \lbar Y)$ for
any $\lbar a \in \rv^{-1}(\lbar t)$. Similarly $\lbar t$ is a
simple common residue root of $G^*_1(\lbar X, \lbar b), \ldots,
G^*_n(\lbar X, \lbar b)$ for any $\lbar b \in \rv^{-1}(\lbar
s)$.

Now for each $i$ we choose a pair of integers $p_i, q_i$. Consider the $(\lbar \gamma, \lbar \delta)$-polynomials
\[
H_i(\lbar X, \lbar Y) = p_i F_i^*(\lbar X, \lbar Y) + q_i G_i^*(\lbar X, \lbar Y).
\]
Let $\sigma \in S_n$ be a permutation and $\tau(\lbar X, \lbar Y)$ a term in the expansion of the product $\prod_i \partial H_i(\lbar X, \lbar Y) / \partial Y_{\sigma(i)}$. The coefficient $c_{\tau}$ of $\tau(\lbar X, \lbar Y)$ is of the form $\prod_i m_i$, where $m_i $ is either $p_i$ or $q_i$. Suppose that
\[
\vv(\tau (\lbar a, \lbar b)) = \sum_i \alpha_i - \sum_i \delta_i
\]
for some (hence all) $(\lbar a, \lbar b) \in \rv^{-1}(\lbar t, \lbar s)$. Then $\rv(\tau(\lbar X, \lbar Y))$ is constant on $\rv^{-1}(\lbar t, \lbar s)$, which is denoted by $\rv(\tau)$. Observe that there is only one such term with coefficient $\prod_i p_i$, namely $\prod_i \partial (p_i F^*_i) /\partial Y_i$. Let $\tau_i$ enumerate all such terms other than $\prod_i \partial (p_i F^*_i) /\partial Y_i$. It is not hard to see that $p_i, q_i$ may be chosen so that
\[
1 + \sum_i \rv(\tau_i) / \rv \bigl( \prod_i \partial (p_i F^*_i) / \partial Y_i \bigr) \neq 0.
\]
This implies that, for all $(\lbar a, \lbar b) \in \rv^{-1}(\lbar t, \lbar s)$,
\[
\vv (\det \partial (H_1, \ldots, H_n) / \partial \lbar Y (\lbar a, \lbar b))
= \sum_i \alpha_i - \sum_i \delta_i
\]
and hence $\lbar s$ is a simple common residue root of the $\lbar \delta$-polynomials $H_1(\lbar a, \lbar Y), \ldots, H_n(\lbar a, \lbar Y)$ for any $\lbar a \in \rv^{-1}(\lbar t)$. In fact the choice of $p_i, q_i$ can be improved so that we also have, for all $(\lbar a, \lbar b) \in \rv^{-1}(\lbar t, \lbar s)$,
\[
\vv (\det \partial (H_1, \ldots, H_n) / \partial \lbar X (\lbar a, \lbar b))
= \sum_i \alpha_i - \sum_i \gamma_i
\]
and hence $\lbar t$ is a simple common residue root of the $\lbar \gamma$-polynomials $H_1(\lbar X, \lbar b), \ldots, H_n(\lbar X, \lbar b)$ for any $\lbar b \in \rv^{-1}(\lbar s)$.
By Lemma~\ref{hensel:lemma}, for each
$\lbar a \in \rv^{-1}(\lbar t)$ there is a unique $\lbar b \in
\rv^{-1}(\lbar s)$ such that $\bigwedge_i H_i(\lbar a, \lbar b)
= 0$, and vice versa.
\end{proof}

\begin{cor}\label{L:semigroup:hom}
Suppose that the substructure $S$ is $(\VF, \Gamma)$-generated.
The map $\mathbb{L}$ induces surjective homomorphisms between various
Grothendieck semigroups, for example:
\[
\gsk \RV[k, \cdot] \fun \gsk \VF[k, \cdot],\quad
\gsk \RV[k] \fun \gsk \VF[k].
\]
\end{cor}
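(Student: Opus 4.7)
There are three things to verify: that $\bb L$ descends to a well-defined map on isomorphism classes, that it respects the defining relation $[A]+[B]=[A\cup B]+[A\cap B]$, and that it is surjective. Surjectivity is already Corollary~\ref{L:surjective}. The semigroup relation is essentially formal from the fibered-union formula $\bb L(U,f)=\bigcup\{\rv^{-1}(f(\lbar u))\times\{\lbar u\}:\lbar u\in U\}$: subobjects of $(W,f)\in\RV[k,\cdot]$ are pairs $(U,f\rest U)$ with $U\sub W$, and the formula immediately yields $\bb L(A\cup B)=\bb L(A)\cup\bb L(B)$ and analogously with $\cap$. So the real content is well-definedness on isomorphism classes.

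For that, let $F:(U_1,f_1)\fun(U_2,f_2)$ be an isomorphism in $\RV[k,\cdot]$. The plan is to construct a definable $\VF[k,\cdot]$-isomorphism $F^{\uparrow}:\bb L(U_1,f_1)\fun\bb L(U_2,f_2)$ fiberwise over $U_1$: for each $\lbar u\in U_1$ produce a bijection $\rv^{-1}(f_1(\lbar u))\fun\rv^{-1}((f_2\circ F)(\lbar u))$ and glue with $\lbar u\mapsto F(\lbar u)$ on the $\RV$-side. Because both $F$ and $F^{-1}$ are volumetric morphisms, $\wgt(f_1(\lbar u))=\wgt((f_2\circ F)(\lbar u))$ for every $\lbar u$. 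Partition $U_1$ into definable pieces according to the pattern of $\infty$-coordinates in $f_1(\lbar u)$; on each piece the $\infty$-coordinates give canonical $\{0\}$-factors on both sides of $\bb L$, so one reduces to the case where $f_1(\lbar u)$ and $(f_2\circ F)(\lbar u)$ take values in $(\RV^{\times})^k$.

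The required fiberwise lift is then furnished by a parameterized form of Theorem~\ref{RV:iso:class:lifted}. Viewing the correspondence
\[
C=\{(f_1(\lbar u),(f_2\circ F)(\lbar u)):\lbar u\in U_1\}\sub(\RV^{\times})^k\times(\RV^{\times})^k
\]
as definable with $\lbar u$ among the parameters, the morphism hypotheses on $F$ and $F^{-1}$ ensure that both projections are finite-to-one on each $\lbar u$-fiber. The proof of Theorem~\ref{RV:iso:class:lifted} constructs the $\lbar\gamma$-polynomials $H_i$ and integer multipliers $p_i,q_i$ verifying Hensel's conditions (Lemma~\ref{hensel:lemma}) pointwise via Lemma~\ref{exists:gamma:polynomial}, then invokes compactness to spread the pointwise construction into finitely many definable pieces. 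Running the same compactness argument with $\lbar u$ adjoined to the parameters yields the uniform lift in one shot, and the resulting $F^{\uparrow}$ satisfies the hypotheses of Remark~\ref{RV:isomorphism:weaker:condition}, hence is an isomorphism in $\VF[k,\cdot]$.

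The main obstacle is the uniform parameterized Hensel construction in the previous paragraph; it is precisely here that the hypothesis that $S$ is $(\VF,\Gamma)$-generated is essential, since Lemma~\ref{exists:gamma:polynomial} can fail otherwise. For the restriction $\gsk\RV[k]\fun\gsk\VF[k]$, one need only observe that $f$ finite-to-one forces $\bb L(U,f)\in\VF[k]$ because the projection $\bb L(U,f)\fun\VF^k$ is then finite-to-one; the construction above keeps us inside $\VF[k]$, and surjectivity is the $\VF[k]$-part of Corollary~\ref{L:surjective}.
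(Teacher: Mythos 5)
Your proof is correct and takes the same route as the paper: both lift the correspondence $C = \{(f_1(\lbar u), (f_2 \circ F)(\lbar u)) : \lbar u \in U_1\}$ via Theorem~\ref{RV:iso:class:lifted} and then glue the resulting fiberwise bijection with $F$ on the $\RV$-side to obtain $F^{\uparrow}$. The paper simply applies that theorem to $C$ as a black box (since $C$ is already $S$-definable) rather than re-running its proof with $\lbar u$ adjoined to the parameters, which is a modest simplification; two small points to tidy in your version are that the weight condition fixes only the \emph{number}, not the \emph{positions}, of $\infty$-coordinates, so you should also partition by the $\infty$-pattern of $(f_2 \circ F)(\lbar u)$ and allow a permutation of the trivial $\{0\}$-factors before dropping to $(\RV^{\times})^{k'}$, and the appeal to Remark~\ref{RV:isomorphism:weaker:condition} is superfluous here since any definable bijection between objects of $\VF[k,\cdot]$ is automatically an isomorphism in that category (that remark is used to recognize a map on the $\RV$-side as a morphism, which is the opposite direction of inference).
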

\begin{proof}
For any $\RV[k, \cdot]$-isomorphism $F : (U, f) \fun (V, g)$ and
any $\lbar u \in U$, by definition, $\wgt (f(\lbar u)) = \wgt ((g
\circ F)(\lbar u))$. Let
\[
C = \set{(f(\lbar u), (g \circ F)(\lbar u)) : \lbar u \in U} \sub \RV^k \times \RV^k.
\]
By Theorem~\ref{RV:iso:class:lifted} there is a lift $C^{\uparrow}$ of $C$, which induces a $\VF[k, \cdot]$-isomorphism between $\bb L(U, f)$ and $\bb L(V, g)$. So $\bb L$ induces a map on the isomorphism classes, which is clearly a semigroup homomorphism. By Corollary~\ref{L:surjective} it is surjective. The other cases are handled similarly.
\end{proof}

\section{More on structural properties}\label{section:more:stru}

\begin{lem}\label{points:apart:fin}
Let $A \sub \VF^n$ be a definable subset. Suppose that there is a $\gamma \in \Gamma$ such that $\go(\lbar a', \gamma) \cap \go(\lbar a'', \gamma) = \0$ for every $\lbar a'$, $\lbar a'' \in A$. Then $A$ is finite.
\end{lem}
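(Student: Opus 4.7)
The plan is to induct on $n$, with the base case essentially following from $C$-minimality. For $n = 1$: if $A \subseteq \VF$ were infinite, by $C$-minimality (Theorem~\ref{c:min:acvf}) it would contain an open ball $\gb$, which in turn contains a sub-ball $\go(c, \gamma')$ of radius $\gamma' > \max(\gamma, \rad(\gb))$ for any $c \in \gb$. This sub-ball contains a point $c' \neq c$, and then $\vv(c - c') > \gamma' > \gamma$ yields $c \in \go(c, \gamma) \cap \go(c', \gamma) \neq \0$, contradicting the separation hypothesis.

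For the inductive step, I would fiber over the last coordinate: set $C = \pr_n(A) \subseteq \VF$ and, for each $a \in C$, $G(a) = \{\lbar b \in \VF^{n-1} : (\lbar b, a) \in A\}$. Each $G(a)$ is $a$-definable and inherits the separation hypothesis (distinct elements of $G(a)$ come from distinct points of $A$ agreeing in the last coordinate, so their $\vv$-distance is that of the projected tuples), hence by the inductive hypothesis $G(a)$ is finite. It therefore suffices to show $C$ is finite, since then $A = \bigsqcup_{a \in C} G(a) \times \{a\}$ is a finite disjoint union of finite sets.

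The crux of the argument — which I expect to be the main obstacle — is bounding $C$. Suppose for contradiction $C$ is infinite; then by the same $C$-minimality argument as in the base case, $C$ contains an open ball $\gb$ of radius strictly greater than $\gamma$. Set $A' = A \cap (\VF^{n-1} \times \gb)$. The key observation is that any two distinct points $(\lbar b, a), (\lbar b', a')$ of $A'$ satisfy $\vv(\lbar b - \lbar b') \leq \gamma$: this is immediate from the hypothesis when $a = a'$, while for $a \neq a'$ both points lie in $\gb$, forcing $\vv(a - a') > \gamma$, so the hypothesis pushes the separation into the first $n-1$ coordinates. Consequently, $\pr_{\leq n-1}(A') \subseteq \VF^{n-1}$ satisfies the separation hypothesis and is finite by the inductive hypothesis, while each fiber of $A'$ over a point of $\pr_{\leq n-1}(A')$ is a definable subset of $\gb \subseteq \VF$ satisfying the hypothesis, hence is finite by the base case. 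So $A'$ is finite; but $\pr_n(A') = \gb$ is infinite, a contradiction. Therefore $C$ is finite, and $A$ is finite.
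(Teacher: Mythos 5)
Your proof is correct and follows essentially the same route as the paper's: both project away one $\VF$-coordinate, use $C$-minimality to find an open ball $\gb$ of radius greater than $\gamma$ inside the (assumed infinite) projection, and then observe that on the slice over $\gb$ the separation hypothesis transfers entirely to the remaining $n-1$ coordinates, giving the contradiction via the inductive hypothesis. A small remark: your key observation that $\vv(\lbar b - \lbar b') \leq \gamma$ for distinct points of $A'$ already forces $\pr_{\leq n-1} \rest A'$ to be injective (equal $(n-1)$-tuples would give $\vv(\lbar b - \lbar b') = \infty > \gamma$), so the fibers of $A'$ over $\pr_{\leq n-1}(A')$ are singletons and the separate appeal to the base case at that step is redundant.
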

\begin{proof}
We do induction on $n$. The base case $n = 1$ just follows from $C$-minimality. For the inductive step, consider the subset $\pr_1 (A) = A_1$. If $A_1$ is finite then by the inductive hypothesis $\fib(A, a)$ is finite for every $a \in A_1$ and hence $A$ is finite. If $A_1$ is infinite then by $C$-minimality there is an open ball $\gb \sub A_1$ with $\rad(\gb) > \gamma$. For any $a'\in \gb$, $a'' \in \gb$, $\lbar b' \in \fib(A, a')$, and $\lbar b'' \in \fib(A, a'')$, if $\go(\lbar b', \gamma) \cap \go(\lbar b'', \gamma) \neq \0$ then $\go((a', \lbar b'), \gamma) \cap \go((a'', \lbar b''), \gamma) \neq \0$, contradicting the assumption. Therefore, by the inductive hypothesis again, $\bigcup_{a \in \gb} \fib(A, a)$ is finite. So there is a $\lbar b \in \bigcup_{a \in \gb} \fib(A, a)$ such that $\fib(A, \lbar b) \cap \gb$ is infinite, contradiction again.
\end{proof}

\begin{lem}\label{iso:point:dim:less}
Let $f :\VF^n \fun \VF^m$ be a definable function. Let $A \sub \VF^n$ be the definable subset of those $\lbar a \in \VF^n$ such that there are $\epsilon$, $\delta \in \Gamma$ with
\[
\go(\lbar a, \delta)   \cap  f^{-1}(\go(f(\lbar a), \epsilon)) = \set{\lbar a}.
\]
Then $\dim_{\VF} (A) < n$.
\end{lem}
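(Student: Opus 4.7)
The plan is to parametrize the ``isolation witnesses'' $(\epsilon, \delta)$ as honest extra coordinates in $\Gamma$. Concretely, I would replace $A$ by the definable set
\[
A' = \set{(\lbar a, \gamma, \eta) \in A \times \Gamma^{2} : \go(\lbar a, \eta) \cap f^{-1}(\go(f(\lbar a), \gamma)) = \set{\lbar a}},
\]
which projects surjectively onto $A$ by the very definition of $A$. Since $\Gamma$-coordinates do not contribute to transcendental degree over $\VF(S)$, Lemma~\ref{VF:dim:tran:deg} gives $\dim_{\VF}(A') = \dim_{\VF}(A)$, so it is enough to bound $\dim_{\VF}(A')$.

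I would then apply Lemma~\ref{dim:VF:pullback} to the projection $A' \fun \Gamma^{2}$, reducing matters to bounding the $\VF$-dimension of each fiber $A'_{\gamma, \eta}$. The key point is that distinct points in one fiber have nearby graph points. Indeed, if $\lbar a_1, \lbar a_2 \in A'_{\gamma, \eta}$ are distinct, then the isolation condition for $\lbar a_1$ excludes $\lbar a_2$ from $\go(\lbar a_1, \eta) \cap f^{-1}(\go(f(\lbar a_1), \gamma))$, which forces
\[
\vv\bigl((\lbar a_1, f(\lbar a_1)) - (\lbar a_2, f(\lbar a_2))\bigr) \leq \max(\gamma, \eta).
\]
Hence the graph image $\set{(\lbar a, f(\lbar a)) : \lbar a \in A'_{\gamma, \eta}} \sub \VF^{n+m}$ has pairwise disjoint open balls of radius $\max(\gamma, \eta)$ around its distinct points, and Lemma~\ref{points:apart:fin} then forces this graph image, and thus $A'_{\gamma, \eta}$ itself, to be finite.

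Putting everything together, every fiber of $A' \fun \Gamma^{2}$ has $\VF$-dimension $0$, so Lemma~\ref{dim:VF:pullback} yields $\dim_{\VF}(A) = \dim_{\VF}(A') = 0 < n$ whenever $n \geq 1$ (the case $n = 0$ is vacuous). I do not foresee a real obstacle in this argument; the only conceptual point worth underlining is the reification of the existentially quantified witnesses as actual coordinates in the imaginary sort $\Gamma$, which is harmless for $\VF$-dimension and lets everything be handled fiber-wise via Lemma~\ref{points:apart:fin}.
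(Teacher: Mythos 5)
Your proof is correct, and it does take a genuinely different route from the paper's, though both hinge on the same finiteness input, Lemma~\ref{points:apart:fin}. The paper first uses $o$-minimality of $\Gamma$ to \emph{select} a single definable pair of witnesses $(\epsilon_{\lbar a}, \delta_{\lbar a})$ for each $\lbar a$, giving a map $h : A \fun \Gamma^2$, and then argues by contradiction: if $\dim_{\VF}(A) = n$, Lemma~\ref{dim:VF:pullback} plus Lemma~\ref{full:dim:open:poly} produce a fiber $h^{-1}(\epsilon,\delta)$ containing an open polydisc $\gp$, and on a sub-ball $\go(\lbar a, \gamma)\sub\gp$ the \emph{constancy} of the witnesses forces the $f$-images of distinct points to lie in pairwise disjoint $\epsilon$-balls (and $f$ to be injective there); Lemma~\ref{points:apart:fin} then makes $f(\go(\lbar a,\gamma))$ finite, contradicting injectivity on an infinite ball. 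You instead adjoin \emph{all} valid witnesses as honest $\Gamma$-coordinates, invoke $\dim_{\VF} = \td$ to see that $\dim_{\VF}(A')=\dim_{\VF}(A)$, and reduce to fibers by Lemma~\ref{dim:VF:pullback}; the crucial twist is that you pass to the graph in $\VF^{n+m}$, where the disjunctive conclusion ``either the sources or the images are $\gamma$-or-$\eta$-separated'' becomes a single uniform $\max(\gamma,\eta)$-separation, letting Lemma~\ref{points:apart:fin} apply directly to the fiber's graph. Your route avoids both the $o$-minimal selection step and Lemma~\ref{full:dim:open:poly}, dispenses with the injectivity observation (the graph is automatically injective over $\lbar a$), is not a proof by contradiction, and in fact yields the strictly sharper conclusion $\dim_{\VF}(A)=0$, i.e.\ $A$ finite. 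For the paper's eventual uses of this lemma (notably Lemma~\ref{fun:dim:1:val:cons}, which only needs $n=1$) the sharper bound is not required, but it is a clean improvement nonetheless.

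Two small points worth checking off explicitly in a writeup: (i) $\Gamma$ here excludes $\infty$, so $\eta<\infty$ and the balls $\go(\lbar a,\eta)$ are genuinely infinite, which is what makes the isolation condition nontrivial and the separation argument meaningful; (ii) Lemma~\ref{points:apart:fin} is applied to the $(\gamma,\eta)$-definable graph set, which is fine since its proof is robust under added parameters, and the paper itself applies it to an $\lbar a$-definable set.
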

\begin{proof}
For each $\lbar a \in A$ let $(\epsilon_{\lbar a}, \delta_{\lbar a}) \in \Gamma^2$ be an $\lbar a$-definable pair that satisfies the condition above, which exists by $o$-minimality. Let $h : A \fun \Gamma^2$ be the definable function given by $\lbar a \efun (\epsilon_{\lbar a}, \delta_{\lbar a})$. Suppose for contradiction that $\dim_{\VF}(A) = n$. Then, by compactness and Lemma~\ref{full:dim:open:poly}, there is a pair $(\epsilon_{\lbar a}, \delta_{\lbar a}) \in \Gamma^2$ such that $h^{-1}(\epsilon_{\lbar a}, \delta_{\lbar a})$ contains an open polydisc $\gp$. Without loss of generality we may assume $\lbar a \in \gp$. Fix an $\lbar a$-definable $\gamma \geq \delta_{\lbar a}$. If $\lbar a'$, $\lbar a'' \in \go(\lbar a, \gamma)$ are distinct then $\go(f(\lbar a'), \epsilon_{\lbar a}) \cap \go(f(\lbar a''), \epsilon_{\lbar a}) = \0$. By Lemma~\ref{points:apart:fin}, $f(\go(\lbar a, \gamma))$ is finite, which is a contradiction.
\end{proof}

Let $A$ be a definable subset with $\dim_{\VF}(A) = n$. A property holds \emph{almost everywhere on $A$} or \emph{for almost every element in $A$} if there is a definable subset $B \sub A$ with $\dim_{\VF} (B) < n$ such that the property holds with respect to $A \mi B$. For example, if $f : \VF^n \fun \VF^m$ is a definable function, then the property that defines the subset $A$ in Lemma~\ref{iso:point:dim:less} \emph{does not} hold almost everywhere on $\VF^n$. This terminology is also used with respect to $\RV$-dimension.

\begin{lem}\label{fun:dim:1:val:cons}
Let $f :\VF \times \VF^k \fun \VF^m$ be a definable function. Then there are a definable subset $A \sub \VF \times \VF^k$ over $\VF^k$ and a finite set $E$ of positive rational numbers such that
\begin{enumerate}
  \item $\VF \mi \fib(A, \lbar b)$ is finite for all $\lbar b \in \VF^k$,
  \item for every $\lbar a = (a, \lbar b) \in A$ there are $\lbar a$-definable $\epsilon, \delta \in \Gamma$ and a number $k \in E$ such that either $f \rest \go(a, \delta) \times \{\lbar b\}$ is constant or, for any $a' \in \go(a, \delta)$,
\[
\vv(f(a', \lbar b) - f(a, \lbar b)) = \epsilon + k \vv(a' - a).
\]
\end{enumerate}
\end{lem}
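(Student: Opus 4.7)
The plan is to reduce to $m=1$, then to an algebraic situation via Lemma~\ref{dcl:to:ac}, and finally use a Hensel-type local analysis (backed up by $o$-minimality of $\Gamma$) to extract the rational exponent $k$ in the desired form.

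First I would reduce to the case $m = 1$. If the conclusion holds for each coordinate $f_i$, yielding $A_i$ with cofinite $\VF$-fibers and finite exponent sets $E_i$, then $A = \bigcap_i A_i$ and $E = \bigcup_i E_i$ work for $f$: by the convention $\vv(\lbar c - \lbar c') = \min_i \vv(c_i - c'_i)$, for $\delta$ large enough the minimum $\min_i(\epsilon_i + k_i \vv(a'-a))$ is attained by a single index, which supplies the required $\epsilon$ and $k$ for the tuple.

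Assuming $m=1$, Lemma~\ref{dcl:to:ac} gives $f(a, \lbar b) \in \VF(S)(a, \lbar b)^{\alg}$ for every $(a, \lbar b)$. By compactness there is a finite collection $P_1, \ldots, P_N \in \VF(S)[X, \lbar Y, Z]$ and a definable partition of $\VF \times \VF^k$ into pieces on each of which one of the $P_j$ is a minimal polynomial of $f$ over $\VF(S)(X, \lbar Y)$ and satisfies $P_j(a, \lbar b, f(a, \lbar b)) = 0$ identically. Fix one such piece with polynomial $P$ of $Z$-degree $d$; because $P$ is minimal and $\cha = 0$, its $Z$-discriminant is not identically zero, so outside a proper Zariski-closed (hence, by Proposition~\ref{dim:vf:same:zar}, lower-$\VF$-dimensional, and in particular cofinite-in-each-$\VF^k$-fiber) subset we have $\partial P / \partial Z (a, \lbar b, f(a, \lbar b)) \neq 0$. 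On this open stratum, the generalized Hensel lemma (Lemma~\ref{hensel:lemma}) applied to $Q(W) = P(a+h, \lbar b, f(a, \lbar b)+W)$ shows that for $\vv(h)$ sufficiently large the unique Hensel root is $W = f(a+h, \lbar b) - f(a, \lbar b)$ and admits a power series expansion $\sum_{j \geq 1} c_j h^j$ with coefficients algebraic over $\VF(S)(a, \lbar b)$; letting $j^*$ be the order of the first nonzero $c_j$ we read off $\vv(f(a+h, \lbar b) - f(a, \lbar b)) = \vv(c_{j^*}) + j^* \vv(h)$ on a small enough open ball, yielding $\epsilon = \vv(c_{j^*})$ and integer exponent $k = j^* \in \{1, \ldots, d\}$. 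Finiteness of $E$ is automatic since only finitely many pieces occur.

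The main obstacle is obtaining the expansion and bounding its order cleanly using only the tools of the paper, rather than appealing to general Puiseux theory on the ramified stratum. A robust backup route I would use if the Hensel-based expansion becomes awkward is to argue directly via $o$-minimality of $\Gamma$: for fixed $(a, \lbar b)$ the definable map $h \efun \vv(f(a+h, \lbar b) - f(a, \lbar b))$ eventually factors through $\vv(h)$ by a variant of Lemma~\ref{fun:bounded:cons} (its value range is bounded below once $h \to 0$), and then $o$-minimality of the divisible ordered abelian group $\Gamma$ forces the induced partial function $\Gamma \fun \Gamma$ to be eventually linear with positive rational slope. Uniformity of $\delta, \epsilon$ in $\lbar a$ is automatic from definability, and denominators in any Puiseux-type slope are uniformly bounded in each piece by $(\deg_Z P_j)!$, so $E$ remains finite overall.
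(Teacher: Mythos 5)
Your route (stratify by minimal polynomial and read off the slope from Hensel/Newton data) is genuinely different from the paper's, which is a direct syntactic analysis combined with Lemma~\ref{iso:point:dim:less}. Unfortunately your version has two real gaps. First, the inference ``proper Zariski-closed, hence lower-$\VF$-dimensional, and in particular cofinite-in-each-$\VF^k$-fiber'' is false: a subset of $\VF^{k+1}$ of $\VF$-dimension $\leq k$ can still have a whole fiber over some $\lbar b_0 \in \VF^k$ (e.g.\ $\VF \times \{\lbar b_0\}$). The cofiniteness required in item~(1) of the lemma is a \emph{fiber-wise} condition, and this is exactly what the paper gets by applying Lemma~\ref{iso:point:dim:less} one fiber at a time (so $n=1$ there, and $\dim_{\VF}(B_{\lbar b}) < 1$ forces $B_{\lbar b}$ to be finite), then assembling $A$ by compactness. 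Your discriminant-vanishing locus does not obviously have this property, and in any case the separability of the minimal polynomial in characteristic $0$ already gives $\partial P/\partial Z(a,\lbar b, f(a,\lbar b)) \neq 0$ on the whole piece, so removing a Zariski-closed set for that reason is both unnecessary and insufficient.

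Second, and more seriously, the claim that ``the unique Hensel root is $W = f(a+h, \lbar b) - f(a, \lbar b)$'' is unjustified. All that Hensel's lemma gives you is \emph{some} root $W^{\mathrm H}$ of $Q(W)=P(a+h,\lbar b, f(a,\lbar b)+W)$ with $\vv(W^{\mathrm H})$ large; you need $f(a+h,\lbar b)$ to actually \emph{track} that root as $h\to 0$, which amounts to (one-variable, along the $\VF$-direction) continuity of $f$ at $(a,\lbar b)$. A definable $f$ can discontinuously jump to a distant root of $P(a+h,\lbar b,\cdot)$, and nothing about the discriminant-free or minimal-polynomial stratification rules this out; on such points the slope would be $0$, which the lemma forbids. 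Removing exactly these points, fiber-wise, is the content of Lemma~\ref{iso:point:dim:less} in the paper's proof. You would also need to check that $\go(a,\delta)\times\{\lbar b\}$ stays inside a single piece of your partition, which again is a fiber-wise, not dimension-theoretic, requirement. Your back-up sketch via $o$-minimality of $\Gamma$ is closer in spirit to the paper's actual argument (which substitutes $\alpha_G + l_G\vv(X)$ for each term $\vv(G(a+X,a,\lbar b))$ and reads the rational slope from the resulting $\Gamma$-linear formula), but note that the value range there is bounded below only, not above, so Lemma~\ref{fun:bounded:cons} does not apply as-is and you would need to work out the variant explicitly.
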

\begin{proof}
For every $\lbar b \in \VF^k$ let $B_{\lbar b} \sub \VF \times \{\lbar b\}$ be as given by Lemma~\ref{iso:point:dim:less} with respect to the function $f \rest \VF \times \{\lbar b\}$. By compactness $A = \VF^{k +1} \mi \bigcup_{\lbar b \in \VF^k} B_{\lbar b}$ is definable. Let $\phi(X_1, X_2, \lbar Y, Z)$ be a quantifier-free $\lan{v}$-formula, possibly with additional parameters from $\VF$, that defines the function on $\VF^2 \times \VF^k$ given by
\[
(a', a, \lbar b)  \efun \vv(f(a', \lbar b) - f(a, \lbar b)).
\]
Fix an $\lbar a = (a, \lbar b) \in A$ such that $f \rest \VF \times \{\lbar b\}$ is not constant on any open ball around $a$. For any term of the form $\vv (G(X_1, X_2, \lbar Y))$ in $\phi(X_1, X_2, \lbar Y, Z)$ there is an $\lbar a$-definable $\alpha \in \Gamma \cup \{\infty\}$ and an integer $l \geq 0$ such that, for any $a + d \in \VF$, if $\vv(d)$ is sufficiently large then
\[
\vv(G(a + d, a, \lbar b)) = \alpha + l\vv(d).
\]
Therefore, there is an $\epsilon \in \Gamma \cup \{\infty\}$ and a rational number $k \geq 0$ such that for any sufficiently large $\delta \in \Gamma$, the formula
\[
\vv(X) > \delta \wedge \phi(a+X, a, \lbar b, Z)
\]
defines a function on $\go(a, \delta) \times \{\lbar b\}$ that is given by the equation $Z = \epsilon + k \vv(X)$. Note that, by the choice of $\lbar a$, we actually must have $k > 0$ and $\epsilon \neq \infty$. Since $\Gamma$ is $o$-minimal, $\epsilon$ and some $\delta$ are $\lbar a$-definable. Now it is easy to see that the number $k$ is provided by the exponents of $X_1$ in $\phi(X_1, X_2, \lbar Y, Z)$ and hence there are only finitely many choices.
\end{proof}

\begin{lem}\label{rad:equal:1}
Let $\ga, \gb$ be open balls around $0$ and $f : \ga \fun \gb$ a definable bijection that takes open balls around $0$ to open balls around $0$. Then there are definable $\gamma, \epsilon \in \Gamma$ such that $\vv(f(a)) = \epsilon + \vv(a)$ for every $a \in \go(0, \gamma)$.
\end{lem}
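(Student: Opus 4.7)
I apply Lemma~\ref{fun:dim:1:val:cons} to $f$ at the point $a=0$. To do so, I first need $f(0) = 0$ and $0$ in the ``good set'' produced by Lemma~\ref{iso:point:dim:less}. Write $h(\mu)$ for the radius of the open ball $f(\go(0,\mu))$. Bijectivity of $f$ yields
\[
\{f(0)\} \;=\; f\bigl(\textstyle\bigcap_\mu \go(0,\mu)\bigr) \;=\; \textstyle\bigcap_\mu f(\go(0,\mu)) \;=\; \textstyle\bigcap_\mu \go(0, h(\mu)),
\]
and since $0$ lies in every $\go(0, h(\mu))$, this forces $f(0) = 0$. Moreover, for any $\epsilon, \delta \in \Gamma$,
\[
\go(0,\delta) \cap f^{-1}(\go(0,\epsilon)) \;=\; f^{-1}\bigl(\go(0, h(\delta)) \cap \go(0, \epsilon)\bigr) \;=\; f^{-1}\bigl(\go(0, \max(h(\delta),\epsilon))\bigr),
\]
which is infinite because $f$ is a bijection and the target ball is; hence $0$ is not isolated in the sense of Lemma~\ref{iso:point:dim:less}, so $0$ belongs to the good set.

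Applying Lemma~\ref{fun:dim:1:val:cons} at $a=0$, with $f$ not constant on any ball since it is a bijection, now produces definable $\gamma, \epsilon \in \Gamma$ and a positive rational $k$ such that $\vv(f(a)) = \vv(f(a) - f(0)) = \epsilon + k\,\vv(a)$ for every $a \in \go(0,\gamma)$. Inverting this affine formula shows that $f^{-1}$ likewise takes sufficiently small open balls around $0$ to open balls around $0$, so the identical argument applied to $f^{-1}$ at $b=0$ provides $\vv(f^{-1}(b)) = \epsilon' + k'\,\vv(b)$ on some $\go(0,\gamma')$, with $k'$ a positive rational in a finite set. Substituting $b = f(a)$ and reading off the coefficient of $\vv(a)$ forces $k k' = 1$.

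The remaining step, and the main obstacle, is upgrading ``$k$ is a positive rational'' to ``$k$ is a positive integer'', so that $k k' = 1$ with $k, k' \in \Z_{>0}$ yields $k = k' = 1$. This integrality is visible in the proof of Lemma~\ref{fun:dim:1:val:cons}: the slope is extracted as an exponent of the distinguished variable in an occurring polynomial of the formula defining $f$, and such exponents are nonnegative integers. An alternative, more intrinsic justification is a ramification argument on the $\K^\times$-torsors $\vrv^{-1}(\mu)$: $f$ restricted to the annulus at level $\mu$ is a definable bijection onto the annulus at level $\epsilon + k\mu$, producing a definable family of bijections $\K^\times \to \K^\times$ uniform in $\mu$, and in characteristic zero this uniformity, together with injectivity, is incompatible with $k$-th-power behavior for $k > 1$, forcing $k = 1$.
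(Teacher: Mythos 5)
The overall strategy you propose is sound and matches the paper's: reduce to showing that the rational slope $k$ given by Lemma~\ref{fun:dim:1:val:cons} is a positive \emph{integer}, apply the same reasoning to $f^{-1}$ to get that $k' = 1/k$ is also a positive integer, and conclude $k = k' = 1$. The paper's proof does exactly this (the ``symmetrically $1/k$ is also a positive integer'' at the end). Your preliminary observation that $f(0) = 0$ and that $0$ lies in the good set of Lemma~\ref{iso:point:dim:less} is also fine, though in the paper this is handled by translation and is not the issue.

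The genuine gap is the integrality step, which is the entire content of the lemma beyond Lemma~\ref{fun:dim:1:val:cons}. Your first justification is wrong: in the proof of Lemma~\ref{fun:dim:1:val:cons}, the slope $k$ is \emph{not} directly an exponent of the distinguished variable. Each term $\vv(G(a+X,a,\lbar b))$ individually has integer slope $l$ in $\vv(X)$, but the relation defining $Z = \vv(f(a+X)-f(a))$ in the $\Gamma$-sort has the form $\alpha + l\vv(X) = nZ + c$, so $k = l/n$ is a ratio of integers and can genuinely be a non-integer rational. That is precisely why the lemma is stated with ``positive rational number'' rather than ``positive integer''. Your second, ``more intrinsic'' justification is too vague to constitute a proof: nothing is said about what ``$k$-th-power behavior'' means for fractional $k$, why the induced maps between annuli should be uniform, or how uniformity contradicts injectivity. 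The paper's actual argument is a Newton polygon analysis: it isolates a polynomial $G(X,Y) \in \VF(X)[Y]$ with $G(a, f(a)) = 0$ such that $f(a)$ is a simple root, and shows that if the relevant segment of the Newton polygon had horizontal length $>1$, there would be another root $r \neq f(a)$ of $G(a,Y)$ with $\vv(r) = \vv(f(a))$ that the remaining literals of the defining formula could not distinguish from $f(a)$ (since they only see valuations), contradicting that $f$ is a function. Segment length $1$ forces $k$ to be the difference of two integer slopes of coefficient rational functions, hence an integer. This uniqueness-of-root argument is the idea missing from your proposal.
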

\begin{proof}
By the proof of Lemma~\ref{fun:dim:1:val:cons} we may assume that there is a definable $\epsilon \in \Gamma$ and a positive rational number $k$ such that $\vv(f(a)) = \epsilon + k \vv(a)$. We need to show that $k=1$.

Suppose for contradiction $k \neq 1$. Let $\phi(X, Y)$ be a quantifier-free $\lan{v}$-formula, possibly with additional parameters from $\VF$, that defines $f$. Let $F_i(X, Y)$ be the occurring polynomials of $\phi(X, Y)$. If $a \in \ga$ then $F_i(a, f(a)) = 0$ for some $i$, since otherwise $f^{-1}(f(a))$ would be infinite. By $C$-minimality, we may shrink $\ga$ if necessary so that, for every $a \in \ga$, $F_i(a, f(a)) = 0$ if and only if $i \leq m$. For every $F_j(X, Y)$ with $j > m$, since $k \neq 1$, we may shrink $\ga$ again so that, for some monomial $c X^{l} Y^{n}$, for every $a \in \ga$, and for every $r, s \in\VF$ with $\vv(r) = \vv(a)$ and $\vv(s) = \vv(f(a))$, we have
\[
\vv(F_j(r, s)) = \vv(c r^{l} s^{n}) = \vv(c a^{l} f(a)^{n}) = \vv(F_j(a, f(a))).
\]
Now, using the division algorithm, there are rational functions $G(X, Y) \in \VF(X)[Y]$ and $H(X, Y) \in \VF(Y)[X]$ such that, possibly after shrinking $\ga$ again,
\begin{enumerate}
  \item every solution of $G(a, Y) = 0$ is a solution of $\bigwedge_{i \leq m} F_i(a, Y) = 0$ for every $a \in \ga$,
  \item every solution of $H(X, b) = 0$ is a solution of $\bigwedge_{i \leq m} F_i(X, b) = 0$ for every $b \in \gb$.
  \item taking derivatives and using the division algorithm again if necessary, for every $a \in\ga$, $f(a)$ is not a repeated root of $G(a, Y)$ and $a$ is not a repeated root of $H(X, f(a))$,
\end{enumerate}
Moreover, we may assume that, if we write $G(X, Y)$ as $\sum_i G_i(X) Y^i$ then there are indices $i < i'$ such that for every $a \in\ga$
\[
  \vv(f(a))^{i' - i} = \vv(G_{i'}(a)/\vv(G_i(a))) > 0;
\]
Similarly for $H(X, Y)$. Observe that if $i' - i > 1$ then for every $a \in \ga$ there is a root $r \neq f(a)$ of $G(a, Y)$ such that $\vv(F_j(a, f(a))) = \vv(F_j(a, r))$ for all $j > m$ and hence $\phi(a, r)$ holds, which is a contradiction. So $i' = i + 1$. Since the radius of $\ga$ is sufficiently large, we conclude that $k$ must be a positive integer. Symmetrically $1/k$ is also a positive integer and hence $k=1$, contradicting the assumption $k \neq 1$.
\end{proof}

\begin{lem}\label{ball:shrink:RV:cons}
Let $A, B \sub \VF$ be infinite subsets and $f : A \fun B$ a definable bijection. Then for almost all $a \in A$ there are $a$-definable $\delta \in \Gamma$ and $t \in \RV^{\times}$ such that, for any $b, b' \in \go(a, \delta)$,
\[
\rv(f(b) - f(b')) = t \rv(b - b').
\]
\end{lem}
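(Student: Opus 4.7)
The goal is to show that $f$ is locally approximately linear at the $\rv$-level near almost every $a \in A$, with ``slope'' $t_a \in \RV^{\times}$. I proceed by first establishing the linear relation on the valuation level, then on the $\rv$-level for one-sided differences $b - a$, and finally for arbitrary chords $b - b'$.

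First, by applying Lemma~\ref{fun:dim:1:val:cons} to $f : A \fun B$ and symmetrically to $f^{-1}$, combined with Lemma~\ref{iso:point:dim:less} to rule out the case where $f$ is constant on a neighbourhood of $a$, one gets that for almost every $a \in A$ there are $a$-definable $\epsilon_a, \delta_a \in \Gamma$ and a positive rational $k_a$ such that $\vv(f(b) - f(a)) = \epsilon_a + k_a \vv(b - a)$ for every $b \in \go(a, \delta_a)$. The translated map $g_a(x) = f(a + x) - f(a)$ is then a definable bijection between open balls around $0$; the symmetric statement for $f^{-1}$ together with the formula above forces $g_a$ to take open balls around $0$ to open balls around $0$, so Lemma~\ref{rad:equal:1} yields $k_a = 1$. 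Now consider the $a$-definable function $h_a(b) = \rv((f(b) - f(a))/(b - a))$ on $\go(a, \delta_a) \setminus \set{a}$. Its image lies in $\vrv^{-1}(\epsilon_a)$, which has bounded $\vrv$; after the shift $b \efun b - a$ and multiplication by an element of value $-\epsilon_a$ the image falls into $\K^{\times}$. By the local version of Lemma~\ref{fun:bounded:cons} alluded to in the remark following it, $h_a$ is constant on $\go(a, \delta_a') \setminus \set{a}$ for some $\delta_a' \geq \delta_a$, with value $t_a \in \RV^{\times}$.

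It remains to upgrade from one-sided differences to chords. Fix $c_0 \in \rv^{-1}(t_a)$ and write $f(b) - f(a) = c_0(b - a)(1 + \mu_b)$ with $\vv(\mu_b) > 0$. For distinct $b, b' \in \go(a, \delta_a')$ the identity
\[
f(b) - f(b') = c_0(b - b') + c_0\bigl[(b - a)\mu_b - (b' - a)\mu_{b'}\bigr]
\]
shows that $\rv((f(b) - f(b'))/(b - b')) = t_a$ whenever $\vv(b - b') = \min(\vv(b - a), \vv(b' - a))$, which holds in particular whenever $\vv(b - a) \neq \vv(b' - a)$.

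The main obstacle is the residual case $\vv(b - a) = \vv(b' - a) < \vv(b - b')$, in which $b, b'$ sit in a common sub-ball disjoint from $a$ and the bracketed error term is only bounded below by $\epsilon_a + \vv(b - a)$, not by $\epsilon_a + \vv(b - b')$. I would handle this by applying Step~2 at $b$ to obtain a local slope $t_b$ and threshold $\delta_b'$, and arguing that, for almost every $a$ (absorbing further finite exceptions into the ``almost all'' qualifier), the inequality $\delta_b' < \vv(b - a)$ holds for all good $b \in \go(a, \delta_a'')$ sufficiently close to $a$. Granted this, $a \in \go(b, \delta_b')$, and the identity $H(a, b) = H(b, a)$ (where $H(x, y) = \rv((f(x) - f(y))/(x - y))$) yields $t_a = t_b$; in addition $b' \in \go(b, \delta_b')$ since $\vv(b - b') > \vv(b - a) > \delta_b'$, and so $H(b, b') = t_b = t_a$. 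The required uniformity $\delta_b' < \vv(b - a)$ for $b$ close to $a$ is the one technically delicate point: it follows from a definability-and-$o$-minimality analysis of the function $b \efun \delta_b'$ on $\go(a, \delta_a')$, using that a failure would, by $C$-minimality, force this function to blow up on a whole sub-ball --- contradicting the definability of $\delta_b'$ from $b$ alone.
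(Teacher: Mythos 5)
Your first three steps agree with the paper: apply Lemma~\ref{fun:dim:1:val:cons} to get $\vv(f(b)-f(a))=\epsilon_a + k_a\vv(b-a)$, translate and apply Lemma~\ref{rad:equal:1} to force $k_a=1$, and then use Lemma~\ref{fun:bounded:cons} on the one-variable function $d \efun \rv(f(a+d)-f(a))/\rv(d)$ to get a ball around $0$ on which this $\rv$-slope is constantly $t_a$. So the first two thirds of the proposal are sound and essentially the paper's own.

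The chord upgrade is where there is a genuine gap. You reduce to the ``residual'' case and then claim that the uniformity $\delta_b' < \vv(b-a)$ for $b$ near $a$ ``follows from a definability-and-$o$-minimality analysis \dots\ a failure would, by $C$-minimality, force this function to blow up on a whole sub-ball --- contradicting the definability of $\delta_b'$ from $b$ alone.'' That last step is not a valid deduction: a definable $\Gamma$-valued function of $b$ on a punctured ball around $a$ can perfectly well blow up as $b \to a$ (the function $b \efun \vv(b-a)$ itself does), so definability of $\delta_b'$ from $b$ imposes no such constraint. Indeed, by the one-variable structure given in Lemma~\ref{fun:dim:1:val:cons}, one generically has $\delta_b' = \alpha + k\vv(b-a)$ near $a$, and if $k \geq 1$ your required inequality simply fails for $b$ close to $a$; nothing in the sketch rules this out.

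The paper closes this gap by a different, slicker argument that you have not reproduced: it sets $h(a) = (\delta_a, t_a)$ and observes that, by $C$-minimality together with Corollary~\ref{function:rv:to:vf:finite:image}, only finitely many $a \in A'$ can be isolated in their own fiber $h^{-1}(\delta_a, t_a)$. For any non-isolated $a$, some open ball $\go(a, \gamma) \subseteq h^{-1}(\delta_a, t_a)$ with $\gamma \geq \delta_a$, and then for $b, b' \in \go(a,\gamma)$ one has $\vv(b-b') > \gamma \geq \delta_a = \delta_b$, so $b' \in \go(b, \delta_b)$ and hence $\rv(f(b')-f(b)) = t_b\rv(b'-b) = t_a\rv(b'-b)$ directly --- no case split on whether $\vv(b-a)$ equals $\vv(b'-a)$ is needed. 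The crucial ingredient you are missing is precisely this ``finitely many isolated points'' step, which is what actually makes $\delta_b$ constant near a generic $a$ and yields the uniformity you wanted to assume.
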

\begin{proof}
Let $A' \sub A$ be a definable subset such that $A \mi A'$ is finite and for every $a \in A'$ there are $\epsilon_a, \delta_a \in \Gamma$ given as in Lemma~\ref{fun:dim:1:val:cons}. Translating $A, B$ to $A- a, B -f(a)$ and applying Lemma~\ref{rad:equal:1}, we see that $\delta_a$ may be chosen so that
\[
\vv(f(b) - f(a)) = \epsilon_a + \vv(b - a)
\]
for any $b \in \go(a, \delta_a)$. Let $D_a = (\go(a, \delta_a) - a) \mi \set{0}$ and $g_a : D_a \fun \RV$ the function given by
\[
d \efun \rv(f(d +a) - f(a))/\rv(d).
\]
Since $\vrv(g_a(D_a))$ is bounded from both above and below, by Lemma~\ref{fun:bounded:cons}, there is a $\beta_a \in \Gamma$ such that $g_a (\go(0, \beta_a) \mi \set{0}) = t_a$. Let $h : A' \fun \Gamma \times \RV$ be the function given by $a \efun (\delta_a, t_a)$. By compactness and Corollary~\ref{function:rv:to:vf:finite:image}, there are only finitely many $a \in A'$ that is isolated in $h^{-1}(\delta_a, t_a)$. On the other hand, if $\go(a, \gamma) \sub h^{-1}(\delta, t)$ with $\gamma \geq \delta$ then clearly for any $b, b' \in \go(a, \gamma)$,
\[
\rv(f(b) - f(b')) = t \rv(b - b'),
\]
as required.
\end{proof}

Lemma~\ref{fun:dim:1:val:cons} can be generalized to multivariate functions, but only with inequality:

\begin{lem}\label{VF:fun:reg:away}
Let $f :\VF^n \times \VF^k \fun \VF^m$ be a definable function. Then there are a definable subset $A \sub \VF^n \times \VF^k$ over $\VF^k$ and a positive rational number $k$ such that
\begin{enumerate}
  \item $\dim_{\VF}(\VF^n \mi \fib(A, \lbar b)) < n$ for all $\lbar b \in \VF^k$,
  \item for every $\lbar x = (\lbar a, \lbar b) \in A$ there are $\lbar x$-definable $\epsilon, \delta \in \Gamma$ such that for any $\lbar a' \in \go(\lbar a, \delta)$,
\[
\vv(f(\lbar a', \lbar b) - f(\lbar a, \lbar b)) \geq \epsilon + k \vv(\lbar a' - \lbar a).
\]
\end{enumerate}
\end{lem}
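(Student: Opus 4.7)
The plan is to adapt the syntactic Taylor-expansion analysis from the proof of Lemma~\ref{fun:dim:1:val:cons} to the multivariate setting. In one variable that proof exhibits the valuation of each polynomial term, near a generic point, as an \emph{exact} linear function of $\vv(d)$; in several variables, ultrametric cancellations among monomials of different multi-degrees only guarantee a lower bound, which is precisely why the conclusion has to weaken from an equality to an inequality.

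By quantifier elimination, fix a quantifier-free $\lan{v}$-formula $\phi(\lbar X_1, \lbar X_2, \lbar Y, Z)$ (possibly with parameters from $\VF$) defining the graph of the function $(\lbar a', \lbar a, \lbar b) \efun \vv(f(\lbar a', \lbar b) - f(\lbar a, \lbar b))$. Let $G_1(\lbar X_1, \lbar X_2, \lbar Y), \ldots, G_r(\lbar X_1, \lbar X_2, \lbar Y)$ enumerate the polynomials occurring in $\phi$ inside terms of the form $\vv(G_j(\cdot))$, and Taylor expand each one in the first block of variables around $\lbar X_1 = \lbar X_2$:
\[
G_j(\lbar X_2 + \lbar H, \lbar X_2, \lbar Y) = \sum_{\lbar i \in \N^n} c_{j, \lbar i}(\lbar X_2, \lbar Y)\, \lbar H^{\lbar i},
\]
where each $c_{j, \lbar i}$ is a polynomial over $\VF(S)$. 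Define
\[
A = \set{(\lbar a, \lbar b) \in \VF^n \times \VF^k : c_{j, \lbar i}(\lbar a, \lbar b) \neq 0 \text{ for every } (j, \lbar i) \text{ with } c_{j, \lbar i}(\lbar X_2, \lbar b) \not\equiv 0}.
\]
For each $\lbar b$ the complement $\VF^n \mi \fib(A, \lbar b)$ is a finite union of proper Zariski closed subsets of $\VF^n$, hence by Proposition~\ref{dim:vf:same:zar} has $\VF$-dimension $< n$, which establishes (1).

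For (2), fix $(\lbar a, \lbar b) \in A$ and let $\lbar h \in \VF^n$ with $\vv(\lbar h) := \min_l \vv(h_l) > 0$. Since $\sum_l i_l \vv(h_l) \geq |\lbar i| \vv(\lbar h)$, the ultrametric inequality gives
\[
\vv(G_j(\lbar a + \lbar h, \lbar a, \lbar b)) \geq \min_{\lbar i \in I_j(\lbar b)} \bigl( \vv(c_{j, \lbar i}(\lbar a, \lbar b)) + |\lbar i|\vv(\lbar h) \bigr) \geq \alpha_j(\lbar a, \lbar b) + m_j(\lbar b)\, \vv(\lbar h),
\]
where $I_j(\lbar b) = \set{\lbar i : c_{j, \lbar i}(\lbar X_2, \lbar b) \not\equiv 0}$, the integer $m_j(\lbar b) = \min\set{|\lbar i| : \lbar i \in I_j(\lbar b)}$ depends only on $(\phi, \lbar b)$, and $\alpha_j$ is $(\lbar a, \lbar b)$-definable. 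Substituting these bounds into $\phi(\lbar a + \lbar h, \lbar a, \lbar b, Z)$ and running the same endgame as in the proof of Lemma~\ref{fun:dim:1:val:cons}, one extracts $(\lbar a, \lbar b)$-definable $\delta > 0$ and $\epsilon \in \Gamma$, together with a positive rational $k$ drawn from a finite set determined by $\phi$ and the $m_j$'s, so that $\vv(f(\lbar a + \lbar h, \lbar b) - f(\lbar a, \lbar b)) \geq \epsilon + k\vv(\lbar h)$ whenever $\vv(\lbar h) > \delta$.

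The main obstacle is guaranteeing that $k$ is strictly positive. A priori some $G_j$ may have $m_j = 0$, i.e.\ $G_j(\lbar a, \lbar a, \lbar b) \neq 0$, so that $\vv(G_j)$ remains bounded near $\lbar h = 0$, and without further care the Boolean combinations in $\phi$ might let such terms control $Z$. This is handled by intersecting $A$ with the additional set of $(\lbar a, \lbar b)$ at which $\vv(f(\lbar a + \lbar h, \lbar b) - f(\lbar a, \lbar b)) \to \infty$ as $\lbar h \to 0$; the condition is definable and, by a multivariate analogue of Lemma~\ref{iso:point:dim:less} proved by the same $o$-minimality argument over $\Gamma$ combined with compactness in $\lbar b$, its complement has $\VF$-dimension $< n$ in each fiber. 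On the refined set, $Z \to \infty$ forces the asymptotics of $\phi$ to be controlled by terms with $m_j > 0$, and one takes $k$ to be the minimum of the positive rationals produced by the (finite) syntactic case analysis.
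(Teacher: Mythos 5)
Your approach is genuinely different from the paper's and, as written, has a real gap at the crucial step.

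The paper proves Lemma~\ref{VF:fun:reg:away} by induction on $n$: it applies the one-variable result (Lemma~\ref{fun:dim:1:val:cons}) separately in the last coordinate and in the first $n-1$ coordinates, then passes to the topological interiors of the level sets of the map $b \efun (\epsilon_b, \delta_b)$ (via $C$-minimality, removing a set of $\VF$-dimension $< n$), so that the $(n-1)$-variable estimate at $(\lbar a_1, b_1)$ persists at nearby $(\lbar a_1, b_2)$; finally it combines the two estimates by applying the ultrametric triangle inequality to the \emph{values of $f$ itself}:
\[
\vv(f(\lbar a_2, b_2) - f(\lbar a_1, b_1)) \geq \min\set{\vv(f(\lbar a_1, b_2) - f(\lbar a_1, b_1)),\ \vv(f(\lbar a_2, b_2) - f(\lbar a_1, b_2))}.
\]
This is what makes the direction-dependence of multivariate valuations harmless: each one-variable estimate is exact (Lemma~\ref{fun:dim:1:val:cons} gives $Z = \epsilon + k\vv(d)$), and the triangle inequality glues them without ever needing two-sided control of the occurring polynomials.

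Your direct Taylor-expansion argument does not achieve this. The step from the lower bounds $\vv(G_j(\lbar a + \lbar h, \lbar a, \lbar b)) \geq \alpha_j + m_j\vv(\lbar h)$ to a lower bound on $Z$ is asserted but not justified, and it does not follow in general. In the one-variable case the decisive fact is that each $\vv(G_j(a+d, a, \lbar b))$ is an \emph{exact} affine function of $\vv(d)$ for $\vv(d)$ large, so the Boolean structure of $\phi$ (which pins down $Z$ by literals involving differences, minima, etc.\ of the $\vv(G_j)$) transforms into an exact affine expression for $Z$. In several variables each $\vv(G_j)$ depends on the \emph{direction} of $\lbar h$, not just on $\vv(\lbar h)$: e.g.\ for $G = H_1 + H_2$ the valuation $\vv(h_1 + h_2)$ is unbounded above along $h_2 = -h_1 + \text{(small)}$ even as $\vv(\lbar h)$ stays fixed. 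Your set $A$ guarantees each lower bound is tight at generic directions, but it gives no upper bound, and no control of cancellation between different $\vv(G_j)$'s. If a disjunct of $\phi$ constrains $Z$ by a literal such as $Z = \vv(G_1) - \vv(G_2)$ with $m_1, m_2 > 0$ (both vanishing at $\lbar h = 0$), the lower bounds on $\vv(G_1), \vv(G_2)$ give no lower bound on $Z$ whatsoever. Your final remedy — intersecting with the set where $Z \to \infty$ and asserting that "$Z \to \infty$ forces the asymptotics of $\phi$ to be controlled by terms with $m_j > 0$" — confuses a qualitative limit with a quantitative rate; it still does not produce the inequality $Z \geq \epsilon + k\vv(\lbar h)$ without precisely the kind of bookkeeping that the paper's induction and triangle inequality supply. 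A secondary point: Lemma~\ref{iso:point:dim:less} is already multivariate, so the "multivariate analogue" you appeal to is just the lemma itself with a compactness argument in $\lbar b$; this part is fine, but it only controls the set where $f$ fails to be locally injective, not the set where your syntactic reduction would succeed.
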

\begin{proof}
We do induction on $n$. The base case $n = 1$ is readily implied by Lemma~\ref{fun:dim:1:val:cons}.

We proceed to the inductive step. By the inductive hypothesis, there are a definable subset $A_1 \sub \VF^{n-1} \times \VF^{k+1}$ over $\VF^{k+1}$ and a positive rational number $k_1$ with respect to which the conclusion of the lemma holds. Similarly, there are a definable subset $A_2 \sub \VF^{n-1} \times \VF \times \VF^{k}$ over $\VF^{k+ n -1}$ and a positive rational number $k_2$ with respect to which the conclusion of the lemma holds.

Let $k = \min\{k_1, k_2\}$. Fix a $\lbar c \in \VF^k$. We shall concentrate on the subsets $\fib(A_1, \lbar c), \fib(A_2, \lbar c)$, which, for simplicity, are respectively written as $C_1, C_2$. Also we shall suppress mentioning $\lbar c$ as parameters. Set $C = C_1 \cap C_2$. Note that, by compactness, $\dim_{\VF}(\VF^n \mi C) < n$. Consider any $(\lbar a, b) \in C_1$. Let $(\epsilon_b, \delta_b) \in \Gamma^2$ be an $(\lbar a, b)$-definable pair such that, for any $\lbar a' \in \go(\lbar a, \delta_b)$,
\[
\vv(f(\lbar a', b) - f(\lbar a, b)) \geq \epsilon_b + k \vv(\lbar a' - \lbar a).
\]
Let $h_{\lbar a} : \fib(C_1, \lbar a) \fun \Gamma^2$ be the $\lbar a$-definable function given by $(\lbar a, b) \efun (\epsilon_b, \delta_{b})$. For each $(\epsilon, \delta) \in \Gamma^2$ let $B_{\epsilon, \delta}$ be the topological interior of $h_{\lbar a}^{-1}(\epsilon, \delta)$. Let
\[
B_{\lbar a} = \bigcup_{(\epsilon, \delta) \in \Gamma^2} B_{\epsilon, \delta} \quad \text{and} \quad B = \bigcup_{\lbar a \in \pr_{< n} (C_1)} (\set{\lbar a} \times (\fib(C_1, \lbar a) \mi B_{\lbar a})).
\]
By \cmin-minimality, $\dim_{\VF} (h_{\lbar a}^{-1}(\epsilon, \delta) \mi B_{\epsilon, \delta}) = 0$ for every $(\epsilon, \delta) \in \Gamma^2$ and hence, by Lemma~\ref{dim:VF:pullback} and Lemma~\ref{dim:RV:fibers}, $\dim_{\VF} (\fib(C_1, \lbar a) \mi B_{\lbar a}) = 0$ and $\dim_{\VF} (B) < n$.

Let $(\lbar a_1, b_1) \in C \mi B$ and $h_{\lbar a_1} (b_1) =(\epsilon_1, \delta_1)$. Since the corresponding interior $B_{\epsilon_1, \delta_1}$ is nonempty, there are $(\lbar a_1, b_1)$-definable $\delta_2$, $\epsilon_2 \in \Gamma$ such that $\go(b_1, \delta_2) \sub B_{\epsilon_1, \delta_1}$ and, for any $b_2 \in \go(b_1, \delta_2)$,
\[
\vv(f(\lbar a_1, b_2) - f(\lbar a_1, b_1)) \geq \epsilon_2 + k \vv(b_2 - b_1).
\]
On the other hand, for any $b_2 \in \go(b_1, \delta_2)$ and any $\lbar a_2 \in \go(\lbar a_1, \delta_1)$,
\[
\vv(f(\lbar a_2, b_2) - f(\lbar a_1, b_2)) \geq \epsilon_1 + k \vv(\lbar a_2 - \lbar a_1).
\]
We then have
\begin{align*}
      & \vv(f(\lbar a_2, b_2) - f(\lbar a_1, b_1))\\
 \geq & \min\set{\vv(f(\lbar a_1, b_2) - f(\lbar a_1, b_1)), \vv(f(\lbar a_2, b_2) - f(\lbar a_1, b_2))}\\
 \geq & \min\set{\epsilon_1, \epsilon_2} + \min\set{k \vv(b_2 - b_1), k \vv(\lbar a_2 - \lbar a_1)}\\
 = & \min\set{\epsilon_1, \epsilon_2} + k \vv((\lbar a_2, b_2) - (\lbar a_1, b_1)).
\end{align*}
Now the lemma follows from compactness.
\end{proof}

Clearly this lemma holds with respect to any definable function $f : A \fun \VF^m$ with $A \sub \VF^n$ and $\dim_{\VF}(A) = n$, since $f$ may be extended to $\VF^n$ by sending $\VF^n \mi A$ to any definable tuple in $\VF^m$. In application we usually take $k = 0$.

\begin{lem}\label{fun:almost:cont}
Let $f : \VF^n \fun \VF^m$ be a definable function. Then there is a definable closed subset $A \sub \VF^n$ with $\dim_{\VF}(A) < n$ such that $f \rest (\VF^n \mi A)$ is continuous with respect to the valuation topology.
\end{lem}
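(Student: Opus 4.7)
The plan is to combine the local H\"{o}lder-type estimate from Lemma~\ref{VF:fun:reg:away} (which gives continuity on a large definable set) with a topological-closure argument (which produces the required closed exceptional set). First I would apply Lemma~\ref{VF:fun:reg:away} to $f$ with trivial parameter space (the ``$\VF^k$'' factor taken to be a point), obtaining a definable subset $A' \sub \VF^n$ with $\dim_{\VF}(\VF^n \mi A') < n$ and a positive rational number $\kappa$ such that at every $\lbar a \in A'$ there are $\lbar a$-definable $\epsilon, \delta \in \Gamma$ with $\vv(f(\lbar a') - f(\lbar a)) \geq \epsilon + \kappa \vv(\lbar a' - \lbar a)$ for all $\lbar a' \in \go(\lbar a, \delta)$. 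The right-hand side tends to $\infty$ as $\lbar a' \to \lbar a$ in the valuation topology, so this estimate already yields continuity of $f$ at every point of $A'$.

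Set $B_0 = \VF^n \mi A'$, so $\dim_{\VF}(B_0) < n$. To produce a \emph{closed} exceptional set, I would define
\[
A = \set{\lbar x \in \VF^n : \fa{\gamma \in \Gamma} \ex{\lbar y \in B_0} \vv(\lbar x - \lbar y) > \gamma},
\]
the topological closure of $B_0$ in the valuation topology. Since $\Gamma$ is an imaginary sort of $\lan{RV}$, the displayed formula defines $A$ in $\ACVF_S(0,0)$; by construction $A$ is valuation-closed and contains $B_0$, so $\VF^n \mi A \sub A'$.

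The step I regard as the main obstacle is the dimension bound $\dim_{\VF}(A) < n$, since taking a topological closure could a priori enlarge the dimension. The way around it is to pass through the Zariski topology: polynomials are continuous in the valuation topology, so every Zariski-closed subset of $\VF^n$ is valuation-closed, and hence $A$ is contained in the Zariski closure $\overline{B_0}^{\mathrm{Zar}}$ of $B_0$. Two applications of Proposition~\ref{dim:vf:same:zar} then give
\[
\dim_{\VF}(A) \leq \dim_{\VF}(\overline{B_0}^{\mathrm{Zar}}) = \dim_{\VF}(B_0) < n,
\]
as required.

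Finally, every $\lbar a \in \VF^n \mi A$ lies in $A'$, so the H\"{o}lder estimate from the first step gives continuity of $f$ at $\lbar a$ as a function on $\VF^n$, hence a fortiori as a function on the subspace $\VF^n \mi A$. This completes the proof.
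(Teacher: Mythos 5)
Your proof is correct, and it rests on the same key ingredient as the paper's argument, namely the local H\"{o}lder estimate from Lemma~\ref{VF:fun:reg:away}, combined with passage to a topological closure. The organizational route is genuinely different, though. The paper first \emph{defines} the exceptional set $A$ as the set of discontinuity points of $f$, takes its closure $\lbar A$, and then proves $\dim_{\VF}(A) < n$ by contradiction: if not, $A$ would contain an open polydisc on which the H\"{o}lder estimate of Lemma~\ref{VF:fun:reg:away} holds, forcing continuity there. You instead apply Lemma~\ref{VF:fun:reg:away} \emph{directly} to produce the large set $A'$ on which $f$ is continuous, set the bad set $B_0 = \VF^n \mi A'$, and then take its closure. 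For the dimension bound on the closure, the paper invokes Lemma~\ref{full:dim:open:poly} (the open-polydisc criterion for full dimension) to see that closure does not raise $\VF$-dimension, whereas you pass through the Zariski topology: the valuation closure sits inside the Zariski closure, and two applications of Proposition~\ref{dim:vf:same:zar} do the rest. Both dimension arguments are valid; yours is a neat alternative that avoids the contradiction structure, at the cost of invoking the Zariski comparison. One small point worth making explicit in a write-up: when you say the right-hand side $\epsilon + \kappa\vv(\lbar a' - \lbar a)$ tends to $\infty$, this uses that $\kappa$ is a \emph{positive} rational and that $\Gamma$ is divisible, so that for any target $\gamma$ you can shrink the ball to radius $\max\{\delta, (\gamma - \epsilon)/\kappa\}$; and when you conclude continuity of the restriction, you are using that continuity of $f$ at a point of $\VF^n \mi A$ in the ambient topology implies continuity of $f \rest (\VF^n \mi A)$ in the subspace topology, which is of course immediate.
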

\begin{proof}
Let $A \sub \VF^n$ be the definable subset of ``discontinuous points'' of $f$; that is, $\lbar a \in A$ if and only if there is a $\gamma \in \Gamma$ such that $f^{-1}(\go(f(\lbar a), \gamma))$ fails to contain any open polydisc around $\lbar a$. Let $\lbar A$ be the topological closure of $A$, which is definable, and set $f_1 = f \rest (\VF^n \mi \lbar A)$. For any $\lbar a \in \VF^n \mi \lbar A$ and any $\gamma \in \Gamma$, since $f^{-1}(\go(f(\lbar a), \gamma))$ contains an open polydisc around $\lbar a$, $f_1^{-1}(\go(f(\lbar a), \gamma))$ must also contain an open polydisc around $\lbar a$. So it is enough to show that $\dim_{\VF} (\lbar A) < n$, which, by Lemma~\ref{full:dim:open:poly}, is equivalent to showing that $\dim_{\VF} (A) < n$.

Suppose for contradiction that $\dim_{\VF} (A) = n$. Let $A' \sub A$ be the definable subset given by Lemma~\ref{VF:fun:reg:away} with respect to $f$. Since $\dim_{\VF} (A') = n$, by Lemma~\ref{full:dim:open:poly} again, $A'$ contains an open polydisc $\gp$. Fix an $\lbar a \in \gp$ and let $\gamma \in \Gamma$ be such that $f^{-1}(\go(f(\lbar a), \gamma))$ fails to contain any open ball around $\lbar a$. By Lemma~\ref{VF:fun:reg:away}, there are $\epsilon$, $\delta \in \Gamma$ such that
\begin{enumerate}
 \item $\go(\lbar a, \delta) \sub \gp$,
 \item $\epsilon + \delta > \gamma$,
 \item for any $\lbar b \in \go(\lbar a, \delta)$ with $\lbar b \neq \lbar a$, $\vv(f(\lbar b) - f(\lbar a)) \geq \epsilon + \delta$.
\end{enumerate}
So $\go(\lbar a, \delta) \sub f^{-1}(\go(f(\lbar a), \gamma))$, contradiction.
\end{proof}

\begin{defn}
A function $f : \VF^n \fun \mdl P(\RV^m)$ is \emph{locally constant at $\lbar a$} if there is an open subset $U_{\lbar a} \sub \VF^n$ containing $\lbar a$ such that $f \rest U_{\lbar a}$ is constant. If $f$ is locally constant at every point in an open subset $A$ then $f$ is locally constant on $A$.
\end{defn}

\begin{lem}\label{fun:almost:loc:con}
Let $f : \VF^n \fun \mdl P(\RV^m)$ be a definable function. Then $f$ is locally constant almost everywhere.
\end{lem}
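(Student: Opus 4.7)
The plan is to factor $f$ through a definable function $h : \VF^n \fun \RV^k$ of the form $\lbar a \efun (\rv(F_1(\lbar a)), \ldots, \rv(F_k(\lbar a)))$ for suitable polynomials $F_i$, and then to use the continuity of polynomials to see that such an $h$ is locally constant away from the vanishing locus of the $F_i$.

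First I would pick a formula $\phi(\lbar X, \lbar Y)$ defining the graph $\{(\lbar a, \lbar t) \in \VF^n \times \RV^m : \lbar t \in f(\lbar a)\}$. By quantifier elimination and the discussion following Definition~\ref{def:normal:form}, $\phi$ may be put into a disjunction of conjunctions of $\RV$-literals, in which every occurrence of a $\VF$-sort variable sits inside a term $\rv(F(\lbar X))$ for some polynomial $F$ with coefficients in $\VF$. Let $F_1, \ldots, F_k$ enumerate the occurring polynomials of $\phi$ and set $h(\lbar a) = (\rv(F_1(\lbar a)), \ldots, \rv(F_k(\lbar a)))$. Then $h(\lbar a) = h(\lbar a')$ implies $\phi(\lbar a, \lbar Y) \liff \phi(\lbar a', \lbar Y)$, so $f(\lbar a) = f(\lbar a')$; in particular, if $h$ is locally constant at $\lbar a$ then so is $f$.

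Next I would discard any $F_i$ that is identically zero (these contribute the constant value $\infty$ to $h$) and set $A = \bigcup_i \{\lbar a \in \VF^n : F_i(\lbar a) = 0\}$. Since each remaining $F_i$ is a nonzero polynomial, its zero set has Zariski dimension $< n$, hence $\dim_{\VF}(A) < n$ by Proposition~\ref{dim:vf:same:zar}. Polynomials are continuous in the valuation topology and $\{0\}$ is closed, so $A$ is closed and $\VF^n \mi A$ is open. For any $\lbar a \in \VF^n \mi A$, each $F_i(\lbar a) \neq 0$, so continuity of $F_i$ supplies an $\lbar a$-definable $\gamma_i \in \Gamma$ with $\vv(F_i(\lbar a') - F_i(\lbar a)) > \vv(F_i(\lbar a))$ for all $\lbar a' \in \go(\lbar a, \gamma_i)$, which forces $\rv(F_i(\lbar a')) = \rv(F_i(\lbar a))$. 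Taking $\gamma = \max_i \gamma_i$ yields an open neighborhood $\go(\lbar a, \gamma)$ of $\lbar a$ on which $h$, and therefore $f$, is constant.

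There is no serious obstacle; the only point requiring care is the syntactic reduction to the normal form of Definition~\ref{def:normal:form}, which is what guarantees that $f$ factors through the finite tuple $h$ of polynomial residues. Once that is in hand, the lemma reduces to the elementary observation that $\rv \circ F$ is locally constant on $\{F \neq 0\}$, combined with Proposition~\ref{dim:vf:same:zar} to control the dimension of the bad set.
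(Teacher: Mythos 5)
Your proof is correct, and it takes a genuinely different and more elementary route than the paper's. The paper proves this by induction on $n$, using $C$-minimality together with Lemma~\ref{fun:quo:rep} in the base case and the pair of radii $(\alpha_{\lbar a}, \beta_{\lbar a})$ plus Lemma~\ref{full:dim:open:poly} in the inductive step. You instead work directly from quantifier elimination: by the normal form following Definition~\ref{def:normal:form}, every occurrence of the $\VF$-sort variables in a defining formula $\phi$ sits inside a term $\rv(F_i(\lbar X))$, so $f$ factors through the map $h(\lbar a) = (\rv(F_1(\lbar a)), \ldots, \rv(F_k(\lbar a)))$; throwing away the identically zero $F_i$ and setting $A = \bigcup_i \{F_i = 0\}$, Proposition~\ref{dim:vf:same:zar} gives $\dim_{\VF}(A) < n$, and continuity of polynomials in the valuation topology gives local constancy of each $\rv \circ F_i$ (hence of $h$, hence of $f$) on the open set $\VF^n \mi A$. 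This avoids the induction entirely and is a good illustration of the syntactic simplifications the paper advertises for $\ACVF_S(0,0)$; the paper's argument is more in the spirit of $C$-minimality/cell-decomposition and would be the one to reach for if the normal form were not available.
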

\begin{proof}
We do induction on $n$. For the base case $n = 1$, let $A \sub \VF$ be the definable subset of those $a \in \VF$ such that $f$ is not constant on any $\go(a, \gamma)$. Let $\lbar A$ be the topological closure of $A$. It is enough to show that $\dim_{\VF} (\lbar A) = 0$, which, by $C$-minimality, is equivalent to showing that $A$ is finite. Suppose for contradiction that $A$ is infinite. By $C$-minimality again there is a definable $\gamma \in \Gamma$ such that $A$ contains infinitely many cosets of $\go(0, \gamma)$. By Lemma~\ref{fun:quo:rep}, $f$ fails to be constant on only finitely many cosets of $\go(0, \gamma)$, contradiction.

We proceed to the inductive step. For any $\lbar a = (a_1, \lbar a_1) \in \VF^n$, let $(\alpha_{\lbar a}, \beta_{\lbar a}) \in \Gamma^2$ be an $\lbar a$-definable pair such that $f$ is constant on both $\go(a_1, \alpha_{\lbar a}) \times \set{\lbar a_1}$ and $\set{a_1} \times \go(\lbar a_1, \beta_{\lbar a})$. If no such pair exists then set $\alpha_{\lbar a} = \beta_{\lbar a} = \infty$. Let $g : \VF^n \fun \Gamma^2$ be the function given by $\lbar a \efun (\alpha_{\lbar a}, \beta_{\lbar a})$. By the inductive hypothesis and compactness, $\dim_{\VF} (g^{-1}(\infty, \infty)) < n$. For each $(\alpha, \beta) \in \Gamma^2$ let $B_{\alpha, \beta}$ be the topological interior of $g^{-1}(\alpha, \beta)$. By Lemma~\ref{full:dim:open:poly},
\[
\dim_{\VF} (g^{-1}(\alpha, \beta) \mi B_{\alpha, \beta}) < n.
\]
Let $B = \bigcup_{(\alpha, \beta) \in \Gamma^2} B_{\alpha, \beta}$. By compactness, $\dim_{\VF} (\VF^n \mi B) < n$. For any $\lbar a = (a_1, \lbar a_1) \in B$, since $B_{\alpha_{\lbar a}, \beta_{\lbar a}}$ contains an open polydisc around $\lbar a$, clearly for any sufficiently large $\gamma$ and any $(a'_1, \lbar a'_1) \in \go(\lbar a, \gamma)$ we have $f(a_1, \lbar a_1) = f(a_1, \lbar a'_1) = f(a'_1, \lbar a'_1)$. So $f$ is locally constant on $B$.
\end{proof}

\section{Differentiation}\label{section:diff}

We shall extend the results in Section~\ref{section:RV:product} and Section~\ref{section:lifting} to finer categories of definable subsets with volume forms. To define these categories we first need a notion of the Jacobian in the $\VF$-sort. There are two approaches, which essentially give the same data. The first one is an analogue of the classical analytic approach, where we first define differentiation and the notion of ``approaching a point'' is expressed via valuation. This method makes certain computations very easy (see Lemma~\ref{special:tran:vol:pre} and Lemma~\ref{jcb:chain}). The second approach is an algebraic one, where we are reduced to the case of computing the Jacobian of a regular map between varieties over $\VF$. The Jacobian in the $\RV$-sort will also be defined in this way. This makes the compatibility of the Jacobian in both sorts transparent.

In the discussion below it is convenient to think that there is a ``point at infinity'' in the $\VF$-sort, denoted by $p_{\infty}$. The set $\VF \cup \set{p_{\infty}}$ is denoted by $\PP(\VF)$. Balls around $p_{\infty}$ are defined in a reversed way. For example, for any $\gamma \in \Gamma$, the open ball $\go(p_{\infty}, \gamma)$ around $p_{\infty}$ of radius $\gamma$ is the subset $\VF \mi \gc(0, - \gamma)$. Note the negative sign in front of $\gamma$. We emphasize that $p_{\infty}$ will not be treated as a real point. It is merely a notational device that allows us to discuss complements of balls around $0$ more efficiently.

\begin{defn}
Let $A \sub \VF^n$, $f : A \fun \mdl P(\VF^m)$ a definable function, $\lbar a \in \VF^n$, and $L \sub \PP(\VF)^m$. We say that $L$ is a \emph{limit set of $f$ at $\lbar a$}, written as $\lim_{A \rightarrow \lbar a} f \sub L$, if for every $\epsilon \in \Gamma$ there is a $\delta \in \Gamma$ such that if $\lbar c \in \go(\lbar a, \delta) \cap (A \mi \lbar a)$ then $f(\lbar c) \sub \bigcup_{\lbar b \in L'} \go(\lbar b, \epsilon)$ for some $L' \sub L$.
\end{defn}

A limit set $L$ of $f$ at $\lbar a$ is \emph{minimal} if no proper subset of $L$ is a limit set of $f$ at $\lbar a$. Observe that if $\lim_{A \rightarrow \lbar a} f \sub L$ and $\lbar b \in L$ is not isolated in $L$ then actually $\lim_{ A \rightarrow \lbar a} f \sub L \mi \{\lbar b \}$. So in a minimal limit set every element is isolated. Moreover, if a minimal limit set $L$ exists then its topological closure $\lbar L$ is unique:

\begin{lem}
Let $L_1$, $L_2 \sub \VF^m$ be two minimal limit sets of $f$ at $\lbar a$ and $\lbar L_1$, $\lbar L_2$ their topological closures. Then $\lbar L_1 = \lbar L_2$.
\end{lem}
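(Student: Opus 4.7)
By symmetry it suffices to show $L_1 \sub \lbar L_2$, so fix $\lbar b \in L_1$; the plan is to show that every open ball around $\lbar b$ meets $L_2$. The starting observation is that because $L_1$ is minimal, the proper subset $L_1 \mi \set{\lbar b}$ fails to be a limit set of $f$ at $\lbar a$. Spelling out the negation of the definition, there exists some $\epsilon_0 \in \Gamma$ such that, for every $\delta \in \Gamma$, one can find $\lbar c \in \go(\lbar a, \delta) \cap (A \mi \set{\lbar a})$ and a point $\lbar d \in f(\lbar c)$ with $\lbar d \notin \bigcup_{\lbar b' \in L_1 \mi \set{\lbar b}} \go(\lbar b', \epsilon_0)$. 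Moreover, this property is monotone: if it holds for $\epsilon_0$, it holds for every $\epsilon \geq \epsilon_0$, since larger $\epsilon$ means smaller balls. So the witnessing $\epsilon$ can be chosen arbitrarily large in $\Gamma$.

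Fix an arbitrary target $\eta \in \Gamma$ and set $\epsilon = \max(\epsilon_0, \eta)$. Apply the defining property of $\lim_{A \rightarrow \lbar a} f \sub L_1$ at this $\epsilon$ to obtain $\delta_1$, and the defining property of $\lim_{A \rightarrow \lbar a} f \sub L_2$ at this $\epsilon$ to obtain $\delta_2$. Now choose $\delta \geq \max(\delta_1, \delta_2)$ and use the failure statement above to pick $\lbar c \in \go(\lbar a, \delta) \cap (A \mi \set{\lbar a})$ together with $\lbar d \in f(\lbar c)$ avoiding $\bigcup_{\lbar b' \in L_1 \mi \set{\lbar b}} \go(\lbar b', \epsilon)$. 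Since $\lbar c \in \go(\lbar a, \delta_1)$, the first limit property forces $\lbar d \in \go(\lbar b', \epsilon)$ for some $\lbar b' \in L_1$, and the avoidance pins this $\lbar b'$ down to be $\lbar b$ itself. Since also $\lbar c \in \go(\lbar a, \delta_2)$, the second limit property yields $\lbar b'' \in L_2$ with $\lbar d \in \go(\lbar b'', \epsilon)$.

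The two open balls $\go(\lbar b, \epsilon)$ and $\go(\lbar b'', \epsilon)$ of the same radius share the point $\lbar d$, so by the ultrametric property they coincide; in particular $\lbar b'' \in \go(\lbar b, \epsilon) \sub \go(\lbar b, \eta)$. Since $\eta$ was arbitrary, every open ball around $\lbar b$ meets $L_2$, hence $\lbar b \in \lbar L_2$. Thus $L_1 \sub \lbar L_2$ and so $\lbar L_1 \sub \lbar L_2$; by the symmetric argument $\lbar L_2 \sub \lbar L_1$, giving the claimed equality.

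The main obstacle, such as it is, is bookkeeping the quantifier alternation: one has to read off the right $\epsilon$ from the failure of minimality, then simultaneously force $\lbar c$ close enough to $\lbar a$ for both limit properties to apply, and then combine the resulting inclusions using the ultrametric coincidence of overlapping balls of equal radius. Everything else is formal.
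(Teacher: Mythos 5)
Your proof is correct and uses the same key ingredients as the paper's (minimality of $L_1$ combined with the ultrametric fact that overlapping open balls of the same radius coincide); the paper argues by contradiction, assuming some $\lbar b \in L_1 \mi \lbar L_2$ and deducing that $L_1 \mi \set{\lbar b}$ would still be a limit set, whereas you argue directly by extracting the failure witness $\epsilon_0$ from minimality and chasing a single $f$-value into both $\go(\lbar b, \epsilon)$ and an $L_2$-ball. The two are contrapositives of one another; yours is somewhat more explicit about the quantifier bookkeeping, which the paper leaves terse.
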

\begin{proof}
Suppose for contradiction that, say, $\lbar L_1 \mi \lbar L_2 \neq \0$ and hence there is a $\lbar b \in L_1 \mi \lbar L_2$. So there is an $\epsilon \in \Gamma$ such that $\go(\lbar b, \epsilon) \cap L_2 = \0$. Let $\delta \in \Gamma$ be such that, for all $\lbar c \in \go(\lbar a, \delta) \cap ( A \mi \lbar a)$, $f(\lbar c) \sub \bigcup_{\lbar d \in L'_2} \go(\lbar d, \epsilon)$ for some $L'_2 \sub L_2$. Since $\go(\lbar b, \epsilon) \cap \go(\lbar d, \epsilon) = \0$ for any $\lbar d \in L_2$, we see that $L_1 \mi \{\lbar b \}$ is a limit set of $f$ at $a$, contradicting the minimality condition on $L_1$. So $\lbar L_1 \sub \lbar L_2$ and symmetrically $\lbar L_2 \sub \lbar L_1$.
\end{proof}

This lemma justifies the equality $\lim_{ A \rightarrow \lbar a} f = L$ when $L$ is a closed (hence the unique) minimal limit set of $f$ at $\lbar a$.

\begin{lem}\label{union:lim}
Let $f_1$, $f_2 : A \fun \mdl P( \VF^m)$ be definable functions with $\lim_{A \rightarrow \lbar a} f_i = L_i$, then $\lim_{A \rightarrow \lbar a} (f_1 \cup f_2) = L_1 \cup L_2$.
\end{lem}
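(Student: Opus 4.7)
The plan is to verify that $L_1\cup L_2$ meets the defining conditions under which the notation $\lim_{A\to \lbar a}(f_1\cup f_2)=L_1\cup L_2$ is valid: that it is a limit set of $f_1\cup f_2$, is closed, and serves as the topological closure of a minimal limit set (so that by the uniqueness lemma just established, equality with $L_1\cup L_2$ is justified).

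First I would establish that $L_1\cup L_2$ is a limit set of $f_1\cup f_2$. Fix $\epsilon\in\Gamma$, and use the hypothesis $\lim f_i=L_i$ to choose $\delta_i$ with the property that for every $\lbar c\in\go(\lbar a,\delta_i)\cap(A\setminus\lbar a)$ there is some $L'_i\sub L_i$ with $f_i(\lbar c)\sub \bigcup_{\lbar b\in L'_i}\go(\lbar b,\epsilon)$. Taking $\delta$ at least as large as both $\delta_1$ and $\delta_2$, any $\lbar c\in\go(\lbar a,\delta)\cap (A\setminus\lbar a)$ gives $(f_1\cup f_2)(\lbar c)=f_1(\lbar c)\cup f_2(\lbar c)\sub \bigcup_{\lbar b\in L'_1\cup L'_2}\go(\lbar b,\epsilon)$ with $L'_1\cup L'_2\sub L_1\cup L_2$. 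Since $L_1$ and $L_2$ are closed, $L_1\cup L_2$ is closed as well.

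Next I would show that $L_1\cup L_2$ is the closure of a minimal limit set of $f_1\cup f_2$; by the preceding lemma, this pins down the value of $\lim(f_1\cup f_2)$. Let $M$ be any minimal limit set of $f_1\cup f_2$ contained in $L_1\cup L_2$ (shrinking one produced by the first step). Clearly $\bar M\sub L_1\cup L_2$. For the reverse inclusion, suppose some $\lbar b\in L_1$ (the case $\lbar b\in L_2$ being symmetric) fails to lie in $\bar M$. Using the ultrametric nature of open polydiscs, choose $\epsilon$ so that $\go(\lbar b,\epsilon)\cap \bar M=\0$, which by the usual disjointness-or-equality of equal-radius polydiscs gives $\lbar b\notin\go(\lbar d,\epsilon)$ for every $\lbar d\in M$. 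Because $L_1$ is the unique closed minimal limit set of $f_1$, shrinking $L_1$ by removing $\lbar b$ destroys the limit-set property, so there exist $\epsilon_0\leq\epsilon$ and arbitrarily close $\lbar c$ to $\lbar a$ with a witness $\lbar x\in f_1(\lbar c)$ lying outside every $\epsilon_0$-ball around $L_1\setminus\{\lbar b\}$. Since $M$ is a limit set of $f_1\cup f_2$, hence of $f_1$, this $\lbar x$ must fall in some $\go(\lbar d,\epsilon_0)$ with $\lbar d\in M$; then $\lbar d\in L_1\setminus\{\lbar b\}$ is impossible by construction, while $\lbar d\in L_2\setminus L_1$ forces $\lbar b$ actually to sit in $\bar M$ after a further application of the ultrametric inequality and the fact that balls of equal radius containing a common point coincide — yielding a contradiction.

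The main obstacle, as the last paragraph hints, is the case in which $\lbar b\in L_1$ is not isolated inside $L_1\cup L_2$, i.e., when points of $L_2$ accumulate toward $\lbar b$ even though $\lbar b$ is isolated in $L_1$ (and symmetrically). Handling this will require carefully choosing $\epsilon$ (using closedness of $L_1$ and $L_2$ individually, together with the discrete nature of each as a closed set of isolated points) small enough to separate $\lbar b$ from $L_1\setminus\{\lbar b\}$ but coordinating with the $L_2$ side so that the witnesses $\lbar x$ produced by the failure of $L_1\setminus\{\lbar b\}$ as a limit set of $f_1$ cannot be absorbed by $L_2$-centered balls either. This is essentially a bookkeeping argument with the valuative (ultrametric) triangle inequality, and once it is in place the two inclusions $\bar M\sub L_1\cup L_2$ and $L_1\cup L_2\sub \bar M$ combine to give the desired equality.
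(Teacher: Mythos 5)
Your proposal takes a detour that the paper's proof avoids, and the detour creates a gap that you then flag but do not close. The paper shows directly that $L = L_1\cup L_2$ is minimal: it picks $\lbar b\in L$ (say $\lbar b\in L_1$), finds $\epsilon$ with $\go(\lbar b,\epsilon)\cap(L\mi\{\lbar b\})=\0$, and observes that since $L_1$ is a limit set of $f_1$ but $L_1\mi\{\lbar b\}$ is not (by minimality of $L_1$), $f_1$-images must keep hitting $\go(\lbar b,\epsilon)$ arbitrarily close to $\lbar a$; since those images are also $f$-images and $\go(\lbar b,\epsilon)$ misses all of $L\mi\{\lbar b\}$, the set $L\mi\{\lbar b\}$ cannot be a limit set of $f$. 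You instead introduce an auxiliary minimal limit set $M\sub L$ of $f$ and try to show $\bar M=L$. Even granting that step, you still owe an argument that $L$ itself is minimal (a closure of a minimal limit set need not be minimal); this does follow because $L$ is discrete, forcing $M=\bar M=L$, but you never say so.

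The more serious issue is the ``main obstacle'' you describe, which is a non-problem: you worry that points of $L_2$ might accumulate at $\lbar b\in L_1$. But $L_1$ and $L_2$ are, by the convention set up right before the lemma, \emph{closed} minimal limit sets, hence closed sets in which every point is isolated, hence discrete. If $L_2$ accumulated at $\lbar b$, closedness of $L_2$ would force $\lbar b\in L_2$, contradicting that $\lbar b$ is then isolated in $L_2$. So for every $\lbar b\in L$ there is an $\epsilon$ with $\go(\lbar b,\epsilon)\cap(L\mi\{\lbar b\})=\0$ --- this is precisely the two-case observation the paper makes ($\lbar b\in L_1\cap L_2$ versus $\lbar b\in L_1\mi L_2$), and it dissolves the case analysis you were bracing for. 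Your proposal also has a sign slip: you write ``there exist $\epsilon_0\le\epsilon$'' when you want the failure of $L_1\mi\{\lbar b\}$ as a limit set to be witnessed at a scale \emph{finer} than $\epsilon$, i.e.\ $\epsilon_0\ge\epsilon$, so that $\go(\lbar b,\epsilon_0)\sub\go(\lbar b,\epsilon)$ remains disjoint from the balls around $L\mi\{\lbar b\}$. As written, with the bookkeeping explicitly deferred and the minimality of $L$ not addressed, the proof is incomplete; replacing the $M$-detour with the paper's direct minimality argument, and invoking discreteness of $L_1$ and $L_2$, closes both gaps at once.
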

\begin{proof}
Let $f = f_1 \cup f_2$ and $L = L_1 \cup L_2$. Clearly $L$ is a closed limit set of $f$ at $\lbar a$. We need to show that it is minimal. To that end, fix a $\lbar b \in L_1$. If $\lbar b \in L_1 \cap L_2$ then, since $\lbar b$ is isolated in both $L_1$ and $L_2$, there is an $\epsilon \in \Gamma$ such that $\go(\lbar b, \epsilon) \cap (L \mi \{\lbar b \}) = \0$. If $\lbar b \in L_1 \mi L_2$ then, since $L_2$ is closed, there is again an $\epsilon \in \Gamma$ such that $\go(\lbar b, \epsilon) \cap (L \mi \{\lbar b\}) = \0$. Now, since $L_1$ is a limit set of $f_1$ at $\lbar a$ but $L_1 \mi \{\lbar b\}$ is not, we see that $L \mi \{\lbar b\}$ cannot be a limit set of $f$ at $\lbar a$. This shows that $L$ is minimal.
\end{proof}

\begin{lem}\label{fin:lim:bounded}
Let $f : A \fun \mdl P(\VF^m)$ be a definable function with finite images. Let $k$ be the maximal size of $f(c)$. Let $a \in \VF$ and suppose that there is an open ball $\gb$ containing $a$ such that $\gb \mi \set{a} \sub A$. Suppose that $\lim_{A \rightarrow a} f = L$. If $L$ is finite then $\abs{L} \leq k$.
\end{lem}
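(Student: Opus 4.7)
The plan is to argue by contradiction: assume $\abs{L} > k$ and build a single point $c \in A$, arbitrarily close to $a$, whose image $f(c)$ meets $\abs{L}$ pairwise disjoint balls, violating $\abs{f(c)} \leq k$.

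First I would exploit the minimality of $L$ (which is implicit in the equality $\lim_{A \rightarrow a} f = L$, per the discussion after the definition): for each $b \in L$, the set $L \mi \{b\}$ is not a limit set, so there exists $\epsilon_b \in \Gamma$ witnessing this failure. Since enlarging $\epsilon$ only shrinks the balls $\go(b', \epsilon)$, the failure persists for any $\epsilon \geq \epsilon_b$. I therefore fix a single $\epsilon \in \Gamma$ large enough that (i) $\epsilon \geq \epsilon_b$ for every $b \in L$ (a finite maximum), and (ii) the balls $\{\go(b, \epsilon) : b \in L\}$ are pairwise disjoint (possible because $L$ is finite). Using the limit property of $L$ at this $\epsilon$, pick $\delta_0 \in \Gamma$ such that $\go(a, \delta_0) \sub \gb$ and for every $c \in \go(a, \delta_0) \mi \{a\} \sub A$, the image $f(c)$ is contained in $\bigcup_{b \in L} \go(b, \epsilon)$.

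Next, for each $b \in L$, define the definable subset
\[
A^b = \set{c \in \go(a, \delta_0) \mi \{a\} : f(c) \cap \go(b, \epsilon) \neq \0}.
\]
The failure of $L \mi \{b\}$ to be a limit set (combined with the containment just established) forces $A^b$ to meet every punctured ball $\go(a, \delta) \mi \{a\}$ with $\delta \geq \delta_0$: for any such $\delta$, minimality gives some $c \in \go(a, \delta) \cap (A \mi a)$ with a point of $f(c)$ outside $\bigcup_{b' \in L \mi \{b\}} \go(b', \epsilon)$, but $f(c) \sub \bigcup_{b' \in L} \go(b', \epsilon)$, so the stray point must lie in $\go(b, \epsilon)$. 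Hence $a$ is in the topological closure of $A^b$. By $C$-minimality (Theorem~\ref{c:min:acvf}), $A^b$ is a boolean combination of balls, and accumulation at $a$ forces $A^b$ to contain some punctured ball $\go(a, \delta_b) \mi \{a\}$ with $\delta_b \geq \delta_0$.

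Finally, set $\delta = \max_{b \in L} \delta_b$, which exists since $L$ is finite. For any $c \in \go(a, \delta) \mi \{a\}$ we have $c \in A^b$ for every $b \in L$ simultaneously, so $f(c)$ meets each of the pairwise disjoint balls $\go(b, \epsilon)$, yielding $\abs{f(c)} \geq \abs{L} > k$. This contradicts the hypothesis that $\abs{f(c)} \leq k$, completing the proof. I do not anticipate a serious obstacle; the only point that needs attention is the monotonicity of the minimality failures in $\epsilon$, which lets us reduce the finitely many witnesses $\epsilon_b$ to a common $\epsilon$.
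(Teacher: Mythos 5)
Your proof is correct, but it takes a route complementary to the paper's. The paper argues from the hypothesis $\abs{f(c)}\leq k$ forward: it fixes a radius $\alpha$ making the balls $\go(\lbar b_i,\alpha)$ disjoint, observes that every $f(c)$ must lie inside the union of at most $k$ of them, and so the finitely many definable sets $A_D=\set{c:f(c)\sub\bigcup_{\lbar b_i\in D}\go(\lbar b_i,\alpha)}$ (over $k$-element subsets $D\sub L$) cover a punctured ball around $a$; by $C$-minimality one $A_D$ itself contains a punctured ball, whence $\lim_{A\to a}f\sub D$, contradicting minimality of $L$ only in the final step. You instead invoke minimality up front: for each $b\in L$, the failure of $L\mi\set{b}$ to be a limit set forces the set $A^b$ of points whose image hits $\go(b,\epsilon)$ to accumulate at $a$, so each $A^b$ contains a punctured ball; intersecting all $\abs{L}$ of them produces a point $c$ whose image meets $\abs{L}>k$ pairwise disjoint balls, violating $\abs{f(c)}\leq k$. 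Both proofs lean on $C$-minimality at the same place (to upgrade accumulation at $a$ to containing a punctured ball, via the swiss-cheese decomposition) and are of comparable length; the tradeoff is that the paper's version needs only one application of $C$-minimality to a finite cover, while yours applies it once per $b\in L$ but keeps the combinatorics transparent since you track single balls rather than $k$-element subsets. One small point worth making explicit in your write-up: the claim that a boolean combination of balls not containing $a$ but accumulating at $a$ must contain a punctured ball around $a$ relies on the swiss-cheese form of $C$-minimal sets and the ultrametric fact that a hole around $a$ of finite radius would cut off an entire neighborhood, so the only possible hole at $a$ is $\set{a}$ itself.
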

\begin{proof}
Let $L = \{\lbar b_1, \ldots, \lbar b_l \}$ and suppose for contradiction that $l > k$. Let $\alpha \in \Gamma$ be such that $\go(\lbar b_i, \alpha) \cap \go(\lbar b_j, \alpha) = \0$ whenever $i \neq j$. Without loss of generality we may assume that $A \mi \set{a} =  \gb \mi \set{a}$ and $\bigcup f(A) \sub \bigcup_i \go(\lbar b_i, \alpha)$. For each $D \sub L$ with $\abs{D} = k$ let
\[
A_D = \bigl \{c \in A : f(c) \sub \bigcup_{\lbar b_i \in D} \go(\lbar b_i, \alpha) \bigr\}.
\]
Each $A_D$ is $\seq{L, \alpha}$-definable. By $C$-minimality, some $A_D \cup \set{a}$ contains an open ball around $a$ and hence $\lim_{A \rightarrow a} f = \lim_{A_D \rightarrow a} (f \rest A_D) \sub D$, contradicting the assumption that $\lim_{A \rightarrow a} f = L$.
\end{proof}

Here is the key lemma that makes the definition of differentiation in $\VF$ below work. It is essentially a variation on a fundamental property of henselian fields, see~\cite[Proposition, p.~70]{koblitz:padic:analysis}.

\begin{lem}\label{lim:exists}
Let $\gb \sub \VF$ be a ball containing $0$ and $A \sub (\gb \mi \set{0}) \times \VF^m$ a definable function $\gb \mi \set{0} \fun \mdl P (\VF^m)$ with finite images. Then there is a definable finite subset $L \sub \PP(\VF)^m$ such that $\lim_{\gb \mi \set{0} \rightarrow 0} A = L$.
\end{lem}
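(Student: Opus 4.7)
The plan is to reduce to $m=1$ by coordinate projection, to present $A(c)$ as the root set of a monic polynomial whose coefficients are scalar definable functions via elementary symmetric functions, and finally to show that any scalar definable function $\gb \mi \set{0} \fun \VF$ has a limit in $\PP(\VF) = \VF \cup \set{p_{\infty}}$.

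By compactness fix a uniform bound $k$ on $\abs{A(c)}$. For each $i \le m$ the composition $A_i = \pr_i \circ A : \gb \mi \set{0} \fun \mdl P(\VF)$ still has finite images of size $\le k$; if each $A_i$ admits a finite limit set $L_i \sub \PP(\VF)$, then $L = L_1 \times \cdots \times L_m$ is a finite limit set of $A$, since $A(c) \sub \prod_i A_i(c)$ and a product of $\epsilon$-balls sits inside an open polydisc of radius $\epsilon$. So assume $m=1$. Stratify by fiber size via the definable subsets $\set{c : \abs{A(c)} = j}$ for $0 \le j \le k$; by \cmin-minimality precisely one of them, say for $j=j_0$, contains a punctured open ball around $0$, so after shrinking $\gb$ we may assume $\abs{A(c)} = j_0$ on $\gb \mi \set{0}$, the case $j_0 = 0$ giving $L = \0$. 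Define definable scalar functions $e_i : \gb \mi \set{0} \fun \VF$ ($1 \le i \le j_0$) by letting $e_i(c)$ be the $i$-th elementary symmetric polynomial of $A(c)$. Then $A(c)$ is the root set of
\[
P_c(Y) = Y^{j_0} + \sum_{i=1}^{j_0} (-1)^i e_i(c) Y^{j_0 - i},
\]
and if each $e_i$ has a limit $l_i \in \PP(\VF)$, continuity of roots in algebraically closed valued fields produces a finite limit set $L$ for $A$ (with roots escaping to $p_{\infty}$ for those indices where $l_i = p_{\infty}$, handled by the usual reciprocal-variable normalization).

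Thus it suffices to show a definable $g : \gb \mi \set{0} \fun \VF$ has a limit in $\PP(\VF)$. Applying Lemma~\ref{fun:dim:1:val:cons} to $g$, off a definable subset of smaller $\VF$-dimension, $g$ admits a local rate $\vv(g(c') - g(c_0)) = \epsilon_{c_0} + k_{c_0} \vv(c' - c_0)$ on a punctured open ball about $c_0$, with $k_{c_0}$ drawn from a fixed finite set $E$ of positive rationals and $\epsilon_{c_0}$ a $c_0$-definable element of $\Gamma$. Finiteness of $E$ and $o$-minimality of $\Gamma$ force the functions $c_0 \efun k_{c_0}$ and $c_0 \efun \epsilon_{c_0} - k_{c_0} \vv(c_0)$ to stabilize as $c_0 \to 0$, giving constants $k_0$ and $\alpha \in \Gamma \cup \set{-\infty}$ such that $\vv(g(c))$ eventually agrees with (or is controlled by) $\alpha + k_0 \vv(c)$ via the ultrametric inequality. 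Three cases arise: if this tends to $+\infty$, then $g(c) \to 0$; if to $-\infty$, then $g(c) \to p_{\infty}$; otherwise $g(c)$ eventually lives in a single $\rv$-ball, Lemma~\ref{ball:proper:sub:center} supplies a definable point $b$ in that $\rv$-ball, and one iterates on $g-b$, which has strictly larger eventual valuation. Termination in finitely many rounds follows from Lemma~\ref{dcl:to:ac}, which places $g(c) \in \VF(S)(c)^{\alg}$ of some bounded degree, capping the depth of this Puiseux-type refinement.

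The main obstacle is this last step: rigorously integrating the local-near-$c_0$ valuation information into uniform near-$0$ behavior, and closing the iterative refinement using the bounded algebraic degree of $g$ over $\VF(S)(c)$. The other parts of the argument (coordinate projection, the $C$-minimal stratification, and the elementary symmetric reduction together with continuity of roots) are comparatively routine.
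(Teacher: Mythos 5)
There is a genuine gap, and it lives exactly where you flag it.

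Your reduction to scalar $g : \gb \mi \set{0} \fun \VF$ via elementary symmetric functions is a clean idea (and the projection to $m=1$ and the $C$-minimal fiber-size stratification are fine), but the scalar step does not close. Lemma~\ref{fun:dim:1:val:cons} gives the local formula $\vv(g(c') - g(c_0)) = \epsilon_{c_0} + k_{c_0}\vv(c'-c_0)$ only on a small punctured ball \emph{about $c_0$}; it says nothing directly about $\vv(g(c))$ as $c \to 0$. You invoke ``finiteness of $E$ and $o$-minimality of $\Gamma$'' to get the quantities $k_{c_0}$ and $\epsilon_{c_0} - k_{c_0}\vv(c_0)$ to stabilize, and an ``ultrametric inequality'' to convert this local data into a global formula $\vv(g(c)) \sim \alpha + k_0\vv(c)$ near $0$, but neither step is an argument: $\epsilon_{c_0}$ is a definable $\Gamma$-valued function of $c_0$ with no a priori reason to be eventually affine in $\vv(c_0)$, and even if it were, the passage from increments to values requires a telescoping/path argument you have not supplied. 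The termination claim is also unsubstantiated: subtracting a definable center $b$ from $g$ does not lower the field-theoretic degree of $g(c)$ over $\VF(S)(c)$, so Lemma~\ref{dcl:to:ac} does not by itself cap the number of refinement rounds. Finally, ``continuity of roots'' and ``reciprocal normalization'' for the $p_\infty$ coefficients are substantive points you are treating as routine, and they interact badly with the case where some $e_i$ escape to $p_\infty$ while others converge.

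The paper takes a different, more syntactic route that sidesteps all of this. By QE and compactness it reduces at once to the case where $A$ is cut out by a single polynomial equation $G(a,b) = 0$, writes $G(X,Y) = Y^m H(X) G^*(X,Y)$ with $G^*(X,Y) = \sum_j H_j(X) Y^j$, and splits $G^*$ into the part $G_1$ whose coefficients vanish at $X=0$ and the remainder $G_2$; the roots $d_1, \ldots, d_k$ of the fixed polynomial $G_2(0,Y)$, together with $p_\infty$ (and $0$ if $m>0$), are then shown directly to absorb all roots of $G(a,Y)$ as $a \to 0$. This is the Newton-polygon analysis you allude to, but done in one explicit stroke on the defining polynomial, so there is no local-to-global passage and no iterative refinement to terminate. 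Your elementary-symmetric reduction would ultimately have to invoke the same polynomial data to push the scalar case through; if you want to salvage the scalar approach, the honest fix is to observe that $g(c)$ is a root of a fixed $G(c,Y) \in \VF(S)[X,Y]$ and then run essentially the paper's coefficient analysis on $G$.
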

\begin{proof}
Without loss of generality we may assume that $\gb$ is an open ball. Let $\ga = \gb \mi \set{0}$. We first consider the basic case: $m = 1$ and there is a polynomial $G(X, Y) \in \VF(S)[X, Y]$ such that $(a, b) \in A$ if and only if $a \in  \ga$ and $G(a,b) = 0$.

Fix an $\epsilon \in \Gamma$. Write $G(X, Y)$ as $Y^m H(X) G^*(X, Y)$, where $H(X) \in \VF(S)[X]$ and $G^*(X, Y) \in \VF(S)[X, Y]$ is of the form
\[
H_n(X)Y^n + \cdots + H_0(X),
\]
where the polynomials $H_j(X)Y^j \in \VF(S)[X, Y]$ are relatively prime. Shrinking $\ga$ if necessary, we may assume that $\ga$ does not contain any root of $H(X)$ or nonzero $H_j(X)$. If $n = 0$ then clearly $L = \set{0}$ is as required. If $m >0$ then $(a, 0) \in A$ for every $a \in \ga$. So let us assume $n > 0$ and $m = 0$. Let $E \sub \set{0, 1, \ldots, n}$ be the subset such that $i \in E$ if and only if $X$ divides $H_i(X)$. Let
\[
G_1(X, Y) = \sum_{i \in E} H_i(X)Y^i, \,\,\,\,  G_2(X, Y) = \sum_{i \notin E} (H^*_i(X) + H_i(0)) Y^i.
\]
Note that $X$ also divides each $H^*_i(X)$. For any sufficiently large $\delta \in \Gamma$, $\vv(H_i(X))$ has a sufficiently large lower bound on $\go(0, \delta) \mi \set{0}$ for every $i \in E$; similarly for every $H^*_i(X)$. On the other hand, let $d_1, \ldots, d_k$ be the distinct roots of $G_2(0, Y) \in \VF(S)[Y]$ ($k = 0$ if $G_2(0, Y)$ is a nonzero constant) then, for any sufficiently large $\alpha \in \Gamma$, $\vv(G_2(0, b)) > \alpha$ only if $b \in \go(d_i, \epsilon)$ for some $i$. Therefore, if $\delta \in \Gamma$ is sufficiently large then for every $a \in \go(0, \delta) \mi \set{0}$ and every $b \notin \go(p_{\infty}, \epsilon) \cup \bigcup_i \go(d_i, \epsilon)$ we must have
\[
\vv(G^*(a, b) - G_2(0, b)) > \vv(G_2(0, b))
\]
and hence $G^*(a, b) \neq 0$. This concludes the basic case.

More generally, by compactness, $A \sub \VF^2$ is a union of finitely many subsets of the form $A_i \cap D_i$, where each $A_i$ is given by a $\VF$-sort equality as above. Since the lemma holds for each $A_i \cap D_i$, it holds for $A$ by Lemma~\ref{union:lim}.

For the case $m > 1$, let $A_i = \set{(b, \pr_i (\lbar a)) : (b, \lbar a) \in A}$ for each $i \leq m$ and $\lim_{\gb \mi \set{0} \rightarrow 0} A_i = L_i$. It is easy to see that
\[
\lim_{\gb \mi \set{0} \rightarrow 0} A \sub L_1 \times \cdots \times L_m
\]
and hence, as in the proof of Lemma~\ref{fin:lim:bounded}, there is a definable $L \sub L_1 \times \cdots \times L_m$ such that $\lim_{\gb \mi \set{0} \rightarrow 0} A = L$.
\end{proof}

\begin{defn}\label{defn:diff}
Let $f : \VF^n \fun \VF^m$ be a definable function. For any $\lbar a \in \VF^n$, we say that $f$ is \emph{differentiable at $\lbar a$} if there is a linear map $\lambda : \VF^n \fun \VF^m$ (of $\VF$-vector spaces) such that, for any $\epsilon \in \Gamma$, if $\lbar b \in \VF^n$ and $\vv(\lbar b)$ is sufficiently large then
\[
\vv(f(\lbar a + \lbar b) - f(\lbar a) - \lambda(\lbar b)) - \vv(\lbar b) > \epsilon.
\]
It is straightforward to check that if such a linear function $\lambda$ (a matrix with entries in $\VF$) exists then it is unique and hence may be called \emph{the derivative of $f$ at $\lbar a$}, which shall be denoted by $\der_{\lbar a} f$.

For each $1 \leq j \leq m$ let $f_j = \pr_j \circ f$. For any $\lbar a = (a_i, \lbar a_i) \in \VF^n$, if the derivative of the function $f_j \rest (\VF \times \set{\lbar a_i})$ at $a_i$ exists then we call it the \emph{$ij$th partial derivative of $f$ at $\lbar a$} and denote it by $\partial_{\lbar a}^{ij} f$.
\end{defn}

The classical differentiation rules, such as the product rule and the chain rule, hold with respect to this definition. Here we only check the chain rule:

\begin{lem}[The chain rule]\label{chain:rule}
Let $f : \VF^n \fun \VF^m$ be differentiable at $\lbar a \in \VF^n$ and $g : \VF^m \fun \VF^l$ differentiable at $f(\lbar a)$. Then $g \circ f$ is differentiable at $\lbar a$ and
\[
\der_{\lbar a} (g \circ f) = (\der_{f(\lbar a)} g) \times (\der_{\lbar a} f),
\]
where the righthand side is a product of matrices.
\end{lem}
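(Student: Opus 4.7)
The plan is to follow the classical proof, decomposing the error of the composite into an ``outer'' error and a correction involving the $f$-error pushed through $\lambda_g$, and then using the non-Archimedean strong triangle inequality to combine valuation bounds. Write $\lambda_f = \der_{\lbar a} f$ and $\lambda_g = \der_{f(\lbar a)} g$, both viewed as matrices over $\VF$, and set $\lbar c = f(\lbar a + \lbar b) - f(\lbar a)$. The algebraic identity to use is
\[
(g \circ f)(\lbar a + \lbar b) - (g \circ f)(\lbar a) - \lambda_g \lambda_f \lbar b = \bigl( g(f(\lbar a) + \lbar c) - g(f(\lbar a)) - \lambda_g \lbar c \bigr) + \lambda_g(\lbar c - \lambda_f \lbar b).
\]
The aim is to show that for every $\epsilon \in \Gamma$, when $\vv(\lbar b)$ is sufficiently large the right-hand side has valuation exceeding $\vv(\lbar b) + \epsilon$.

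First I would bound the second summand. Fix $\gamma \in \Gamma$ with $\vv(\lambda_g \lbar d) \geq \vv(\lbar d) + \gamma$ for every $\lbar d \in \VF^m$; such a $\gamma$ exists because $\lambda_g$ is a $\VF$-linear map and so its action on a tuple is controlled by the minimum valuation of its entries (or is identically $\infty$ if $\lambda_g = 0$, in which case this summand vanishes). Differentiability of $f$ at $\lbar a$ applied with tolerance $\epsilon - \gamma$ yields $\vv(\lbar c - \lambda_f \lbar b) > \vv(\lbar b) + \epsilon - \gamma$ for all $\lbar b$ of sufficiently large valuation, whence $\vv(\lambda_g(\lbar c - \lambda_f \lbar b)) > \vv(\lbar b) + \epsilon$.

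For the first summand I need a preliminary lower estimate on $\vv(\lbar c)$, since differentiability of $g$ at $f(\lbar a)$ delivers smallness only when its input is small. Pick $\beta \in \Gamma$ with $\vv(\lambda_f \lbar b) \geq \vv(\lbar b) + \beta$ for all $\lbar b$. Applying differentiability of $f$ with tolerance $\beta$, for $\vv(\lbar b)$ large we get $\vv(\lbar c - \lambda_f \lbar b) > \vv(\lbar b) + \beta$, and then the strong triangle inequality gives $\vv(\lbar c) \geq \vv(\lbar b) + \beta$. In particular $\vv(\lbar c) \to \infty$ as $\vv(\lbar b) \to \infty$. Now invoke differentiability of $g$ at $f(\lbar a)$ with tolerance $\epsilon - \beta$: for $\vv(\lbar c)$ large enough,
\[
\vv\bigl( g(f(\lbar a) + \lbar c) - g(f(\lbar a)) - \lambda_g \lbar c \bigr) > \vv(\lbar c) + (\epsilon - \beta) \geq \vv(\lbar b) + \epsilon.
\]
Combining the two bounds through the strong triangle inequality establishes differentiability of $g \circ f$ at $\lbar a$ with derivative $\lambda_g \lambda_f$, which is the matrix product asserted in the statement. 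Uniqueness of the derivative (already noted in Definition~\ref{defn:diff}) then forces $\der_{\lbar a}(g \circ f) = (\der_{f(\lbar a)} g) \times (\der_{\lbar a} f)$.

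The only nontrivial point is the preliminary step $\vv(\lbar c) \geq \vv(\lbar b) + \beta$: without it one cannot feed the $f$-increment into the definition of differentiability of $g$. Everything else is bookkeeping with the non-Archimedean triangle inequality and the fact that a $\VF$-linear matrix action shifts valuations by a uniform constant. No deeper tool (such as \cmin-minimality or henselianity) is required here; this lemma is purely formal, relying only on the algebraic identity and the definition of $\der$.
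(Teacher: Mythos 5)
Your proof is correct and follows essentially the same strategy as the paper's: decompose the composite error into the ``outer'' $g$-error at $\lbar c = f(\lbar a + \lbar b) - f(\lbar a)$ plus the $f$-error pushed through $\lambda_g$, use the uniform valuation shift of linear maps, and combine via the strong triangle inequality. If anything your write-up is slightly cleaner than the paper's: you add the two summands and bound each directly rather than the paper's ``either/or'' case split, and you explicitly record the intermediate bound $\vv(\lbar c) \geq \vv(\lbar b) + \beta$ needed before differentiability of $g$ can be invoked, a point the paper leaves implicit.
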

\begin{proof}
Fix an $\epsilon \in \Gamma$. Since $\der_{\lbar a} f$ is a linear function, there is an $\alpha \in \Gamma$ such that, for every $\lbar b \in \VF^n$, $\vv(\der_{\lbar a} f(\lbar b)) - \vv(\lbar b) \geq \alpha$. Similarly there is a $\beta \in \Gamma$ such that, for every $\lbar b \in \VF^m$, $\vv(\der_{f(\lbar a)} g(\lbar b)) - \vv(\lbar b) \geq \beta$. Let $s: \VF^n \fun \VF^m$ be the function given by
\[
\lbar b \efun f(\lbar a + \lbar b) - f(\lbar a) - \der_{\lbar a} f(\lbar b).
\]
By assumption, for any $\lbar b \in \VF^n$ with $\vv(\lbar b)$ sufficiently large,
\[
\vv(\der_{f(\lbar a)} g(s(\lbar b))) \geq \vv (s(\lbar b)) + \beta > \vv(\lbar b) + (\epsilon - \beta) + \beta = \vv(\lbar b) + \epsilon.
\]
Therefore, if $\vv(\lbar b)$ is sufficiently large then either
\[
\vv (g(f(\lbar a + \lbar b)) - g(f(\lbar a)) - \der_{f(\lbar a)} g(\der_{\lbar a} f(\lbar b))) > \vv(\lbar b) + \epsilon
\]
or
\begin{align*}
     & \vv (g(f(\lbar a + \lbar b)) - g(f(\lbar a)) - \der_{f(\lbar a)} g(\der_{\lbar a} f(\lbar b)))\\
=  &   \vv (g(f(\lbar a + \lbar b)) - g(f(\lbar a)) - \der_{f(\lbar a)} g(\der_{\lbar a} f(\lbar b)) - \der_{f(\lbar a)} g(s(\lbar b)))\\
=    & \vv(g(f(\lbar a) + \der_{\lbar a} f(\lbar b) + s(\lbar b)) - g(f(\lbar a)) - \der_{f(\lbar a)} g(\der_{\lbar a} f(\lbar b) + s(\lbar b)))\\
>    & \vv(\der_{\lbar a} f(\lbar b) + s(\lbar b)) + \min \set{\beta, \epsilon - \alpha}\\
\geq & \vv(\lbar b) + \epsilon.
\end{align*}
In either case the lemma follows.
\end{proof}


\begin{lem}\label{partial:exist}
Let $f : \VF^n \fun \VF^m$ be a definable function. Then each partial derivative $\partial^{ij} f$ is defined almost everywhere.
\end{lem}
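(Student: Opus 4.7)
The strategy is to reduce to a single-variable statement and then use $C$-minimality together with the valuative-regularity results of the previous section.

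\emph{Reduction.} By considering each component $f_j = \pr_j \circ f$ separately and, by symmetry of indices, fixing $i = 1$ and $j = 1$, we may take $m = 1$ and work with the first partial derivative $\partial^{11} f$. Let $D = \set{\lbar a \in \VF^n : \partial^{11} f \text{ is not defined at } \lbar a}$. For each $\lbar c \in \VF^{n-1}$ the fibre $D \cap (\VF \times \set{\lbar c})$ equals the set of $a \in \VF$ at which $g_{\lbar c}(x) := f(x, \lbar c)$ fails to be differentiable. Granting the claim below that each such fibre is finite, compactness yields a uniform bound on the fibre sizes, Lemma~\ref{finite:VF:project:RV} embeds $D$ definably into $\VF^{n-1} \times \RV^k$, and Lemma~\ref{dim:RV:fibers} then gives $\dim_{\VF}(D) \leq n-1$, as required.

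\emph{Single-variable claim.} For any definable $g : \VF \fun \VF$, the set $D_g = \set{a \in \VF : g'(a) \text{ does not exist}}$ is finite. I argue by contradiction: if $D_g$ is infinite, it contains an open ball by $C$-minimality. Applying Lemma~\ref{fun:dim:1:val:cons} with $k = 0$ and discarding the finite exceptional set, each point $a$ of this ball is either locally constant for $g$ (in which case $g'(a) = 0$, excluding $a$ from $D_g$) or carries $a$-definable $\epsilon_a, \delta_a \in \Gamma$ and a positive rational $k_a$ from a finite set $E$ with $\vv(g(a') - g(a)) = \epsilon_a + k_a \vv(a' - a)$ for $a' \in \go(a, \delta_a) \mi \set{a}$. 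Pigeonhole over the finite $E$ together with $C$-minimality then produces an open sub-ball $\gb \sub D_g$ on which $k_a$ is a constant $k$; shrinking $\gb$ if necessary, one may additionally assume that $\epsilon_a$ is constant $\epsilon$ and that the identity $\vv(g(a_1) - g(a_2)) = \epsilon + k \vv(a_1 - a_2)$ holds for every pair $a_1, a_2 \in \gb$.

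\emph{The three subcases on $\gb$.} If $k > 1$, the difference quotient $h(a') = (g(a') - g(a))/(a' - a)$ has $\vv(h(a')) = \epsilon + (k-1) \vv(a' - a) \to \infty$ as $a' \to a$, so $g'(a) = 0$, contradicting $a \in D_g$. If $k \leq 1$, the uniform identity forces any collision $g(a_1) = g(a_2)$ to give $\vv(a_1 - a_2) = \infty$, hence $a_1 = a_2$, so $g \rest \gb$ is a definable bijection onto its image. Lemma~\ref{ball:shrink:RV:cons} then provides, at almost every $a \in \gb$, an $a$-definable $t \in \RV^\times$ with $\rv(g(b) - g(b')) = t \rv(b - b')$ on a punctured neighbourhood; this forces $\vv(g(b) - g(b')) = \vrv(t) + \vv(b - b')$ and hence $k = 1$, already ruling out $k < 1$, which would otherwise be the most delicate subcase. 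For the remaining case $k = 1$, the same identity makes $\rv(h)$ constantly equal to $t$, so $h$ takes values in the bounded $\rv$-ball $\rv^{-1}(t) \sub \VF$. Lemma~\ref{lim:exists} then furnishes a definable finite limit set $L \sub \PP(\VF)$ for $h$ at $a$; the bound on $h$ forces $L \sub \VF$, Lemma~\ref{fin:lim:bounded} combined with $h$ being single-valued forces $\abs{L} \leq 1$, and non-emptiness of the image of $h$ forces $L \neq \0$. Hence $L = \set{c}$ and $g'(a) = c$ exists, contradicting $a \in D_g$ one last time and closing the contradiction, so $D_g$ is finite.
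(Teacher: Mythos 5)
Your proof is correct but takes a genuinely different route from the paper's. The paper works directly with the multivariate function: it applies Lemma~\ref{VF:fun:reg:away} (decomposing $\VF^n$ as one distinguished coordinate against the rest) to assert that for almost every $\lbar a$ the difference quotient $g^{ij}_{\lbar a}(b) = (f_j(a_i + b, \lbar a_i) - f_j(\lbar a))/b$ has valuation bounded below on a punctured ball around $0$, then cites Lemma~\ref{lim:exists} and Lemma~\ref{fin:lim:bounded} to produce the limit. You instead reduce fibre-wise to the single-variable case and argue by contradiction, using $C$-minimality and Lemma~\ref{fun:dim:1:val:cons} to obtain a uniform growth exponent $k$ on an open ball, then close by a case split in $k$. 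What your route buys, beyond being more hands-on, is that it explicitly confronts the subcase $k < 1$: Lemma~\ref{VF:fun:reg:away} only supplies a positive rational exponent, and if that exponent $\kappa$ were strictly less than $1$ the induced lower bound $\epsilon + (\kappa-1)\vv(b)$ on $\vv(g^{ij}_{\lbar a}(b))$ would tend to $-\infty$, so the paper's assertion of boundedness from below quietly presupposes $\kappa \geq 1$; your appeal to injectivity and Lemma~\ref{ball:shrink:RV:cons} is exactly what rules this out. The one spot worth tightening is the step where you shrink $\gb$ to make $\epsilon_a$ and $\delta_a$ constant: this needs Lemma~\ref{fun:almost:loc:con} (a definable function $\VF \fun \Gamma^2$ is locally constant almost everywhere) before one can pass to a sub-ball of radius at least $\delta$ on which the local identity holds uniformly for all pairs, but that is easily supplied.
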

\begin{proof}
Let $\lbar a = (a_i, \lbar a_i) \in \VF^n$. Let $g^{ij}_{\lbar a} : \VF^{\times} \fun \VF$ be the $\lbar a$-definable function given by
\[
b \efun (f_j(a_i + b, \lbar a_i) - f_j(\lbar a)) / b,
\]
where $f_j = \pr_j \circ f$. By Lemma~\ref{VF:fun:reg:away}, for almost all $\lbar a \in \VF^n$ there is an $\lbar a$-definable open ball $\gb_{\lbar a}$ punctured at $0$ such that $\vv(g^{ij}_{\lbar a}(\gb_{\lbar a}))$ is bounded from below. By Lemma~\ref{lim:exists} and Lemma~\ref{fin:lim:bounded}, $\lim_{\gb_{\lbar a} \rightarrow 0} g^{ij}_{\lbar a} = \zeta(\lbar a)$ for some $\zeta(\lbar a) \in \VF$. The linear function is constructed in the usual way, taking $\zeta(\lbar a)$ as the slope.
\end{proof}

\begin{cor}\label{diff:almost:every}
Let $f : \VF^n \fun \VF^m$ be a definable function. Then $f$ is continuously partially differentiable almost everywhere.
\end{cor}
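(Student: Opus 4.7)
The result is essentially an aggregation step from Lemma~\ref{partial:exist} and Lemma~\ref{fun:almost:cont}. The plan is to first note that there are only finitely many partial derivatives $\partial^{ij} f$ (one for each pair $(i,j)$ with $1 \leq i \leq n$, $1 \leq j \leq m$), then combine the ``almost everywhere'' conclusions of those two lemmas by taking finite unions of the exceptional sets.

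In detail, I would first apply Lemma~\ref{partial:exist} to each pair $(i,j)$, obtaining a definable subset $A_{ij} \sub \VF^n$ with $\dim_{\VF}(\VF^n \mi A_{ij}) < n$ on which $\partial^{ij} f$ is defined. Setting $A = \bigcap_{i,j} A_{ij}$, the complement $\VF^n \mi A$ is a finite union of subsets each of $\VF$-dimension $< n$, hence has $\VF$-dimension $< n$ as well (using Lemma~\ref{VF:dim:mono} or directly Lemma~\ref{dim:VF:pullback}). So all partial derivatives of $f$ exist simultaneously on $A$, and $\dim_{\VF}(\VF^n \mi A) < n$.

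Next, I would extend each $\partial^{ij} f$ from $A$ to a definable function $\VF^n \fun \VF$ by setting it to $0$ off $A$, and apply Lemma~\ref{fun:almost:cont} to the extension to obtain a definable closed subset $B_{ij} \sub \VF^n$ with $\dim_{\VF}(B_{ij}) < n$ such that $\partial^{ij} f$ is continuous on $\VF^n \mi B_{ij}$. Finally, let
\[
B = (\VF^n \mi A) \cup \bigcup_{i,j} B_{ij}.
\]
This is a definable subset of $\VF^n$ with $\dim_{\VF}(B) < n$, and on the complement $\VF^n \mi B$ every $\partial^{ij} f$ exists and is continuous, which is exactly the assertion that $f$ is continuously partially differentiable almost everywhere.

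There is essentially no obstacle here; the only mild subtlety is that Lemma~\ref{fun:almost:cont} is phrased for a function defined on all of $\VF^n$, which is handled by extending the partial derivatives arbitrarily (definably) off $A$. Since the good set $\VF^n \mi B$ is open in $A$ and $\dim_{\VF}(B) < n$, the finite aggregation preserves the ``almost everywhere'' conclusion.
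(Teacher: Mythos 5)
Your proof is correct and is exactly the argument the paper intends; the paper simply states the corollary is ``immediate by Lemma~\ref{partial:exist} and Lemma~\ref{fun:almost:cont},'' and you have filled in the routine aggregation of exceptional sets that makes this precise.
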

\begin{proof}
This is immediate by Lemma~\ref{partial:exist} and Lemma~\ref{fun:almost:cont}.
\end{proof}


We would like to differentiate functions between arbitrary definable subsets. The simplest way to do this to be ``forgetful'' about the $\RV$-coordinates. Let $f : \VF^n \times \RV^m \fun \VF^{n'} \times \RV^{m'}$ be a definable function. For each $\lbar t \in \RV^m$ let $U_{\lbar t} = \prv(f(\VF^n \times \{\lbar t\}))$. For every $\lbar s \in U_{\lbar t}$ let $f_{\lbar t, \lbar s}$ be the function on $\{\lbar a : \prv(f(\lbar a, \lbar t)) = \lbar s \}$ given by $\lbar a \efun \pvf(f(\lbar a, \lbar t))$. Note that, by compactness, there is an $\lbar s \in U_{\lbar t}$ such that $\dim_{\VF}(\dom(f_{\lbar t, \lbar s})) = n$ and hence, by Lemma~\ref{full:dim:open:poly}, $\dom(f_{\lbar t, \lbar s})$ contains an open polydisc. For such an $\lbar s$ and each $\lbar a \in \dom(f_{\lbar t, \lbar s})$ we define the \emph{$ij$th partial derivative of $f$ at $(\lbar a, \lbar t)$} to be the $ij$th partial derivative of $f_{\lbar t, \lbar s}$ at $\lbar a$. It follows from Corollary~\ref{diff:almost:every} and compactness that every partial derivative of $f$ is defined almost everywhere.

\begin{defn}
If $n = n'$ and all the partial derivatives exist at a point $(\lbar a, \lbar t)$ then the \emph{Jacobian of $f$ at $(\lbar a, \lbar t)$} is defined in the usual way, that is, the determinant of the Jacobian matrix, and is denoted by $\jcb_{\VF} f(\lbar a, \lbar t)$.
\end{defn}

\begin{lem}\label{special:tran:vol:pre}
For any special bijection $T : A \fun A^{\sharp}$, the Jacobians of $T$ and $T^{-1}$ are equal to $1$ almost everywhere. If $A$ is a nondegenerate $\RV$-pullback then they are equal to $1$ everywhere.
\end{lem}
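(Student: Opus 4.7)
The plan is to proceed by induction on the length $\ell$ of $T$. For $\ell = 0$, $T$ is either the identity or a canonical bijection; both act trivially on the $\VF$-coordinates (the canonical bijection only rearranges $\RV$-data), so the Jacobian matrix is the identity and $\jcb_{\VF}(T) = 1$ everywhere. For the inductive step, decompose $T = \eta \circ T'$, absorbing any intervening canonical bijection into the identification since it contributes an identity block to the $\VF$-Jacobian, where $\eta$ is a single centripetal transformation and $T'$ is a special bijection of length $\ell - 1$. By the chain rule (Lemma~\ref{chain:rule}),
\[
\jcb_{\VF}(T)(\lbar a, \lbar t) = \jcb_{\VF}(\eta)(T'(\lbar a, \lbar t)) \cdot \jcb_{\VF}(T')(\lbar a, \lbar t)
\]
wherever both factors are defined, and by the inductive hypothesis $\jcb_{\VF}(T') = 1$ almost everywhere, so it suffices to establish that $\jcb_{\VF}(\eta) = 1$ almost everywhere on $A$.

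For the centripetal transformation $\eta$ with focus map $\lambda$ and locus $C$, write points of $A \sub \VF \times \VF^n \times \RV^m$ as $(a_1, \lbar a_1, \lbar t)$. On $A \mi C$, $\eta$ is the identity, so $\jcb_{\VF}(\eta) = 1$ trivially. On $C \cap A$ we have $\eta(a_1, \lbar a_1, \lbar t) = (a_1 - \lambda(\lbar a_1, \lbar t), \lbar a_1, \lbar t)$; fixing the $\RV$-parameter $\lbar t$ as in the convention preceding the Jacobian definition, the Jacobian matrix in the $\VF$-variables has the block upper-triangular form
\[
J_{\eta} = \begin{pmatrix} 1 & -\partial \lambda / \partial \lbar a_1 \\ 0 & I_n \end{pmatrix},
\]
whose determinant is identically $1$ wherever the partial derivatives of $\lambda$ in the $\lbar a_1$-variables are defined. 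By Corollary~\ref{diff:almost:every} these partials exist almost everywhere on $\pr_{>1}(C \cap A)$ (viewed as a definable subset fibered over the $\RV$-parameters), so $\jcb_{\VF}(\eta) = 1$ almost everywhere on $C \cap A$, and hence on $A$. For the inverse, the centrifugal transformation $\eta^{-1}(b_1, \lbar a_1, \lbar t) = (b_1 + \lambda(\lbar a_1, \lbar t), \lbar a_1, \lbar t)$ admits the same triangular calculation; alternatively, differentiating the identity $\eta^{-1} \circ \eta = \id$ via Lemma~\ref{chain:rule} yields $\jcb_{\VF}(\eta^{-1}) = \jcb_{\VF}(\eta)^{-1} = 1$ almost everywhere.

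For the strengthened claim when $A$ is a nondegenerate $\RV$-pullback, the target is $\jcb_{\VF}(T) = 1$ everywhere on $A$. Reducing as above to a single centripetal step, the task is to show that on each nondegenerate $\rv$-polydisc $\gp$ in $\pr_{>1}(C \cap A)$ the focus map $\lambda$ has partial derivatives in the $\lbar a_1$-variables defined at every point, so that the triangular computation gives $\jcb_{\VF}(\eta) = 1$ pointwise. The key input is that all $\rv$-values in $\gp$ lie in $\RV^{\times}$ and that $\lambda$ takes values in a single nondegenerate $\rv$-ball of the first $\VF$-coordinate; combined with Lemma~\ref{fun:almost:loc:con} and Corollary~\ref{function:rv:to:vf:finite:image} one expects $\lambda$ to be locally constant on $\gp$, making $\partial \lambda / \partial \lbar a_1 \equiv 0$ there. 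This verification — that the exceptional non-differentiable set of $\lambda$ cannot meet a nondegenerate $\rv$-polydisc — is the main obstacle I expect, since the general-position arguments underlying Corollary~\ref{diff:almost:every} only yield differentiability \emph{almost} everywhere, and pinning down the exact interaction between the nondegeneracy of $C$ and the definable structure of $\lambda$ is where the real work lies.
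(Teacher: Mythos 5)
Your proposal follows essentially the same route as the paper's own (very terse) proof. The paper simply says: reduce to length $1$, then the claim is "clear" by running the proof of Lemma~\ref{partial:exist} on an additive translation and the canonical bijection. Your induction-plus-chain-rule reduction is the same reduction, and your block-triangular computation
\[
J_{\eta} = \begin{pmatrix} 1 & -\partial \lambda / \partial \lbar a_1 \\ 0 & I_n \end{pmatrix}
\]
makes explicit why the determinant is $1$ wherever the matrix is defined; Corollary~\ref{diff:almost:every} (equivalently Lemma~\ref{partial:exist}) then supplies "almost everywhere." This is precisely the content the paper takes for granted, and your rendering of it is correct and more transparent.

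On the "everywhere on a nondegenerate $\RV$-pullback" clause, you are right that this is the delicate point, and you are right to flag it rather than paper over it. Two remarks. First, your speculation that $\lambda$ should be \emph{locally constant} on a nondegenerate $\rv$-polydisc is not the right expectation: e.g.\ $\lambda(a)=a$ maps $\rv^{-1}(1)$ into $\rv^{-1}(1)$ and is differentiable but nowhere locally constant, so local constancy is neither necessary nor automatic. What one actually needs is only that $\partial\lambda/\partial a_j$ exists at every point; your triangular determinant formula already gives the value $1$ unconditionally once existence is secured, so Lemma~\ref{fun:almost:loc:con} and Corollary~\ref{function:rv:to:vf:finite:image} are not the right tools here. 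Second, the paper itself does not supply a detailed argument for this clause either --- it is absorbed into "this is clear" --- so your identification of the gap is a fair criticism of both your draft and the original, not a defect unique to your proposal. For the purposes of where Lemma~\ref{special:tran:vol:pre} is actually used (Theorem~\ref{L:measure:surjective}), "almost everywhere" already suffices, since morphisms in $\mVF[k]$ are essential bijections; the "everywhere" refinement is a convenience statement rather than a load-bearing one.
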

\begin{proof}
We may assume that the length of $T$ is $1$. Then this is clear if we apply the proof of Lemma~\ref{partial:exist} to (additive) translation and canonical bijection (or its inverse).
\end{proof}

\begin{lem}\label{jcb:chain}
Let $f : A \fun B$ and $g : B \fun C$ be definable functions. Then for any $\lbar x \in A$,
\[
\jcb_{\VF} ( g \circ f)(\lbar x) = \jcb_{\VF} g(f(\lbar x)) \cdot \jcb_{\VF} f(\lbar x),
\]
if both sides are defined.
\end{lem}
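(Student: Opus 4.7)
The plan is to reduce the claim to the chain rule for derivatives between pure $\VF$-sort functions, already proved as Lemma~\ref{chain:rule}, with the only new ingredient being the compatibility of the fiber choices that enter the definition of $\jcb_{\VF}$ in the mixed $(\VF, \RV)$ setting.

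First I would unpack the definitions. Write $\lbar x = (\lbar a, \lbar t)$, set $(\lbar b, \lbar s) = f(\lbar x)$, and set $(\lbar c, \lbar r) = g(f(\lbar x))$, with $\lbar a, \lbar b, \lbar c$ in the $\VF$-sort and $\lbar t, \lbar s, \lbar r$ in the $\RV$-sort. The assumption that both $\jcb_{\VF} f(\lbar x)$ and $\jcb_{\VF} g(f(\lbar x))$ are defined forces the $\VF$-arities of source and target to agree at each stage, so the Jacobian matrices are square of common size. Moreover it guarantees that $f_{\lbar t, \lbar s}$ is defined on a neighborhood of $\lbar a$ and is differentiable there in the sense of Definition~\ref{defn:diff}, and likewise for $g_{\lbar s, \lbar r}$ at $\lbar b$.

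Next I would invoke the fact that differentiability, as formulated via valuations in Definition~\ref{defn:diff}, directly implies continuity in the valuation topology. Consequently there is an open polydisc $\gp$ around $\lbar a$ such that $f_{\lbar t, \lbar s}$ carries $\gp$ into the polydisc around $\lbar b$ on which $g_{\lbar s, \lbar r}$ is defined. Moreover, because $\prv \circ f$ is constantly $\lbar s$ on $\gp \times \set{\lbar t}$ and $\prv \circ g$ is constantly $\lbar r$ on the corresponding polydisc around $(\lbar b, \lbar s)$, the function $g \circ f$ on $\gp \times \set{\lbar t}$ has constant $\RV$-projection equal to $\lbar r$, with $\VF$-projection equal to the ordinary composition $g_{\lbar s, \lbar r} \circ f_{\lbar t, \lbar s}$. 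Hence the Jacobian of $g \circ f$ at $\lbar x$, in the sense of the pre-definition just before Definition~\ref{defn:diff}, agrees with that of $g_{\lbar s, \lbar r} \circ f_{\lbar t, \lbar s}$ at $\lbar a$.

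Finally, Lemma~\ref{chain:rule} applied to $g_{\lbar s, \lbar r} \circ f_{\lbar t, \lbar s}$ at $\lbar a$ gives the matrix identity
\[
\der_{\lbar a}(g_{\lbar s, \lbar r} \circ f_{\lbar t, \lbar s}) = \der_{\lbar b}\, g_{\lbar s, \lbar r} \cdot \der_{\lbar a} f_{\lbar t, \lbar s},
\]
and taking determinants together with the multiplicativity of the determinant yields the desired equality. The only conceptual subtlety, and the main obstacle worth isolating, is verifying that the $\RV$-projections $\lbar s$ and $\lbar r$ chosen independently to define $\jcb_{\VF} f(\lbar x)$ and $\jcb_{\VF} g(f(\lbar x))$ are automatically forced to match the fiber used to define $\jcb_{\VF}(g \circ f)(\lbar x)$; this is exactly what local constancy of $\prv \circ f$ and $\prv \circ g$ around points of differentiability ensures, after which the proof reduces to a routine determinant computation.
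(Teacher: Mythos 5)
Your proof is correct and follows essentially the same route as the paper's, which simply states that the result is immediate from the chain rule (Lemma~\ref{chain:rule}). You have additionally spelled out the bookkeeping in the mixed $(\VF,\RV)$ setting — namely that near a point of differentiability the $\RV$-projections are locally constant, so the fiber functions $f_{\lbar t,\lbar s}$ and $g_{\lbar s,\lbar r}$ compose correctly and the matrix chain rule applies — which is exactly the implicit content behind the paper's one-line proof.
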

\begin{proof}
This is immediate by the chain rule.
\end{proof}


Next we describe the second approach to defining the Jacobian in $\VF$. Let $f : \VF^n \fun \VF^n$ be a definable function, which in general is not a rational map. Let $D \sub \VF^{2n}$ be the Zariski closure of the graph of $f$. By Proposition~\ref{dim:vf:same:zar} the dimension of $D$ is $n$ and hence $\pr_{\leq n} \rest D$ is finite-to-one. Let $D_1 = \pr_{\leq n}(D) = \VF^n$ and $D_2 = \pr_{> n}(D)$. For almost all $(\lbar a_1, \lbar a_2) = \lbar a \in D$, $\pr_{\leq n}$ and $\pr_{> n}$ induce surjective linear maps of the tangent spaces (see~\cite[Lemma~2, p.~141]{Shafarevich:77}):
\[
\der_{\lbar a} \pr_{\leq n} : T_{\lbar a}(D) \fun T_{\lbar a_1}(D_1), \quad \der_{\lbar a} \pr_{> n}: T_{\lbar a}(D) \fun T_{\lbar a_2}(D_2).
\]
Since the dimension of $D_1$ is also $n$, we see that $\der_{\lbar a} \pr_{\leq n}$ is an isomorphism of the tangent spaces for almost all $(\lbar a_1, \lbar a_2) = \lbar a \in D$ and hence the composition
\[
(\der_{\lbar a} \pr_{> n}) \circ (\der_{\lbar a} \pr_{\leq n})^{-1} : T_{\lbar a_1}(D_1) \fun T_{\lbar a_2}(D_2)
\]
is defined and is given by an $n \times n$ matrix $\lambda_{\lbar a}$ with entries in $\VF$ (not necessarily invertible). Suppose $f(\lbar a_1) = \lbar a_2$. Then $\lambda_{\lbar a}$ satisfies the defining property in Definition~\ref{defn:diff} and hence $\det \lambda_{\lbar a} = \jcb_{\VF} f(\lbar a_1)$. It is clear that this equality holds for almost all $\lbar a_1 \in \VF^n$. Note that the construction can be carried out even if $f$ is a partial function, as long as $\dim_{\VF}(\dom(f)) = n$.

Now the Jacobian in $\RV$ may be defined almost identically as above. But for clarity we shall repeat the whole procedure. Let $(U, f), (V, g) \in \RV[n, \cdot]$. Set $A = f(U) \cap (\RV^{\times})^n$ and $B = g(V) \cap (\RV^{\times})^n$.
\begin{defn}
An \emph{essential isomorphism} between $(U, f)$ and $(V, g)$ is an isomorphism between $(f^{-1}(A), f \rest f^{-1}(A))$ and $(g^{-1}(B), g \rest g^{-1}(B))$.
\end{defn}
Let $F : (U, f) \fun (V, g)$ be an essential isomorphism. Note that if $A \neq \0$ then a lift of $F$ is defined almost everywhere on $\bb L(U, f)$. Actually, since the parts $f(U) \mi A$ and $g(V) \mi B$ will not concern us, we may assume $f^{-1}(A) = U$ and $g^{-1}(B) = V$. We also assume that $A, B$ are of $\RV$-dimension $n$. Set
\[
C = \set{(f(\lbar u), g(F(\lbar u))) : \lbar u \in U } \sub A \times B.
\]
Note that, since $F$ is an isomorphism, both $\pr_{\leq n} \rest C$ and $\pr_{> n} \rest C$ are finite-to-one. We first consider the simple situation $A, B\sub (\K^{\times})^n$. By Remark~\ref{def:subseti:in:K}, $A, B$ are unions of locally closed subsets (in the sense of Zariski topology). We may assume that $A, B, C$ are varieties. Clearly the dimensions of $A, B, C$ are all $n$. Since the projections $\pi_A, \pi_B$ of $C$ to $A$ and $B$ are dominant rational maps, for almost all $(f(\lbar u), g(F(\lbar u))) = \lbar c \in C$ (that is, outside of a closed subset of dimension $< n$), $\pi_A, \pi_B$ induce isomorphisms of the tangent spaces:
\[
\der_{\lbar c} \pi_A : T_{\lbar c}(C) \fun T_{\pi_A(\lbar c)}(A), \quad \der_{\lbar c} \pi_B: T_{\lbar c}(C) \fun T_{\pi_B(\lbar c)}(B).
\]
Therefore the composition
\[
(\der_{\lbar c} \pi_B) \circ (\der_{\lbar c} \pi_A)^{-1} : T_{\pi_A(\lbar c)}(A) \fun T_{\pi_B(\lbar c)}(B)
\]
is defined and is given by an invertible $n \times n$ matrix $\lambda_{\lbar u}$ with entries in $\K$. The determinant of $\lambda_{\lbar u}$, denoted by $\jcb_{\K} F(f(\lbar u), \lbar u)$, is the \emph{Jacobian of $F$ at $\lbar u$}, which is a $\lbar u$-definable element in $\K^{\times}$. Note that $\jcb_{\K} F$ is defined almost everywhere in $A$, that is, the subset of those $f(\lbar u) \in A$ such that $\jcb_{\K} F(f(\lbar u),\lbar u)$ is not defined is of dimension $< n$.

In general, if $(f(\lbar u), g(F(\lbar u))) \in C$ is contained in a multiplicative coset $O$ of $(\K^{\times})^{2n}$ then we may translate $A, B$ coordinate-wise by $f(\lbar u), g(F(\lbar u))$ respectively so that $O$ is mapped into $(\K^{\times})^{2n}$. Let $(U, f')$, $(V, g')$ be the induced objects and $F'$ the induced isomorphism on $f'^{-1}((\K^{\times})^{n})$.

\begin{defn}
The \emph{Jacobian $\jcb_{\RV} F(f(\lbar u),\lbar u)$ of $F$ at $\lbar u$} is a $\lbar u$-definable element in $\RV^{\times}$ given by
\[
(\Pi f(\lbar u))^{-1} (\Pi g(F(\lbar u))) \jcb_{\K} F'(1, \ldots, 1)
\]
if it exists, where $\Pi (t_1, \ldots, t_n) = t_1 \times \cdots \times t_n$.
\end{defn}

By Lemma~\ref{rv:dim:gamma:coset} and compactness, the subset of those $f(\lbar u) \in A$ such that $\jcb_{\RV} F(f(\lbar u),\lbar u)$ is defined is not empty and the subset of those $f(\lbar u) \in A$ such that $\jcb_{\RV} F(f(\lbar u),\lbar u)$ is not defined is of dimension $< n$. Symmetrically this is also true for $B$.

We may further coarsen the data and define the \emph{$\Gamma$-Jacobian}
\[
\jcb_{\Gamma} F(f(\lbar u),\lbar u) = \Sigma (\vrv \circ g \circ F)(\lbar u) - \Sigma (\vrv \circ f)(\lbar u),
\]
where $\Sigma (\gamma_1, \ldots, \gamma_n) = \gamma_1 + \cdots + \gamma_n$. Obviously this always exists and
\[
\vrv(\jcb_{\RV} F(f(\lbar u),\lbar u)) = \jcb_{\Gamma} F(f(\lbar u),\lbar u).
\]

Note that the chain rule clearly holds for both $\jcb_{\RV}$ and $\jcb_{\Gamma}$ whenever the things involved are defined.

For the rest of this section we do not need to assume that $A$, $B$ are of $\RV$-dimension $n$.

\begin{lem}\label{RV:iso:class:lifted:jcb}
Let $F^{\uparrow} : \bb L(U, f) \fun \bb L (V, g)$ be a lift of $F$. Then for every $f(\lbar u) \in A$ outside of a definable subset of $A$ of dimension $<n$ and almost all $(\lbar a, \lbar u) \in \rv^{-1}(f(\lbar u),\lbar u)$,
\[
\rv(\jcb_{\VF} F^{\uparrow}(\lbar a, \lbar u)) = \jcb_{\RV} F(f(\lbar u), \lbar u).
\]
Also, for almost all $(\lbar a, \lbar u) \in \bb L(U, f)$,
\[
\vv(\jcb_{\VF} F^{\uparrow}(\lbar a, \lbar u)) = \jcb_{\Gamma} F(f(\lbar u), \lbar u).
\]
\end{lem}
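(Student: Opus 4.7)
The plan is as follows. The second identity follows from the first by applying $\vrv$, so I focus on the first. The statement is local at a generic pair $(\lbar a, \lbar u)$, and the first reduction is a scaling. Pick any $\lbar c \in \rv^{-1}(f(\lbar u))$ and $\lbar d \in \rv^{-1}(g(F(\lbar u)))$, and set $\tilde F_{\lbar u}(\lbar z) := \lbar d^{-1}\pvf(F^{\uparrow}(\lbar c \cdot \lbar z, \lbar u))$ coordinatewise; this is a definable bijection $\rv^{-1}(\lbar 1) \fun \rv^{-1}(\lbar 1)$ with $\lbar 1 := (1, \ldots, 1) \in (\K^{\times})^n$. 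By Lemma~\ref{jcb:chain},
\[
\jcb_{\VF}F^{\uparrow}(\lbar a, \lbar u) = (\Pi \lbar d)(\Pi \lbar c)^{-1} \cdot \jcb_{\VF}\tilde F_{\lbar u}(\lbar a/\lbar c),
\]
and the prefactor matches the $(\Pi g(F(\lbar u)))(\Pi f(\lbar u))^{-1}$ factor in the definition of $\jcb_{\RV}$ once I apply $\rv$, since $\rv(\lbar c) = f(\lbar u)$ and $\rv(\lbar d) = g(F(\lbar u))$. It therefore suffices to show $\rv(\jcb_{\VF}\tilde F_{\lbar u}(\lbar z_0)) = \jcb_{\K}F'(\lbar 1, \lbar u)$ for generic $\lbar z_0 \in \rv^{-1}(\lbar 1)$, where $F'$ is the rescaled essential isomorphism.

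Next I would compute both Jacobians through their algebraic tangent-space definitions. Let $D \sub \VF^{2n}$ be the Zariski closure of the graph of $\tilde F_{\lbar u}$ and let $D_0 \sub \VF^{2n}$ be the variety defined by polynomials in $\K(S)[\lbar X, \lbar Y]$ (Remark~\ref{def:subseti:in:K}) cutting out the Zariski closure of the rescaled correspondence $\tilde C$; by Proposition~\ref{dim:vf:same:zar} and Lemma~\ref{rv:dim:gamma:coset}, both have $\VF$-dimension $n$. The $\VF$-Jacobian at $(\lbar z_0, \tilde F_{\lbar u}(\lbar z_0)) \in D$ and the $\K$-Jacobian at $(\lbar 1, \lbar 1) \in D_0$ are then determinants of the projection-induced isomorphisms of their tangent spaces, so it is enough to exhibit defining polynomials that identify these tangent spaces under reduction $\OO \to \K$.

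The core of the argument is to specialize to the explicit lift produced by Theorem~\ref{RV:iso:class:lifted}. For that lift, the graph of $\tilde F_{\lbar u}$ near $(\lbar 1, \lbar 1)$ is cut out by the $\lbar 0$-polynomials $H_1, \ldots, H_n \in \VF(S)[\lbar X, \lbar Y]$ of residue value $0$ constructed in the proof of that theorem, with $(\lbar 1, \lbar 1)$ as a simple common residue root. Implicit differentiation yields
\[
\der_{\lbar z_0}\tilde F_{\lbar u} = -\bigl[\partial H/\partial \lbar Y\bigr]^{-1}\bigl[\partial H/\partial \lbar X\bigr]\Big|_{(\lbar z_0, \tilde F_{\lbar u}(\lbar z_0))};
\]
the matrices are $\OO$-valued by the residue-value hypothesis, invertible by simplicity, and their reductions mod $\MM$ are the analogous matrices of the residued $\bar H_i \in \K(S)[\lbar X, \lbar Y]$ evaluated at $(\lbar 1, \lbar 1)$. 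Since the $\bar H_i$ cut out $D_0$ locally near $(\lbar 1, \lbar 1)$, the same implicit-function computation in $\K$ gives $\jcb_{\K}F'(\lbar 1, \lbar u)$, and taking determinants yields the residue identity for the Hensel lift. Passage to an arbitrary lift is handled by showing that any two lifts differ by a definable $\rv$-preserving automorphism whose $\VF$-Jacobian has residue $1$ almost everywhere, using that $S$ is $(\VF, \Gamma)$-generated to rigidify the admissible automorphisms via Corollary~\ref{function:rv:to:vf:finite:image}; the chain rule then transports the identity from the Hensel lift to any lift.

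The hard part will be the last step, the rigidity of lifts modulo definable $\rv$-preserving automorphisms: the definability constraints under a $(\VF,\Gamma)$-generated $S$ must be squeezed hard enough to force the Jacobian residue of any such automorphism to $1$, and verifying this uniformly across $\lbar u$ is the key technical obstacle. The remainder of the proof is routine bookkeeping: the exceptional subsets where the identity can fail are of $\VF$-dimension strictly less than $n$, being controlled by Corollary~\ref{diff:almost:every} (existence of partial derivatives), Lemma~\ref{rv:dim:gamma:coset} (existence of generic $\lbar \gamma$-cosets), and compactness, and these assemble into the exceptional sets described in the statement.
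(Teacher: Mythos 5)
Your proposal shares the paper's core strategy: reduce by scaling to $(\K^{\times})^n$-cosets, then compare $\jcb_{\VF}$ and $\jcb_{\RV}$ through their common algebraic tangent-space definitions, reducing modulo $\MM$. But at the crucial step you take a genuinely different route, and it has a gap you correctly flag as the hard part but do not fill. The paper works with the arbitrary lift $F^{\uparrow}$ directly: the Zariski closure of its graph has $\VF$-dimension $n$ and sits inside $\rv^{-1}(C)$; its defining polynomials, normalized to have unit coefficients, reduce mod $\MM$ to equations vanishing on $C$, and since the Zariski closure of $C$ also has dimension $n$ the two tangent-space constructions agree mod $\MM$ at generic points --- using only that $F^{\uparrow}$ is a lift, not which one. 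Your proposal instead proves the identity for the Hensel lift of Theorem~\ref{RV:iso:class:lifted} via implicit differentiation of the $\lbar\gamma$-polynomials $H_i$ (that part is sound), and then asserts that any two lifts differ by a definable $\rv$-preserving automorphism $G$ with $\rv(\jcb_{\VF} G) = 1$ almost everywhere. That rigidity assertion is precisely the $F = \id$ instance of the lemma you are proving, so invoking it is circular absent an independent proof. Corollary~\ref{function:rv:to:vf:finite:image} alone does not constrain the local behavior of a definable $\rv$-preserving automorphism inside an $\rv$-polydisc enough to force the residue of its Jacobian to $1$; closing the gap would require essentially the paper's direct Zariski-closure argument for the arbitrary lift, which makes the Hensel-lift detour unnecessary.

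A secondary inaccuracy: the second assertion does not simply follow from the first by applying $\vrv$. The first assertion holds only for $f(\lbar u)$ outside a definable subset $A_0 \sub A$ of $\RV$-dimension $< n$, but $\bb L(f^{-1}(A_0), f \rest f^{-1}(A_0))$ still has full $\VF$-dimension $n$ inside $\bb L(U,f)$ whenever $A_0$ is nonempty, so the excluded locus is not negligible in the $\VF$-sense required by the phrase ``almost all $(\lbar a, \lbar u) \in \bb L(U,f)$''. The $\jcb_{\Gamma}$ identity needs its own argument --- the paper appeals again to Lemma~\ref{jcb:chain}, splitting $F^{\uparrow}$ through the scaling maps whose $\vv(\jcb_{\VF})$ exactly accounts for $\jcb_{\Gamma}$ --- rather than a mere application of $\vrv$ to the residue identity.
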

\begin{proof}
Without loss of generality we may assume $\dim_{\RV}(A) = n$. Also, by Lemma~\ref{jcb:chain} and compactness, we may assume $A, B \sub (\K^{\times})^{n}$. For almost all $(\lbar a, \lbar u) \in \bb L(U, f)$, $\jcb_{\VF} F^{\uparrow}(\lbar a, \lbar u)$ may be obtained by running the construction described above with respect to  $\rv^{-1}(A), \rv^{-1}(B), \rv^{-1}(C)$ and the projection maps. For almost all $f(\lbar u) \in A$ this construction modulo the maximal ideal agrees with the construction that yields $\jcb_{\RV} F(f(\lbar u), \lbar u)$. The second assertion follows from Lemma~\ref{jcb:chain}.
\end{proof}

Let $a, b\in \OO$ be definable units. Set $\rv(a) = t$ and $\rv(b) = s$. Clearly for any definable unit $c \in \OO$ there is a definable bijection $f : \rv^{-1}(t) \fun \rv^{-1}(s)$ such that $\der_{x} f = c$ for all $x \in \rv^{-1}(t)$. This simple observation is used in the following analogue of Theorem~\ref{RV:iso:class:lifted}, where we need to assume that $f, g$ are finite-to-one, that is, $(U, f), (V, g) \in \RV[n]$ (for otherwise we may not have definable points in $\VF$ to work with).

\begin{thm}\label{iso:lifted:vol}
Suppose that $S$ is $(\VF, \Gamma)$-generated and $f, g$ are finite-to-one. Let $\omega : U \fun \RV$ be a definable function such that
\begin{enumerate}
  \item $\omega(\lbar u) = \jcb_{\RV} F(f(\lbar u), \lbar u)$ for every $\lbar u \in U$ outside of a definable subset of $U$ of dimension $<n$,
  \item $\vrv(\omega(\lbar u)) = \jcb_{\Gamma} F(f(\lbar u), \lbar u)$ for \emph{every} $\lbar u \in U$.
\end{enumerate}
Then there is a lift $F^{\uparrow} : \bb L(U, f) \fun \bb L (V, g)$ of $F$ such that for almost all $(\lbar a, \lbar u) \in \bb L(U, f)$,
\[
\rv(\jcb_{\VF} F^{\uparrow}(\lbar a, \lbar u)) = \omega(\lbar u).
\]
\end{thm}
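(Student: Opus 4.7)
The plan is to begin with an arbitrary lift provided by Theorem~\ref{RV:iso:class:lifted} and then post-compose it with a definable polydisc automorphism that corrects the Jacobian where necessary.

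First I would invoke Theorem~\ref{RV:iso:class:lifted} to get a lift $F_0^{\uparrow}\colon \bb L(U, f) \fun \bb L(V, g)$ of $F$. By Lemma~\ref{RV:iso:class:lifted:jcb}, outside a subset of $\bb L(U, f)$ of $\VF$-dimension $<n$ the identity $\rv(\jcb_{\VF} F_0^{\uparrow}(\lbar a, \lbar u)) = \jcb_{\RV} F(f(\lbar u), \lbar u)$ holds and $\vv(\jcb_{\VF} F_0^{\uparrow}) = \jcb_{\Gamma} F$. Combined with hypothesis~(1), this already gives $\rv(\jcb_{\VF} F_0^{\uparrow}) = \omega$ on the $\bb L$-preimage of $U \mi E$, where $E \sub U$ is the exceptional subset of $\RV$-dimension $<n$ provided by~(1). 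However, the preimage of $E$ in $\bb L(U, f)$ has full $\VF$-dimension $n$ (its fibers over $E$ are full polydiscs), so the identity above need not hold on it, and $F_0^{\uparrow}$ by itself does not meet the conclusion.

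To fix this, I would post-compose $F_0^{\uparrow}$ with a definable map $h\colon \bb L(V, g) \fun \bb L(V, g)$ that restricts, for each $\lbar v \in V$, to an automorphism of the polydisc $\rv^{-1}(g(\lbar v))$ with constant Jacobian whose $\rv$-value is the correction factor $e(\lbar v) := \omega(F^{-1}(\lbar v))/\jcb_{\RV} F(\lbar v) \in \K^{\times}$. Hypothesis~(2) guarantees $\vrv(e) = 0$, so the correction is genuinely an element of $\K^{\times}$; outside $F(E)$ one can take $h = \id$. The chain rule (Lemma~\ref{jcb:chain}) then yields $\rv(\jcb_{\VF}(h \circ F_0^{\uparrow})) = e \cdot \jcb_{\RV} F = \omega$ almost everywhere on $\bb L(U, f)$, as required.

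The hard part will be constructing $h$ definably and uniformly in $\lbar v$. A naive affine prescription $y_1 \mapsto \tilde e(\lbar v)(y_1 - c(\lbar v)) + c(\lbar v)$ would require definable $\VF$-valued sections of $\RV$-parametrized data, but by Corollary~\ref{function:rv:to:vf:finite:image} no such section can have infinite image, so this fails whenever $e$ or $g$ takes infinitely many values. My plan to circumvent this is to adapt the implicit-polynomial strategy of Theorem~\ref{RV:iso:class:lifted}: decompose $F(E)$ via Lemma~\ref{rv:dim:gamma:coset} and compactness into finitely many pieces on each of which $\vrv \circ g$ takes a single value in $\Gamma^n$; translate so that the relevant polydiscs lie in $(\K^{\times})^n \sub \VF^n$; and use the hypothesis that $S$ is $(\VF, \Gamma)$-generated together with Lemma~\ref{algebraic:balls:definable:centers} to supply the finitely many auxiliary $\VF(S)$-centers needed on each piece. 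On each piece, define $h$ implicitly by a system of $\VF(S)$-polynomial equations $H_i(\lbar Y, \lbar Z) = 0$ chosen so that, by Hensel's lemma (Lemma~\ref{hensel:lemma}), the induced implicit function $\lbar c \mapsto \lbar d$ has Jacobian $(-1)^n \det(\partial H/\partial \lbar Y)/\det(\partial H/\partial \lbar Z)$ with $\rv$-value $e(\lbar v)$. The extra freedom in the auxiliary polynomial coefficients and integer combinations, exactly as exploited in the proof of Theorem~\ref{RV:iso:class:lifted}, is what I would rely on to arrange this equality. Gluing the pieces definably then completes the construction.
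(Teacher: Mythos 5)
Your plan shares ingredients with the paper's proof (definable centers via Lemma~\ref{algebraic:balls:definable:centers}, Hensel's lemma, the $(\VF,\Gamma)$-generated hypothesis), but the post-composition framing hides the main difficulty, and as written the argument has two genuine gaps.

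First, the correction factor $e(\lbar v)=\omega(F^{-1}(\lbar v))/\jcb_{\RV}F(\lbar v)$ is only defined where $\jcb_{\RV}F$ itself is defined, i.e.\ outside a subset of dimension $<n$. The exceptional set from hypothesis~(1) is precisely the union of the locus where $\jcb_{\RV}F$ is undefined and the locus where it is defined but disagrees with $\omega$; on the first part $e$ simply does not exist, so the automorphism $h$ cannot be prescribed there. Worse, even where $e$ is defined, Lemma~\ref{RV:iso:class:lifted:jcb} only guarantees $\rv(\jcb_{\VF}F_0^{\uparrow})=\jcb_{\RV}F$ \emph{outside} a set of small $\RV$-dimension; on the $\bb L$-preimage of the exceptional set (which, as you correctly note, has full $\VF$-dimension), $\rv(\jcb_{\VF}F_0^{\uparrow})$ is unconstrained and need not be constant on the polydisc fibers. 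So $\jcb_{\VF}(h\circ F_0^{\uparrow})=e\cdot\rv(\jcb_{\VF}F_0^{\uparrow})$ need not equal $\omega$ there. In short, the problem on the exceptional set is not a small multiplicative adjustment of what Theorem~\ref{RV:iso:class:lifted} gives; it is a fresh instance of the lifting-with-prescribed-Jacobian problem.

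Second, the plan to supply ``finitely many auxiliary $\VF(S)$-centers'' per piece does not scale: once $n\geq 2$, the exceptional set can have $\RV$-dimension $n-1$, and even after restricting to a single $\Gamma$-fiber via Lemma~\ref{rv:dim:gamma:coset} there are infinitely many $\rv$-balls, so no finite family of $\VF(S)$-points can furnish centers for all of them (and Corollary~\ref{function:rv:to:vf:finite:image} rules out a definable $\RV\to\VF$ section). The paper's proof resolves exactly this by a genuine induction on $\dim_{\RV}$ of the exceptional set: factor $F$ on that set through a finite-to-one projection onto the first $k<n$ coordinates, lift those $k$ coordinates via Theorem~\ref{RV:iso:class:lifted} and Lemma~\ref{RV:iso:class:lifted:jcb}, and \emph{only then} take centers for the remaining coordinates \emph{relative to the now-fixed $\VF$-parameters} $\lbar a\in\rv^{-1}(\lbar t)$; the ``simple observation'' before the theorem (multiplication by a definable unit) then corrects the Jacobian coordinate-wise. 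That inductive, parameter-passing structure is not a cosmetic difference from your post-composition framing; it is what makes the centers exist at all, and it is what your write-up is missing.
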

\begin{proof}
As in the proof of Lemma~\ref{RV:iso:class:lifted:jcb} we may assume $A, B \sub (\K^{\times})^{n}$ and hence $\vrv \circ \omega$ is the zero function. By Theorem~\ref{RV:iso:class:lifted} and Lemma~\ref{RV:iso:class:lifted:jcb} we are reduced to showing this for a definable subset $A_1 \sub A$ of $\RV$-dimension $< n$. We do induction on $\dim_{\RV}(A_1)$. For the base case, since $A_1$ is finite, by Lemma~\ref{algebraic:balls:definable:centers} the $\rv$-balls involved have centers, then it is easy to see that we may apply the simple observation above in one of the coordinates and use additive translation in the other coordinates.

We proceed to the inductive step. Let $f^{-1}(A_1) = U_1$, $F(U_1) = V_1$, and $B_1 = (g \circ F)(U_1)$. Since $\dim_{\RV}(A_1) = k < n$, without loss of generality, we may assume over a definable finite partition of $A_1$ that both $\pr_{\leq k} \rest A_1$ and $\pr_{\leq k} \rest B_1$ are finite-to-one. Let
\[
f_1 : U_1 \fun \pr_{\leq k}(A_1), \quad g_1 : V_1 \fun \pr_{\leq k}(B_1), \quad F_1 : (U_1, f_1) \fun (V_1, g_1)
\]
be the naturally induced definable functions and
\[
C_1 = \{(f_1(\lbar u), g_1(F_1(\lbar u))) : \lbar u \in U_1 \} \sub \pr_{\leq k}(A_1) \times \pr_{\leq k}(B_1).
\]
Clearly both $\pr_{\leq k} \rest C_1$ and $\pr_{> k}\rest C_1$ are finite-to-one and hence, by Theorem~\ref{RV:iso:class:lifted} and Lemma~\ref{RV:iso:class:lifted:jcb} again, there is a definable subset $A_2 \sub \pr_{\leq k}(A_1)$ and a lift $F_1^{\uparrow}$ of $F_1$ such that $\dim_{\RV}(\pr_{\leq k}(A_1) \mi A_2) < k$ and for all $f_1(\lbar u) \in A_2$ and almost all $(\lbar a, \lbar u) \in \rv^{-1}(f_1(\lbar u),\lbar u)$,
\[
\rv(\jcb_{\VF} F_1^{\uparrow}(\lbar a, \lbar u)) = \jcb_{\RV} F_1(f_1(\lbar u), \lbar u).
\]
Let $U_2 =  (\pr_{\leq k} \circ f)^{-1} (A_2)$. By the inductive hypothesis there is a lift of $F \rest (U_1 \mi U_2)$ as desired.

We construct a lift $F_2^{\uparrow}$ of $F \rest U_2$ as follows. Let $\lbar t \in A_2$ and $U_{\lbar t} = f^{-1}(\fib(A_1, \lbar t))$. For any $\lbar a \in \rv^{-1}(\lbar t)$ we have $\lbar a$-definable centers
\[
h_{\lbar a} : \fib(A_1, \lbar t) \cup \omega(U_{\lbar t}) \fun \OO \mi \MM.
\]
For any $(\lbar a, \lbar u) \in \rv^{-1}(\lbar t,\lbar u)$, using the centers provided by $h_{\lbar a}$ as above, we may construct an $(\lbar a, \lbar u)$-definable bijection
\[
F_{\lbar a, \lbar u} : \rv^{-1}((\pr_{>k} \circ f)(\lbar u)) \fun \rv^{-1}((\pr_{>k} \circ g \circ F)(\lbar u))
\]
such that, for any $\lbar b \in \dom(F_{\lbar a, \lbar u})$,
\[
\jcb_{\VF} F_{\lbar a, \lbar u} (\lbar b) = (\jcb_{\VF} F_1^{\uparrow}(\lbar a, \lbar u))^{-1} h_{\lbar a}(\lbar u)
\]
if the righthand side is defined; otherwise let $F_{\lbar a, \lbar u}$ be any $(\lbar a, \lbar u)$-definable bijection. Now let $F_2^{\uparrow}$ be the lift of $F \rest U_2$ given by
\[
(\lbar a, \lbar b, \lbar u) \efun (\lbar a, F_{\lbar a, \lbar u} (\lbar a, \lbar b), \lbar u) \efun (F_1^{\uparrow}(\lbar a, \lbar u), F_{\lbar a, \lbar u} (\lbar a, \lbar b)).
\]
Multiplying the Jacobians of the two components (Lemma~\ref{jcb:chain}), we see that $F_2^{\uparrow}$ is as desired.
\end{proof}

%

\section{Categories with volume forms}\label{section:cat:vol}

In this section we shall assume that the substructure $S$ is $(\VF, \Gamma)$-generated.

We shall define finer categories of definable subsets with the notion of the Jacobian factored in. This will make the homomorphisms between various Grothendieck groups compatible with the Jacobian transformation, as in the classical integration theory.

\begin{defn}[$\VF$-categories with volume forms]\label{defn:mVF:cat:vol}
First set $\mVF[0, \cdot] = \VF[0, \cdot]$. Suppose $k > 0$. An object in the category $\mVF[k, \cdot]$ is a definable pair $(A, \omega)$, where $\pvf(A) \sub \VF^k$ and $\omega : A \fun \RV^{\times}$ is a function. The latter is understood as a \emph{definable $\RV$-volume form} on $A$, or simply a volume form on $A$. A \emph{morphism} between two objects $(A, \omega)$, $(A', \omega')$ is a definable \emph{essential bijection} $F : A \fun A'$, that is, a bijection that is defined outside of definable subsets of $A$, $A'$ of $\VF$-dimension $< k$, such that for every $\lbar x \in \dom(F)$,
\[
\omega(\lbar x) = \omega'(F(\lbar x)) \cdot \rv(\jcb_{\VF} F(\lbar x)).
\]
We also say that such an $F$ is an \emph{$\RV$-measure-preserving map}, or simply measure-preserving map.

An object in the category $\mgVF[k, \cdot]$ is a pair $(A, \omega)$, where $A \in \VF[k, \cdot]$ and $\omega : A \fun \Gamma$ a definable function. The latter is understood as a \emph{definable $\Gamma$-volume form} on $A$. A morphism between two objects $(A, \omega), (A', \omega')$ is a definable essential bijection $F : A \fun A'$ such that for every $\lbar x \in \dom(F)$,
\[
\omega(\lbar x) = \omega'(F(\lbar x)) + \vv(\jcb_{\VF} F(\lbar x)).
\]
We also say that such an $F$ is a \emph{$\Gamma$-measure-preserving map}.

The category $\VF_1[k, \cdot]$ is the full subcategory of $\mVF[k, \cdot]$ such that $(A, \omega) \in \VF_1[k, \cdot]$ if and only of $\omega = 1$. The category $\VF_0[k, \cdot]$ is the full subcategory of $\mVF[k, \cdot]$ such that $(A, \omega) \in \VF_0[k, \cdot]$ if and only of $\omega = 0$.

The category $\mVF[k]$ is the full subcategory of $\mVF[k, \cdot]$ such that $(A, \omega) \in \mVF[k]$ if and only of $A \in \VF[k]$; similarly for the categories $\mgVF[k]$, $\VF_1[k]$, $\VF_{0}[k]$.

The category $\mVF_*[\cdot]$ is defined to be the \emph{direct sums} (coproducts) of the corresponding categories; similarly for the other ones.
\end{defn}

Note that, for conceptual simplicity, we have allowed redundant objects in these categories. For example, if $(A, \omega) \in \mVF[k, \cdot]$ with $\dim_{\VF}(A) < k$ then $(A, \omega)$ is isomorphic to the empty object. Also, given how each $\mVF[k, \cdot]$ is defined, $\mVF_*[\cdot]$ is actually just the union of the corresponding categories.

\begin{rem}\label{mor:equi}
Any two morphisms in $\mVF[k, \cdot]$ that agree almost everywhere may be naturally identified. It is conceptually more ``correct'' to define a morphism in $\mVF[k, \cdot]$ as such an equivalence class, although in practice it is more convenient to work with a representative. The ``equivalence class'' point of view is required when it comes to defining the Grothendieck semigroup. Consequently, since the Jacobian of the identity map is equal to $1$ almost everywhere, by Lemma~\ref{jcb:chain}, every morphism is actually an isomorphism. This is very similar to birational maps in algebraical geometry. Below by a ``morphism'' we shall mean either an equivalence class or a representative of the class, depending on the context.
\end{rem}


\begin{defn}[$\RV$-categories with volume forms]\label{defn:RV:cat:vol}
First set $\mRV[0] = \RV[0]$. Suppose $k > 0$. An object of the category $\mRV[k]$ is a definable triple $(U,f, \omega)$, where $(U, f) \in \RV[k]$ and $\omega: U \fun \RV^{\times}$ is a function, which is understood as a \emph{volume form} on $(U, f)$. A \emph{morphism} between two objects $(U,f, \omega)$, $(U', f', \omega')$ is an essential isomorphism $F : (U, f) \fun (U', f')$ such that
\begin{enumerate}
  \item $\omega(\lbar u) = \omega'(F(\lbar u)) \cdot \jcb_{\RV} F(f(\lbar u), \lbar u)$ for every $\lbar u \in \dom(F)$ outside of a definable subset of $\dom(F)$ of dimension $< k$,
  \item $\vrv(\omega(\lbar u)) = (\vrv \circ \omega' \circ F)(\lbar u) + \jcb_{\Gamma} F(f(\lbar u), \lbar u)$ for every $\lbar u \in \dom(F)$.
\end{enumerate}
It is easily seen from the definitions of $\jcb_{\RV}$ and $\jcb_{\Gamma}$ that every morphism here is actually an isomorphism.

The categories $\mgRV[k]$, $\RV_1[k]$, $\RV_0[k]$ are similar to the corresponding $\VF$-categories.

The categories $\mRV[\leq k]$, $\mRV[*]$ are defined to be the \emph{direct sums} (coproducts) of the corresponding categories; similarly for the other ones.
\end{defn}

Note that, as in the $\VF$-categories with volume forms, we have allowed redundant objects in the $\RV$-categories with volume forms. For example, for an object $(\mathbf{U}, \omega)$, if $\bb L \mathbf{U}$ is strictly degenerate then $(\mathbf{U}, \omega)$ is isomorphic to the empty object.

For any $(\mathbf{U}, \omega) \in \mRV[k]$, let $\bb L \omega$ be the function on $\bb L \mathbf{U}$ naturally induced by $\omega$. The \emph{lift} of $(\mathbf{U}, \omega)$ is the object $\bb L(\mathbf{U}, \omega) = (\bb L \mathbf{U}, \bb L \omega) \in \mVF[k]$.

For each $(A, \omega) \in \mVF[k]$ let $A_{\omega} = \set{(\lbar a, \omega(\lbar a)) : \lbar a \in A)}$. The function $\omega$ induces naturally a function on $A_{\omega}$, which will also be denoted by $\omega$ for simplicity. Clearly $(A, \omega)$ and $(A_{\omega}, \omega)$ are isomorphic.

\begin{thm}\label{L:measure:surjective}
Every object $(A, \omega)$ in $\mVF[k]$ is isomorphic to another object $\bb L(\mathbf{U}, \pi)$ in $\mVF[k]$, where $(\mathbf{U}, \pi) \in \mRV[k]$; similarly for other pairs of corresponding categories.
\end{thm}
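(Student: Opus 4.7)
The plan is to reduce $(A, \omega)$ via a single special bijection $T$ to an object for which $A$ is an $\RV$-pullback and $\omega$ is constant on every $\rv$-polydisc; such an object is then tautologically the lift of an $\mRV[k]$-object. The two ingredients this rests on are Theorem~\ref{special:bi:polynomial:constant}, applied to the polynomial data extracted from a defining formula of $\omega$, and Lemma~\ref{special:tran:vol:pre}, which guarantees $\rv(\jcb_{\VF} T) = 1$ almost everywhere and hence makes $T$ itself a morphism in $\mVF[k]$ (in the sense of Remark~\ref{mor:equi}).

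In detail, by compactness I would reduce to working inside a single $\rv$-polydisc, so that $A$ may be treated as a subset of $\VF^k$ with the $\RV$-coordinates playing the role of parameters, and $\omega : A \fun \RV^{\times}$ is expressed in normal form (Definition~\ref{def:normal:form}); in particular, $\omega$ is determined by finitely many quantities $\rv(F_j(\lbar X))$ with $F_1, \ldots, F_\ell \in \VF(S)[\lbar X]$. Applying Theorem~\ref{special:bi:polynomial:constant} simultaneously to $F_1, \ldots, F_\ell$ (the simultaneous version follows from the single-polynomial case, as remarked at the outset of the proof of Theorem~\ref{special:bi:polynomial:constant:disc}) produces a special bijection $T : A \fun A^{\sharp}$ such that $A^{\sharp}$ is an $\RV$-pullback and each $F_j \circ T^{-1}$ is contractible. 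Setting $\omega^{\sharp} := \omega \circ T^{-1}$, every $\rv(F_j \circ T^{-1})$ is constant on each $\rv$-polydisc of $A^{\sharp}$, and hence so is $\omega^{\sharp}$. By Lemma~\ref{special:tran:vol:pre} we have $\rv(\jcb_{\VF} T) = 1$ almost everywhere, and since $\omega^{\sharp} \circ T = \omega$ by construction, $T$ is a morphism $(A, \omega) \fun (A^{\sharp}, \omega^{\sharp})$ in $\mVF[k]$.

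To exhibit $(A^{\sharp}, \omega^{\sharp})$ as a lift, set $U = \rv(A^{\sharp}) \sub \RV^{k} \times \RV^{m}$, $g = \pr_{\leq k} : U \fun \RV^{k}$, and let $\pi : U \fun \RV^{\times}$ be the unique function with $\pi \circ \rv = \omega^{\sharp}$ (well-defined by the preceding paragraph). Because $A^{\sharp}$ is an $\RV$-pullback lying in $\VF[k]$, each fiber $g^{-1}(\lbar t)$ coincides with the finite $\RV$-fiber $\fib(A^{\sharp}, \lbar a)$ for any $\lbar a \in \rv^{-1}(\lbar t)$; thus $(U, g) \in \RV[k]$ and $(U, g, \pi) \in \mRV[k]$. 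The natural definable bijection $\bb L(U, g) \fun A^{\sharp}$ given by $(\lbar a, \lbar t, \lbar s) \mapsto (\lbar a, \lbar s)$ is the identity on $\VF$-coordinates (so its $\VF$-Jacobian equals $1$) and identifies $\bb L\pi$ with $\omega^{\sharp}$; hence $\bb L(U, g, \pi) \cong (A^{\sharp}, \omega^{\sharp}) \cong (A, \omega)$ in $\mVF[k]$. The main technical hurdle, already handled above, is enforcing contractibility of $\omega$ simultaneously with turning $A$ into an $\RV$-pullback; this is precisely what Theorem~\ref{special:bi:polynomial:constant} was designed for. The remaining cases are analogous: for $\mgVF[k]$ one contracts the $\Gamma$-form $\omega : A \fun \Gamma$ by applying the same polynomial analysis via $\vv = \vrv \circ \rv$, while for $\VF_1[k]$ and $\VF_0[k]$ the constant form makes contractibility automatic and only Corollary~\ref{all:subsets:rvproduct} together with Lemma~\ref{special:tran:vol:pre} are required.
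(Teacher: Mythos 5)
Your proof is correct, but it takes a slightly different and heavier route than the paper's. The paper's own proof avoids any syntactic analysis of $\omega$: it introduces the graph $A_{\omega} = \set{(\lbar a, \omega(\lbar a)) : \lbar a \in A}$ (so that $\omega$ becomes a mere $\RV$-coordinate projection), then invokes Corollary~\ref{all:subsets:rvproduct} to pull back $A_{\omega}$ to an $\RV$-pullback $A_1$ by a special bijection $T$. Since special bijections never alter existing $\RV$-coordinates, the coordinate encoding $\omega$ is untouched, and constancy of $\omega_1 = \omega \circ T^{-1}$ on every $\rv$-polydisc of $A_1$ is then automatic, with no appeal to normal forms or to Theorem~\ref{special:bi:polynomial:constant} beyond its role inside the corollary. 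By contrast, you leave $\omega$ outside the object, peel off the occurring polynomials of a defining formula of $\omega$, and apply Theorem~\ref{special:bi:polynomial:constant} directly to make them all simultaneously contractible. This works --- contractibility of every $\rv(F_j \circ T^{-1})$ together with the $\rv$-polydisc fixing the $\RV$-parameters does give constancy of $\omega^{\sharp}$ --- but it rederives by hand precisely what the graph trick supplies for free, and it commits you to a compactness-and-normal-form preamble the paper sidesteps. The remainder of your argument (Jacobian of $T$ equals $1$ almost everywhere via Lemma~\ref{special:tran:vol:pre}, so $T$ is a $\mVF[k]$-isomorphism; $\pr_{\leq k}$ on $\rv(A^{\sharp})$ is finite-to-one because $A^{\sharp} \in \VF[k]$; the canonical identification of $A^{\sharp}$ with $\bb L(\rv(A^{\sharp}), \pr_{\leq k})$) matches the paper's and is sound. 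So: correct, but the graph-of-$\omega$ device is the cleaner mechanism the paper wants you to notice.
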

\begin{proof}
By Corollary~\ref{all:subsets:rvproduct} there is a special bijection $T : A_{\omega} \fun A_1$ with $A_1$ an $\RV$-pullback such that $(\rv(A_1), \pr_{\leq k}) \in \RV[k]$. Let $\omega_1 = \omega \circ T^{-1}$. So $\omega_1$ is constant on every $\rv$-polydisc. By Corollary~\ref{diff:almost:every} and Lemma~\ref{special:tran:vol:pre}, $(A, \omega)$ and $(A_1, \omega_1)$ are isomorphic. Let $\pi : \rv(A_1) \fun \RV$ be the function naturally induced by $\omega_1$. Then $(\rv(A_1), \pr_{\leq k}, \pi)$ is as required.

The arguments for the other cases are essentially the same.
\end{proof}

\begin{thm}\label{L:measure:class:lift}
Let $F : (\mathbf{U}, \omega) \fun (\mathbf{U}', \omega')$ be a $\mRV[k]$-isomorphism. Then there exists a measuring-preserving lift $F^{\uparrow} : \bb L(\mathbf{U}, \omega) \fun \bb L (\mathbf{U}', \omega')$ of $F$.
\end{thm}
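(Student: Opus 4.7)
The plan is to reduce this directly to Theorem~\ref{iso:lifted:vol}, by choosing the correct ``target'' function $\omega$ in its statement. Writing $\mathbf{U} = (U, f)$ and $\mathbf{U}' = (U', f')$, note that in the category $\mRV[k]$ the underlying objects live in $\RV[k]$, so $f$ and $f'$ are finite-to-one; this matches the hypothesis of Theorem~\ref{iso:lifted:vol}. Also, since Section~\ref{section:cat:vol} already assumes $S$ is $(\VF, \Gamma)$-generated, the hypotheses of Theorem~\ref{iso:lifted:vol} are satisfied. Recall that, to unpack the conclusion we need, a measure-preserving lift $F^{\uparrow}: \bb L(\mathbf{U}, \omega) \fun \bb L(\mathbf{U}', \omega')$ is (by Definition~\ref{defn:mVF:cat:vol} and Remark~\ref{mor:equi}) a definable bijection, defined almost everywhere, that lifts $F$ and satisfies
\[
\bb L \omega(\lbar a, \lbar u) = \bb L \omega'(F^{\uparrow}(\lbar a, \lbar u)) \cdot \rv(\jcb_{\VF} F^{\uparrow}(\lbar a, \lbar u))
\]
almost everywhere. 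Since $\bb L \omega, \bb L \omega'$ are pulled back from $\omega, \omega'$ via the projections, this condition is equivalent to
\[
\rv(\jcb_{\VF} F^{\uparrow}(\lbar a, \lbar u)) = \omega(\lbar u) \cdot \omega'(F(\lbar u))^{-1}
\]
almost everywhere on $\bb L \mathbf{U}$.

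With this in mind, define a definable function $\pi : U \fun \RV^{\times}$ by $\pi(\lbar u) = \omega(\lbar u) \cdot \omega'(F(\lbar u))^{-1}$. The two conditions on a morphism in $\mRV[k]$ (Definition~\ref{defn:RV:cat:vol}) translate into:
\begin{enumerate}
 \item $\pi(\lbar u) = \jcb_{\RV} F(f(\lbar u), \lbar u)$ for every $\lbar u$ outside a definable subset of dimension $< k$;
 \item $\vrv(\pi(\lbar u)) = \jcb_{\Gamma} F(f(\lbar u), \lbar u)$ for every $\lbar u \in U$.
\end{enumerate}
These are exactly the hypotheses (1) and (2) of Theorem~\ref{iso:lifted:vol} for the function $\omega = \pi$. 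Applying Theorem~\ref{iso:lifted:vol} to $F$ and $\pi$ produces a lift $F^{\uparrow}: \bb L(U, f) \fun \bb L(U', f')$ of $F$ such that
\[
\rv(\jcb_{\VF} F^{\uparrow}(\lbar a, \lbar u)) = \pi(\lbar u) = \omega(\lbar u) \cdot \omega'(F(\lbar u))^{-1}
\]
for almost all $(\lbar a, \lbar u) \in \bb L(U, f)$, which is precisely the measure-preservation condition.

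Strictly speaking, the only mild technicality is that $F$ is only an \emph{essential} isomorphism, so one must first restrict attention to the essential parts of $\mathbf{U}, \mathbf{U}'$ (the parts with $f, f'$ taking values in $(\RV^{\times})^k$), since this is where Theorem~\ref{iso:lifted:vol} operates and since $\bb L$ only sees these parts anyway (the complement maps into degenerate $\rv$-polydiscs, which contribute nothing to $\bb L(\mathbf{U},\omega)$). No serious obstacle is expected here; all the substantive content has already been absorbed into the two preparatory theorems. The main thing to check carefully is only that the compatibility conditions of a $\mRV[k]$-morphism are precisely the hypotheses needed for Theorem~\ref{iso:lifted:vol}, which is an essentially bookkeeping matter.
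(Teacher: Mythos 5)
Your proposal is correct and follows exactly the paper's own argument: define the ratio $\omega^*(\lbar u)=\omega(\lbar u)/\omega'(F(\lbar u))$ and apply Theorem~\ref{iso:lifted:vol} with this as the target volume form. The extra remark about restricting to the essential parts of $\mathbf{U},\mathbf{U}'$ is a reasonable clarification but is already implicit in the paper's setup, since $\bb L$ ignores the degenerate part.
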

\begin{proof}
Let $\omega^* : \dom(F) \fun \RV$ be the function given by $\lbar u \efun \omega(\lbar u) / \omega'(F(\lbar u))$. By Theorem~\ref{iso:lifted:vol}, there is a lift $F^{\uparrow} : \bb L \mathbf{U} \fun \bb L \mathbf{U}'$ such that $\rv(\jcb_{\VF} F^{\uparrow}(\lbar a, \lbar u)) = \omega^*(\lbar u)$ for almost all $(\lbar a, \lbar u) \in \bb L \mathbf{U}$, that is, $F^{\uparrow}$ is a $\mVF[k]$-isomorphism between $\bb L(\mathbf{U}, \omega)$ and $\bb L(\mathbf{U}', \omega')$.
\end{proof}

\begin{cor}\label{L:measure:semigroup:hom}
The map $\mathbb{L}$ induces surjective homomorphisms between the various
Grothendieck semigroups associated with the categories with volume forms, for example:
\[
\gsk \mRV[k] \fun \gsk \mVF[k], \quad \gsk \mgRV[k] \fun \gsk \mgVF[k].
\]
\end{cor}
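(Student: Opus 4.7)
The plan is to assemble the corollary from the two preceding theorems, very much in parallel to Corollary~\ref{L:semigroup:hom}. On the object level $\bb L$ already assigns to each $(\mathbf{U}, \omega) \in \mRV[k]$ an object $\bb L(\mathbf{U}, \omega) = (\bb L \mathbf{U}, \bb L \omega) \in \mVF[k]$, and similarly in the other pairs of categories with volume forms. What has to be checked is that this assignment descends to a well-defined map on isomorphism classes, that this map respects the semigroup operation on Grothendieck semigroups, and that the resulting semigroup homomorphism is surjective.

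For well-definedness on isomorphism classes, I would invoke Theorem~\ref{L:measure:class:lift} directly: given an $\mRV[k]$-isomorphism $F : (\mathbf{U}, \omega) \fun (\mathbf{U}', \omega')$, that theorem produces a measure-preserving lift $F^{\uparrow} : \bb L(\mathbf{U}, \omega) \fun \bb L(\mathbf{U}', \omega')$, which by Remark~\ref{mor:equi} is an $\mVF[k]$-isomorphism. Hence $[\bb L(\mathbf{U}, \omega)] = [\bb L(\mathbf{U}', \omega')]$ in $\gsk \mVF[k]$. The same argument, invoking the $\Gamma$-version of Theorem~\ref{iso:lifted:vol} (which is also contained in Theorem~\ref{iso:lifted:vol}), yields the analogous result for the pair $(\mgRV[k], \mgVF[k])$, and trivial modifications cover the remaining cases.

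For the semigroup structure, it suffices to observe that $\bb L$ commutes with disjoint unions and intersections of objects in an obvious way: for $(\mathbf{U}_1, \omega_1), (\mathbf{U}_2, \omega_2) \in \mRV[k]$ with a common ambient product of $\RV$-sorts, the underlying subsets satisfy $\bb L(\mathbf{U}_1 \cup \mathbf{U}_2) = \bb L \mathbf{U}_1 \cup \bb L \mathbf{U}_2$ and $\bb L(\mathbf{U}_1 \cap \mathbf{U}_2) = \bb L \mathbf{U}_1 \cap \bb L \mathbf{U}_2$ directly from Definition~\ref{def:L}, and the induced volume forms agree on these unions and intersections. The defining relation $[A] + [B] = [A \cup B] + [A \cap B]$ of the Grothendieck semigroup is therefore preserved, so $\bb L$ is a semigroup homomorphism at the level of Grothendieck semigroups.

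Finally, surjectivity is immediate from Theorem~\ref{L:measure:surjective}: every $(A, \omega) \in \mVF[k]$ is isomorphic to some $\bb L(\mathbf{U}, \pi)$ with $(\mathbf{U}, \pi) \in \mRV[k]$, so every isomorphism class in $\gsk \mVF[k]$ lies in the image of $\bb L$, and analogously for $\gsk \mgVF[k]$. No step here should present a genuine obstacle, since the two preceding theorems have already absorbed the real work (the construction of the lift in Theorem~\ref{iso:lifted:vol} and the special-bijection reduction behind Theorem~\ref{L:measure:surjective}); the only mild point to be careful about is that morphisms in $\mVF[k]$ are essential bijections defined outside a subset of smaller $\VF$-dimension, but this is precisely what Remark~\ref{mor:equi} and the almost-everywhere statements in Theorem~\ref{iso:lifted:vol} are designed to handle.
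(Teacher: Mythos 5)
Your argument is correct and is precisely the assembly the paper has in mind: surjectivity from Theorem~\ref{L:measure:surjective}, well-definedness on isomorphism classes from Theorem~\ref{L:measure:class:lift} (with the $\Gamma$-variant absorbed by condition~(2) of Theorem~\ref{iso:lifted:vol}), and compatibility with $[A]+[B]=[A\cup B]+[A\cap B]$ from the fact that $\bb L$ commutes with unions and intersections of subobjects, exactly as in Corollary~\ref{L:semigroup:hom}. The paper states the corollary without proof, treating it as immediate from those two theorems, and your write-up matches that intent.
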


As mentioned in Step~3 in the introduction, various classical properties, in particular, special cases of Fubini's theorem and a change of variables formula, can already be verified for the inversions of the homomorphisms in Corollary~\ref{L:measure:semigroup:hom} and hence we may complete the Hrushovski-Kazhdan construction of motivic integration right here. However, we choose to postpone this until we have achieved a more satisfying theory by putting forward a canonical description of the kernels of these homomorphisms in a sequel.

\end{document}